
\documentclass[11pt, twoside,a4paper]{article}
\setcounter{page}{1} 
\usepackage{flafter,amsmath,amssymb,latexsym,psfrag,graphicx,color,indentfirst,bm,amsthm}
\usepackage[T1]{fontenc}
\usepackage[utf8]{inputenc}
\usepackage[colorlinks,
linktocpage=true,
linkcolor=blue,
citecolor=green,
urlcolor=blue,
anchorcolor=black
]{hyperref}
\usepackage[a4paper,top=2.3cm,bottom=2.3cm,left=1.8cm,right=1.8cm, marginparwidth=1.75cm]{geometry}
\usepackage{float}
\usepackage[makeroom]{cancel}
\usepackage{mathtools}
\usepackage[numbers,sort&compress]{natbib}
\usepackage{appendix}
\usepackage{multirow}
\usepackage{caption}
\usepackage{subfigure}
\usepackage{graphicx}
\usepackage{dutchcal}
\usepackage{comment}
\allowdisplaybreaks
\usepackage{tikz}
\tikzset{mynode/.style={inner sep=2pt,fill,outer sep=0,circle}}
\usetikzlibrary{fadings}
\usepackage{amsmath,amssymb,braket,tikz}
\usetikzlibrary{tikzmark,calc}
\usepackage{ebgaramond}
\usepackage[sfdefault]{FiraSans}
\usepackage{microtype}

\newtheorem{theorem}{Theorem}[section]
\newtheorem{lemma}{Lemma}[section] 
\newtheorem{corollary}{Corollary}[section] 
\newtheorem{proposition}{Proposition}[section] 
\newtheorem{remark}{Remark}[section]  

\numberwithin{equation}{section}


\usepackage{mathrsfs}
\newsavebox\foobox
\newlength{\foodim}
\newcommand{\slantbox}[2][0]{\mbox{%
        \sbox{\foobox}{#2}%
        \foodim=#1\wd\foobox
        \hskip \wd\foobox
        \hskip -0.5\foodim
        \pdfsave
        \pdfsetmatrix{1 0 #1 1}%
        \llap{\usebox{\foobox}}%
        \pdfrestore
        \hskip 0.5\foodim
}}
\def\Laplace{\slantbox[-.45]{$\mathscr{L}$}}
\def\Fourier{\slantbox[-.45]{$\mathscr{F}$}}
\def\bsq#1{
\lq{#1}\rq}

\linespread{0.9}
\parskip=0.2pt
\DeclareMathOperator*{\res}{Res}
\numberwithin{equation}{section}
\numberwithin{figure}{section}


\DeclareRobustCommand{\rchi}{{\mathpalette\irchi\relax}}
\newcommand{\irchi}[2]{\raisebox{\depth}{$#1\chi$}}
\newcommand\obullet[1]{\ThisStyle{\ensurestackMath{%
  \stackon[1pt]{\SavedStyle#1}{\SavedStyle\kern.6\LMpt\bullet}}}}
\newcommand\ocirc[1]{\ThisStyle{\ensurestackMath{%
  \stackon[1pt]{\SavedStyle#1}{\SavedStyle\kern.6\LMpt\circ}}}}
  

\title{Effective Medium Theory for Heat Generation Using Plasmonics:\\  {\Large {A Parabolic Transmission Problem Driven by the Maxwell System}}}

\author{Xinlin Cao\footnote{Department of Applied Mathematics, The Hong Kong Polytechnic University, Hong Kong SAR. Email: xinlin.cao@polyu.edu.hk.},\ Arpan Mukherjee\footnote{Joint Research Center of Applied Mathematics, Shenzhen MSU-BIT University, Shenzhen, People's Republic of China (arpanmath99@alumni.iitm.ac.in and arpan.mukherjee@oeaw.ac.at).} \ and Mourad Sini\footnote{Radon Institute (RICAM), Austrian Academy of Sciences, Altenbergerstrasse 69, A-4040, Linz, Austria (mourad.sini@oeaw.ac.at). This author is partially supported by the Austrian Science Fund (FWF): P36942.}}
\date{\today}

\begin{document}
\maketitle
\begin{abstract}    

The excitation of plasmonic nanoparticles by incident electromagnetic waves at frequencies near their subwavelength resonances induces localized heat generation in the surrounding medium. We develop a mathematical framework to rigorously quantify this heat generation in systems of arbitrarily distributed nanoparticles.
\begin{enumerate}
    \item For an arbitrary discrete distribution of $M$ nanoparticles within a bounded domain, the effective heat distribution is described by a coupled system: Volterra-type integral equations for the heat conduction and a Foldy-Lax-type system governing the self-consistent electric field intensities. These equations are parameterized by the particle geometries and the local electromagnetic field interactions. The effective heat generation is computed by solving these coupled systems, with the computational complexity scaling as $\mathcal{O}(M^2)$.

    \item In the case $M \gg 1$, under natural scaling regimes, the discrete system converges to a continuum model, yielding an effective parabolic equation for the heat distribution. The source term in this homogenized parabolic model is characterized by the solution of the homogenized Maxwell’s equations, incorporating an effective permittivity distribution derived from the Drude model under resonance conditions.
\end{enumerate}
\noindent
Our analysis utilizes advanced tools in potential theory, asymptotic analysis and homogenization. By leveraging layer potential representations, we rigorously characterize subwavelength plasmonic resonances and derive point-wise field approximations. The coupling between the Maxwell and heat equations is resolved by analyzing the spectral properties of the nanoparticles and their scaling limits.
\vspace{0.05in}

\noindent
This framework reduces the problem to two mathematical challenges: a control problem for the effective parabolic system and an internal phase-less inverse problem for the Maxwell system, thus providing a unified approach to modeling heat generation in nanoparticle clusters.

\bigskip

\textbf{\textit{Key words}}. Parabolic Transmission Problem, Maxwell System,  Well-posedness and Regularity Theory,  Bochner-Sobolev space, Plasmonic Resonance, Foldy-Lax Point-Approximation, Retarded Layer and Volume Potentials, Lippmann–Schwinger Equations.
\bigskip

\textbf{\textit{AMS subject classifications.}} 35C15, 35C20, 35K10, 35K05, 35P25.

\end{abstract}     

\newpage

\tableofcontents

        \section{Introduction} 

    \subsection{Motivation} 

The interaction of electromagnetic waves with plasmonic nanoparticles has become a cornerstone of modern nanotechnology, enabling transformative advancements across various fields such as photothermal therapy, thermal imaging, energy harvesting, and sensing \cite{Challener, Han-al, Prost-al, mourad, new-1}. These nanoparticles, typically made of metals like gold, silver, or copper, exhibit a unique ability to concentrate electromagnetic fields at their surfaces due to plasmonic resonances. This ability results in highly localized heating when the nanoparticles absorb light, a phenomenon that has garnered significant attention due to its vast range of potential applications. The ability of absorbing media to convert light into heat is primarily governed by the Joule effect, in which energy from absorbed electromagnetic radiation is converted to thermal energy. In weakly absorbing media, this conversion is often inefficient and can limit the effectiveness of heat-based applications. To overcome this limitation, the engineering community has proposed enhancing electromagnetic-to-heat conversion by introducing nanoparticles into these media. By localizing and concentrating light at the nanoparticle surfaces, these materials facilitate a much more efficient heat generation process, thus opening the door to numerous potential applications in biomedical engineering, environmental monitoring, and advanced material science \cite{baffou, baffou-2, Challener, Han-al, Prost-al, Zograf-al-1, Zograf-al-2}.
\vspace{0.3cm}

\noindent 
Plasmonic nanoparticles exhibit several key optical properties that distinguish them from conventional materials. These include significant absorption, scattering, and field enhancement due to their ability to resonate at specific frequencies known as plasmonic resonances. These resonances are associated with the spectrum of the Neumann-Poincaré operator or the related magnetization operator in the resolvent model for electromagnetic wave propagation \cite{new-4, Arpan-Sini}. When nanoparticles are excited by electromagnetic waves near their plasmonic resonant frequencies, the resulting enhancement of the electric field near the nanoparticle surface can generate significant localized heat. The combination of this field enhancement and the absorption of electromagnetic energy results in the conversion of light into heat, often with high efficiency, even at subwavelength scales. The physics behind plasmonic resonances is rooted in the collective oscillation of free electrons in the metal at specific frequencies. These resonances depend on various factors, including the nanoparticle's size, shape, material composition, and surrounding medium. This fundamental understanding allows us to design nanoparticles that can be "tuned" to absorb light at specific wavelengths, thus optimizing heat generation for targeted applications such as cancer therapy, where precise heat delivery to tumor sites is crucial.
\vspace{0.3cm}

\noindent 
Building on these principles, prior works have estimated heat generation near a single nanoparticle embedded in a background medium \cite{ProfHabib, Arpan-Sini, AM5}. However, the challenge arises when trying to understand and quantify the behavior of nanoparticle ensembles or arrays, where the interactions between multiple particles and their collective response to electromagnetic fields can lead to complex and highly nonlinear behaviors. Despite the growing body of research, a comprehensive framework that couples the Maxwell equations, governing electromagnetic wave propagation, with the heat generation mechanisms driven by plasmonic excitations, remains under explored—especially in frequency regimes where plasmonic effects are most pronounced.
\vspace{0.3cm}

\noindent 
This work addresses this gap by formulating and analyzing a parabolic transmission problem that models the heat generation process in the context of effective medium theory (EMT). EMT provides a way to describe the collective behavior of systems of interacting particles by averaging out the microscopic details, offering an effective description that simplifies the overall system behavior. By coupling Maxwell's equations with heat generation, this study not only advances our understanding of thermo-plasmonic phenomena but also contributes to the broader field of homogenization theory, which seeks to understand the macroscopic behavior of complex materials from their microscopic properties. Specifically, we focus on two primary regimes:
\begin{enumerate}
      \item Discrete distribution of nanoparticles: In this case, nanoparticles are distributed arbitrarily within the medium. We develop a closed-form expression for the heat generated by such distributions, accounting for both the electromagnetic interactions and the heat diffusion processes. This model is referred to as the discrete effective model for light-to-heat conversion. It provides a powerful tool for predicting the heat generation in systems where nanoparticles are dispersed in a non-uniform manner, such as in biological tissues, with applications ranging from photo-thermal therapy to thermal imaging. This model, outlined in Theorem \ref{th1}, serves as a foundation for understanding nanoparticle interactions at the microscopic scale.
      \item Continuous distribution of nanoparticles: In this regime, nanoparticles are distributed periodically or in a regular, structured manner, which could mimic nanoparticle arrays or engineered meta-materials. The collective behavior of nanoparticles in such systems can be described by equivalent material properties, such as the effective electric permittivity and thermal conductivity. The continuous effective model for light-to-heat conversion, presented in Theorem \ref{non-periodic}, allows for the characterization of large-scale nanoparticle systems, such as those used in photonic crystals. These systems have vast potential in applications where large-area, uniform heating is needed, such as solar energy harvesting, and advanced coating technologies.
\end{enumerate}
\vspace{0.2cm}

\noindent 
With such characterizations, we transform the problem of heat generation, using nanoparticles, into a {\it{control problem}}, for the heat equation, using external sources. These sources can be generated as an {\it{inverse  problem}}, for the Maxwell system, of recovering a permittivity needed to generate the (internal but phaseless) data given by the intensity of the electric field inside the domain of interest. Finally, this needed permittivity profile can be generated by design of proper nanoparticles.
\vspace{0.3cm}

\noindent 
This approach provides a novel means of controlling heat generation in plasmonic nanoparticle systems. Furthermore, the results presented here, where we focus on homogeneous background media, lay the foundation for future work on heterogeneous media, where complex nanoparticle arrangements and variations in the surrounding material properties must be accounted for. This includes examining the impact of particle shape, orientation, and clustering effects on heat generation, as well as the role of environmental factors such as temperature gradients and material heterogeneity.

         \subsection{The Mathematical Models and Related Asymptotic Regimes}  

    \subsubsection{The Mathematical Models}   

The objective of this work is to present a detailed analysis of a mathematical model describing the photo-thermal effect in a system composed of \( M \) plasmonic nanoparticles, given by
\[
D := \bigcup_{j=1}^M D_j,
\]
where each \( D_j \) is defined as \( D_j = z_j + \delta B_j \) with \(\delta \ll 1\). Here, \( B_j \) is the $\mathcal{C}^2$-regular domain centered at the origin with volume \(\textit{Vol}(B_j) \sim 1\), and \( z_j \) represents the position of each nanoparticle.
\par\noindent
Let us define the volumetric heat capacity \( c_p \), thermal conductivity \( \gamma_p \), and electric permittivity \( \varepsilon_p \) of the nanoparticles. Similarly, let \( c_m \), \( \gamma_m \), and \( \varepsilon_m \) denote the corresponding properties of the homogeneous background medium \(\mathbb{R}^3 \setminus \overline{D}\), which are assumed to be constant and positive. We define the effective parameters over the entire space by
    \begin{align}
        \mathrm{c}_v = \mathrm{c}_p \chi(D) + \mathrm{c}_m \chi(\mathbb{R}^3 \setminus \overline{D}),\;
        \gamma = \gamma_p \chi(D) + \gamma_m \chi(\mathbb{R}^3 \setminus \overline{D}),\; \text{and}\;
        \varepsilon = \varepsilon_p \chi(D) + \varepsilon_m \chi(\mathbb{R}^3 \setminus \overline{D}),
    \end{align}
where \( \chi \) denotes the characteristic function of a given domain. We assume that the nanoparticles are non-magnetic, permitting us to set the magnetic permeability to a uniform constant across \(\mathbb{R}^3\), i.e., \(\mu = 1\) throughout. The homogeneous electromagnetic scattering problem for the total electric field \( E := E^{\textit{sc}} + E^{\textit{in}} \) is formulated as follows:
    \begin{align}\label{Maxwell-model}
        \begin{cases}
            \textit{curl} \, \textit{curl} \; E - k^2 \varepsilon E = 0\; \text{in }\ \mathbb{R}^3, \\
            E^{\textit{sc}} := E - E^{\text{in}} \text{ satisfies the Silver-Müller radiation condition (S-M.R.C.):} &\\
            \lim_{|x|\to +\infty} \left(\text{curl}\, E^{\textit{sc}}(x) \times x - i k \sqrt{\varepsilon} |x| E^{\textit{sc}}\right) = 0.
        \end{cases}
    \end{align}
We also consider $E^\textit{in}$ to be the incident plane wave satisfying 
$E^\textit{in} = E_0^\textit{in} e^{i k^w\; \theta\cdot\mathrm{x}}$, with the direction of wave propagation $\vartheta\in \mathbb{S}$ (unit sphere in $\mathbb{R}^3$), $E_0^\textit{in}\in \mathbb{S}$ is the polarization vector satisfying $\theta\cdot E_0^\textit{in} = 0$ and $k^w= k \sqrt{\varepsilon\mu}$ is the wave number with the incidence frequency $k$. The electric permittivity \(\varepsilon_p\) is modeled by the Lorentz model, given by
    \begin{align}
         \varepsilon_p(k, \zeta) = \varepsilon_\infty \Big(\varepsilon_0 + \frac{k_p^2}{k_0^2 - k^2 - i \zeta k}\Big),
    \end{align}
where \(k_p\) is the plasma frequency, \(k_0\) the undamped resonance frequency, \(\zeta\) the damping parameter, and \(\varepsilon_\infty\) the electric permittivity of the vacuum. The parameter \(\varepsilon_0\) describes the contribution of bound electrons to polarizability. Considering the background medium is non-dispersive, the parameters \(c_m\), \(\gamma_m\), and \(\varepsilon_m\) are assumed to be independent of the incident frequency \(k\).
\par\noindent
The photo-thermal effect is modeled by the following transmission parabolic problem with a source term coupled to the Maxwell system , see \cite{ProfHabib, baffou, Triki-Vauthrin} for instance,
    \begin{align}\label{heat}
         \begin{cases}
             \mathrm{c}_v \frac{\partial u}{\partial t} - \nabla \cdot (\gamma \nabla u) = \frac{k}{2\pi} \Im(\varepsilon) |E|^2 f(t), & \text{in } (\mathbb{R}^3 \setminus \partial D) \times (0, T),\\
             u\big|_{+} = u\big|_{-}, & \text{on } \partial D_i, \\
             \gamma_m^{-1} \partial_\nu u\big|_{+} = \gamma_p^{-1} \partial_\nu u\big|_{-}, & \text{on } \partial D_i, \\
             u(x, 0) = 0, & x \in \mathbb{R}^3
        \end{cases}
    \end{align}
where \(T \in \mathbb{R}\) is the final measurement time, and \(\partial_\nu \big|_{\pm}\) denotes the limit expression:
\(\partial_\nu u\big|_{\pm}(x, t) = \lim_{h \to 0} \nabla u(x \pm h \nu_x, t) \cdot \nu_x,\), where \(\nu\) represents the outward normal vector to \(\partial D\).
\vspace{0.2cm}

\noindent 
The term $\frac{k}{2\pi} \Im(\varepsilon) |E|^2$ models the amount of energy that is deposited on the nanoparticle via a laser excitation, via the pointing vector, see \cite{baffou}. The electric field $E$ is related to the time-harmonic regime. Ideally, this would be in the time-domain. However, if the used incident sources, creating the electric field, is chosen having a bandwidth near the plasmonic resonances, described later, then the modes near these resonant frequencies would be the dominating modes. Therefore, using incident electric field, through the laser excitation, with such bandwidths, it is enough to consider the time-harmonic modes.
\vspace{0.2cm}

\noindent 
The term \(f\)  models the time-modulation of the laser excitation that allows the attenuation, in time, of excitation. This can be a pulse distribution, i.e. $f:=\delta_0$ which is the Dirac distribution supported at the origin, \cite{Triki-Vauthrin}. However, here we take a smoother modulation given by a function with initial conditions satisfying \(\partial_t^{r+1} f(0) \neq 0\) and \(\partial_t^n f(0) = 0\) for all \(n \in \{0, 1, \ldots, r\}\). An example can be constructed by taking \(\varphi \in C^\infty_c(\mathbb{R})\) such that \(\varphi = 1\) on \([0, l/2]\) and \(\text{supp}(\varphi) \subseteq [0, l]\) with $l$ small if needed. Define $f(t) :=t^r \varphi(t)$.
\vspace{0.2cm}

\noindent 
Note that \(\Im(\varepsilon) = 0\) in \((\mathbb{R}^3 \setminus \overline{D})\). This last condition means that the background is not absorbing. Therefore, the nanoparticle, which is absorbing, is the only responsible for converting the exciting electric field into heat.  Notice that, we could also take absorbing background but in this case, one needs to use dielectric nanoparticles and not plasmonic ones, which means, based on the Lorentz model above, that we need to use incident frequencies close to the undamped resonance frequency $k_0$ and not the plasma frequency $k_p$. Indeed, for such frequencies, we have \(\Im(\varepsilon) \gg 1\) which then dominates the one of the background and hence enhances more the amount of the electric to heat conversion. The question of comparing the effects due to plasmonic and the ones due to dielectric nanoparticles will not be discussed in more details in this work, but the interested reader can see a related discussion in \cite{Arpan-Sini}.

\begin{remark}   

To describe the electromagnetic as well as the heat properties of the nano-particles, we can also use the Drude model which is an approximation, as the undamped resonant frequency \(k_0\)  is small, in the Lorentz model, reducing it to
    \begin{align}\label{Drude-model}
          \varepsilon_p(k, \zeta) = \varepsilon_\infty \Big(\varepsilon_0 - \frac{k_p^2}{k^2 + i \zeta k}\Big).
    \end{align}
This model provides a reasonable approximation of the optical properties in many metals over a wide frequency range. For example, for gold, the parameters \(\varepsilon_0 = 9.84\) eV, \(k_p = 9.096\) eV, and \(\zeta = 0.072\) eV yield a dielectric constant that aligns closely with experimental values of frequency from 0.8 eV to 4 eV \cite{drude3}. In practice, We consider metallic nanoparticles in a biological medium, such as tissue. For instance, the thermal conductivity of gold is \(\gamma_p = 318 \, \text{W/(m K)}\), whereas that of human hand tissue is \(\gamma_m = 0.963 \, \text{W/(m K)}\). Additionally, for human hand tissue, we have \(\gamma_m \, c_m = 1.592 \, \text{W}^2 \text{s/m}^4 \text{K}^2\), see for instance \cite{biotissue}.

\end{remark}    


    \subsubsection{The Related Asymptotic Regimes}    
    
To streamline our mathematical analysis based on the experimental values of the heat and electromagnetic parameters discussed in the previous remark, we adopt the following asymptotic regime:
\begin{equation}\label{scales}
    \zeta \sim \delta^h, \quad \gamma_p \sim \delta^{-\beta}, \quad \text{and} \quad c_p \sim 1, \quad \text{with} \quad \delta \ll 1,
\end{equation}
where \( h \) and \( \beta \) are positive real constants. Observe that for moderate incident frequencies $k$, we deduce the above scale of $\zeta$ and (\ref{Drude-model}) that
\begin{align}\label{scales2}
    \Im(\varepsilon_p) \sim \delta^h.
\end{align}
Additionally, we assume that the parameters associated with the homogeneous background medium, \( c_m \), \( \gamma_m \), and \( \varepsilon_m \), remain of order 1. 
\vspace{0.2cm}

\noindent
\textit{\textbf{Definition.}} Let \( D \) represent the union of the nanoparticles, defined by \( D := \bigcup\limits_{j=1}^M D_j \). We introduce the following parameters:

\noindent 1. The parameter \( \delta \), representing the maximum diameter among all distributed nanoparticles, given by
   \[
   \delta := \max_{1 \leq j \leq M} \operatorname{diam}(D_j).
   \]
\noindent 2. The parameter \( d \), representing the minimum distance between any two distributed nanoparticles, defined as
   \[
   d := \min_{1 \leq i,j \leq M} \operatorname{dist}(D_i, D_j), \quad i \neq j.
   \]
\par\noindent
In this work, we focus on the following regimes to model clusters of nanoparticles distributed in a three-dimensional bounded domain:
\begin{align} \label{regimes}
    M \sim d^{-3} \quad \text{and} \quad d \sim \delta^{\lambda}, \quad \lambda\geq 0, \quad \delta \ll 1.
\end{align}
The appropriate choices of $\lambda$ will be specified later.
\vspace{0.2cm}

\noindent
We, next review some basic results on the Fourier-Laplace transform, as outlined in \cite{lubich, sayas}.


    \subsection{Function Spaces and Preliminaries on Fourier-Laplace Transforms}\label{lap-fou} 

We begin by considering \( F(\bm{s}): \mathbb{C}_+ := \{\bm{s} \in \mathbb{C} : \Re \bm{s} > 0\} \to X \), where \( X \) is a Hilbert space, as an analytic function that is assumed to be polynomially bounded. Specifically, for \( j \in \mathbb{R} \), and for all \( \sigma \), there exists a constant \( C_F(\sigma) < \infty \) such that  
\begin{equation}\label{1.1}
    \Vert F(\bm{s}) \Vert_X \leq C_F(\Re \bm s) |\bm{s}|^j, \quad \text{for} \ \Re\bm{s} \geq \sigma > 0.
\end{equation}
Now, for \( \bm{s} \in \mathbb{C}_* := \mathbb{C} \setminus (-\infty, 0] \), we have \( \bm{s}^{\frac{1}{2}} := |\bm{s}|^{\frac{1}{2}} e^{\frac{i}{2} \text{Arg}(\bm{s})} \in \mathbb{C}_+ \). Let us also introduce the notations
\begin{equation}\label{1.13}
    \omega := \Re \bm{s}^{\frac{1}{2}} = \Re \overline{\bm{s}}^{\frac{1}{2}}, \quad \text{and} \quad \underline{\omega} := \min\{1, \Re \bm{s}^{\frac{1}{2}}\}.
\end{equation}
This setup arises from the symbol \( \bm{s} \) after applying the Fourier-Laplace transform to the heat equation. Here, \( F(\bm{s}) \) is well-defined and analytic in \( \mathbb{C}_* \), and satisfies the following condition:
\begin{equation}\label{1.2}
    \Vert F(\bm{s}) \Vert_X \leq D_F(\Re \bm{s}^{\frac{1}{2}}) |\bm{s}|^j, \quad \text{for} \ \Re\bm{s}^{\frac{1}{2}} = \omega > 0,
\end{equation}
where \( D_F \) shares a similar boundedness property as \( C_F \). Noting that
\begin{equation}\nonumber
    \min\{1, \Re \bm{s}^{\frac{1}{2}}\} \geq \min\{1, \Re \bm{s}\}, \quad \text{for} \ \bm{s} \in \mathbb{C}_+,
\end{equation}
we observe that if \( F(\bm{s}) \) satisfies the bound in (\ref{1.1}), it also satisfies the bound in (\ref{1.2}) with an appropriate choice of \( C_F \) (see, for instance, \cite{sayas2}). These bounds ensure that \( F \) is the Laplace transform of a distribution of finite order of differentiation, with support on the positive real line.
\\
We now define some necessary function spaces. Let \( \mathcal{D}'(\mathbb{R}; X) \) represent the space of \( X \)-valued distributions, and \( \mathcal{S}'(\mathbb{R}; X) \) denote the space of tempered distributions on the real line for a Banach space \( X \). We then introduce the space
\[
    \mathcal{L}'(\mathbb{R}, X) := \left\{ f \in \mathcal{D}'(\mathbb{R}, X) : e^{-\sigma t} f \in \mathcal{S}'(\mathbb{R}_+, X) \ \text{for some} \ \sigma > 0 \right\}.
\]
This definition allows us to consider the Laplace transform of a function \( f \in \mathcal{L}'(\mathbb{R}, X) \), such that \( f \in L^1(\mathbb{R}, X) \), given by
\[
    \hat{f}(x, \bm{s}) = \mathcal{L}[f](\bm{s}) := \int_{-\infty}^{+\infty} f(x, t) \exp(-\bm{s} t) \, dt, \quad \text{for a.e.} \ \bm{s} \in \mathbb{C}_\sigma := \{ \bm{s} \in \mathbb{C} : \Re \bm{s} = \sigma > 0 \}.
\]
For a Sobolev space \( X \) and \( r \in \mathbb{R} \), we further define the following anisotropic Hilbert space:
\[
    H_\sigma^r(\mathbb{R}; X) := \left\{ f \in \mathcal{L}'(\mathbb{R}, X) : \int_{-\infty+i\sigma}^{+\infty+i\sigma} |\bm{s}|^{2r} \Vert \mathcal{L}[f](\bm{s}) \Vert_X^2 \, d\bm{s} < \infty \right\},
\]
with the norm
\[
    \Vert f \Vert_{H_\sigma^r(\mathbb{R}; X)} = \left( \int_{-\infty+i\sigma}^{+\infty+i\sigma} |\bm{s}|^{2r} \Vert \mathcal{L}[f](\bm{s}) \Vert_X^2 \, d\bm{s} \right)^{\frac{1}{2}}.
\]
By Plancherel’s theorem, we know that \( \Vert \mathcal{L}[f](\bm{s}) \Vert_X = \Vert e^{-\sigma t} f(t) \Vert_X^2 \), which allows us to relate the norms in \( H_\sigma^p(\mathbb{R}; X) \) with the weighted norm of \( \mathcal{L}[f](\bm{s}) \) in \( X \). This connection is useful for deriving bounds on time-dependent functions. For instance,
\[
    \int_{-\infty+i\sigma}^{+\infty+i\sigma} \Vert \mathcal{L}[f](\bm{s}) \Vert_X^2 \, d\bm{s} = \int_{-\infty}^{+\infty} e^{-2\sigma t} \Vert f(t) \Vert_X^2 \, dt.
\]
We now recall the following lemma, which will be essential for establishing the boundedness of certain linear operators. The lemma is as follows:

\begin{lemma}\label{lubich}\cite[Lemma 2.1]{lubich}
If \( F(\bm{s}) \) is bounded as in equation (\ref{1.1}) within the half-plane \( \Re \bm{s} \geq \sigma > 0 \), then the operator \( F(\partial_t) \) extends by density to a bounded linear operator
\[
    F(\partial_t) : H_0^{r+j}(0,T; X) \to H^r_0(0,T; Y),
\]
for any \( r \in \mathbb{R} \). Here, \( H^r(\mathbb{R}; X) \) denotes the Sobolev space of order \( r \) for \( X \)-valued functions on \( \mathbb{R} \), and on finite intervals \( (0, T) \), we define
\[
    H_0^r(0, T; X) := \left\{ g|_{(0,T)} : g \in H^r(\mathbb{R}; X), \ \text{with} \ g \equiv 0 \ \text{on} \ (-\infty, 0) \right\}, \quad r \in \mathbb{R}.
\]
\end{lemma}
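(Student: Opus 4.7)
The plan is to realize $F(\partial_t)$ via the Laplace transform as a Fourier multiplier on the vertical contour $\Re\bm{s}=\sigma$, and then exploit the polynomial bound (\ref{1.1}) to trade powers of $|\bm{s}|$ for Sobolev regularity. Concretely, for a smooth causal test function $g$ with values in $X$ (vanishing for $t<0$ and sufficiently rapidly decaying so that $\hat g$ is defined on $\mathbb{C}_\sigma$), define
\[
    F(\partial_t)g(t) := \frac{1}{2\pi i}\int_{\sigma-i\infty}^{\sigma+i\infty} e^{\bm{s}t}\, F(\bm{s})\,\hat g(\bm{s})\, d\bm{s},
\]
which is well defined because $F(\bm{s})\hat g(\bm{s})$ lies in $L^1$ on the contour thanks to $\|F(\bm{s})\|_{X\to Y}\leq C_F(\sigma)|\bm{s}|^j$ together with the decay of $\hat g$. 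The resulting function is again causal, since $F$ is analytic in $\mathbb{C}_+$ and the contour can be shifted to the right.

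Next I would estimate the $H^r_\sigma(\mathbb{R};Y)$ norm directly in the Laplace variable. Using the definition of the weighted norm and the operator bound on $F$,
\begin{align*}
    \|F(\partial_t)g\|_{H^r_\sigma(\mathbb{R};Y)}^2
    &= \int_{-\infty+i\sigma}^{+\infty+i\sigma} |\bm{s}|^{2r}\, \|F(\bm{s})\hat g(\bm{s})\|_Y^2\, d\bm{s} \\
    &\leq C_F(\sigma)^2 \int_{-\infty+i\sigma}^{+\infty+i\sigma} |\bm{s}|^{2(r+j)}\, \|\hat g(\bm{s})\|_X^2\, d\bm{s}
     = C_F(\sigma)^2\, \|g\|_{H^{r+j}_\sigma(\mathbb{R};X)}^2 .
\end{align*}
Thus $F(\partial_t)$ is bounded from $H^{r+j}_\sigma(\mathbb{R};X)$ into $H^r_\sigma(\mathbb{R};Y)$ on the dense subspace of smooth causal test functions, and extends by density to a bounded linear map between the full spaces.

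To pass to the finite-interval spaces $H^{r+j}_0(0,T;X)\to H^r_0(0,T;Y)$ I would argue as follows. Any $g\in H^{r+j}_0(0,T;X)$ is, by definition, the restriction to $(0,T)$ of some $\tilde g\in H^{r+j}(\mathbb{R};X)$ vanishing on $(-\infty,0)$; apply the extended operator to $\tilde g$, and then restrict to $(0,T)$. Causality (support in $[0,\infty)$) of $F(\partial_t)\tilde g$ is preserved because $F$ is analytic in $\mathbb{C}_+$: shifting the contour to the right shows that the inverse Laplace transform vanishes for $t<0$. Moreover, the value of $F(\partial_t)\tilde g$ on $(0,T)$ depends only on $\tilde g|_{(0,T)}$, again by causality. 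Finally, the $H^r_\sigma$ norm controls the $H^r_0(0,T;\cdot)$ norm uniformly in $\sigma$ after optimizing, because for causal functions supported in $[0,T]$ the weight $e^{-\sigma t}$ is bounded below by $e^{-\sigma T}$, making $H^r_\sigma$-norms equivalent (up to $\sigma$-dependent constants) to standard $H^r$-norms on $(0,T)$.

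The main obstacle is the last step: transferring the clean Laplace-side estimate from the weighted spaces $H^r_\sigma(\mathbb{R};\cdot)$ to the intrinsic Sobolev spaces $H^r_0(0,T;\cdot)$ while preserving causality and handling negative $r$ (so that the ``extension by zero'' is taken in a distributional sense and density of smooth causal test functions is not immediate). This requires an interpolation/duality argument for negative $r$, together with the fact that restriction and zero-extension are bounded between the two scales for causal distributions; once this equivalence is in place, the boundedness claim of the lemma follows immediately from the Plancherel-type identity.
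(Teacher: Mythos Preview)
The paper does not prove this lemma at all: it is stated with the explicit attribution \cite[Lemma 2.1]{lubich} and merely \emph{recalled} as a tool, so there is no in-paper proof to compare against. Your sketch is the standard Laplace-multiplier argument that underlies Lubich's original result, and the Plancherel-type estimate you wrote down is exactly the heart of it; the passage from $H^r_\sigma(\mathbb{R};\cdot)$ to $H^r_0(0,T;\cdot)$ via causality and equivalence of norms on causal functions supported in $[0,T]$ is also the expected route, and your identification of the negative-$r$ duality/extension issue as the only genuine technical wrinkle is accurate.
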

\noindent
Additionally, note that for \( q \in \mathbb{Z}_+ \) and \( r > \frac{1}{2} \), there exists a continuous embedding
\[
    H_0^{q+r}(0, T; X) \subset \mathcal{C}^q(0, T; X),
\]
which ensures that functions in \( H_0^{q+r}(0, T; X) \) exhibit sufficient regularity to belong to the space of \( k \)-times continuously differentiable functions over \( (0, T) \).
\\
Next, we recall the decomposition, see \cite{raveski} for more details,
\begin{align}\label{decomposition-introduction}
\big(\mathrm{L}^{2}(D)\big)^3 = \mathbb{H}_{0}(\textit{div}\ 0,D) \oplus\mathbb{H}_{0}(\textit{curl}\ 0,D)\oplus \nabla \mathbb{H}_{\textit{arm}},
\end{align}
where 
\begin{align}
    \begin{cases}
        \mathbb{H}_{0}(\textit{div}\ 0,D) = \Big\{ u\in \mathbb{H}(\textit{div},D): \textit{div}\;u =0\;\text{in}\; D \; \text{and}\;   u\cdot \nu = 0 \; \text{on}\; \partial D\Big\}, \\
        \mathbb{H}_{0}(\textit{curl}\ 0,D) = \Big\{ u\in \mathbb{H}(\textit{curl},D): \textit{curl}\;u =0\;\text{in}\; D \; \text{and}\; u\times \nu = 0 \; \text{on}\; \partial D\Big\}, \; \text{and} \\
        \nabla \mathbb{H}_{\textit{arm}} = \Big\{ u \in \big(\mathbb{L}^{2}(D)\big)^3: \exists \  \varphi \ \text{s.t.} \ u = \nabla \varphi,\; \varphi\in \mathbb{H}^1(D)\; \text{and} \ \Delta \varphi = 0 \Big\}.
    \end{cases}
\end{align}
For a given vector field \( f \), we define the Magnetization operator and the Newtonian potential operator as follows
\begin{align}\nonumber
    \mathbb{M}^{(k)}_{D}\big[f\big](\mathrm{x}) := \nabla \int_{D} \nabla \mathbcal{G}^{(k)}(\mathrm{x}, \mathrm{y}) \cdot f(\mathrm{y}) \, d\mathrm{y} \quad \text{and} \quad \mathbb{N}^{(k)}_{D}\big[f\big](\mathrm{x}) := \int_{D} \mathbcal{G}^{(k)}(\mathrm{x}, \mathrm{y}) f(\mathrm{y}) \, d\mathrm{y},
\end{align}
respectively, where \(\mathbcal{G}^{(k)}\) is the Green's function associated with the Helmholtz operator. Furthermore, these operators are similarly defined for the case \( k = 0 \), corresponding to the Laplace operator. Then, it is well known, see for instance \cite{friedmanI}, that the  Magnetization operator $\mathbb{M}^{(0)}_{D}: \nabla \mathbb{H}_{\textit{arm}}\rightarrow \nabla \mathbb{H}_{\textit{arm}}$ induces a complete orthonormal basis namely $\big(\lambda^{(3)}_{\mathrm{n}},\mathrm{e}^{(3)}_{\mathrm{n}}\big)_{\mathrm{n} \in \mathbb{N}}$. Related to the decomposition (\ref{decomposition-introduction}), we define $\overset{3}{\mathbb{P}}$ to be the natural projector as $\overset{3}{\mathbb{P}}:= \mathbb{L}^2 \to \nabla \mathbb{H}_{\textit{arm}}.$
\vspace{0.1cm}

\noindent
Throughout this paper, we use the notation $\mathcal{L}(\mathrm{X}; \mathrm{Z})$ to refer to the set of linear bounded operators, defined from $\mathrm{X}$ to $\mathrm{Z}$. Additionally, we define $\mathcal{L}(\mathrm{X})$ to be the same as $\mathcal{L}(\mathrm{X}; \mathrm{X})$. Furthermore, we use the standard Sobolev space of order $r$ on $D$, which we denote as $\mathrm{H}^r(D)$. In this manuscript, we use the notation $'\lesssim'$ to denote $'\le'$ with its right-hand side multiplied by a generic positive constant.


    \subsection{Statement of the Results }     

    \subsubsection{The Discrete Effective Models: Theorem \ref{th1}}   

We start by introducing the free-space fundamental solution, denoted \(\Phi^{(m)}(\mathbf{x}, t; \mathbf{y}, \tau)\), for the heat operator. This solution satisfies the distributional equation
\(
(\kappa_m \partial_t - \Delta) \Phi^{(m)}(\mathbf{x}, t) = \delta_0(x,t)\; \text{in} \; \mathbb{R}^3,
\)
where \(\kappa_m = \frac{c_m}{\gamma_m}\) represents the diffusion constant in a homogeneous medium. In three-dimensional space, the fundamental solution is expressed as
    \begin{align}\label{green}
           \Phi^{(m)}(\mathbf{x}, t; \mathbf{y}, \tau) = \left( \frac{\kappa_m}{4\pi(t - \tau)} \right)^{\frac{3}{2}} \exp\left( -\frac{\kappa_m |\mathbf{x} - \mathbf{y}|^2}{4(t - \tau)} \right) \chi_{(0, \infty)}(t - \tau),
    \end{align}
where \(\chi_{(0, \infty)}(\cdot)\) is the indicator function of \((0, \infty)\).
\\
We also express the operators \(\mathbb{M}_{D}^{(\kappa)}\) and \(\mathbb{N}_{D}^{(\kappa)}\) using the dyadic Green's function \(\bm{\Upsilon}^{(k)}(x,y)\), given by
    \begin{align}\nonumber
        \bm{\Upsilon}^{(k)}(x,y) = \underset{x}{\text{Hess}}\,\mathcal{G}^{(k)}(x,y) + k^2 \mathcal{G}^{(k)}(x,y) \mathbb{I},
    \end{align}
where \(\mathcal{G}^{(k)}(x,y)\) is the Green's function of the Helmholtz operator. 
\\
In this work, we focus on the sub-wavelength regime, specifically where \( k\delta \ll 1 \), relative to the size of \( B \). To explore this, we model the electric permittivity, \( \varepsilon_p \), using the Drude model. We select the incident frequency \( k \) and damping parameter \( \zeta \) as follows:
\begin{align}\label{frequency}
   k - k_{n_0} \sim \delta^h\; \text{and}\; \zeta - \zeta_{n_0}\sim \delta^h,
\end{align}
with
\begin{align}\nonumber
    k_{n_0} = \Bigg[\frac{k_p^2\varepsilon_\infty \lambda_{n_0}^{(3)}\big[1-\lambda_{n_0}^{(3)}\big(\Re(\varepsilon_m) - \varepsilon_\infty \varepsilon_0\big)\big]}{\big|1 - \lambda_{n_0}^{(3)}(\varepsilon_m - \varepsilon_\infty \varepsilon_0)\big|^2}\Bigg]^\frac{1}{2}\; \text{and}\; \zeta_{n_0}= \frac{\Im(\varepsilon_m)\lambda_{n_0}^{(3)}}{1 - \lambda_{n_0}^{(3)}(\varepsilon_m - \varepsilon_\infty \varepsilon_0)} k_{n_0}.
\end{align}
Under this chosen frequency, the nanoparticle exhibits plasmonic behavior, resonating at the plasmonic frequency. With this choice of the incident frequency, we observe the following property: 
\begin{align}\label{condition3D}
\big|1 + \eta \lambda^{(3)}_{n}\big| \sim 
\begin{cases}
\delta^h & \text{for } n = n_0, \\
1 & \text{for } n \neq n_0,      
\end{cases}   
\end{align}
where \(\eta := \varepsilon_p - \varepsilon_m\) represents the electromagnetic contrast parameter, assumed to be of order 1.
\bigskip     

\noindent We now proceed to state the first main result of this work.

\begin{theorem}\label{th1}       

Consider the heat transfer problem (\ref{heat}) associated with a cluster of plasmonic nanoparticles, denoted \( D_i = \delta B_i +z_i (\delta\ll 1) \), for \( i = 1, 2, \ldots, M \), each belonging to the class $\mathcal{C}^2$. Let the source term, \( J(x, t) := \frac{k \cdot \Im(\varepsilon)}{2 \pi} |E|^2 f(t)\)\footnote{Given that $|E|$ is defined in the Maxwell model (\ref{Maxwell-model}), we know that $E \in L^4_{\textit{loc}}(\mathbb{R}^3)$ (as shown in \cite{mourad}). Since $\Im(\varepsilon)$ is compactly supported, then it suffices to consider $f \in  H^{r-\frac{1}{2}}_{0, \sigma}(0, T)$ so that $J(x,t)$ will be in the function space $H^{r - \frac{1}{2}}_{0, \sigma} \Big(0, T; \mathbb{L}^2(\mathbb{R}^3)\Big).$}  with $f \in  H^{r-\frac{1}{2}}_{0, \sigma}(0, T)$ and \( \Im(\varepsilon) = 0 \) in \( \mathbb{R}^3 \setminus \overline{D} \). In particular, we assume that $f:\mathbb{R}\to \mathbb{R}$ is a causal function and of class $\mathcal{C}^6(\mathbb{R})$. Then, there exists a unique solution \(u(x, t)\) in \( H^r_{0, \sigma} \Big(0, T; \mathbb{H}^1(\mathbb{R}^3)\Big) \) that satisfies equation (\ref{heat}). Furthermore, considering the asymptotic regimes (\ref{scales})-(\ref{scales2})-(\ref{regimes}) for the corresponding heat and electromagnetic parameters, and choosing the incidence frequency $k$ and damping parameter $\zeta$ as defined in (\ref{frequency}) with $\frac{9}{5}<h<2$, under the conditions (\ref{condition3D}) and
\begin{align}\label{condition-heat-1}
    b \ \max_{1 \leq i \leq M} \sum_{j \neq i} d_{ij}^{-2} < 1,
\end{align}
where \( b = \max_{1 \leq j \leq M} b_j \) with \( b_j := \frac{\overline{\alpha}_j}{\kappa_m} \text{vol}(B_j) \delta^{3-\beta} \), the heat generated by the nanoparticle cluster admits the following asymptotic expansion
\begin{align}\label{assymptotic-expansion-us}
    u^\textit{sc}(x, t) = -\sum_{i=1}^M \frac{\alpha_i}{\kappa_m} \text{Vol}(D_i) \int_0^t \Phi^{(m)}(x, t; z_i, \tau)\ \sigma^{(i)}(\tau) \, d\tau + \mathcal{O}(M \delta^{4 - h}) \quad \text{as} \quad \delta \to 0,
\end{align}
for \( (x, t) \in \mathbb{R}^3 \setminus K \times (0, T) \), with \( \overline{D} \subset \subset K \). Here, \( \sigma^{(i)}\) for \( i = 1, 2, \ldots, M \), uniquely satisfies the following linear algebraic system
\begin{align}\label{intro-1}
    \sigma^{(i)} + \sum_{\substack{j = 1 \\ j \neq i}}^M \frac{\alpha_j}{\kappa_m} \text{vol}(D_j) \int_0^t \Phi^{(m)}(z_i, t; z_j, \tau) \frac{\partial}{\partial \tau} \sigma^{(j)}(\tau) \, d\tau = \frac{\gamma_m}{\overline{\gamma}_{p_i}} \frac{k \cdot \overline{\varepsilon}_p}{2 \pi \kappa_m} \, \delta^{\beta - h} \, f(t) \, \mathbcal{P}_{B}\cdot\mathbcal{Q}_i\cdot\overline{\mathbcal{Q}^\textit{Tr}_i},
\end{align}
where \( {\mathbcal{Q}}_i \) is defined as the solution to the linear algebraic system associated with the electromagnetic scattering problem (\ref{Maxwell-model})
\begin{align}\label{in-1}
    \mathbcal{Q}_i - \eta \sum_{j \neq i}^M \bm{\Upsilon}^{(k)}(z_i, z_j)\cdot \mathbcal{P}_{D_j} \cdot \mathbcal{Q}_j = \mathrm{E}^\textit{in}(z_i),
\end{align}
and \( \mathbcal{P}_{D_i} \) is the polarization matrix defined by
\begin{align}\label{in-2}
    \mathbcal{P}_{D_i} =  \frac{\delta^3}{1 + \eta \lambda_{n_0}^{(3)}}\sum_{m=1}^{m_{n_0}} \int_{B_i} e^{(3)}_{m,n_0}(B) \otimes \int_{B_i} e^{(3)}_{m,n_0}(B) + \mathcal{O}(\delta^3),
\end{align}
with \( \Tilde{e}^{(3)}_{m,n_0} \) representing the scalled eigenfunctions associated with the space \( \nabla \mathbb{H}_{\textit{arm}} \) on the domain \( B_i \) and \(\displaystyle e^{(3)}_{m,n_0}(B) = \frac{1}{\Vert \Tilde{e}_{m,n_0}^{(3)} \Vert_{L^2(B_i)}} \Tilde{e}_{m,n_0}^{(3)} \) representing the corresponding normalized functions. In (\ref{in-2}), the dominant term arises due to the properties (\ref{frequency})–(\ref{condition3D}) and using the same properties, we can deduce that $\mathbcal{P}_{D_i} = \delta^{3-h}\mathbcal{P}_B$, where, we have $\mathbcal{P}_B$ as
\begin{align}
    \mathbcal{P}_B := C_B \sum_{m=1}^{m_{n_0}} \int_{B_i} e^{(3)}_{m,n_0}(B) \otimes \int_{B_i} e^{(3)}_{m,n_0}(B),\; \text{with}\ C_B = \Bigg(\lambda_{n_0}^{(3)}\cdot \dfrac{k_p^2k_{n_0}\sqrt{5}}{k_{n_0}^4 + (\zeta_{n_0}k_{n_0})^2}\Bigg)^{-1}.
\end{align}
\\
Additionally, \( \alpha_i := \gamma_{p_i} - \gamma_m \) denotes the contrast between the inner and outer heat conductivity coefficients, while \( \kappa_m := \frac{c_m}{\gamma_m} \) represents the diffusion constant of the surrounding homogeneous medium. The system of equations in (\ref{intro-1}) is invertible in \( H^1(0, T) \) under the condition (\ref{condition-heat-1}), while the algebraic system (\ref{in-1}) is invertible under the condition
\begin{align}\label{in-3}
    |\eta| \, \max_{i = 1, 2, \ldots, M} \Vert \mathbcal{P}_{D_i} \Vert\ d^{-3} < 1.
\end{align}

\end{theorem}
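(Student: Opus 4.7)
The overall strategy is to decouple the analysis into (i) an electromagnetic step in which the intensity $|E|^2$ entering the heat source is asymptotically reduced to the Foldy--Lax system \eqref{in-1}--\eqref{in-2}, and (ii) a thermal step in which the parabolic transmission problem \eqref{heat} is reduced, via Laplace transform in time and layer/volume potentials, to the algebraic Volterra system \eqref{intro-1}. For the well-posedness part, I would apply the Fourier--Laplace transform $t\mapsto \bm{s}$ to \eqref{heat}, which converts the parabolic transmission problem into a family of elliptic transmission problems of the form $(\mathrm{c}_v \bm{s} - \nabla\cdot(\gamma\nabla))\hat u = \hat J(\cdot,\bm{s})$ with the standard interior/exterior matching, parameterized by $\bm{s}\in\mathbb{C}_\sigma$. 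Each such problem is well posed in $\mathbb{H}^1(\mathbb{R}^3)$ by a Lax--Milgram argument that tracks the $\bm{s}$-dependence polynomially in the sense of \eqref{1.1}. Invoking Lemma \ref{lubich} with appropriate $j$ then transfers the uniform bound to $H^r_{0,\sigma}\bigl(0,T;\mathbb{H}^1(\mathbb{R}^3)\bigr)$, yielding the existence and uniqueness claim; the regularity of $f\in\mathcal C^6$ and $r$ large enough guarantees the source $J$ lies in the required Bochner--Sobolev space, thanks to $E\in L^4_{\mathrm{loc}}$.

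For the electromagnetic step, I would recast \eqref{Maxwell-model} as the Lippmann--Schwinger equation $E+\eta(\mathbb{M}^{(k)}_D-k^2\mathbb{N}^{(k)}_D)[E]=E^{\mathrm{in}}$, project onto the Helmholtz decomposition \eqref{decomposition-introduction}, and then exploit the spectral expansion of $\mathbb{M}^{(0)}_{B_i}$ on $\nabla\mathbb{H}_{\mathit{arm}}$. The scaling $D_i=z_i+\delta B_i$ combined with the resonance selection \eqref{frequency}--\eqref{condition3D} makes the $n_0$-mode contribution dominate: terms with $n\ne n_0$ are $\mathcal O(\delta^3)$ while the resonant mode picks up a factor $(1+\eta\lambda^{(3)}_{n_0})^{-1}\sim\delta^{-h}$, producing the polarization $\mathbcal P_{D_i}=\delta^{3-h}\mathbcal P_B+\mathcal O(\delta^3)$ of \eqref{in-2}. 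Approximating the dyadic Green's function $\bm\Upsilon^{(k)}(x,z_j)$ by its value at $z_i$ on each $D_i$ and using $\int_{D_i}\overset{3}{\mathbb P}(E)$ as the unknown $\mathbcal Q_i$, I obtain \eqref{in-1} with errors controlled by $\delta$, $d$ and $\lambda$; invertibility in $\ell^2$ follows from a Neumann argument under \eqref{in-3}.

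Turning to the thermal step, I would represent the Laplace transformed scattered field $\hat u^{\mathit{sc}}$ using the single-layer and volume heat potentials associated with the kernel \eqref{green}. Using the jump relations across each $\partial D_i$ and the transmission conditions in \eqref{heat}, I obtain a boundary/volume integral system whose volume source, after inverse Laplace transform, is exactly $\int_0^t\Phi^{(m)}(x,t;y,\tau)J(y,\tau)\,d\tau$. The key reduction is a point-approximation: since $\delta\ll d$ and $f$ is smooth, the integral of $\Phi^{(m)}(x,t;\cdot,\tau)$ against $J$ over $D_i$ equals $\mathrm{Vol}(D_i)\Phi^{(m)}(x,t;z_i,\tau)\bar J_i(\tau)+(\text{error})$, where $\bar J_i$ is the averaged source, which by step one equals $\tfrac{k\bar\varepsilon_p}{2\pi}\delta^{-h}f(t)\,\mathbcal P_B\mathbcal Q_i\overline{\mathbcal Q_i^{\mathit{Tr}}}$ up to lower order. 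Introducing the unknown $\sigma^{(i)}(t)$ as the effective heat intensity on $D_i$ (the normalized $\bar u|_{D_i}$, appropriately rescaled by $\gamma_m/\bar\gamma_{p_i}$ coming from the conductivity jump) and evaluating at $x=z_i$ yields \eqref{intro-1}. Invertibility in $H^1(0,T)$ follows from a Gronwall/Volterra fixed-point argument combined with the smallness condition \eqref{condition-heat-1}, since the integral operator's norm is controlled by $b\max_i\sum_{j\ne i}d_{ij}^{-2}$ after exploiting the short-time decay of $\Phi^{(m)}$.

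The principal obstacle, in my view, is producing the uniform remainder $\mathcal O(M\delta^{4-h})$ in \eqref{assymptotic-expansion-us}: it requires simultaneously (a) the pointwise expansion of $|E|^2$ via the Foldy--Lax estimate, with its delicate loss of $\delta^{-h}$ caused by the plasmonic resonance and the quadratic dependence in $E$, and (b) control of cumulative interactions across $M\sim d^{-3}\sim\delta^{-3\lambda}$ particles. Balancing these two requires the window $\tfrac95<h<2$ together with a careful choice of $\lambda$, ensuring the residual $\delta^{\min\{4-h,7-2h-4\lambda\}}$ per particle multiplied by $M$ still vanishes. Closing this estimate cleanly, while simultaneously maintaining the Bochner--Sobolev framework so that the Laplace transform inversion can be justified, is where the argument is most technical; I would handle it by working first at the level of $\hat u(\cdot,\bm{s})$ with $\bm{s}$-polynomial bounds, then invoking Lemma \ref{lubich} to transfer the estimate to the time domain.
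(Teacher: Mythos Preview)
Your overall architecture---Laplace transform for well-posedness, Lippmann--Schwinger plus Helmholtz decomposition for the electromagnetic Foldy--Lax system, layer/volume potentials for the heat problem---matches the paper. But there is a genuine gap in the thermal step: you misidentify what $\sigma^{(i)}$ is and, consequently, how the Volterra system \eqref{intro-1} is derived.

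You describe $\sigma^{(i)}$ as ``the normalized $\bar u|_{D_i}$'' and propose to obtain \eqref{intro-1} by point-approximating the \emph{volume} potential of the source and evaluating at $x=z_i$. In the paper, $\sigma^{(i)}$ is (up to normalization by $\mathrm{vol}(D_i)$) the boundary flux $\int_{\partial D_i}\partial_\nu u^{(i)}$, and the dominant contribution to the exterior field in \eqref{assymptotic-expansion-us} is the \emph{single-layer} heat potential with density $\partial_\nu u$, multiplied by the large contrast $\alpha_i\sim\delta^{-\beta}$. The volume potential terms (of $\partial_t u$ and of $J$) are lower order. If you simply point-approximate the volume source and evaluate at $z_i$, you still carry the single-layer term with the unknown $\partial_\nu u$, and you do not get a closed system for an averaged $u$.

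The paper closes the system by taking the \emph{Neumann trace} of the Lippmann--Schwinger equation and then integrating over $\partial D_i$. The key algebraic identity is that the heat Neumann--Poincar\'e operator satisfies $\mathcal K_{\partial D}[1]=-\tfrac12+\mathcal O(\delta^3 t^{-3/2})$ in the regime $|x-y|\ll t$ (obtained by reducing the time convolution to the Laplace Neumann--Poincar\'e operator via an explicit auxiliary function $\varphi(x,y,t)$ and the error-function expansion). This produces a cancellation with the $(1+\alpha_i/2)$ coming from the jump relation, leaving precisely $\int_{\partial D_i}\partial_\nu u^{(i)}$ on the left and the averaged source on the right, i.e.\ the scalar system \eqref{intro-1}. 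Without this Neumann-trace/averaging step and the $\mathcal K[1]=-\tfrac12$ mechanism, you cannot decouple $u$ from $\partial_\nu u$ and the derivation of \eqref{intro-1} does not go through.

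A smaller point: the invertibility of \eqref{intro-1} in $H^1(0,T)$ is not done via Gronwall in the paper. One differentiates in $t$ (using $\sigma^{(i)}(0)=0$) to obtain a pointwise-in-$t$ linear system, then uses the singularity estimate $\bigl(\int_0^T|\Phi^{(m)}(z_i,t;z_j,\tau)|^2\,dt\bigr)^{1/2}=\mathcal O(d_{ij}^{-2})$, which is exactly what produces the condition \eqref{condition-heat-1}.
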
    


    \subsubsection{The Continuous Effective Models: Theorem \ref{non-periodic}} 

\noindent
\textbf{\textit{Assumption 1.}}\label{as11}
Let be given a bounded and smooth domain \( \mathbf{\Omega} \). We partition \( \mathbf{\Omega} \) into approximately \( [d^{-3}] \) subdomains, denoted by \( \Omega_i \), for \( i = 1, 2, \ldots , [d^{-3}] \) (where \([x]\) denotes the integer \( n \) satisfying \( n \leq x < n + 1 \)). These subdomains are distributed periodically and disjointly across \( \mathbf{\Omega} \), with each subdomain \( \Omega_i \) containing exactly one inclusion \( \mathrm{D}_i \) with $\textit{Vol}(\Omega_i) = d^3$.  Additionally, We assume that the shape, and the heat and electromagnetic properties, of the each nanoparticles $D_j$ for $j=1,2,\ldots,M$ is same, which implies that $b_i = b_j$ for $i,j=1,2,\ldots, M.$ Let us define $\overline{b}:= \overline{b}_j$ be the scaled value of $b_j$ i.e. $\overline{b}=\frac{\overline{\alpha}_j}{\kappa_m} \text{vol}(B_j)$ for each $j=1,2,\ldots,M.$ Finally, we take
\begin{equation}\label{scales-h-beta}
    h=\beta\; \text{and}\; d\sim \delta^{1-\frac{\beta}{3}}.
\end{equation}


\begin{figure}[H]
\begin{center}
    \begin{tikzpicture}[scale=0.8]

\draw (0,0) ellipse (4cm and 2cm);
\node[scale=1.5] at (0,-2.5) {$\mathbf{\Omega}$};

\draw[<-,red, thick] (0.1, 0.15) -- (2, 2.5);
\node[above, scale=1] at (2,2.5) {$\mathrm{D}_j$};
\draw[<-,red, thick] (-0.3, 0.25) -- (-0.3, 2.5);
\node[above, scale=1] at (-0.3,2.5) {$\Omega_j$};
\draw[<-,red, thick] (-1.1, 0.17) -- (-2.5, 2.5);
\node[above, scale=1] at (-2.5,2.5) {$\mathrm{D}_i$};
\draw[<-,red, thick] (-0.7, 0.25) -- (-0.7, 2.5);
\node[above, scale=1] at (-0.7,2.5) {$\Omega_i$};

\node (formula) []{};
\draw[-latex,red] ($(formula.north west)+(4.5,0)$) arc
    [
        start angle=160,
        end angle=20,
        x radius=0.5cm,
        y radius =0.5cm
    ] ;

\clip (0,0) ellipse (4cm and 2cm);

\def\xellip(#1){4*sqrt(1 - (#1/2)^2)}

\foreach \y in {-1.75,-1.25,...,1.75} {
    \draw ({\xellip(\y)},\y) -- ({-\xellip(\y)},\y);
}

\coordinate (A) at (-1,0.95); 
\coordinate (B) at (-2,0.95); 
\coordinate (C) at (0,0.95); 
\coordinate (D) at (1,0.95); 
\coordinate (E) at (2,0.95); 
\shade[ball color=white] (A) circle (0.2);
\shade[ball color=white] (B) circle (0.2);
\shade[ball color=white] (C) circle (0.2);
\shade[ball color=white] (D) circle (0.2);
\shade[ball color=white] (E) circle (0.2);
\coordinate (a) at (-1,-0.05); 
\coordinate (b) at (-2,-0.05); 
\coordinate (c) at (0,-0.05); 
\coordinate (d) at (1,-0.05); 
\coordinate (e) at (2,-0.05); 
\coordinate (f) at (3,-0.05); 
\coordinate (g) at (-3,-0.05); 
\shade[ball color=white] (g) circle (0.2);
\shade[ball color=white] (f) circle (0.2);
\shade[ball color=white] (a) circle (0.2);
\shade[ball color=white] (b) circle (0.2);
\shade[ball color=white] (c) circle (0.2);
\shade[ball color=white] (d) circle (0.2);
\shade[ball color=white] (e) circle (0.2);
\coordinate (m) at (-1,0.45); 
\coordinate (n) at (-2,0.45); 
\coordinate (o) at (0,0.45); 
\coordinate (p) at (1,0.45); 
\coordinate (q) at (2,0.45); 
\coordinate (r) at (3,0.45); 
\coordinate (s) at (-3,0.45); 
\shade[ball color=white] (m) circle (0.2);
\shade[ball color=white] (n) circle (0.2);
\shade[ball color=white] (o) circle (0.2);
\shade[ball color=white] (p) circle (0.2);
\shade[ball color=white] (q) circle (0.2);
\shade[ball color=white] (r) circle (0.2);
\shade[ball color=white] (s) circle (0.2);
\coordinate (M) at (-1,-0.55); 
\coordinate (N) at (-2,-0.55); 
\coordinate (O) at (0,-0.55); 
\coordinate (P) at (1,-0.55); 
\coordinate (Q) at (2,-0.55); 
\coordinate (R) at (3,-0.55); 
\coordinate (S) at (-3,-0.55); 
\shade[ball color=white] (M) circle (0.2);
\shade[ball color=white] (N) circle (0.2);
\shade[ball color=white] (O) circle (0.2);
\shade[ball color=white] (P) circle (0.2);
\shade[ball color=white] (Q) circle (0.2);
\shade[ball color=white] (R) circle (0.2);
\shade[ball color=white] (S) circle (0.2);
\coordinate (T) at (-1,-1.05); 
\coordinate (U) at (-2,-1.05); 
\coordinate (V) at (0,-1.05); 
\coordinate (W) at (1,-1.05); 
\coordinate (X) at (2,-1.05); 
\shade[ball color=white] (T) circle (0.2);
\shade[ball color=white] (U) circle (0.2);
\shade[ball color=white] (V) circle (0.2);
\shade[ball color=white] (W) circle (0.2);
\shade[ball color=white] (X) circle (0.2);
\coordinate (t) at (-1,-1.55); 
\coordinate (v) at (0,-1.55); 
\coordinate (w) at (1,-1.55); 
\shade[ball color=white] (t) circle (0.2);
\shade[ball color=white] (v) circle (0.2);
\shade[ball color=white] (w) circle (0.2);
\coordinate (1) at (-1,1.45); 
\coordinate (2) at (0,1.45); 
\coordinate (3) at (1,1.45); 
\shade[ball color=white] (1) circle (0.2);
\shade[ball color=white] (2) circle (0.2);
\shade[ball color=white] (3) circle (0.2);

\node[above, scale=0.3] at (-1,1.4)  {$\bullet$};
\node[above, scale=0.3] at (0,1.4)  {$\bullet$};
\node[above, scale=0.3] at (1,1.4)  {$\bullet$};
\node[above, scale=0.3] at (-1,0.9)  {$\bullet$};
\node[above, scale=0.3] at (0,0.9)  {$\bullet$};
\node[above, scale=0.3] at (1,0.9)  {$\bullet$};
\node[above, scale=0.3] at (2,0.9)  {$\bullet$};
\node[above, scale=0.3] at (-2,0.9)  {$\bullet$};

\node[above, scale=0.3] at (-2,0.4)  {$\bullet$};
\node[above, scale=0.3] at (0,.4)  {$\bullet$};
\node[above, scale=0.3] at (1,.4)  {$\bullet$};
\node[above, scale=0.3] at (-1,0.4)  {$\bullet$};
\node[above, scale=0.3] at (3,0.4)  {$\bullet$};
\node[above, scale=0.3] at (1,0.4)  {$\bullet$};
\node[above, scale=0.3] at (2,.4)  {$\bullet$};
\node[above, scale=0.3] at (-3,0.4)  {$\bullet$};

\node[above, scale=0.3] at (-2,-0.1)  {$\bullet$};
\node[above, scale=0.3] at (0,-0.1)  {$\bullet$};
\node[above, scale=0.3] at (1,-0.1)  {$\bullet$};
\node[above, scale=0.3] at (-1,-0.1)  {$\bullet$};
\node[above, scale=0.3] at (3,-0.1)  {$\bullet$};
\node[above, scale=0.3] at (1,-0.1)  {$\bullet$};
\node[above, scale=0.3] at (2,-0.1)  {$\bullet$};
\node[above, scale=0.3] at (-3,-0.1)  {$\bullet$};

\node[above, scale=0.3] at (-2,-0.6)  {$\bullet$};
\node[above, scale=0.3] at (0,-0.6)  {$\bullet$};
\node[above, scale=0.3] at (1,-0.6)  {$\bullet$};
\node[above, scale=0.3] at (-1,-0.6)  {$\bullet$};
\node[above, scale=0.3] at (3,-0.6)  {$\bullet$};
\node[above, scale=0.3] at (1,-0.6)  {$\bullet$};
\node[above, scale=0.3] at (2,-0.6)  {$\bullet$};
\node[above, scale=0.3] at (-3,-0.6)  {$\bullet$};

\node[above, scale=0.3] at (-2,-1.1)  {$\bullet$};
\node[above, scale=0.3] at (0,-1.1)  {$\bullet$};
\node[above, scale=0.3] at (1,-1.1)  {$\bullet$};
\node[above, scale=0.3] at (-1,-1.1)  {$\bullet$};
\node[above, scale=0.3] at (2,-1.1)  {$\bullet$};
\node[above, scale=0.3] at (1,-1.6)  {$\bullet$};
\node[above, scale=0.3] at (0,-1.6)  {$\bullet$};
\node[above, scale=0.3] at (-1,-1.6)  {$\bullet$};

\foreach \x in {-3.5,-2.5,...,3.5} {
    \draw (\x,-2) -- (\x,2);
}
\foreach \y in {-1.75,-1.25,...,1.75} {
    \draw (-3.5,\y) -- (3.5,\y);
}

\end{tikzpicture}
\begin{tikzpicture}[scale=2] 

\draw[line width=1.5pt] (0,0) -- (1,0) -- (1,1) -- (0,1) -- cycle; 
\draw[line width=1.5pt] (0,1) -- (1,1) -- (1.5,1.5) -- (0.5,1.5) -- cycle; 
\draw[line width=1.5pt] (1,0) -- (1,1) -- (1.5,1.5); 
\draw[line width=1.5pt] (0,0)--(0,1)--(0.5,1.5); 

\draw[dotted, blue, line width=1.2pt] (0.5,1.5) -- (0.5,0.5)--(0,0);

\draw[dotted, blue, line width=1.2pt] (1.5,1.5) -- (1.5,0.5)--(1,0);

\draw[line width=1.5pt] (1,0) -- (2,0) -- (2,1) -- (1,1) -- cycle; 
\draw[line width=1.5pt] (1,1) -- (2,1) -- (2.5,1.5) -- (1.5,1.5) -- cycle; 
\draw[line width=1.5pt] (2,0) -- (2.5,0.5) -- (2.5,1.5) -- (2,1) -- cycle; 

\draw[<->](0,-0.3)--(1,-0.3) node[pos=0.5, sloped, above] {$\delta$};
\draw[dotted, blue, line width=1.2pt] (0.5,0.5) -- (2.5,0.5);

\coordinate (A) at (0.75,0.75); 
\coordinate (B) at (1.75,0.75); 
\shade[ball color=white] (A) circle (0.2);
\shade[ball color=white] (B) circle (0.2);

\node[above, scale=1] at (0.75,0.65) {$z_i$};
\node[above, scale=0.3] at (0.7,0.7) {$\bullet$};

\node[above, scale=1] at (1.75,0.63) {$z_j$};
\node[above, scale=0.3] at (1.7,0.7) {$\bullet$};

\node[below,scale=1] at (0.5,0.25) {$\Omega_i$};
\node[below, scale=1] at (1.5,0.25) {$\Omega_j$};

\node[below, scale=1] at (0.75,0.55) {$\mathrm{D}_i$};
\node[below, scale=1] at (1.75,0.55) {$\mathrm{D}_j$};

\coordinate (P1) at (0.95,0.75); 
\coordinate (P2) at (1.55,0.75); 

\draw[red, line width=0.2pt] (P1) -- (P2) node[midway, above, scale = 1] {$d$};
\end{tikzpicture}
\end{center}
\caption{A schematic illustration for the global distribution of the particles in $\mathbf{\Omega}$ and local relation between any two particles.}
\end{figure}


\noindent 
We then state the second main result of this work, related to the continuous effective medium theory.

\begin{theorem}\label{non-periodic}    

Let \( \bm{\Omega} \) be a bounded, \( \mathcal{C}^2 \)-regular domain in \( \mathbb{R}^3 \) with unit volume, and assume that the plasmonic nanoparticles are distributed inside $\bm{\Omega}$ according to the \textit{Assumption} \textcolor{blue}{1}. Then, under the assumptions of Theorem \ref{th1}, for \((x, t) \in \mathbb{R}^3 \setminus \overline{\bm{\Omega}} \times (0, T)\), we have the following asymptotic expansion
\begin{align}
    u(x, t) - \mathbcal{W}(x, t) = \mathcal{O}(\delta^{\frac{2}{7}(3-\beta)}, \quad \text{as} \quad \delta \to 0,
\end{align}
where \( \mathbcal{W}(x, t) \) satisfies the following parabolic model:
\begin{align}
    \begin{cases}
        ((\kappa_m + \overline{b} \, \rchi_{\bm{\Omega}})\partial_t - \Delta) \mathbcal{W}  = \overline{b} \, \rchi_{\bm{\Omega}} \, \mathbcal{F}(x, t) & \text{if}\; (x, t) \in \mathbb{R}^3 \times (0, T), \\
        \mathbcal{W}(x, 0) = 0 & \text{for}\; x \in \mathbb{R}^3, \\
        |\mathbcal{W}(x, t)| \leq C_0 \, e^{A |x|^2} & \text{as} \; |x| \to +\infty,
    \end{cases}\label{parabolic-model}
\end{align}
for some positive constant \( C_0 \) and \( A \le \frac{1}{4T} \). Here, we define \( \displaystyle\mathbcal{F}(x, t) := \overline{a} \, f(t) \, \mathbcal{A}_B\cdot\mathbcal{P}_{B}^{-1}\cdot \mathbcal{A}_B \cdot \bm{E}_f\cdot \overline{\bm{E}}^\textit{Tr}_f \), where \( \overline{a} := \frac{\gamma_m}{\overline{\gamma}_{p_i}} \frac{k \cdot \overline{\varepsilon}_{p_i}}{2 \pi \kappa_m} \) and $\bm{\textit{E}}_f$ satisfies the following effective model:
 \begin{align}\label{effective-maxwell-model}
        \begin{cases}
            \textit{curl} \, \textit{curl} \; \bm{\textit{E}}_f - k^2 \varepsilon_\textit{ef} \bm{\textit{E}}_f = 0\; \text{in }\ \mathbb{R}^3, \\
            \bm{\textit{E}}_f^{\textit{sc}} := \bm{\textit{E}}_f - E^{\textit{in}} \text{ satisfies the Silver-Müller radiation condition (S-M.R.C.)} &\\
            \text{with}\; \varepsilon_\textit{ef} := \varepsilon_m + \mathbcal{A}_B\rchi(\bm{\Omega}),
        \end{cases}                        
    \end{align}
where $\mathbcal{A}_B$ is the effective polarization matrix given by $\displaystyle\mathbcal{A}_B :=\Big(\mathbb{I} - \nabla\int_{B}\nabla\mathbcal{G}^{(0)}(0,y) \cdot \mathbcal{P}_B dy\Big)^{-1}\mathbcal{P}_B.$  

\end{theorem}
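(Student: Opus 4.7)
The plan is to take the discrete asymptotics of Theorem~\ref{th1}, namely the expansion (\ref{assymptotic-expansion-us}) and the coupled algebraic systems (\ref{intro-1}) and (\ref{in-1})--(\ref{in-2}), as the starting point and to pass to the continuum limit $\delta\to 0$ (equivalently $M=[d^{-3}]\to\infty$) under Assumption~1. The geometric input is that, with the periodic cell decomposition $\bm{\Omega}=\bigcup_j\Omega_j$ of cells of volume $d^3$ centered at $z_j$, any expression of the form $d^3\sum_{j=1}^M G(z_j)$ is a Riemann sum for $\int_{\bm{\Omega}}G(y)\,dy$. The choice $h=\beta$ together with $d^3\sim\delta^{3-\beta}$ is exactly what calibrates the weights in the two Foldy--Lax systems to the elementary cell volume: the heat weight satisfies $\alpha_j\,\text{Vol}(D_j)/\kappa_m=b_j=\bar b\,d^3$, while the Maxwell weight satisfies $\eta\,\mathbcal{P}_{D_j}=\eta\,d^3\,\mathbcal{P}_B$ through (\ref{in-2}).

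The first step is to pass to the continuum limit in the Maxwell Foldy--Lax system (\ref{in-1}). Using the pointwise ansatz $\mathbcal{Q}_j\approx\mathbcal{A}_B\mathbcal{P}_B^{-1}\bm{E}_f(z_j)$, the discrete equation formally becomes the Lippmann--Schwinger equation $\bm{E}_f(x)-\eta\int_{\bm{\Omega}}\bm{\Upsilon}^{(k)}(x,y)\,\mathbcal{A}_B\bm{E}_f(y)\,dy=E^{\mathrm{in}}(x)$. This step is what I expect to be the main obstacle, because the dyadic kernel $\bm{\Upsilon}^{(k)}$ contains the Hessian of the Helmholtz Green function and is therefore non-integrable near the diagonal; the exclusion $j\neq i$ in the discrete sum leaves a nontrivial Clausius--Mossotti-type trace in the limit. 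I would address this by a near/far splitting around each $z_i$: cells $\Omega_j$ outside a shrinking ball $B(z_i,r_\delta)$ contribute a regular Riemann sum converging, in the principal-value sense, to the volume integral with kernel $\bm{\Upsilon}^{(k)}$; cells inside, after replacing the Helmholtz kernel by its Laplace counterpart using analyticity of the difference, produce the local cavity term $\nabla\int_B\nabla\mathbcal{G}^{(0)}(0,y)\,\mathbcal{P}_B\,dy$. Solving the resulting local-field identity for $\bm{E}_f$ in terms of $\mathbcal{Q}_j$ yields precisely the effective polarization $\mathbcal{A}_B=(\mathbb{I}-\nabla\int_B\nabla\mathbcal{G}^{(0)}(0,y)\,\mathbcal{P}_B\,dy)^{-1}\mathbcal{P}_B$, and the standard equivalence between the Lippmann--Schwinger equation and the associated curl-curl problem identifies the limit with the effective Maxwell system (\ref{effective-maxwell-model}) for $\varepsilon_{\mathrm{ef}}=\varepsilon_m+\mathbcal{A}_B\rchi_{\bm{\Omega}}$.

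The second step is to pass to the continuum limit in the heat Foldy--Lax system (\ref{intro-1}). Since the heat kernel $\Phi^{(m)}$ is integrable, no local correction is needed, and setting $\sigma^{(i)}(t)\approx\sigma(z_i,t)$ the Riemann sum yields the Volterra integral equation
\begin{equation}\nonumber
\sigma(y,t)+\bar b\int_0^t\!\int_{\bm{\Omega}}\Phi^{(m)}(y,t;y',\tau)\,\partial_\tau\sigma(y',\tau)\,dy'\,d\tau=\bar a\,f(t)\,\mathbcal{A}_B\mathbcal{P}_B^{-1}\mathbcal{A}_B\cdot\bm{E}_f(y)\cdot\overline{\bm{E}}^{\mathrm{Tr}}_f(y),
\end{equation}
where the right-hand side is obtained by substituting the Step~1 identification $\mathbcal{Q}_i\to\mathbcal{A}_B\mathbcal{P}_B^{-1}\bm{E}_f$ into $\mathbcal{P}_B\mathbcal{Q}_i\overline{\mathbcal{Q}}_i^{\mathrm{Tr}}$ and recognising the product as $\mathbcal{F}(y,t)/(\bar a f(t))$. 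Plugging this $\sigma$ into the continuum limit $\mathbcal{W}(x,t):=-\bar b\int_0^t\!\int_{\bm{\Omega}}\Phi^{(m)}(x,t;y,\tau)\sigma(y,\tau)\,dy\,d\tau$ of (\ref{assymptotic-expansion-us}) and applying the Duhamel identity $(\kappa_m\partial_t-\Delta)\Phi^{(m)}=\delta_0$ give $(\kappa_m\partial_t-\Delta)\mathbcal{W}=-\bar b\,\sigma\,\rchi_{\bm{\Omega}}$; reading the integral equation for $\sigma$ on $\bm{\Omega}$ as $\sigma=\partial_t\mathbcal{W}-\mathbcal{F}$ and substituting produces exactly the transmission parabolic problem $((\kappa_m+\bar b\,\rchi_{\bm{\Omega}})\partial_t-\Delta)\mathbcal{W}=\bar b\,\rchi_{\bm{\Omega}}\,\mathbcal{F}$ of (\ref{parabolic-model}); the zero initial condition and the Gaussian growth bound are inherited from the volume-potential representation of $\mathbcal{W}$.

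The quantitative rate $\mathcal{O}(\delta^{2(3-\beta)/7})$ is then obtained by summing three contributions: the $\mathcal{O}(M\delta^{4-h})$ remainder already present in (\ref{assymptotic-expansion-us}); the regular Riemann-sum error for the smooth heat kernel, controlled through the invertibility bound (\ref{condition-heat-1}) of the limiting Volterra system; and the delicate error from the Maxwell near/far decomposition, controlled through (\ref{in-3}) and optimised over the cutoff radius $r_\delta$ separating near-field and far-field cells. Expressing everything in terms of $\delta$ via $d=\delta^{1-\beta/3}$, the optimal $r_\delta$ equilibrates the singular-kernel error from near-field cells against the Riemann-sum error of the far-field sum and yields the exponent $\tfrac{2(3-\beta)}{7}$ claimed in the theorem.
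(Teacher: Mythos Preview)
Your proposal is correct and follows essentially the same route as the paper: bridge the discrete Foldy--Lax systems and the continuous Lippmann--Schwinger equations via the cell decomposition $\bm{\Omega}=\bigcup_j\Omega_j$, handle the singular dyadic kernel $\bm{\Upsilon}^{(k)}$ by a near/far splitting (the paper optimises the cutoff exponent to $r=\tfrac{11}{14}$, producing the $d^{-9/7}$ that feeds into the final $\delta^{2(3-\beta)/7}$), extract the Clausius--Mossotti self-cell correction that yields $\mathbcal{A}_B$, and propagate all errors through the already-established invertibility of the discrete systems. Two small points to tighten: the paper works in the direction continuous$\to$discrete (evaluating the effective equations at the $z_i$ and subtracting the discrete systems) and, crucially, it first proves regularity of the effective solutions ($\partial_t^2\bm{Y}\in L^\infty_{t,x}$, $\nabla\partial_t\bm{Y}\in L^\infty_tL^p_x$, and $\bm{E}_f\in C^{0,\alpha}$) to justify the Taylor-type discretization estimates---your Step~2 should make this dependence explicit; also, your identity ``$\sigma=\partial_t\mathbcal{W}-\mathbcal{F}$'' has a sign slip (it should read $\sigma=\partial_t\mathbcal{W}+\mathbcal{F}$ with your sign convention for $\mathbcal{W}$), which is a bookkeeping issue rather than a gap.
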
    

    \subsection{Discussion about the Results} 

According to the results in Theorem \ref{th1} and Theorem \ref{non-periodic}, we reduce the problem of heat generation using plasmonic nanoparticles to the following two sub problems, namely a control problem for the parabolic model and an internal phaseless inverse problem for the Maxwell model.  
\begin{enumerate}
      \item Solve the control problems of estimating the sources needed to achieve a needed amount of heat in $\bm{\Omega}$. This question is related to the discrete and continuous heat models (\ref{intro-1}) and (\ref{parabolic-model}) respectively. For the discrete problem, it amounts to estimate the needed source term in (\ref{in-1})  such that the solution vector $(\sigma_i)^M_{i=1}$ plugged in (\ref{assymptotic-expansion-us}) generates $u^{sc}$ that matches with the needed heat. For the continuous problem, it amounts to estimate the needed source term in (\ref{parabolic-model}) so that the solution of that problem matches with the needed heat.

     \item As a second step, we proceed to generate these sources, by solving inverse problems of estimating the permittivity sequence $\eta \mathbcal{P}_{D_i}, i=1, ..., M$ or the permittivity function $\epsilon_{ef}$ from the internal but phaseless data $(\mathbcal{P}_{B}\cdot\mathbcal{Q}_i\cdot\overline{\mathbcal{Q}^\textit{Tr}_i})^M_{i=1}$ or  $\mathbcal{A}_B\cdot\mathbcal{P}_{B}^{-1}\cdot \mathbcal{A}_B \cdot \bm{E}_f\cdot \overline{\bm{E}}^\textit{Tr}_f$, respectively. As discussed in \cite{cao-sini}, for a unit ball $B$, the polarization matrices $\mathbcal{P}_B$ and $\mathbcal{A}_B$ are proportional to the identity matrix. Therefore, for this case, the source function in (\ref{intro-1}) is proportional to $f(\cdot) \vert\mathbcal{Q}_i\vert^2$ while the source function $\mathbcal{F}(\cdot, \cdot)$ in (\ref{parabolic-model}) is proportional to $f(\cdot) \vert \bm{\textit{E}}_f(\cdot)\vert^2$.  These internal inverse problems are related to the models (\ref{in-2}) and (\ref{effective-maxwell-model}) respectively.
\end{enumerate}
These control and inverse problems, for both the discrete and continuous problems described above, will be analyzed and computationally addressed in future works.
\bigskip

\noindent  
We add the following two comments about the distribution of the nanoparticles in $\bm{\Omega}$.
\begin{enumerate}
    \item The periodicity in the distribution of the nanoparticle clusters within \( \mathbf{\Omega} \) is not a necessary condition. This non-periodic approach provides greater flexibility in modeling, especially given the random or irregular placement often needed in practical scenarios. For example, the Maxwell effective model describing the electromagnetic waves generated by nanoparticles distributed on latices inspired by van-der-Waals heterostructures, which are globally periodic but locally arbitrary distributed, was derived in \cite{cao-ghandriche-sini-VDW}.       
    A similar departure from periodicity, with globally arbitrary distributions, has been addressed in \cite{AM4} for a different problem setting, related to acoustic propagation in the presence of bubbles, where we have characterized the effective medium without requiring a strictly periodic arrangement. This framework allows for a more realistic distributions accommodating random or clustered arrangements of the nanoparticles. 

    \item The condition (\ref{scales-h-beta}) can be relaxed as well. It allows ensuring non-trivial effective coefficients for both the heat and electromagnetic models. It is assumed to simplify the exposition. However, characterizing the model for different choices of $h$ and $\beta$ might be of great interest. In particular, we can generate a moderate heat model but a highly contrasting electric permittivity allowing for a generation of giant electromagnetic fields (see \cite{cao-sini} for an attempt in this direction using dielectric nanoparticles). Such a scenario can be useful, in particular, for inverse problems using heat potentials using nanoparticles as contrast agents. Namely, the generation of local giant electromagnetic field can help in linearizing the boundary (electromagnetic impedance or heat potential) maps. Such an approach has been already tested in \cite{A-M} for the Calderon problem in acoustics. 
\end{enumerate}
\noindent
The analysis needed to prove Theorem \ref{th1} and Theorem \ref{non-periodic} is based on the layer potential and volume potential operators for both the parabolic and Maxwell models. For the parabolic model, we start with the Fourier-Laplace transform to derive a first a-priori estimate of the heat potentials in the weighted-in-time space $H^r_{0,\sigma}\Big(0,T; \mathbb H^1(\mathbb R^3)\Big)$. With such a transformation, we reduce the well-posedness to the Fourier-Laplace domain where the $s-$parameter lives in the upper-half space, as $\sigma>0$. The advantage of these spaces is that we avoid the spectrum of the Laplace operator and hence derive the coercivity estimate uniform with respect to $s$. The price to pay is that the derived estimate is not enough to our purpose of deriving the full asymptotic expansions for the heat-generation model. This is the only place where the Fourier-Laplace transform has been used. The rest of the proofs are done in the time-domain. To improve this first a-priori estimate, we use the Helmholtz decomposition and the spectral properties of the Newtonian-like and Magnetization-like operators appearing in the related heat-Lippmann-Schwinger equation. In parallel, while dealing with the Maxwell model, we use the Maxwell-Lippmann-Schwinger system fo equations given by the Newtonian as well as the Magnetization operators. We derive the needed a-priori estimates for the electric field, particularly in the $\mathbb L^4$-space, using the spectral decomposition of these two operators. Then injecting them into this Lippmann-Schwinger system of equations we derive the point-approximation expansions for both the heat-Maxwell system modeling the heat generation. The challenge here is twofold. First, we need to take into account the contrasting scales of both the heat conduction $\gamma_p$ and the permittivity distribution $\epsilon_p$ to sort out . Second, we need to take into account the cluster character of the plasmonic nanoparticle to sort out the Voltera-type system and algebraic system satisfied by the projected, heat and electric, fields on the spectral-subspaces.  Gathering all these steps, we derive the proof of Theorem \ref{th1}. The proof of Theorem \ref{non-periodic} is done by discretizing the Lippmann-Schwinger systems fo equations modeling (\ref{parabolic-model}) and (\ref{effective-maxwell-model}) respectively to derive the related Voltera-integral equations and Algebraic systems respectively. Then, we derive the natural effective heat-conduction coefficient and the effective permittivity by matching these last Voltera and algebraic systems with the ones derived in Theorem \ref{th1}.  The discretization steps are justified by deriving the needed regularity estimates for both the heat and Maxwell systems allowing for the control of the error terms and taking into account the contribution of the large cluster of the nanoparticles. 
\vspace{0.2cm}

\noindent 
Let us stress here that we do not use the standard homogenization techniques. Rather, we base all our analysis on the Foldy-Lax point-approximation framework. The advantage of this framework is that we can derive the related Foldy-Lax (Voltera-type or algebraic) systems for an arbitrary distributions of the nanoparticles. From this system, we can already fix the kind of continuous mathematical model we can end with, depending on how the nanoparticles are distributed in volumetric domains or lower-dimensional hyper-surfaces.


    \section{Proof of Theorem \ref{th1}}    

   \subsection{Constructing the Integral Equation for the Parabolic Problem} 

We consider the following initial value problem (IVP) with a source term \(\mathrm{J}(x,t)\), defined as \(\mathrm{J}(x,t) := \frac{k}{2\pi} \boldsymbol{\Im}(\varepsilon) |\mathrm{E}|^{2} f(t)\), as follows
    \begin{align}
        \begin{cases}
              \frac{c_m}{\gamma_m} \frac{\partial v}{\partial t} - \Delta v = \frac{1}{\gamma_m} \mathrm{J}(x,t) & \text{in} \ \mathbb{R}^3 \times (0,T), \\
              v(x,0) = 0 & \text{for} \ x \in \mathbb{R}^3.
        \end{cases}
    \end{align}
Next, we define \(w = u - v\), where \(w\) is the temperature difference between the medium before and after the injection of the nanoparticle. The function \(w\) satisfies the following initial value problem in a distributional sense
    \begin{align}
        \begin{cases}
               \frac{\partial w}{\partial t} - \Delta w = \nabla \cdot \alpha \nabla u - \alpha^\dagger \frac{\partial u}{\partial t} & \text{in} \ \mathbb{R}^3 \times (0,T), \\
               w(x,0) = 0,
        \end{cases}
    \end{align}
where \(\alpha^\dagger := c_p - c_m\) and \(\alpha := \gamma_p - \gamma_m\), representing the contrasts in heat capacity and thermal conductivity between the two regions.

Let $w$ be a distribution in $\mathbb{R}^3$, satisfying $(\kappa_m\partial_t-\Delta)w(x,t) = f,$ with $f$ having a compact support. Then, due to the fact $f$ is integrable, we deduce
\begin{align}
    w(x,t) = \frac{1}{\kappa_m}\int_0^t\int_D \Phi^{(m)}(\mathrm{x}, t; \mathrm{y}, \tau)\ f(y,\tau)dyd\tau.
\end{align}
Consequently, using the fundamental solution (\ref{green}) and applying Green's identity, we derive the following Lippmann-Schwinger equation for \(w\)
    \begin{align}\label{con-ls}
        w + \alpha^\dagger \frac{1}{\kappa_m} \int_0^t \int_D \Phi^{(m)}(\mathrm{x}, t; \mathrm{y}, \tau) \frac{\partial u}{\partial \tau} \, d\mathrm{y} \, d\tau - \alpha \frac{1}{\kappa_m} \nabla_\mathrm{x} \cdot \int_0^t \int_D \Phi^{(m)}(\mathrm{x}, t; \mathrm{y}, \tau) \nabla u \, d\mathrm{y} \, d\tau = 0.
    \end{align}
Next, we use Green's identity for the term involving \(\nabla \cdot \alpha \nabla u\) to further simplify the expression
    \begin{align}\label{con-ls2}
        \int_0^t \int_D \nabla_\mathrm{x} \Phi^{(m)}(\mathrm{x}, t; \mathrm{y}, \tau) \cdot \nabla u(\mathrm{y}, \tau) \, d\mathrm{y} \, d\tau 
        &\nonumber = -\int_0^t \int_{\partial D} \Phi^{(m)}(\mathrm{x}, t; \mathrm{y}, \tau) \partial_\nu u(\mathrm{y}, \tau) \, d\mathrm{y} d\tau 
        \\&+ \int_0^t \int_D \Phi^{(m)}(\mathrm{x}, t; \mathrm{y}, \tau) \Delta u \, d\mathrm{y} d\tau.
    \end{align}
Substituting the heat equation \(\Delta u = \frac{c_p}{\gamma_p} \frac{\partial u}{\partial t} - \frac{1}{\gamma_p} \mathrm{J}(x, t)\), the right hand side of the above expression becomes
    \begin{align}\label{con-ls1}
          -\int_0^t \int_{\partial D} \Phi^{(m)}(\mathrm{x}, t; \mathrm{y}, \tau) \partial_\nu u \, d\mathrm{y} d\tau + \frac{c_p}{\gamma_p} \int_0^t \int_D \Phi^{(m)}(\mathrm{x}, t; \mathrm{y}, \tau) \frac{\partial u}{\partial t} \, d\mathrm{y} d\tau - \frac{1}{\gamma_p} \int_0^t \int_D \Phi^{(m)}(\mathrm{x}, t; \mathrm{y}, \tau) \mathrm{J}(y,\tau) \, d\mathrm{y} d\tau.
    \end{align}
Consequently, combining (\ref{con-ls1}), (\ref{con-ls2}) and plugging those in (\ref{con-ls}), the Lippmann-Schwinger equation for \(u\) becomes
    \begin{align}
         u + \left( \alpha^\dagger - \alpha \frac{c_p}{\gamma_p} \right) \frac{1}{\kappa_m} \int_0^t \int_D \Phi^{(m)}(\mathrm{x}, t; \mathrm{y}, \tau) \frac{\partial u}{\partial \tau} \, d\mathrm{y} d\tau &+ \alpha \frac{1}{\kappa_m} \int_0^t \int_{\partial D} \Phi^{(m)}(\mathrm{x}, t; \mathrm{y}, \tau) \partial_\nu u \, d\mathrm{y} d\tau 
         \nonumber= v(x,t) 
         \\ &- \frac{\alpha}{\gamma_p \kappa_m} \int_0^t \int_D \Phi^{(m)}(\mathrm{x}, t; \mathrm{y}, \tau) \mathrm{J}(y,\tau) \, d\mathrm{y} d\tau.
    \end{align}
Finally, recalling that \(v(x,t)\) solves the following initial value problem
\[
\begin{cases}
\kappa_m \frac{\partial v}{\partial t} - \Delta v = \mathrm{J}(x,t) & \text{in} \ \mathbb{R}^3 \times (0,T), \\
v(x,0) = 0 & \text{for}\ x\in \mathbb R^3,
\end{cases}
\]
we express \(v(x,t)\) as
\[
v(x,t) = \frac{1}{\kappa_m} \int_0^t \int_D \Phi^{(m)}(\mathrm{x}, t; \mathrm{y}, \tau) \mathrm{J}(y,\tau) \, d\mathrm{y} d\tau.
\]
Thus, the Lippmann-Schwinger equation simplifies to
\begin{align}\label{lippmann}
u + \left( \alpha^\dagger - \alpha \frac{c_p}{\gamma_p} \right) \frac{1}{\kappa_m} \int_0^t \int_D \Phi^{(m)}(\mathrm{x}, t; \mathrm{y}, \tau) \frac{\partial u}{\partial \tau} \, d\mathrm{y} d\tau 
&\nonumber+ \alpha \frac{1}{\kappa_m} \int_0^t \int_{\partial D} \Phi^{(m)}(\mathrm{x}, t; \mathrm{y}, \tau) \partial_\nu u \, d\mathrm{y} d\tau 
\\ &= \frac{\gamma_m}{\gamma_p} \frac{1}{\kappa_m} \int_0^t \int_D \Phi^{(m)}(\mathrm{x}, t; \mathrm{y}, \tau) \mathrm{J}(\mathrm{y}, \tau) \, d\mathrm{y} d\tau.
\end{align}
Finally, we introduce the parameter 
\begin{equation}
    \vartheta = \alpha^\dagger - \alpha \frac{c_v}{\gamma_p} \sim 1,\; \text{implying}\; \alpha \sim \delta^{-\beta},
\end{equation}
where \(\alpha^\dagger\) and \(\alpha\) represent the contrasts in heat coefficients between the inner and outer media.


\subsection{Well-posedness and Regularity of the Parabolic Problem}\label{apriori}          

To establish the main result of this section, we begin by analyzing the problem (\ref{heat}) in the Fourier-Laplace domain. Utilizing fundamental technical results related to the Fourier-Laplace transform as discussed in Section \ref{lap-fou}, we subsequently return to the time domain to complete the proof.

\subsubsection{\texorpdfstring{A-Priori Estimates in $H^r_{0,\sigma}\Big(0,T; \mathbb H^1(\mathbb R^3)\Big)$}{A-priori Estimates}}
\begin{lemma}\label{lemma-apriori-heat}
Let us consider $J(x,t) = \frac{k\cdot \Im(\varepsilon)}{2\pi} \big|E\big|^2f(t)$ with $ f\in H^{r-\frac{1}{2}}_{0,\sigma}(0,T)$ and $\Im(\varepsilon) = 0 \in \mathbb R^3\setminus \overline{D}.$ Then, we have $u\in H^r_{0,\sigma}\Big(0,T; \mathbb H^1(\mathbb R^3)\Big)$ solving (\ref{heat}) which satisfies the following estimate for $k\le r-\frac{1}{2}$
\begin{align}\nonumber
    \Vert \partial_t^ku(\cdot,t)\Vert_{\mathbb L^2(D)} \lesssim \delta^{\frac{5}{2}-h},
\end{align}
and in the case of multiple inclusion, we have
\begin{align}\nonumber
    \Vert \partial_t^ku^{(i)}(\cdot,t)\Vert_{\mathbb L^2(D_i)} \lesssim \delta^{\frac{1}{2}+\beta-h}.
\end{align}
In addition to that, the solution admits the following bounds
\begin{align}\nonumber
    \Vert \partial_t^k\nabla u(\cdot,t)\Vert_{\mathbb L^2(D)} \lesssim \delta^{\frac{3}{2}+\beta-h}\ \text{and}\ \Vert \partial_t^k\partial_\nu u(\cdot,t)\Vert_{\mathbb L^2(\partial D)} \lesssim \delta^{1+\beta-h}.
\end{align}
\end{lemma}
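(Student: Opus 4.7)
The plan is to proceed in two stages: first establish existence, uniqueness, and the qualitative regularity of $u$ in the weighted Bochner--Sobolev space $H^r_{0,\sigma}(0,T;\mathbb H^1(\mathbb R^3))$ via the Fourier--Laplace framework of Section \ref{lap-fou}, and then extract the explicit $\delta$-dependence by a rescaling argument tailored to the regime (\ref{scales})--(\ref{scales2}). Applying the Laplace transform in time to (\ref{heat}) converts the parabolic transmission problem into the family of elliptic transmission problems
\begin{align*}
\bm s\,\mathrm c_v\hat u-\nabla\!\cdot(\gamma\nabla\hat u)=\hat J(\cdot,\bm s)\quad\text{in }\mathbb R^3,\qquad \bm s\in\mathbb C_+,
\end{align*}
together with the transmission conditions inherited from (\ref{heat}). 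Testing against $\overline{\hat u}$, integrating by parts, and using $\Re\bm s=\sigma>0$ together with the positivity of $\mathrm c_v$ and $\gamma$, one obtains a coercivity inequality yielding a polynomial bound of $\|\hat u(\bm s)\|_{\mathbb H^1}$ in terms of $\|\hat J(\bm s)\|_{\mathbb L^2}$ of the type (\ref{1.1}). Lemma \ref{lubich} then transfers this estimate back to the time domain, providing the well-posedness together with a first, $\delta$-unrefined a-priori bound.

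The main stage is to turn this qualitative bound into the claimed $\delta$-explicit inequalities. For this, I would pull back each particle $D_i=z_i+\delta B_i$ to the reference configuration $B_i$ via $x=z_i+\delta\xi$ and keep track of three competing sources of $\delta$ in the energy identity: the volume factor $|D_i|\sim\delta^3$, the enhanced inner conductivity $\gamma_p\sim\delta^{-\beta}$ which amplifies the gradient norm inside $D$ in the coercive bilinear form, and the small source amplitude $\Im(\varepsilon_p)\sim\delta^h$ which reduces the size of $J$ on the right-hand side. Using the $\mathbb L^4_{\mathrm{loc}}$-regularity of $E$ (see the footnote in Theorem \ref{th1}) to estimate $\||E|^2\|_{\mathbb L^2(D)}=\|E\|_{\mathbb L^4(D)}^2$, together with the Fourier--Laplace inversion via Plancherel and the identification of $\partial_t^k$ with multiplication by $\bm s^k$ for $k\le r-\tfrac12$, one arrives at the target bound $\|\partial_t^ku(\cdot,t)\|_{\mathbb L^2(D)}\lesssim\delta^{5/2-h}$ in the single-particle case. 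The per-particle bound $\|\partial_t^ku^{(i)}(\cdot,t)\|_{\mathbb L^2(D_i)}\lesssim\delta^{1/2+\beta-h}$ in the cluster case follows by localizing the above argument on each $D_i$ and re-incorporating the amplification arising from the coupling through $\gamma_p\sim\delta^{-\beta}$.

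For the gradient and trace bounds I would use interior elliptic regularity on the rescaled reference domain $B_i$ applied to the Laplace-transformed equation: once $\|\hat u\|_{\mathbb L^2(B_i)}$ is controlled, a Caccioppoli-type estimate yields $\|\nabla\hat u\|_{\mathbb L^2(B_i)}$, and the standard trace theorem gives control of $\|\partial_\nu\hat u\|_{\mathbb L^2(\partial B_i)}$ via the $\mathbb H^2(B_i)$-norm. Returning to the physical domain $D_i$ costs a factor $\delta^{-1}$ for each spatial derivative and $\delta^{-1/2}$ for the normal trace, and combining these scalings with the previously established bound on $\|\partial_t^ku\|_{\mathbb L^2(D_i)}$ produces the claimed exponents $\delta^{3/2+\beta-h}$ and $\delta^{1+\beta-h}$ for the gradient and normal-trace estimates respectively.

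The principal obstacle, in my view, is not the well-posedness part (which follows from the standard Fourier--Laplace / Lubich machinery once the right variational setting is fixed) but rather the precise bookkeeping of $\delta$-powers in the presence of the three competing scales. The high-contrast coefficient $\gamma_p\sim\delta^{-\beta}$ forbids the use of naive coercivity constants uniform in $\gamma$, and the non-standard transmission condition $\gamma_m^{-1}\partial_\nu u|_+=\gamma_p^{-1}\partial_\nu u|_-$ has to be exploited quantitatively to transfer interior bounds to the boundary without losing powers of $\delta$ in the trace step.
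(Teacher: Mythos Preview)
Your overall strategy---Fourier--Laplace transform plus Lax--Milgram for well-posedness, then tracking the competing $\delta$-scales---is exactly how the paper begins, and the source estimate $\|J\|_{\mathbb L^2(D)}\lesssim\Im(\varepsilon_p)\|E\|_{\mathbb L^4(D)}^2\lesssim\delta^{3/2-h}$ is also what the paper uses.

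However, there is a genuine gap for the main $\mathbb L^2(D)$ bound. The coercivity estimate you describe yields only
\[
\|\partial_t^k u(\cdot,t)\|_{\mathbb L^2(D)}\;\lesssim\;\|J(\cdot,t)\|_{\mathbb L^2(D)}\;\lesssim\;\delta^{3/2-h},
\]
one full power of $\delta$ short of the claimed $\delta^{5/2-h}$. The coercive form does not see the volume factor $|D|\sim\delta^3$ in the way you suggest: the term $\bm s\int c_v|\hat u|^2$ has $c_v\sim 1$ both inside and outside $D$, so no extra smallness is gained there. The paper explicitly flags this (``the estimate \ldots\ is insufficient to control the error terms'') and obtains the improvement via a \emph{bootstrap through the Lippmann--Schwinger equation} (\ref{lippmann}). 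After scaling to $B$ this reads
\[
\widetilde{\hat u}\;=\;-\delta^2\vartheta\,\bm s\,\mathcal V_{\bm s,B}[\widetilde{\hat u}]\;-\;\delta\,\alpha\,\mathcal S_{\bm s,\partial B}[\widetilde{\partial_\nu\hat u}]\;+\;\delta^2\tfrac{\gamma_m}{\gamma_p}\,\mathcal V_{\bm s,B}[\widetilde{\hat J}],
\]
into which one inserts the preliminary bounds $\|\widetilde{\hat u}\|_{L^2(B)}\sim\delta^{-h}$, $\|\widetilde{\partial_\nu\hat u}\|_{L^2(\partial B)}\sim\delta^{\beta-h}$, $\|\widetilde{\hat J}\|_{L^2(B)}\sim\delta^{-h}$, together with the operator bounds for $\mathcal V_{\bm s,B}$ and $\mathcal S_{\bm s,\partial B}$ (Proposition \ref{prop2} and \cite[Theorem 2.1]{sayas2}). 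The dominant single-layer term contributes $\delta\cdot\delta^{-\beta}\cdot\delta^{\beta-h}=\delta^{1-h}$, which after rescaling to $D$ gives the target $\delta^{5/2-h}$. The multiple-inclusion exponent $\delta^{1/2+\beta-h}$ arises from the same bootstrap with the cross-terms $\mathcal V^{(ij)}_{\bm s}$, $\mathcal S^{(ij)}_{\bm s}$ controlled via explicit kernel bounds and the regimes (\ref{regimes}). Without this step your scheme cannot reach the stated exponents.

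A secondary point: the gradient and trace bounds $\delta^{3/2+\beta-h}$ and $\delta^{1+\beta-h}$ are obtained in the paper \emph{before} the bootstrap, directly from the weighted coercivity (the factor $\gamma_p\sim\delta^{-\beta}$ in front of $\|\nabla\hat u\|^2$) and then the scaled trace inequality applied to the interior equation $\Delta u=\frac{c_p}{\gamma_p}\partial_t u-\frac{1}{\gamma_p}J$. Your route of deducing them from the $\mathbb L^2$-bound of $u$ via rescaled Caccioppoli would produce $\delta^{-1}\cdot\delta^{5/2-h}=\delta^{3/2-h}$ for $\nabla u$, missing the additional $\delta^\beta$.
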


\begin{proof}
To begin, we consider the following transmission Helmholtz-type equation, which results from applying the Fourier-Laplace transform to (\ref{heat})
\begin{align}\label{laplace}
    \begin{cases}
        \hat{u} \in H^1(\mathbb{R}^3), \quad \mathrm{c}_v \bm{\mathrm{s}} \; \Tilde{u}(\mathrm{x}, \bm{\mathrm{s}}) - \nabla \cdot \gamma \nabla \Tilde{u}(\mathrm{x}, \bm{\mathrm{s}}) = \Tilde{\mathrm{J}}(\mathrm{x}, \bm{\mathrm{s}}) \; \text{in} \ \mathbb{R}^3, \\
        \left[ u \right] = 0, \quad \left[ \partial_\nu u \right] = 0.
    \end{cases}
\end{align}
Next, we develop a variational approach for solving the above problem (\ref{laplace}) and apply the Lax-Milgram Lemma. The problem (\ref{laplace}) can be reformulated as the following variational form
\begin{align}
\begin{cases}
    \hat{u} \in H^1(\mathbb{R}^3), \\
    \gamma \big\langle \nabla \hat{u}, \nabla v \big\rangle_{L^2(\mathbb{R}^3)} + \bm{\mathrm{s}} \big\langle \hat{u}, v \big\rangle_{L^2(\mathbb{R}^3 )} = \big\langle \hat{\mathrm{J}}, v \big\rangle_{L^2(\bm{D})}, \quad \text{for all} \ v \in H^1(\mathbb{R}^3).
\end{cases}
\end{align}
To verify the coercivity of the bilinear form, we set $v = \bm{s}^{\frac{1}{2}} \Tilde{u}$. Given the relation (\ref{1.13}) and the condition $\Re(\bm{s}^{\frac{1}{2}}) > 0$, we conclude that the bilinear form is coercive, guaranteeing the existence of a unique solution. Specifically, we have
\begin{align}\label{bi}
    \Re \left( \overline{\bm{s}}^{\frac{1}{2}} \left( \| \mathrm{c}_v \nabla \Tilde{u}(\cdot, \bm{\mathrm{s}}) \|^2_{L^2(\mathbb{R}^3 )} + \bm{s} \| \gamma \Tilde{u}(\cdot, \bm{\mathrm{s}}) \|^2_{L^2(\mathbb{R}^3)} \right) \right) = \Re(\bm{s}^{\frac{1}{2}}) \left( \| \mathrm{c}_v \nabla \Tilde{u}(\cdot, \bm{\mathrm{s}}) \|^2_{L^2(\mathbb{R}^3)} + |\bm{s}| \| \gamma \Tilde{u}(\cdot, \bm{\mathrm{s}}) \|^2_{L^2(\mathbb{R}^3 )} \right) \geq 0. \nonumber
\end{align}
Finally, noting that $\omega = \Re(\bm{s}^{\frac{1}{2}})$, and considering the support of $\Tilde{\mathrm{J}}$ in $D$, along with the application of the Cauchy-Schwartz inequality, we find that $\Tilde{u}(\cdot, \bm{\mathrm{s}})$ satisfies the inequality
\begin{align}
\omega |\bm{s}| \| \mathrm{c}_v \Tilde{u}(\cdot, \bm{\mathrm{s}}) \|^2_{L^2(\mathbb{R}^3 )} + \omega \left\| \gamma \nabla \Tilde{u}(\cdot, \bm{\mathrm{s}}) \right\|^2_{L^2(\mathbb{R}^3 )} \leq |\bm{s}|^{\frac{1}{2}} \| \Tilde{\mathrm{J}}(\cdot, \bm{\mathrm{s}}) \|_{L^2(D)} \| \Tilde{u}(\cdot, \bm{\mathrm{s}}) \|_{L^2(\mathbb{R}^3 )}.
\end{align}
Given the scaling properties of \( \mathrm{c}_v \) and \( \gamma \) from (\ref{scales}), we further derive that
\begin{align}\label{ineq}
    \| \Tilde{u}(\cdot,\bm{\mathrm{s}}) \|_{\mathbb{L}^2(D)} \lesssim \frac{1}{\omega|\bm{s}|^{\frac{1}{2}}} \| \Tilde{J}(\cdot,\bm{\mathrm{s}}) \|_{\mathrm{L}^2(D)}\quad \text{and} \quad \| \nabla \Tilde{u}(\cdot,\bm{\mathrm{s}}) \|_{\mathbb{L}^2(D)} \lesssim \delta^\beta\frac{|\bm{s}|^{\frac{1}{2}}}{\omega} \| \Tilde{J}(\cdot,\bm{\mathrm{s}}) \|_{\mathrm{L}^2(D)}.
\end{align}
Next, we define the inverse Laplace transform of \( \hat{u}(\mathrm{x}, \cdot) \) for \( \Re(\bm{\mathrm{s}}) = \sigma > 0 \) as
\begin{align}\label{invlap}
    u(\mathrm{x}, \mathrm{t}) := \frac{1}{2\pi i} \int_{\sigma - i\infty}^{\sigma + i\infty} e^{\bm{\mathrm{s}} \mathrm{t}} \hat{u}(\mathrm{x}, \bm{\mathrm{s}})\, d\bm{\mathrm{s}} = \frac{1}{2\pi} \int_{-\infty}^{\infty} e^{(\sigma + i\omega) \mathrm{t}} \Tilde{u}(\mathrm{x}, \sigma + i\omega)\, d\omega.
\end{align}
Given the bounds on \( \bm{\mathrm{s}} \) in (\ref{ineq}), \( u(\mathrm{x}, \mathrm{t}) \) is well-defined. Moreover, using contour integration techniques, it can be shown that \( u(\mathrm{x}, \mathrm{t}) \) is independent of \( \sigma \), as detailed in \cite[pp. 39]{sayas}. Consequently, we derive the following bound
\begin{align}
    \| u \|^2_{H_{0,\sigma}^r(0,T; L^2(D))} 
    &\nonumber\lesssim \frac{1}{\omega^2} \int_{-\infty + i\sigma}^{+\infty + i\sigma} |\bm{s}|^{2r} \| \hat{u}(\bm{s}) \|_{L^2(D)}^2 \, d\bm{s}
    \\ \nonumber&\lesssim \frac{1}{\omega^2} \int_{-\infty + i\sigma}^{+\infty + i\sigma} |\bm{s}|^{2(r - \frac{1}{2})} \| \hat{u}(\bm{s}) \|_{L^2(D)}^2 \, d\bm{s}
    \\ &= \frac{1}{\omega^2} \| J \|^2_{H_{0,\sigma}^{r - \frac{1}{2}}(0,T; L^2(D))}.
\end{align}
Similarly, we obtain
\begin{align}
    \| \nabla u \|^2_{H_{0,\sigma}^r(0,T; L^2(D))} \lesssim \delta^4 \frac{1}{\omega^2} \| J \|^2_{H_{0,\sigma}^{r + \frac{1}{2}}(0,T; L^2(D))}.
\end{align}
We also note that \( \Im(\varepsilon) = 0 \) in \( \mathbb{R}^3 \setminus \overline{D} \), and for a plasmonic nanoparticle, \( \Im(\varepsilon_p) \sim \delta^h \) in \( D \), while \( \| \mathrm{E} \|^2_{\mathbb{L}^4(D)} \sim \delta^{\frac{3}{2} - 2h} \) for \( h \in (0,2) \). This leads to the conclusion
\begin{align}\label{h1}
    \| \Tilde{\mathrm{J}}(\cdot, \bm{\mathrm{s}}) \|_{\mathbb{L}^2(D)} 
    \nonumber&= \left( \int_D |\Tilde{\mathrm{J}}(x, \bm{\mathrm{s}})|^2 dx \right)^{\frac{1}{2}}
    \\ &\lesssim \Im(\varepsilon_p) \frac{k}{2\pi} \| \mathrm{E} \|^2_{\mathbb{L}^4(D)} \lesssim \delta^{\frac{3}{2} - h}.
\end{align}
Thus, we derive the estimates
\begin{align}
    \| u \|_{H_{0,\sigma}^r(0,T; \mathbb{L}^2(D))} \sim \delta^{\frac{3}{2} - h} \quad \text{and} \quad \| \nabla u \|_{H_{0,\sigma}^r(0,T; \mathbb{L}^2(D))} \sim \delta^{\frac{3}{2}+\beta - h}.
\end{align}
Consequently, for \( r > 1 \), we derive the following estimates
\[
\| u(\cdot, \mathrm{t}) \|_{\mathbb{L}^2(D)} \nonumber \lesssim \| u \|_{\mathrm{H}_{0,\sigma}^r(0,\mathrm{T}; \mathbb{L}^2(D))} \lesssim \delta^{\frac{3}{2}-h}, \quad \text{and} \quad
\|\nabla u(\cdot,\mathrm{t})\|_{\mathbb{L}^2(D)} \lesssim \| \nabla u \|_{\mathrm{H}_{0,\sigma}^r(0,\mathrm{T}; \mathbb{L}^2(D))} \lesssim \delta^{\frac{3}{2}+\beta-h}.
\]
Furthermore, we can infer the following estimates for $k\le p-\frac{1}{2}$:
\begin{equation}\label{esti22}
\| \partial_\mathrm{t}^\mathrm{k} u(\cdot, \mathrm{t}) \|_{\mathbb{L}^2(D)} \lesssim \delta^{\frac{3}{2}-h}, \quad \mathrm{t} \in [0,\mathrm{T}],
\end{equation}
and
\begin{equation}
 \| \partial_\mathrm{t}^\mathrm{k} \nabla u(\cdot, \mathrm{t}) \|_{\mathbb{L}^2(D)} \lesssim \delta^{\frac{3}{2}+\beta-h}, \quad \mathrm{t} \in [0,\mathrm{T}].   
\end{equation}
We now consider the elliptic problem described by equation \eqref{laplace} in the domain \( D \), given by
\begin{align}\label{esti222}
    \frac{\mathrm{c}_p}{\gamma_p} \partial_t u(x,t) - \Delta u(x,t) = \frac{1}{\gamma_p} J(x,t), \quad \text{in} \; D \times (0,T).
\end{align}
Utilizing the initial estimate \( \| \partial_t u(x,t) \|_{\mathbb{L}^2(D)} \lesssim \delta^{\frac{3}{2}-h} \) from equation \eqref{esti222}, and combining it with the estimate \( \| J(\cdot, t) \|_{\mathbb{L}^2(D)} \lesssim \delta^{\frac{3}{2}-h} \) and the scaling property \( \gamma_p \sim \delta^{-\beta} \), we can deduce that
\[
\| \Delta u(\cdot, t) \|_{\mathbb{L}^2(D)} \lesssim \delta^{\frac{3}{2}+\beta-h}.
\]
By applying scaling to the boundary \( \partial B \), using the trace theorem, and then scaling back to \( \partial D \), we obtain the following estimate
\begin{equation}
 \| \partial_\mathrm{t}^\mathrm{k} \partial_\nu u(\cdot, \mathrm{t}) \|_{\mathbb{L}^2(\partial D)} \lesssim \left( \delta^{\frac{1}{2}} \| \partial_\mathrm{t}^\mathrm{k} \Delta u(\cdot, \mathrm{t}) \|_{\mathbb{L}^2(D)} + \delta^{-\frac{1}{2}} \| \partial_\mathrm{t}^\mathrm{k} \nabla u(\cdot, \mathrm{t}) \|_{\mathbb{L}^2(D)} \right) \lesssim \delta^{1+\beta-h},\ \text{for a.e.}\  \mathrm{t} \in [0, \mathrm{T}],\ k\le r-\frac{1}{2}.   
\end{equation}
The estimate for \( \| \partial_\mathrm{t}^\mathrm{k} u(\cdot, \mathrm{t}) \|_{\mathbb{L}^2(D)} \) obtained above is insufficient to control the error terms that will arise in the subsequent section. Therefore, we require a more refined estimate to improve the accuracy of our results. The following steps are taken to achieve this. In order to proceed to further steps, we need the following proposition.
\begin{proposition}\label{prop2}
    The heat volume potential $\mathbf{\bm{V}}^t_{\bm{D}}$ can be extended to a bounded operator from $\mathrm{H}^{q}_{0,\sigma}\Big(0,\mathrm{T}; \mathrm{L}^2(\mathbf{D})\Big)$ to $\mathrm{H}^{q+\frac{1}{2}(1-r)}_{0,\sigma}\Big(0,\mathrm{T}; \mathrm{H}^r(\mathbb R^3)\Big)$,\; \text{for}\ r=0,1,2.
\end{proposition}
\begin{proof}
We begin by considering the Newtonian heat potential operator defined as
\[
    \bm{\mathrm{V}}^t_\mathbf{D}[\mathrm{g}](x,t) := \int_0^t \int_{\mathbf{D}} \Phi^{(m)}(x,t;y,\tau) \, \mathrm{g}(y,\tau) \, dy \, d\tau.
\]
We now note that the Laplace-Fourier transform $\mathrm{G}^{(\bm{s})}$ of the fundamental solution $\Phi^{(m)}(x,t;y,\tau)$ for the heat equation is actually the fundamental solution to the differential equation $-\Delta u + \bm{s} u = 0$, which is expressed as
\[
    \mathrm{G}^{(\bm{s})}(x,y) = \frac{1}{4\pi|x-y|} e^{-\sqrt{\bm{s}} \, |x-y|}.
\]
Based on this, we define the corresponding Newtonian potential as
\[
    \bm{\mathrm{V}}_{\bm{s},\mathbf{D}}[\hat{g}](x) := \int_{D} \mathrm{G}^{(\bm{s})}(x,y) \, \hat{g}(y) \, dy.
\]
In fact, it is known that for \(\hat{g}(y) \in L^2(\bm{D})\), extended by zero outside \(\bm{D}\), the Newtonian potential \(z := \bm{\mathrm{V}}_{\bm{s}, \mathbf{D}}[\hat{g}]\) satisfies the equation
\[
    -\Delta z + \bm{s} z = \hat{g} \quad \text{in} \; \mathbb{R}^3.
\]
Multiplying this equation by \(\overline{\bm{s}}^{1/2} \overline{z}\) and integrating over \(\mathbb{R}^3\), we obtain
\[
    \overline{\bm{s}}^{1/2} \|\nabla z\|_{L^2(\mathbb{R}^3)}^2 + s \overline{\bm{s}}^{1/2} \|z\|_{L^2(\mathbb{R}^3)}^2 = \overline{\bm{s}}^{1/2} \int_{\bm{D}} \hat{g} \, \overline{z} \, dx.
\]
By taking the real part of the above expression, we derive the following estimate
\begin{equation}\label{v1}
    \|z\|_{L^2(\bm{D})} \leq \frac{1}{\omega |\bm{s}|^{1/2}} \|\hat{g}\|_{L^2(\bm{D})}.
\end{equation}
Additionally, we have the following bound for the \(H^2(\mathbb{R}^3)\)-semi-norm of \(z\)
\[
    |z|_{H^2(\mathbb{R}^3)}^2 \leq \mathrm{C} \|\Delta z\|_{L^2(\mathbb{R}^3)}^2 \leq \mathrm{C} \left( |\bm{s}|^2 \|z\|_{L^2(\mathbb{R}^3)}^2 + \|\hat{g}\|_{L^2(\mathbb{R}^3)}^2 \right),
\]
where \(|\cdot|_{H^2(\mathbb{R}^3)}\) represents the semi-norm in the \(H^2(\mathbb{R}^3)\)-space. Therefore, using the previous estimate and assuming \(\frac{|\bm{s}|}{\sigma} \geq 1\), we obtain the bound
\begin{align}\label{v2}
        |z|_{H^2(\mathbb{R}^3)}^2 \leq \mathrm{C} \frac{|\bm{s}|}{\omega^2} \|\hat{g}\|_{L^2(\bm{D})}^2.
\end{align}
Similarly, we have the following estimate in the \(H^1_{\bm{s}}(\mathbb{R}^3)\)-norm
\begin{align}\label{v3}
        \|z\|_{H^1_{\bm{s}}(\mathbb{R}^3)} \leq \frac{1}{\omega} \|\hat{g}\|_{L^2(\bm{D})},
\end{align}
where \(\|z\|_{H^1_{\bm{s}}(\mathbb{R}^3)}^2 := \|\nabla z\|_{L^2(\mathbb{R}^3)}^2 + |\bm{s}|\|z\|_{L^2(\mathbb{R}^3)}^2\).
Using the estimate above, we can define the corresponding time-domain operator-valued distributions via the inverse Laplace transform, leading to the heat potential operator \(\bm{\mathrm{V}}^t_{\bm{D}}\). This operator satisfies the associated heat equation
\[
    \kappa \partial_t u - \Delta u = g \quad \text{in} \; \mathrm{H}^q_{0,\sigma}(0,\mathrm{T}; \mathrm{L}^2(\bm{D})).
\]
Further details on distributional convolution can be found in \cite{sayas2} and \cite[Chapter 3]{sayas}. Consequently, by employing techniques from \cite{hduong,le-monk} and applying the estimates (\ref{v1}), (\ref{v2}), (\ref{v3}), the mapping properties follow from the inverse Fourier-Laplace transform and Lemma \ref{lubich}.
\\
\noindent
This completes the proof of Proposition \ref{prop2}.
\end{proof}
\noindent
Let us now state the following proposition.
\begin{proposition}\label{proposition-maxwell-1}      
Consider the electromagnetic scattering problem (\ref{Maxwell-model}) for cluster of plasmonic nanoparticles $D_i$ for $i=1,2,\ldots,M.$ Then, for $h,\lambda$ satisfying
    \begin{align}\nonumber
          3-3\lambda-h \ge 0 \quad \text{and} \quad \frac{9}{5}< h <2,
    \end{align}
we have the following a-priori estimates
    \begin{align}\nonumber
           \underset{i}{\max}\big\Vert \mathrm{E}_i\big\Vert_{\mathbb{L}^2(D_i)} \lesssim \delta^{\frac{3}{2}-h}.
    \end{align}
In addition to that, we have
    \begin{align}\nonumber
         \big\Vert E_i\big\Vert_{\mathbb{L}^\mathrm{4}(D_i)} \sim \delta^{\frac{3}{4}-\mathrm{h}}.
    \end{align}
\end{proposition}
\begin{proof}
    See Section \ref{a-priori} for the proof.
\end{proof}
\noindent
We now proceed to complete the proof to establish the first estimate stated in Lemma \ref{lemma-apriori-heat}.

\subsubsection*{Part 1: The single inclusion case}\label{part1}

We begin by examining the Lippmann-Schwinger equation (\ref{lippmann}) in the Laplace-Fourier domain, which is given by the following expression
\begin{align}\nonumber
    \hat{u}(x,\bm{s}) = - \vartheta \bm{s} \mathcal{V}_{\bm{s},D}[\hat{u}](x,\bm{s}) - \alpha \mathcal{S}_{\bm{s},\partial D}[\partial_\nu\hat{u}](x,\bm{s}) + \frac{\gamma_m}{\gamma_p} \mathcal{V}_{\bm{s},D}[\hat{\mathrm{J}}](x,\bm{s}),
\end{align}
where \(\mathcal{S}_{\bm{s},\partial D}\) denotes the single-layer potential associated with the Helmholtz equation, which holds for \(x \notin \partial D\) and for all frequencies \(\bm{s}\). Similarly, \(\mathcal{V}_{\bm{s},D}\) represents the Newtonian potential of the Helmholtz equation.
\\
Next, utilizing the scaling property, we derive the following expression
\begin{align}
    \Tilde{\hat{u}}(\xi,\bm{s}) = -\delta^2 \vartheta \bm{s} \mathcal{V}_{\bm{s},B}[\Tilde{\hat{u}}](\xi,\bm{s}) - \delta \alpha \mathcal{S}_{\bm{s},\partial B}[\widetilde{\partial_\nu\hat{u}}](\xi,\bm{s}) + \delta^2 \frac{\gamma_m}{\gamma_p} \mathcal{V}_{\bm{s},B}[\Tilde{\hat{\mathrm{J}}}](\xi,\bm{s}).
\end{align}
Taking the \(L^2(B)\)-norm of the above equation, we obtain
\begin{align}
    \Vert\Tilde{\hat{u}}(\cdot,\bm{s})\Vert_{\mathrm{L}^2(B)} = \delta^2 \vartheta |\bm{s}| \Vert\mathcal{V}_{\bm{s},B}[\Tilde{\hat{u}}]\Vert_{\mathrm{L}^2(B)} + \delta \alpha \Vert\mathcal{S}_{\bm{s},\partial B}[\widetilde{\partial_\nu\hat{u}}]\Vert_{\mathrm{L}^2(B)} + \delta^2 \frac{\gamma_m}{\gamma_p} \Vert\mathcal{V}_{\bm{s},B}[\Tilde{\hat{\mathrm{J}}}]\Vert_{\mathrm{L}^2(B)}.
\end{align}
By applying the bounded estimates \(\Vert\mathcal{S}_{\bm{s},\partial B}[\widetilde{\partial_\nu\hat{u}}]\Vert_{\mathrm{H}^1(\mathbb{R}^3)} \le \frac{|\bm{s}|^{1/2}}{\omega \underline{\omega}^2} \Vert\widetilde{\partial_\nu\hat{u}}\Vert_{\mathrm{H}^{-1/2}(\partial D)}\) (see \cite[Theorem 2.1]{sayas2}) and
\\
\(\Vert\mathcal{V}_{\bm{s},B}[g]\Vert_{L^2(\mathbb{R}^3)} \le \frac{1}{\omega |\bm{s}|^{1/2}} \Vert\hat{g}\Vert_{L^2(\mathbb{R}^3)}\) by Proposition \ref{prop2}, along with the continuous Sobolev embeddings \(H^1(\mathbb{R}^3) \hookrightarrow L^2(B)\) and \(L^2(\partial B) \hookrightarrow H^{-1/2}(\partial B)\), we deduce
\begin{align}
    \Vert\Tilde{\hat{u}}(\cdot,\bm{s})\Vert_{\mathrm{L}^2(B)} 
    &\le \delta^2 \vartheta \frac{|\bm{s}|^{1/2}}{\omega} \Vert\Tilde{\hat{u}}(\cdot,\bm{s})\Vert_{\mathrm{L}^2(B)} + \delta \alpha \frac{|\bm{s}|^{1/2}}{\omega \underline{\omega}^2} \Vert\widetilde{\partial_\nu\hat{u}}(\cdot,\bm{s})\Vert_{\mathrm{H}^{-1/2}(\partial B)} \nonumber \\
    &\quad + \delta^2 \frac{\gamma_m}{\gamma_p} \frac{1}{\omega |\bm{s}|^{1/2}} \Vert\Tilde{\hat{\mathrm{J}}}(\cdot,\bm{s})\Vert_{\mathrm{L}^2(B)}.
\end{align}
Using the initial estimates \(\Vert\Tilde{\hat{u}}(\cdot,\bm{s})\Vert_{\mathrm{L}^2(B)} \sim \delta^{-h}\), \(\Vert\widetilde{\partial_\nu\hat{u}}(\cdot,\bm{s})\Vert_{\mathrm{L}^2(\partial B)} \sim \delta^{\beta-h}\), $\alpha \sim \delta^{-\beta}$ and \(\Vert\Tilde{\hat{\mathrm{J}}}(\cdot,\bm{s})\Vert_{\mathrm{L}^2(B)} \sim \delta^{-h}\), we obtain
\begin{align}
    \Vert\Tilde{\hat{u}}(\cdot,\bm{s})\Vert_{\mathrm{L}^2(B)} 
    &\le \vartheta \frac{|\bm{s}|^{1/2}}{\omega} \delta^{2-h} + \frac{|\bm{s}|^{1/2}}{\omega \underline{\omega}^2} \delta^{1-h} + \frac{\gamma_m}{\gamma_p} \frac{1}{\omega |\bm{s}|^{1/2}} \delta^{2-h} \nonumber \\
    &\le \frac{|\bm{s}|^{1/2}}{\omega \underline{\omega}^2} \delta^{1-h}.
\end{align}
Thus, using Lemma \ref{important} and the preceding discussion, we deduce the following point-wise estimate
\begin{align}
    \Vert u(\cdot,\mathrm{t})\Vert_{\mathbb{L}^2(D)} \lesssim \Vert u\Vert_{\mathrm{H}_{0,\sigma}^p(0,\mathrm{T};\mathbb{L}^2(D))} \lesssim \delta^{\frac{5}{2}-h}.
\end{align}
Repeating the process for the time derivative, we derive the following estimate
\begin{equation}
    \| \partial_\mathrm{t}^\mathrm{k} u(\cdot, \mathrm{t}) \|_{\mathbb{L}^2(D)} \lesssim \delta^{\frac{5}{2}-h}, \quad \mathrm{t} \in [0,\mathrm{T}].
\end{equation}

\subsubsection*{Part 2: The multiple inclusion case}

The solution to equation (\ref{heat}) is expressed as the following integral equation for \( i = 1, 2, \ldots, M \)
\begin{align}\nonumber
    u^{(i)}(x,t) + \sum_{i=1}^M \vartheta_i\ \bm{\mathbcal{V}^t}_{D_i}\big[\partial_t u\big](\mathrm{x},\mathrm{t}) + \sum_{i=1}^M\alpha_i\ \bm{\mathcal{S}^t}_{\partial D_i}\big[\partial_\nu u^{(i)}\big](\mathrm{x},\mathrm{t}) = \sum_{i=1}^M \frac{\gamma_m}{\gamma_{p_i}} \bm{\mathbcal{V}^t}_{D_i}\big[\mathrm{J}\big](\mathrm{x},\mathrm{t}),\; (\mathrm{x},\mathrm{t}) \in D_i \times (0,\mathrm{T}).
\end{align}    
Subsequently, we have
\begin{align}\nonumber
    u^{(i)}(x,t)\nonumber &+ \vartheta_i\ \bm{\mathbcal{V}^{(ii)}}\big[\partial_t u^{(i)}\big](\mathrm{x},\mathrm{t}) + \alpha_i\ \bm{\mathcal{S}^{(ii)}}\big[\partial_\nu u^{(i)}\big](\mathrm{x},\mathrm{t}) + \sum\limits_{\substack{j=1 \\ j\neq i}}^M \vartheta_j\ \bm{\mathbcal{V}^{(ij)}}\big[\partial_t u^{(j)}\big](\mathrm{x},\mathrm{t}) \\ \nonumber&+ \sum\limits_{\substack{j=1 \\ j\neq i}}^M\alpha_j\ \bm{\mathcal{S}^{(ij)}}\big[\partial_\nu u^{(j)}\big](\mathrm{x},\mathrm{t}) = \frac{\gamma_m}{\gamma_{p_i}}\bm{\mathbcal{V}^t}_{D_i}\big[\mathrm{J}\big](\mathrm{x},\mathrm{t})  + \sum\limits_{\substack{j=1 \\ j\neq i}}^M \frac{\gamma_m}{\gamma_{p_j}}\bm{\mathbcal{V}^{(ij)}}\big[\mathrm{J}\big](\mathrm{x},\mathrm{t}),\; (\mathrm{x},\mathrm{t}) \in D_i \times(0,\mathrm{T}).
\end{align} 
In the Fourier-Laplace domain, this integral equation becomes
\begin{align}
    \hat{u}^{(i)}(x,\bm{s}) \nonumber &= - \vartheta_i\ \bm{s}\ \mathcal{V}^{(ii)}_{\bm{s},D_i}\big[\hat{u}^{(i)}\big](x,\bm{s}) - \alpha_i\ \mathcal{S}^{(ii)}_{\bm{s},\partial  D_i}\big[\partial_\nu\hat{u}^{(i)}\big](x,\bm{s}) + \frac{\gamma_m}{\gamma_{p_i}}\ \mathcal{V}^{(ii)}_{\bm{s},D_i}\big[\hat{\mathrm{J}}\big](x,\bm{s}) \\ 
    &- \sum\limits_{\substack{j=1 \\ j\neq i}}^M\vartheta_j\ \bm{s}\ \mathcal{V}^{(ij)}_{\bm{s},D_j}\big[\hat{u}^{(j)}\big](x,\bm{s})
    - \sum\limits_{\substack{j=1 \\ j\neq i}}^M\alpha_j\ \mathcal{S}^{(ij)}_{\bm{s},\partial D_i}\big[\partial_\nu\hat{u}^{(j)}\big](x,\bm{s}) + \sum\limits_{\substack{j=1 \\ j\neq i}}^M\frac{\gamma_m}{\gamma_{p_j}}\ \mathcal{V}^{(ij)}_{\bm{s},D_i}\big[\hat{\mathrm{J}}\big](x,\bm{s}).
\end{align}
Next, taking the \( L^2(D_i) \)-norm of both sides results in
\begin{align}\label{l}
    \Vert\hat{u}^{(i)}(x,\bm{s})\Vert_{L^2(D_i)} 
    \nonumber&= \vartheta_i\ |\bm{s}|\ \Vert\mathcal{V}^{(ii)}_{\bm{s},D_i}\big[\hat{u}^{(i)}\big](x,\bm{s})\Vert_{L^2(D_i)} 
    + \alpha_i\ \Vert\mathcal{S}^{(ii)}_{\bm{s},\partial D_i}\big[\partial_\nu\hat{u}^{(i)}\big](x,\bm{s})\Vert_{L^2(D_i)} 
    + \frac{\gamma_m}{\gamma_{p_i}}\ \Vert\mathcal{V}^{(ii)}_{\bm{s},D_i}\big[\hat{\mathrm{J}}\big](x,\bm{s})\Vert_{L^2(D_i)} \\ 
    &+\nonumber \sum\limits_{\substack{j=1 \\ j\neq i}}^M\vartheta_j\ |\bm{s}|\ \Vert\mathcal{V}^{(ij)}_{\bm{s},D_j}\big[\hat{u}^{(j)}\big](x,\bm{s})\Vert_{L^2(D_i)}
    + \sum\limits_{\substack{j=1 \\ j\neq i}}^M\alpha_j\ \Vert\mathcal{S}^{(ij)}_{\bm{s},\partial D_i}\big[\partial_\nu\hat{u}^{(j)}\big](x,\bm{s})\Vert_{L^2(D_i)}
    \\ &+ \sum\limits_{\substack{j=1 \\ j\neq i}}^M\frac{\gamma_m}{\gamma_{p_j}}\ \Vert\mathcal{V}^{(ij)}_{\bm{s},D_i}\big[\hat{\mathrm{J}}\big](x,\bm{s})\Vert_{L^2(D_i)}.
\end{align}
As the first three terms of this expression can be approximated similarly to the single inclusion case, we now focus on estimating the last three terms. First, we have
\begin{align}\nonumber
    \Vert\mathcal{V}^{(ij)}_{\bm{s},D_j}\big[\hat{u}^{(j)}\big](x,\bm{s})\Vert_{L^2(D_i)} 
    \nonumber&= \Bigg( \int_{D_i}\Big|\int_{D_j} G^s(x,y)\hat{u}^{(j)}(y,\bm{s})\,dy\Big|^2\,dx \Bigg)^{\frac{1}{2}}
    \\ &\lesssim \frac{1}{|z_i-z_j|}\,|D_i\times D_j|^{\frac{1}{2}} \,\Vert\hat{u}^{(j)}(\cdot,\bm{s})\Vert_{L^2(D_j)},
\end{align}
which leads to the estimate
\begin{align}\label{l1}
    \sum\limits_{\substack{j=1 \\ j\neq i}}^M \vartheta_j |\bm{s}| \Vert\mathcal{V}^{(ij)}_{\bm{s},D_j}\big[\hat{u}^{(j)}\big](x,\bm{s})\Vert_{L^2(D_i)} 
    \lesssim \delta^{\frac{9}{2}-h} |\bm{s}| \sum\limits_{\substack{j=1 \\ j\neq i}}^M d_{ij}^{-1} \lesssim |\bm{s}| \delta^{\frac{3}{2}+\beta-h}.
\end{align}
Similarly, we obtain for $\alpha_j \sim \delta^{-\beta}$ that
\begin{align}\label{l2}
    \sum\limits_{\substack{j=1 \\ j\neq i}}^M\alpha_j \Vert\mathcal{S}^{(ij)}_{\bm{s},\partial D_j}\big[\partial_\nu\hat{u}^{(j)}\big](x,\bm{s})\Vert_{L^2(D_i)}
    \lesssim \delta^{\frac{7}{2}-h} \sum\limits_{\substack{j=1 \\ j\neq i}}^M d_{ij}^{-1} \lesssim \delta^{\frac{1}{2}+\beta-h},
\end{align}
and finally due to the fact $\gamma_{p_j} \sim \delta^{-\beta}$, we derive the following estimate
\begin{align}\label{l3}
    \sum\limits_{\substack{j=1 \\ j\neq i}}^M \frac{\gamma_m}{\gamma_{p_j}} \Vert\mathcal{V}^{(ij)}_{\bm{s},D_j}\big[\hat{\mathrm{J}}\big](x,\bm{s})\Vert_{L^2(D_i)} 
    \lesssim \delta^{\frac{3}{2}+2\beta-h}.
\end{align}
Combining the estimates from Section \ref{part1} and using equations (\ref{l1}), (\ref{l2}), and (\ref{l3}) in (\ref{l}), we conclude
\begin{align}\nonumber
    \Vert\hat{u}(\cdot,\bm{s})\Vert_{L^2(D_i)} \lesssim \frac{|\bm{s}|^{1/2}}{\omega \underline{\omega}^2} \delta^{\frac{1}{2}+\beta-h}.
\end{align}
Therefore, the following estimate holds
\begin{align}\nonumber
    \Vert u(\cdot,t)\Vert_{\mathbb{L}^2(D_i)} \lesssim \Vert u\Vert_{H_{0,\sigma}^r(0,T;\mathbb{L}^2(D_i))} \lesssim \delta^{\frac{1}{2}+\beta-h}.
\end{align}
Repeating the process after taking the derivative w.r.to time, we can infer the following estimate
\begin{equation}
\| \partial_\mathrm{t}^\mathrm{k} u(\cdot, \mathrm{t}) \|_{\mathbb{L}^2(D_i)} \lesssim \delta^{\frac{1}{2}+\beta-h}, \quad \mathrm{t} \in [0,\mathrm{T}],
\end{equation}
This completes the proof.
\end{proof}

    \subsection{Proof of Theorem \ref{th1}: The Single Inclusion Case} 

We begin by recalling the Lippmann-Schwinger equation as derived in (\ref{lippmann})
   \begin{align}\label{new-lip}
          u &+ \vartheta\frac{1}{\kappa_m}\int_0^t\int_{D} \Phi^{(m)}(\mathrm{x}-\mathrm{y};\mathrm{t}-\tau) \partial_t u(y,\tau) \, d\mathrm{y} \, d\tau 
          + \nonumber \alpha \frac{1}{\kappa_m}\int_0^t\int_{\partial D} \Phi^{(m)}(\mathrm{x}-\mathrm{y};\mathrm{t}-\tau) \partial_\nu u(y,\tau) \, d\mathrm{y} \, d\tau 
          \\ &= \frac{\gamma_m}{\gamma_p}\frac{1}{\kappa_m}\int_0^t\int_{D} \Phi^{(m)}(\mathrm{x}-\mathrm{y};\mathrm{t}-\tau) \mathrm{J}(y,\tau) \, d\mathrm{y} \, d\tau \; \text{in} \ D \times (0,T).
   \end{align}
The single-layer heat potential operator, $\bm{\mathcal{S}}^t_{\partial D}$, is defined as
    \begin{align}\nonumber
          \bm{\mathcal{S}}^t_{\partial D}[f](\mathrm{x}, \mathrm{t}) := \frac{1}{\kappa_m} \int_0^t \int_{\partial D} \Phi^{(m)}(\mathrm{x}, \mathrm{t}; \mathrm{y}, \tau) f(y, \tau) \, d\sigma_y \, d\tau,\; (\mathrm{x}, \mathrm{t}) \in D \times (0,T),
    \end{align}
and the corresponding volume heat potential operator, $\bm{\mathbcal{V}}^t_D$, is defined by
    \begin{align}\nonumber
          \bm{\mathbcal{V}}^t_D[f](\mathrm{x}, \mathrm{t}) := \frac{1}{\kappa_m} \int_0^t \int_{D} \Phi^{(m)}(\mathrm{x}, \mathrm{t}; \mathrm{y}, \tau) f(y, \tau) \, d\mathrm{y} \, d\tau,\; (\mathrm{x}, \mathrm{t}) \in D \times (0,T).
    \end{align}
We can now rewrite the Lippmann-Schwinger equation (\ref{new-lip}) as follows
    \begin{align}
          u(x,t) + \vartheta \bm{\mathbcal{V}}^t_D[\partial_t u](\mathrm{x},\mathrm{t}) + \alpha \bm{\mathcal{S}}^t_{\partial D}[\partial_\nu u](\mathrm{x},\mathrm{t}) = \frac{\gamma_m}{\gamma_p} \bm{\mathbcal{V}}^t_D[\mathrm{J}](\mathrm{x},\mathrm{t}),\; \text{where} \ (\mathrm{x},\mathrm{t}) \in D \times (0,T).
    \end{align}
Next, we consider the jump relation for the Neumann trace of the single-layer heat potential operator, which is given by $\partial_\nu^\pm \bm{\mathcal{S}}^t_{\partial D}[f] = \mp\frac{1}{2}f + \bm{\mathcal{K}}^t[f]$. Consequently, applying the Neumann trace to the Lippmann-Schwinger equation yields
    \begin{align} \label{boundary}
           (1 + \frac{\alpha}{2}) \partial_\nu u(x,t) + \vartheta \partial_\nu \bm{\mathbcal{V}}^t_D[\partial_t u](\mathrm{x},\mathrm{t}) + \alpha \bm{\mathcal{K}}^t_{\partial D}[\partial_\nu u](\mathrm{x},\mathrm{t}) = \frac{\gamma_m}{\gamma_p} \partial_\nu \bm{\mathbcal{V}}^t_D[\mathrm{J}](\mathrm{x},\mathrm{t}),\; \text{where} \ (\mathrm{x},\mathrm{t}) \in \partial D \times (0,T).
    \end{align}
Here, the adjoint of the Neumann–Poincaré operator, corresponding to the heat operator, is defined as:
    \begin{align}\nonumber
           \bm{\mathbcal{K}}^t_{\partial D}[f](x,t) := \frac{1}{\kappa_m} \int_0^t \int_{\partial D} \left(\frac{\kappa_m}{4\pi(t-\tau)}\right)^{\frac{3}{2}} \frac{\kappa_m \langle y-x \cdot \nu_x \rangle}{2(t-\tau)} e^{-\frac{\kappa_m |x-y|^2}{4(t-\tau)}} f(y,\tau) \, d\sigma_y \, d\tau.
    \end{align}
Integrating over the boundary $\partial D$, we obtain
    \begin{align}
           (1 + \frac{\alpha}{2}) \int_{\partial D} \partial_\nu u + \vartheta \int_{\partial D} \partial_\nu \bm{\mathbcal{V}}^t_D[\partial_t u] + \alpha \int_{\partial D} \bm{\mathcal{K}}^t_{\partial D}[\partial_\nu u] = \frac{\gamma_m}{\gamma_p} \int_{\partial D} \partial_\nu \bm{\mathbcal{V}}^t_D[\mathrm{J}],\nonumber
    \end{align}
which can be rewritten as
    \begin{align} \label{LP-2}
           (1 + \frac{\alpha}{2}) \int_{\partial D} \partial_\nu u + \vartheta \int_{\partial D} \partial_\nu \bm{\mathbcal{V}}^t_D[\partial_t u] + \alpha \int_{\partial D} \partial_\nu u \, \bm{\mathcal{K}}_{\partial D}[1] = \frac{\gamma_m}{\gamma_p} \int_{\partial D} \partial_\nu \bm{\mathbcal{V}}^t_D[\mathrm{J}],
    \end{align}
where the Neumann–Poincaré operator corresponding to the heat operator is defined as
    \begin{align}
           \nonumber\bm{\mathcal{K}}_{\partial D}[f](x,t) := \frac{1}{\kappa_m} \int_0^t \int_{\partial D} \left(\frac{\kappa_m}{4\pi(t-\tau)}\right)^{\frac{3}{2}} \frac{\kappa_m \langle x-y \cdot \nu_y \rangle}{2(t-\tau)} e^{-\frac{\kappa_m |x-y|^2}{4(t-\tau)}} f(y,\tau) \, d\sigma_y \, d\tau.
    \end{align}
Therefore,
     \begin{align}
           \bm{\mathbcal{K}}_{\partial D}\Big[1\Big](x,t) &= \int_{\partial D} \frac{\langle \mathrm{x} - \mathrm{y} \cdot \nu_\mathrm{y} \rangle}{4\pi} \frac{1}{|\mathrm{x} - \mathrm{y}|^3} \left[ \int_{0}^{t} \frac{\kappa_m^{3/2}}{2\sqrt{\pi}} \frac{|\mathrm{x} - \mathrm{y}|^3}{(t - \tau)^{3/2}} \frac{1}{2(t - \tau)} e^{-\frac{\kappa_m |\mathrm{x} - \mathrm{y}|^2}{4(\tau - \mathrm{s})}} \, d\tau \right] d\sigma_\mathrm{y}.
    \end{align}
Define
    \begin{equation}\label{defvarphi3D}
          \varphi(\mathrm{x}, \mathrm{y}, \mathrm{t}) := \int_{0}^{t} \frac{\kappa_m^{3/2}}{2\sqrt{\pi}} \frac{|\mathrm{x} - \mathrm{y}|^3}{(t - \tau)^{3/2}} \frac{1}{2(t - \tau)} e^{-\frac{\kappa_m |\mathrm{x} - \mathrm{y}|^2}{4(t - \tau)}} \, d\tau.
    \end{equation}
By performing the variable change \(\mathrm{m} := \frac{\sqrt{\kappa}_m |\mathrm{x} - \mathrm{y}|}{2 \sqrt{t - \tau}}\), we get $\tau = \mathrm{t} - \frac{\kappa_m |\mathrm{x} - \mathrm{y}|^2}{4 \mathrm{m}^2}$
and $ d\tau = \frac{1}{2} \kappa_m |\mathrm{x} - \mathrm{y}|^2 \mathrm{m}^{-3}.$ Thus, we obtain
    \begin{align}
           \varphi(\mathrm{x}, \mathrm{y}, \mathrm{t}) &= \frac{4}{\sqrt{\pi}} \int_{\frac{\sqrt{\kappa}_m |\mathrm{x} - \mathrm{y}|}{2 \sqrt{t}}}^{\infty} \mathrm{m}^2 \exp(-\mathrm{m}^2) \, d\mathrm{m}. \nonumber
    \end{align}
Considering the evaluation of the improper integral
    \begin{align}\nonumber
          \int_{\frac{|\mathrm{x} - \mathrm{y}|}{2 \sqrt{t}}}^{\infty} \mathrm{m}^2 \exp(-\mathrm{m}^2) \, d\mathrm{m} 
          &= \frac{\sqrt{\pi}}{4} + \frac{\sqrt{\kappa}_m |\mathrm{x} - \mathrm{y}|}{4 \sqrt{t}} e^{-\frac{\kappa_m |\mathrm{x} - \mathrm{y}|^2}{4t}} - \frac{\sqrt{\pi}}{4} \textbf{erf}\left(\frac{\sqrt{\kappa}_m |\mathrm{x} - \mathrm{y}|}{2 \sqrt{t}}\right) 
          \\ \nonumber&= \frac{\sqrt{\pi}}{4} + \frac{\sqrt{\kappa}_m |\mathrm{x} - \mathrm{y}|}{4 \sqrt{t}} \left(1 + \sum_{n=1}^\infty \frac{(-1)^n \left(\frac{\kappa_m |\mathrm{x} - \mathrm{y}|^2}{4t}\right)^n}{n!}\right) \\ \nonumber& \quad - \frac{\sqrt{\pi}}{4} \left(\frac{2}{\sqrt{\pi}} \frac{\sqrt{\kappa}_m |\mathrm{x} - \mathrm{y}|}{2 \sqrt{t}} + \sum_{n=1}^\infty \frac{(-1)^{2n+1} \left(\frac{\sqrt{\kappa}_m |\mathrm{x} - \mathrm{y}|}{2 \sqrt{t}}\right)^n}{(2n+1) n!}\right) 
          \\ \nonumber&= \frac{\sqrt{\pi}}{4} + \sum_{n=1}^\infty \frac{(-1)^n \left(\frac{\kappa_m |\mathrm{x} - \mathrm{y}|^2}{4t}\right)^n}{n!} - \frac{1}{2} \sum_{n=1}^\infty \frac{(-1)^n \left(\frac{\kappa_m |\mathrm{x} - \mathrm{y}|}{2 \sqrt{t}}\right)^{2n+1}}{(2n+1) n!} 
          \\&= \frac{\sqrt{\pi}}{4} + \mathcal{O}\left(\frac{|\mathrm{x} - \mathrm{y}|^3}{t^{3/2}}\right).
    \end{align}
For the regime where \( |\mathrm{x} - \mathrm{y}| \ll t \) and using the Maclaurin series for the error function \(\textbf{erf}(\mathrm{a}) = \frac{2}{\sqrt{\pi}} \sum_{n=0}^\infty \frac{(-1)^n \mathrm{a}^{2n+1}}{(2n+1) n!}\), we deduce
    \begin{align}
           \varphi(\mathrm{x}, \mathrm{y}, \mathrm{t}) 
           = \frac{4}{\sqrt{\pi}} \int_{\frac{|\mathrm{x} - \mathrm{y}|}{2 \sqrt{t}}}^{\infty} \mathrm{m}^2 \exp(-\mathrm{m}^2) \, d\mathrm{m} 
           = 1 + \mathcal{O}\left(\frac{|\mathrm{x} - \mathrm{y}|^3}{t^{3/2}}\right).
    \end{align}
Thus, we have
   \begin{align}
         \bm{\mathbcal{K}}_{\partial D}\Big[1\Big](x,t) 
         \nonumber&= \bm{\mathbcal{K}}_\text{Lap}\Big[1\Big](x,t) + \mathcal{O}\left(\int_{\partial D} \frac{\langle \mathrm{x} - \mathrm{y} \cdot \nu_\mathrm{y} \rangle}{4\pi |\mathrm{x} - \mathrm{y}|^3} \frac{|\mathrm{x} - \mathrm{y}|^3}{t^{3/2}} \, d\sigma_y\right) 
         \\ &= -\frac{1}{2} + \mathcal{O}\left(\int_{\partial D} \frac{\langle \mathrm{x} - \mathrm{y} \cdot \nu_\mathrm{y} \rangle}{4\pi |\mathrm{x} - \mathrm{y}|^3} \frac{|\mathrm{x} - \mathrm{y}|^3}{t^{3/2}} \, d\sigma_y\right),
   \end{align}
where the Neumann–Poincaré operator \(\mathbcal{K}_\text{Lap}\) corresponding to the Laplace operator is defined by
    \begin{align}\nonumber
         \mathbcal{K}_\text{Lap}\Big[f\Big](x) := \frac{1}{4\pi} \int_{\partial D} \frac{\langle x - y \cdot \nu_y \rangle}{|x - y|^3} \, f(y) \, d\sigma_y,
    \end{align}
where \( d\sigma_y \) represents the surface element, \(\nu_y\) is the inward normal vector on \(\partial D\), and the integral is interpreted in the principal value sense.
\\
By substituting the above expression into equation (\ref{LP-2}), we derive
    \begin{align}\label{expression}
           \int_{\partial D} \partial_\nu u + \vartheta \int_{\partial D} \partial_\nu \bm{\mathbcal{V}^t}_D\big[\partial_t u\big] + \alpha \mathcal{O}\left( \int_{\partial D} \int_{\partial D} \frac{\langle \mathrm{x} - \mathrm{y} \cdot \nu_\mathrm{y} \rangle}{4\pi |\mathrm{x} - \mathrm{y}|^3} \frac{|\mathrm{x} - \mathrm{y}|^3}{t^{3/2}} \partial_\nu u(y, \tau) \, d\sigma_y \right) = \frac{\gamma_m}{\gamma_p} \int_{\partial D} \partial_\nu \bm{\mathbcal{V}^t}_D\big[\mathrm{J}\big].
   \end{align}
\noindent 
Given that the volume potential \(\bm{\mathbcal{V}^t}_D[\mathrm{J}]\) satisfies the partial differential equation \(\partial_t u(x, t) - \Delta u(x, t) = \mathrm{J}(x, t)\) and applying Green's identity, we obtain
    \begin{align}\label{impr}
          \int_{\partial D} \partial_\nu u 
          \nonumber&= \frac{\gamma_m}{\gamma_p} \frac{1}{\kappa_m} \int_{\partial D} \partial_\nu \bm{\mathbcal{V}^t}_D\big[\mathrm{J}\big](x, t) \, d\sigma_x 
          - \underbrace{\mathcal{O}\left(\alpha \int_{\partial D} \int_{\partial D} \frac{\langle \mathrm{x} - \mathrm{y} \cdot \nu_\mathrm{y} \rangle}{4\pi |\mathrm{x} - \mathrm{y}|^3} \frac{|\mathrm{x} - \mathrm{y}|^3}{t^{3/2}} \partial_\nu u(y, \tau) \, d\sigma_y \right)}_{:= \textbf{err}^{(1)}} 
          \\ \nonumber&- \vartheta \underbrace{\int_{\partial D} \partial_\nu \bm{\mathbcal{V}^t}_D\big[\partial_t u\big](x, t) \, d\sigma_x}_{:= \textbf{err}^{(2)}} 
          \\&\nonumber= \frac{\gamma_m}{\gamma_p} \frac{1}{\kappa_m} \int_D \Delta \bm{\mathbcal{V}^t}_D\big[\mathrm{J}\big](x, t) \, dx + \textbf{err}^{(1)} + \textbf{err}^{(2)} \\
          &= \underbrace{\frac{\gamma_m}{\gamma_p} \frac{D \cdot \Im(\varepsilon_p)}{2\pi \kappa_m} f(t) \int_D |\mathrm{E}|^2 \, d\mathrm{y}}_{:= \text{Dominant Term}} + \underbrace{\frac{\gamma_m}{\gamma_p} \frac{1}{\kappa_m} \int_D \partial_t \bm{\mathbcal{V}^t}_D\big[\mathrm{J}\big](x, t) \, dx}_{:= \textbf{err}^{(3)}} + \textbf{err}^{(1)} + \textbf{err}^{(2)}.
    \end{align}
We now focus on estimating the term \(\textbf{err}^{(3)}\). First, since \(\mathrm{J}(x, t)\) satisfies \(\mathrm{J}(\cdot, 0) = 0\) identically, and referring to \cite[Theorem 5]{friedman}, the fundamental solution \(\Phi^{(m)}(x, t; y, \tau)\) satisfies \(\big(\partial_t - \Delta\big)\Phi^{(m)}(x, t; y, \tau) = \delta_0(x, y)\), then by differentiating \(\bm{\mathbcal{V}^t}_D\big[\mathrm{J}\big]\) with respect to time and using integration by parts, it follows that
$$
\partial_t \bm{\mathbcal{V}^t}_D\big[\mathrm{J}\big] = \bm{\mathbcal{V}^t}_D\big[\partial_t \mathrm{J}\big].
$$
Thus, we can rewrite the above expression as
    \begin{align}\label{abc}
          \bm{\mathbcal{V}^t}_D\big[\partial_t \mathrm{J}\big] &= \int_D \frac{1}{4\pi |x - y|} \partial_t \mathrm{J}(y, \tau) \, dy 
          + \int_D \frac{1}{4\pi |x - y|} \left(\varphi(x, y, t) - \partial_t \mathrm{J}(y, t)\right) \, dy.
    \end{align}
To estimate the second term in the above equation, we refer to the following lemma. The justification follows similarly to the proof of Lemma 3.2 in \cite{sini-haibing}, and thus we omit it here.
\begin{lemma}\label{singlelayer}    

We set $\varphi(\mathrm{x},\mathrm{y},\mathrm{t})$ as follows:
    \begin{equation}\nonumber
          \varphi(x,\mathrm{y},\mathrm{t}) := \displaystyle\int_{0}^{t} \frac{\kappa_m^\frac{3}{2}|x-y|}{2\sqrt{\pi}(t-\tau)^\frac{3}{2}}\textbf{exp}\big(-\dfrac{|\mathrm{x}-\mathrm{y}| ^2}{4(\mathrm{t}-\tau)}\big)f(y,\tau)d\tau.
    \end{equation}
Then we have
    \begin{equation}\nonumber
          \varphi(\mathrm{x},\mathrm{y},\mathrm{t})-f(y,\tau) = \mathcal{O}\bigg(|\mathrm{x}-\mathrm{y}| \ \Vert \partial_\mathrm{t}f(y,\cdot)\Vert_{\mathrm{L}^{2}(0,t)}\bigg),
    \end{equation}
for $\mathrm{x}, \mathrm{y}$ such that $|\mathrm{v}-\mathrm{y}| \ll \mathrm{t}$ and $\mathrm{t} \in (0,\mathrm{T}]$ uniformly with respect to $D$. 

\end{lemma}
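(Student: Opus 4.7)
The plan is to exploit the fact that the kernel $K(x-y,t-\tau):=\frac{\kappa_m^{3/2}|x-y|}{2\sqrt{\pi}(t-\tau)^{3/2}}\exp\!\big(-\tfrac{\kappa_m|x-y|^{2}}{4(t-\tau)}\big)$ acts, after integration in $\tau$, as an approximate identity in the regime $|x-y|\ll t$. First I would perform the change of variables $m:=\frac{\sqrt{\kappa_m}\,|x-y|}{2\sqrt{t-\tau}}$, under which $\tau\in(0,t)$ is mapped to $m\in(m_{0},\infty)$ with $m_{0}:=\frac{\sqrt{\kappa_m}\,|x-y|}{2\sqrt{t}}$, exactly as in the explicit computation already carried out in the excerpt just above the lemma. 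A direct algebraic simplification reduces the total mass of the kernel to
\begin{equation*}
\int_{0}^{t}K\,d\tau\;=\;\frac{2\kappa_m}{\sqrt{\pi}}\int_{m_{0}}^{\infty}e^{-m^{2}}\,dm,
\end{equation*}
which converges to the expected normalization constant as $m_{0}\downarrow 0$ with an error of order $\mathcal{O}(m_{0})=\mathcal{O}(|x-y|/\sqrt{t})$.

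Next, I would decompose $f(y,\tau)=f(y,t)+\bigl(f(y,\tau)-f(y,t)\bigr)$ and split
\begin{equation*}
\varphi(x,y,t)\;=\;f(y,t)\int_{0}^{t}K\,d\tau\;+\;\int_{0}^{t}K\,\bigl(f(y,\tau)-f(y,t)\bigr)\,d\tau.
\end{equation*}
The first summand, combined with the mass computation above, yields the leading term $f(y,t)$ modulo an $\mathcal{O}(|x-y|)$ correction. For the remainder $\mathcal{R}:=\int_{0}^{t}K\bigl(f(y,\tau)-f(y,t)\bigr)d\tau$, I would write $f(y,\tau)-f(y,t)=-\int_{\tau}^{t}\partial_{s}f(y,s)\,ds$ and apply Cauchy--Schwarz in $s$ to obtain $|f(y,\tau)-f(y,t)|\le\sqrt{t-\tau}\,\Vert\partial_{s}f(y,\cdot)\Vert_{\mathrm{L}^{2}(0,t)}$. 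Hence $|\mathcal{R}|\le\Vert\partial_{t}f(y,\cdot)\Vert_{\mathrm{L}^{2}(0,t)}\int_{0}^{t}K\sqrt{t-\tau}\,d\tau$, and the same substitution reduces the last integral to an explicit one-variable Gaussian-type expression of order $|x-y|$, uniformly in $t\in(0,T]$ and with respect to $D$ (since $K$ depends only on $|x-y|$ and $\kappa_m$).

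The main technical obstacle will be the mild logarithmic divergence at $m_{0}=0$ that appears in $\int_{0}^{t}K\sqrt{t-\tau}\,d\tau$: after the substitution this integral equals $\frac{\kappa_m^{3/2}|x-y|}{\sqrt{\pi}}\int_{m_{0}}^{\infty}m^{-1}e^{-m^{2}}\,dm$, with the inner integral of size $\mathcal{O}(1+|\log m_{0}|)$. This weak singularity is absorbed by the explicit $|x-y|$ prefactor, giving the claimed bound $\mathcal{O}(|x-y|\,\Vert\partial_{t}f(y,\cdot)\Vert_{\mathrm{L}^{2}(0,t)})$; alternatively, replacing Cauchy--Schwarz with a H\"older estimate $|f(y,\tau)-f(y,t)|\le (t-\tau)^{1/2-1/p}\Vert\partial_{t}f(y,\cdot)\Vert_{\mathrm{L}^{p}(0,t)}$ for some $p>2$ removes the logarithm entirely, and this is precisely the route expected in the proof of Lemma~3.2 of \cite{sini-haibing} that the authors cite. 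All remaining steps reduce to elementary one-variable integral estimates once the change of variables has been executed, and uniformity with respect to $D$ follows because $K$ only sees the single parameter $|x-y|$.
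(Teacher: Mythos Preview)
Your plan is the paper's own template: the authors omit the proof here (deferring to \cite{sini-haibing}) but spell out the identical argument for the closely related Lemma~\ref{lemma1}—perform the substitution $m=\tfrac{\sqrt{\kappa_m}|x-y|}{2\sqrt{t-\tau}}$, split off $f(y,t)$ times the total mass of the kernel, and control the remainder $f(y,t-\tfrac{|x-y|^2}{4m^2})-f(y,t)$ via Cauchy--Schwarz in time. The only cosmetic difference is that the paper works entirely in the $m$-variable after substituting, while you bound in $\tau$ first and then substitute; the two are equivalent.

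Your logarithmic observation is genuine and is precisely what separates this lemma from Lemma~\ref{lemma1}. In Lemma~\ref{lemma1} the substitution produces $m^2 e^{-m^2}$, and the factor $1/m$ coming from Cauchy--Schwarz leaves $m e^{-m^2}$, which integrates cleanly to $\mathcal{O}(|x-y|)$. Here the substitution produces only $e^{-m^2}$, so the remainder integral is $\tfrac{|x-y|}{2}\int_{m_0}^\infty m^{-1}e^{-m^2}\,dm\sim |x-y|\,|\log m_0|$. Your first claim—that the $|x-y|$ prefactor ``absorbs'' this—does \emph{not} deliver the stated $\mathcal{O}(|x-y|\,\Vert\partial_t f\Vert_{L^2})$ bound, since $|x-y|\,|\log|x-y||\neq\mathcal{O}(|x-y|)$. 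Your H\"older alternative with $p>2$ does kill the logarithm (the integrand becomes $m^{-\alpha}e^{-m^2}$ with $\alpha<1$) and is the honest repair; it yields $\mathcal{O}(|x-y|\,\Vert\partial_t f\Vert_{L^p})$ rather than the $L^2$ version stated, but for every downstream use in the paper this is equally sufficient.
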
     
\noindent
Taking the integral with respect to \(D\) to the expression (\ref{abc}), we obtain
    \begin{align}\label{2.20}
          \int_{D} \bm{\mathbcal{V}^t}_D\big[\partial_t \mathrm{J}\big] &= \underbrace{\int_{D} \int_D \frac{1}{4\pi |x - y|} \mathrm{J}(y, \tau) \, dy \, dx}_{:= \textbf{err}^{(4)}} 
          + \underbrace{\int_{D} \int_D \frac{1}{4\pi |x - y|} \left(\varphi(x, y, t) - \mathrm{J}(y, t)\right) \, dy \, dx}_{:= \textbf{err}^{(5)}}.
    \end{align}
Then, we deduce
    \begin{align} \label{err4}
           \textbf{err}^{(4)} := \Big| \int_{D}\int_{D} \frac{1}{4\pi|\mathrm{x}-\mathrm{y}|}\partial_t J(y,\tau) dy \Big| 
           &\nonumber\lesssim \Vert \partial_tf(t)\Vert_{L^\infty}\Big(\int_{D\times D}\frac{1}{4\pi|\mathrm{x}-\mathrm{y}|^2}\Big)^\frac{1}{2}\ |D|^\frac{1}{2}\ \Im(\varepsilon_p)\ \big\Vert \mathrm{E}\big\Vert^2_{L^4(D)}
           \\ &\nonumber\lesssim \Big(|D|\sup\limits_{y\in \partial D}\int_{D}\dfrac{dx}{|\mathrm{x}-\mathrm{y}|^2}\Big)^\frac{1}{2} \ |D|^\frac{1}{2}\ \Im(\varepsilon_p)\ \big\Vert \mathrm{E}\big\Vert^2_{L^4(D)}
           \\ &\lesssim \delta^{\frac{9}{2}-h}.
    \end{align}
Again,
    \begin{align}\label{err5}
          \textbf{err}^{(4)} := \Big|\int_{D}\int_{D} \frac{1}{4\pi|\mathrm{x}-\mathrm{y}|}\Big(\varphi(\mathrm{x},\mathrm{y},\mathrm{t})-\partial_t J(y,t)\Big)dy\Big|
          &\nonumber\lesssim |D\times D|^\frac{1}{2}\ |D|^\frac{1}{2}\ \Vert \partial^2_t \mathrm{J}(\cdot,t)\Vert_{L^2\big(D\big)}
          \\ &\nonumber\lesssim |D\times D|^\frac{1}{2} \ |D|^\frac{1}{2}\ \Vert \partial^2_tf(t)\Vert_{L^\infty} \ \Im(\varepsilon_p)\big\Vert \mathrm{E}\big\Vert^2_{L^4(D)}
          \\ &\lesssim \delta^{6-h}.
    \end{align}
Therefore, combining the estimates (\ref{err4}), (\ref{err5}), and plugging these into (\ref{2.20}), we deduce that
    \begin{align}\label{err1}
           \textbf{err}^{(3)} := \Bigg|\frac{\gamma_m}{\gamma_p}\frac{1}{\kappa_m} \int_D \partial_t\bm{\mathbcal{V}^t}_D\big[\mathrm{J}\big](x,t)\ dx\Bigg| \lesssim \delta^{\frac{9}{2}+\beta-h}.
    \end{align}
Recall that if \(D\) is a bounded open subset of \(\mathbb{R}^n\) of class \(\mathcal{C}^{1,\alpha}\) with \(\alpha \in (0,1]\), then there exists a constant \(c_{D,\alpha} > 0\) such that
\[
|(\mathbf{y} - \mathbf{v}) \cdot \nu_{\mathbf{v}}| \leq c_{D,\alpha} |\mathbf{y} - \mathbf{v}|^{1+\alpha}.
\]
Consequently, using the a priori estimate for \(\|\partial_\nu u(y, \cdot)\|_{L^2(\partial D)}\sim \delta^{1+\beta-h}\) and noting that \(\alpha \sim \delta^{-\beta}\), we obtain
    \begin{align}\label{2}
          \textbf{err}^{(1)} := \mathcal{O}\left(\alpha \int_{\partial D} \int_{\partial D} \frac{\langle \mathbf{x} - \mathbf{y} \cdot \nu_\mathbf{y} \rangle}{4\pi |\mathbf{x} - \mathbf{y}|^3} \frac{|\mathbf{x} - \mathbf{y}|^3}{t^{3/2}} \partial_\nu u(y, \tau) \, d\sigma_y \right) = \mathcal{O}(\delta^{6-h}).
    \end{align}
Next, we focus on estimating the term \(\textbf{err}^{(2)}\). We have
    \begin{align}\label{2.27}
        \textbf{err}^{(2)} 
        &\nonumber:= \vartheta \int_{\partial D} \partial_\nu \bm{\mathbcal{V}^t}_D \left[\partial_t u\right](x, t) \, d\sigma_x 
        \\ &\lesssim \Vert 1 \Vert_{L^2(\partial D)} \Vert \partial_\nu \bm{\mathbcal{V}^t}_D \left[\partial_t u\right] \Vert_{L^2(\partial D)}
    \end{align}
We follow a similar approach as we did to estimate in (\ref{err1}) to deduce
    \begin{align}\nonumber
           \partial_{\nu_x} \bm{\mathbcal{V}^t}_D\left[\partial_t u\right](x,t) 
           &= \int_0^t \int_D \partial_{\nu_x} \Phi^{(m)}(x,t; y, \tau) \, \partial_t u(y, \tau) \, d\tau \\
           &= \int_D \frac{\langle \mathbf{y} - \mathbf{x} \cdot \nu_\mathbf{x} \rangle}{4\pi |\mathbf{x} - \mathbf{y}|^3} \left[ \int_0^t \frac{1}{2\sqrt{\pi}} \frac{|\mathbf{x} - \mathbf{y}|^3}{(t - \tau)^{3/2}} \frac{1}{2(t - \tau)} e^{-\frac{|\mathbf{x} - \mathbf{y}|^2}{4(\tau - \mathrm{s})}} \, d\tau \right] d\sigma_\mathbf{y}. \nonumber
    \end{align}
We define
    \begin{equation}
          \varphi(\mathbf{x}, \mathbf{y}, t) := \int_0^t \frac{1}{2\sqrt{\pi} (t - \tau)^{3/2}} \frac{|\mathbf{x} - \mathbf{y}|^3}{(t - \tau)} e^{-\frac{|\mathbf{y} - \mathbf{v}|^2}{4(\mathrm{s} - \tau)}} u(y, \tau) \, d\tau, \nonumber
    \end{equation}
from which we deduce
    \begin{align}\label{normalv}
          \partial_{\nu_x} \bm{\mathbcal{V}^t}_D \left[\partial_t u\right](x,t) 
          &= \int_D \frac{\langle \mathbf{y} - \mathbf{x} \cdot \nu_\mathbf{x} \rangle}{4\pi |\mathbf{x} - \mathbf{y}|^3} \partial_t u(y, \tau) \, dy 
          + \int_D \frac{\langle \mathbf{y} - \mathbf{x} \cdot \nu_\mathbf{x} \rangle}{4\pi |\mathbf{x} - \mathbf{y}|^3} \left(\varphi(\mathbf{x}, \mathbf{y}, t) - \partial_t u(y, t)\right) \, dy.
    \end{align}
Integrating with respect to \(\partial D\), we obtain
    \begin{align}\label{err2}\nonumber
          \Vert \partial_\nu \bm{\mathbcal{V}^t}_D \left[\partial_t u\right] \Vert_{L^2(\partial D)}
          &=\Bigg(\int_{\partial D} \Big|\partial_{\nu_x} \bm{\mathbcal{V}^t}_D \left[\partial_t u\right](x,t)\Big|^2 \Bigg)^\frac{1}{2}
          \\ \nonumber&= \underbrace{\Bigg(\int_{\partial D} \Big|\int_D \frac{\langle \mathbf{y} - \mathbf{x} \cdot \nu_\mathbf{x} \rangle}{4\pi |\mathbf{x} - \mathbf{y}|^3} \partial_t u(y, \tau) \, dy\Big|^2\Bigg)^\frac{1}{2}}_{:= \text{err}^{(6)}} 
          \\ &+ \underbrace{\Bigg(\int_{\partial D} \Big|\int_D \frac{\langle \mathbf{y} - \mathbf{x} \cdot \nu_\mathbf{x} \rangle}{4\pi |\mathbf{x} - \mathbf{y}|^3} \left(\varphi(\mathbf{x}, \mathbf{y}, t) - \partial_t u(y, t)\right) \, dy\Big|^2\Bigg)^\frac{1}{2}}_{:= \text{err}^{(7)}}.
    \end{align}
To estimate the second term in the equation above, we state the following lemma.
\begin{lemma}\label{lemma1}
    Define \(\varphi(\mathbf{x}, \mathbf{y}, t)\) as follows:
\begin{equation}\nonumber
    \varphi(\mathbf{x}, \mathbf{y}, t) := \int_0^t \frac{1}{2\sqrt{\pi} (t - \tau)^{3/2}} \frac{|\mathbf{x} - \mathbf{y}|^3}{(t - \tau)} e^{-\frac{|\mathbf{y} - \mathbf{v}|^2}{4(\mathrm{s} - \tau)}} u(y, \tau) \, d\tau.
\end{equation}
Then
\begin{equation}\nonumber
    \varphi(\mathbf{x}, \mathbf{y}, t) - u(y, t) = \mathcal{O}\left(|\mathbf{x} - \mathbf{y}| \, \| \partial_t u(y, \cdot) \|_{L^2(0, t)}\right),
\end{equation}
for \(\mathbf{x}, \mathbf{y}\) such that \(|\mathbf{x} - \mathbf{y}| \ll t\) and \(t \in (0, T]\), uniformly with respect to \(D\).
\end{lemma}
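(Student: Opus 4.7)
The plan is to mirror the proof sketched for Lemma \ref{singlelayer}, exploiting the self-similar structure of the heat kernel via a Gaussian-type substitution. Concretely, I would introduce the change of variable $m := |\mathbf{x}-\mathbf{y}|/(2\sqrt{t-\tau})$, so that $t-\tau = |\mathbf{x}-\mathbf{y}|^2/(4m^2)$ and $d\tau = |\mathbf{x}-\mathbf{y}|^2/(2m^3)\, dm$. When this substitution is inserted in the definition of $\varphi(\mathbf{x},\mathbf{y},t)$, the singular prefactor $|\mathbf{x}-\mathbf{y}|^3/(t-\tau)^{5/2}$ combines with $d\tau$ to produce $16\, m^2\, dm$, while the Gaussian factor becomes $e^{-m^2}$. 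Hence $\varphi(\mathbf{x},\mathbf{y},t)$ takes the normal form
\[
\varphi(\mathbf{x},\mathbf{y},t)=C\int_{a}^{\infty} m^{2} e^{-m^{2}}\, u(\mathbf{y},\tau(m))\, dm,\qquad a:=\tfrac{|\mathbf{x}-\mathbf{y}|}{2\sqrt{t}},\qquad \tau(m):=t-\tfrac{|\mathbf{x}-\mathbf{y}|^{2}}{4m^{2}},
\]
for an absolute constant $C$. The hypothesis $|\mathbf{x}-\mathbf{y}|\ll t$ ensures $a\ll 1$, which is what makes the approximation viable.

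Next, I would split $u(\mathbf{y},\tau(m)) = u(\mathbf{y},t) + [u(\mathbf{y},\tau(m)) - u(\mathbf{y},t)]$. The "main" contribution $C\, u(\mathbf{y},t) \int_{a}^{\infty} m^{2} e^{-m^{2}}\, dm$ equals $u(\mathbf{y},t)$ up to the missing tail on $(0,a)$, which by the trivial bound $\int_{0}^{a} m^{2} e^{-m^{2}}\, dm \leq a^{3}/3$ contributes $O(a^{3}\, |u(\mathbf{y},t)|)$. Because $u(\mathbf{y},0)=0$, the fundamental theorem of calculus with Cauchy–Schwarz gives $|u(\mathbf{y},t)|\leq \sqrt{t}\,\|\partial_{t}u(\mathbf{y},\cdot)\|_{L^{2}(0,t)}$, so this tail piece is bounded by $C\, |\mathbf{x}-\mathbf{y}|^{3} t^{-1}\,\|\partial_{t}u(\mathbf{y},\cdot)\|_{L^{2}(0,t)}$, which under $|\mathbf{x}-\mathbf{y}|\ll t$ is absorbed into the desired $O(|\mathbf{x}-\mathbf{y}|\,\|\partial_{t}u(\mathbf{y},\cdot)\|_{L^{2}(0,t)})$.

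The bulk of the argument is the control of the "error" term. For each $m\geq a$, Cauchy–Schwarz yields
\[
\bigl|u(\mathbf{y},\tau(m))-u(\mathbf{y},t)\bigr|\leq \sqrt{t-\tau(m)}\,\|\partial_{t}u(\mathbf{y},\cdot)\|_{L^{2}(\tau(m),t)}\leq \frac{|\mathbf{x}-\mathbf{y}|}{2m}\,\|\partial_{t}u(\mathbf{y},\cdot)\|_{L^{2}(0,t)}.
\]
Inserting this bound produces an integrand dominated by $\tfrac{C}{2}|\mathbf{x}-\mathbf{y}|\, m\, e^{-m^{2}}\,\|\partial_{t}u(\mathbf{y},\cdot)\|_{L^{2}(0,t)}$, and since $\int_{a}^{\infty} m\, e^{-m^{2}} dm = \tfrac{1}{2}e^{-a^{2}}\leq \tfrac{1}{2}$, the error integral is directly $O\bigl(|\mathbf{x}-\mathbf{y}|\,\|\partial_{t}u(\mathbf{y},\cdot)\|_{L^{2}(0,t)}\bigr)$. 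Neither the constant nor the bound depend on the shape or location of $D$, giving the claimed uniformity.

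The only genuinely delicate step is tracking the normalization constant $C$ so that the main term reproduces $u(\mathbf{y},t)$ with coefficient exactly $1$ (as opposed to an extraneous multiple coming from $\int_{0}^{\infty} m^{2} e^{-m^{2}} dm = \sqrt{\pi}/4$). In Lemma \ref{singlelayer} the simpler kernel produces $4\, e^{-m^{2}}$ after substitution, whose normalization is immediate; here the extra factor $|\mathbf{x}-\mathbf{y}|^{2}/(t-\tau)$ yields the stronger weight $m^{2}e^{-m^{2}}$ and the prefactor must be adjusted accordingly. This is pure bookkeeping but must be tracked carefully; once it is done, the two-term splitting and Cauchy–Schwarz estimate above close the argument.
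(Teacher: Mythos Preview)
Your proposal is correct and follows essentially the same route as the paper: the substitution $m=|\mathbf{x}-\mathbf{y}|/(2\sqrt{t-\tau})$, the add-and-subtract of $u(\mathbf{y},t)$, the tail bound $\int_0^a m^2 e^{-m^2}\,dm = O(a^3)$ combined with $|u(\mathbf{y},t)|\le \sqrt{t}\,\|\partial_t u\|_{L^2(0,t)}$, and the Cauchy--Schwarz control of the increment are exactly what the paper does. Your version of the increment step is in fact a bit cleaner---you extract $|\mathbf{x}-\mathbf{y}|/(2m)$ and integrate $m\,e^{-m^2}$ directly, whereas the paper carries $|\mathbf{x}-\mathbf{y}|^2/(4m^2)$ and invokes the error-function expansion---but this is a cosmetic difference, not a substantive one.
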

\begin{proof}
    We start by considering the change of variable \(\mathrm{m} := \frac{|\mathrm{x} - \mathrm{y}|}{2\sqrt{t - \tau}}\). Then it follows that \(\tau = t - \frac{|\mathrm{x} - \mathrm{y}|^2}{4\mathrm{m}^2}\). Differentiating \(\tau\) with respect to \(m\), we obtain:
\[ 
d\tau = \frac{|\mathrm{x} - \mathrm{y}|^2}{2} \mathrm{m}^{-3} \, dm.
\] 
Therefore, we deduce
\begin{align*}
    \varphi(\mathrm{x},\mathrm{y},\mathrm{t}) = \frac{4}{\sqrt{\pi}}\displaystyle\int_{\frac{|\mathrm{x}-\mathrm{y}|}{2\sqrt{t}}}^{\infty}\mathrm{m}^2e^{-\mathrm{m}^2}\ u\big(y,\mathrm{t} - \frac{|\mathrm{x}-\mathrm{y}|^2}{4\mathrm{m}^2}\big)d\mathrm{m},
\end{align*} 
which, we rewrite as
\begin{align*}
    \varphi(\mathrm{x},\mathrm{y},\mathrm{t}) = \frac{4}{\sqrt{\pi}}\displaystyle\int_{\frac{|\mathrm{x}-\mathrm{y}|}{2\sqrt{t}}}^{\infty}\mathrm{m}^2e^{-\mathrm{m}^2}\ \Big(u\big(y,\mathrm{t} - \frac{|\mathrm{x}-\mathrm{y}|^2}{4\mathrm{m}^2}\big)-u(y,t)\Big) dm+ \frac{4}{\sqrt{\pi}}u(y,t)\displaystyle\int_{\frac{|\mathrm{x}-\mathrm{y}|}{2\sqrt{t}}}^{\infty}\mathrm{m}^2e^{-\mathrm{m}^2}\ dm
\end{align*}
We now use the fact that $\displaystyle \int_0^\infty\mathrm{m}^2\textbf{exp}(-\mathrm{m}^2)d\mathrm{m}= \frac{\sqrt{\pi}}{4}$ to obtain
\begin{align}\label{esti}
    \varphi(\mathrm{x},\mathrm{y},\mathrm{t}) - u(y,t) = \frac{4}{\sqrt{\pi}}\displaystyle\int_{\frac{|\mathrm{x}-\mathrm{y}|}{2\sqrt{t}}}^{\infty}\mathrm{m}^2e^{-\mathrm{m}^2}\ \Big(u\big(y,\mathrm{t} - \frac{|\mathrm{x}-\mathrm{y}|^2}{4\mathrm{m}^2}\big)-u(y,t)\Big) dm - \frac{4}{\sqrt{\pi}}u(y,t)\displaystyle\int_0^{\frac{|\mathrm{x}-\mathrm{y}|}{2\sqrt{t}}}\mathrm{m}^2e^{-\mathrm{m}^2}\ dm
\end{align}
Considering $ u(y,0) = 0,$ we have $\displaystyle\partial_t u = \int_0^t \partial_t(y,s)ds$ and then by using Cauchy-Schwartz inequality, we deduce the following estimate
\begin{align*}
    u(y,t) = \mathcal{O}\Big(t^\frac{1}{2} \Vert \partial_t u(y,\cdot)\Vert_{L^2(0,t)}\Big)
\end{align*}
We now observe that $\mathrm{m}\ge \frac{\sqrt{\alpha}|\mathrm{y}-\mathrm{v}|}{2\sqrt{t}}$ which implies $t-\frac{|\mathrm{x}-\mathrm{y}|^2}{4\mathrm{m}^2} \ge 0$. Consequently, we derive the similar estimate as before
\begin{align*}
    u\big(y,\mathrm{t} - \frac{|\mathrm{x}-\mathrm{y}|^2}{4\mathrm{m}^2}\big)-u(y,t) 
    &\nonumber= \int_t^{\frac{|\mathrm{x}-\mathrm{y}|^2}{4m^2}}\partial_t u(y,s)ds
    \\ &\nonumber = \mathcal{O}\Big( \frac{|\mathrm{x}-\mathrm{y}|^2}{4\mathrm{m}^2}\Vert \partial_t u(y,\cdot)\Vert_{L^2(0,t)}\Big)
\end{align*}
Now, plugging the above two estimate in (\ref{esti}) to obtain
\begin{align}\label{e3}
   \varphi(\mathrm{x},\mathrm{y},\mathrm{t}) - u(y,t) 
   &= \nonumber\mathcal{O}\bigg(\frac{4}{\sqrt{\pi}}\displaystyle \int_{\frac{|\mathrm{x}-\mathrm{y}|}{2\sqrt{t}}}^{\infty}\mathrm{m}^2\bm{\mathrm{e}}^{-\mathrm{m}^2}d\mathrm{m}\frac{|\mathrm{x}-\mathrm{y}|^2}{4m^2}\Vert \partial_t u(y, \cdot )\Vert_{\mathrm{L}^2(0,t)}\bigg)
   \\ &+ \mathcal{O} \bigg(\frac{4}{\sqrt{\pi}}\displaystyle\int_{0}^{\frac{|\mathrm{x}-\mathrm{y}|}{2\sqrt{t}}}\mathrm{m}^2\bm{\mathrm{e}}^{-\mathrm{m}^2}d\mathrm{m}\;t^{\frac{1}{2}}\Vert \partial_t u(y, \cdot )\Vert_{\mathrm{L}^2(0,t)}\bigg). 
\end{align}
Therefore, considering the regime $|\mathrm{x}-\mathrm{y}|\ll \sqrt{t}$, and as we have error function's Maclaurin series as: $\textbf{erf}(\mathrm{a}) = \frac{2}{\sqrt{\pi}}\displaystyle\sum_{\mathrm{n=0}}^\infty \frac{(-1)^\mathrm{n}\mathrm{a}^{2\mathrm{n}+1} }{(2\mathrm{n}+1)\mathrm{n}!},$ we deduce 
\begin{align}\label{e1}
    \int_{0}^{\frac{|\mathrm{x}-\mathrm{y}|}{2\sqrt{t}}}\mathrm{m}^2\bm{\mathrm{e}}^{-\mathrm{m}^2}d\mathrm{m} 
    &\nonumber= \frac{\sqrt{\pi}}{4}\ \textbf{erf}\Big(\frac{|\mathrm{x}-\mathrm{y}|}{2\sqrt{t}}\Big) - \frac{|\mathrm{x}-\mathrm{y}|}{4\sqrt{t}}\ e^-{\dfrac{|\mathrm{x}-\mathrm{y}|^2}{4t}}
    \\ &= \mathcal{O}\Big(\frac{1}{t^\frac{3}{2}}|\mathrm{x}-\mathrm{y}|^3\Big).
\end{align}
Similarly, we have
\begin{align}\label{e2}
    \int_{\frac{|\mathrm{x}-\mathrm{y}|}{2\sqrt{t}}}^\infty\bm{\mathrm{e}}^{-\mathrm{m}^2}d\mathrm{m} = \frac{\sqrt{\pi}}{2}-\frac{\sqrt{\pi}}{2}\textbf{erf}\Big(\frac{|\mathrm{x}-\mathrm{y}|}{2\sqrt{t}}\Big).
\end{align}
Then, if we plug the previous two estimates (\ref{e1}) and (\ref{e2}) in (\ref{e3}), we deduce
\begin{equation}
     \varphi(\mathrm{x},\mathrm{y},\mathrm{t})-u(y,t) = \mathcal{O}\bigg(|\mathrm{x}-\mathrm{y}| \ \Vert \partial_\mathrm{t}u(y,\cdot)\Vert_{\mathrm{L}^2(0,t)}\bigg),
\end{equation}
for $\mathrm{x}, \mathrm{y}$ such that $|\mathrm{x}-\mathrm{y}| \ll \mathrm{t}$ and $\mathrm{t} \in (0,\mathrm{T}]$ uniformly with respect to $D$. The proof is complete.
\end{proof}
\noindent
Given that \(D\) is a bounded open subset of \(\mathbb{R}^n\) of class \(\mathcal{C}^{1,\alpha}\) with \(\alpha \in (0,1]\), there exists a constant \(c_{D,\alpha} > 0\) such that
\[
|(\mathbf{y} - \mathbf{v}) \cdot \nu_{\mathbf{v}}| \leq c_{D,\alpha} |\mathbf{y} - \mathbf{v}|^{1+\alpha}.
\]
Thus,
\begin{align}\label{err6}
    \Bigg(\int_{\partial D} \Big|\int_D \frac{\langle \mathbf{y} - \mathbf{x} \cdot \nu_\mathbf{x} \rangle}{4\pi |\mathbf{x} - \mathbf{y}|^3} \partial_t u(y, \tau) \, dy\Big|^2\Bigg)^\frac{1}{2} 
    &\lesssim \nonumber\left( \int_{\partial D \times D} \frac{|\langle \mathbf{y} - \mathbf{x} \cdot \nu_\mathbf{x} \rangle|^2}{4\pi |\mathbf{x} - \mathbf{y}|^6} \right)^{1/2} |\partial D|^{1/2} \| \partial_t u(y, \cdot) \|_{L^2(D)} \\
    &\lesssim \nonumber\left( \int_{\partial D \times D} \frac{1}{|\mathbf{x} - \mathbf{y}|^2} \, dx \, dy \right)^{1/2} |\partial D|^{1/2} \| \partial_t u(y, \cdot) \|_{L^2(D)} \\
    &\lesssim \nonumber \delta^{3/2} \left( \int_{\partial B \times B} \frac{1}{|\xi - \eta|^2} \, d\xi \, d\eta \right)^{1/2} |\partial D|^{1/2} \| \partial_t u(y, \cdot) \|_{L^2(D)} \\
    &\lesssim \delta^{5-h}.
\end{align}
Similarly,
\begin{align}\label{err7}
    \Bigg(\int_{\partial D} \Big|\int_D \frac{\langle \mathbf{y} - \mathbf{x} \cdot \nu_\mathbf{x} \rangle}{4\pi |\mathbf{x} - \mathbf{y}|^3} \left(\varphi(\mathbf{x}, \mathbf{y}, t) - \partial_t u(y, t)\right) \, dy\Big|^2\Bigg)^\frac{1}{2}
    &\lesssim \nonumber\left( \int_{\partial D \times D} \frac{|\langle \mathbf{y} - \mathbf{x} \cdot \nu_\mathbf{x} \rangle|^2}{4\pi |\mathbf{x} - \mathbf{y}|^4} \right)^{1/2} |\partial D|^{1/2} \| \partial^2_t u(y, t) \|_{L^2(D \times (0, t))} \\
    &\lesssim \nonumber |\partial D \times D|^{1/2} |\partial D|^{1/2} \int_0^t \| \partial^2_t u(y, \cdot) \|_{L^2(D)} \\
    &\lesssim \delta^{6-h}.
\end{align}
Thus, using the estimates (\ref{err6}), (\ref{err7}), and substituting them into (\ref{2.27}), we obtain
\begin{align}\label{err3}
    \textbf{err}^{(2)} := \vartheta \int_{\partial D} \partial_\nu \bm{\mathbcal{V}^t}_D \left[\partial_t u\right](x, t) \, d\sigma_x \lesssim \delta^{6-h}.
\end{align}
Therefore, combining (\ref{err1}), (\ref{2}), and (\ref{err3}) in (\ref{impr}), we obtain
\begin{align}\label{normalderi}
    \int_{\partial D} \partial_\nu u &= \frac{\gamma_m}{\gamma_p} \frac{1}{\kappa_m} \frac{D \cdot \Im(\varepsilon_p)}{2\pi} \int_D |\mathbf{E}|^2 \, d\mathbf{y} + \mathcal{O}(\delta^{6-h}).
\end{align}
This estimate will be used in the following section.


\subsubsection{End of the Proof of Theorem \ref{th1}: The Asymptotic Expansions}      

We now present the asymptotic expansion of the solution to (\ref{heat}) as $\delta \to 0$, based on the integral representation 
\begin{align}\label{single-integral}
          u(x,t) &+ \vartheta \frac{1}{\kappa_m}\int_0^t\int_{D}\Phi^{(m)}(x,t;y,\tau)\ \partial_t u(y,\tau)\ dyd\tau 
          + \nonumber\alpha\frac{1}{\kappa_m} \int_0^t\int_{\partial D}\Phi^{(m)}(x,t;y,\tau)\ \partial_\nu u(y,\tau)\ d\sigma_yd\tau 
          \\ &= \frac{\gamma_m}{\gamma_p}\frac{k\cdot\Im(\varepsilon_p)}{2\pi}\ \frac{1}{\kappa_m}\int_0^t\int_{D}\Phi^{(m)}(x,t;y,\tau)|\mathrm{E}|^2(y)\ f(\tau) dyd\tau.
    \end{align}
Observe that for any fixed $(x,t) \in \mathbb{R}^3 \setminus \overline{D} \times (0,T)$, the function $\Phi^{(m)}(x,t;y,\tau)$ is sufficiently smooth with respect to $(y,\tau) \in \partial D \times (0,t)$. Consequently, the application of Taylor's series expansion yields
$$\big| \Phi^{(m)}(x,t;y,\tau) - \Phi^{(m)}(x,t;z,\tau)\big| \lesssim \delta,$$ 
for $z \in D$, $y \in \partial D$, and $x \in \mathbb{R}^3 \setminus \overline{D}.$
\begin{align}\label{mainformula-single}
    u(x,t) = -\alpha\frac{1}{\kappa_m}\int_0^t\Phi^{(m)}(x,t;z,\tau)\Big(\int_{\partial D}\partial_\nu u(y,\tau)d\sigma_y\Big)d\tau + \text{error},
\end{align}
where $\text{error}:= \text{err}^{(1)} + \text{err}^{(2)} + \text{err}^{(3)}$. Using the same notation as in \cite{sini-haibing}, consider a nanoparticle occupying a domain $D = \delta \mathrm{B} + \mathrm{z}$, where $\mathrm{B}$ is centered at the origin and $|\mathrm{B}| \sim 1.$ Moreover, we use the following notation for defining functions $\varphi$ and $\psi$ on $\partial D \times (0,T)$ and $\partial \mathrm{B} \times (0,T)$, respectively:
\begin{equation}
\hat{\varphi}(\eta, \Tilde{\tau}) = \varphi^{\Lambda}(\eta, \Tilde{\tau}) := \varphi(\delta\eta + \mathrm{z}, \delta^2 \Tilde{\tau}), \quad \quad \quad  \check{\psi}(\mathrm{x}, t) = \psi^\vee(\mathrm{x}, t) := \psi\left(\frac{\mathrm{x}-\mathrm{z}}{\delta}, \frac{t}{\delta^2}\right)
\end{equation}
for $(\mathrm{x},t) \in \partial D \times (0,T)$ and $(\eta,\Tilde{\tau}) \in \partial B \times (0,T)$, respectively.
\\
Now, we have
\begin{align}\label{serr1}
    \text{err}^{(1)} &\nonumber:= \Big|\alpha\int_0^t\int_{\partial D}\Big(\Phi^{(m)}(x,t;y,\tau) - \Phi^{(m)}(x,t;z,\tau)\Big)\partial_\nu u(y,\tau)\ d\sigma_yd\tau \Big|
    \\ \nonumber&\lesssim \mathcal{O}\Big(\alpha \delta \Vert 1 \Vert_{L^2(\partial D \times (0,T))}\Vert \partial_\nu u(y,t)\Vert_{L^2(\partial D \times(0,T))}\Big) 
    \\ &\lesssim \mathcal{O}\Big(\alpha \delta \Vert 1 \Vert_{L^2(\partial D \times (0,T))} \int_0^t\underbrace{\Vert \partial_\nu u(\cdot,t)\Vert_{L^2(\partial D)}}_{\delta^{1+\beta-h}}\Big) \lesssim \delta^{4-h}.
\end{align}
Similarly, we rewrite the second term as
\begin{align}\label{serr2}
   \text{err}^{(2)} &\nonumber:= \Big| \vartheta\bm{\mathbcal{V}^t}_D\Big[\partial_t u\Big](x,t)\Big| 
   \\ &\nonumber\lesssim\Big|\int_{D} \frac{1}{4\pi|\mathrm{x}-\mathrm{y}|}\partial_t u(y,\tau) dy\Big| + \Big|\int_{D} \frac{1}{4\pi|\mathrm{x}-\mathrm{y}|}\Big(\varphi(\mathrm{x},\mathrm{y},\mathrm{t}) - \partial_t u(y,t)\Big)dy\Big|
   \\  &\lesssim |D|^\frac{1}{2} \Vert \partial_t u(\cdot,t) \Vert_{L^2(D)} + |D|^\frac{1}{2} \Vert \partial^2_t u(\cdot,t) \Vert_{L^2(D)} \lesssim \delta^{4-h}.
\end{align}
For the third term, we have
\begin{align}\label{serr3}
   \text{err}^{(3)} &\nonumber:= \Big| \frac{\gamma_m}{\gamma_p}\frac{\omega\cdot\Im(\varepsilon_p)}{2\pi}\bm{\mathbcal{V}^t}_D\Big[\mathrm{J}\Big](x,t)\Big| 
   \\ &\nonumber\lesssim\delta^{\beta+h}\Bigg(\Big|\int_{D} \frac{1}{4\pi|\mathrm{x}-\mathrm{y}|}\mathrm{J}(y,\tau) dy\Bigg| + \Bigg|\int_{D} \frac{1}{4\pi|\mathrm{x}-\mathrm{y}|}\Big(\varphi(\mathrm{x},\mathrm{y},\mathrm{t}) - \mathrm{J}(y,t)\Big)dy\Big|\Bigg)
   \\  &\lesssim \delta^{\beta+h}\Bigg(|D|^\frac{1}{2}\ \Vert f(t)\Vert_{L^\infty} \big\Vert \mathrm{E} \big\Vert^2_{L^4(D)} + |D|^\frac{1}{2}\ \Vert \partial_t f(t)\Vert_{L^\infty} \big\Vert \mathrm{E} \big\Vert^2_{L^4(D)}\Bigg)
   \lesssim \delta^{3+\beta-h}.
\end{align}
Therefore, combining (\ref{serr1}), (\ref{serr2}), and (\ref{serr3}), we derive from (\ref{mainformula-single}) that
\begin{align}
    u(x,t) = -\alpha\frac{1}{\kappa_m}\int_0^t\Phi^{(m)}(x,t;z,\tau)\Big(\int_{\partial D}\partial_\nu u(y,\tau)d\sigma_y\Big)d\tau + \mathcal{O}(\delta^{4-h}).
\end{align}
Finally, due to the estimate (\ref{normalderi}), we obtain that
\begin{align}
     \nonumber u(x,t) = -\frac{k\cdot\Im(\varepsilon_p)}{2\pi}\frac{\gamma_m}{\kappa^2_m}\int_0^t\Phi^{(m)}(x,t;z,\tau)f(\tau)d\tau\int_D |\mathrm{E}|^2(y) \ d\mathrm{y} + \mathcal{O}(\delta^{4-h}).
\end{align}


\subsection{Proof of Theorem \ref{th1}: the Multiple Inclusion Case} 

Our aim in this section is to provide the asymptotic expansion of the solution to (\ref{heat}) as $\delta\to 0$ for the case when we distribute the multiple nano-particles in the region of interest. Let us now express the solution to (\ref{heat}) as the following integral equation for $i=1,2,\ldots,M$
\begin{align}
    u^{(i)}(x,t) + \sum_{i=1}^M \vartheta_i\ \bm{\mathbcal{V}^t}_{D_i}\big[\partial_t u\big](\mathrm{x},\mathrm{t}) + \sum_{i=1}^M\alpha_i\ \bm{\mathcal{S}^t}_{\partial D_i}\big[\partial_\nu u^{(i)}\big](\mathrm{x},\mathrm{t}) = \sum_{i=1}^M \frac{\gamma_m}{\gamma_{p_i}} \bm{\mathbcal{V}^t}_{D_i}\big[\mathrm{J}\big](\mathrm{x},\mathrm{t}),\; (\mathrm{x},\mathrm{t}) \in D_i \times(0,\mathrm{T}).
\end{align}    
Then, we have
\begin{align}
    u^{(i)}(x,t)\nonumber &+ \vartheta_i\ \bm{\mathbcal{V}^{(ii)}}\big[\partial_t u^{(i)}\big](\mathrm{x},\mathrm{t}) + \alpha_i\ \bm{\mathcal{S}^{(ii)}}\big[\partial_\nu u^{(i)}\big](\mathrm{x},\mathrm{t}) + \sum\limits_{\substack{j=1 \\ j\neq i}}^M \vartheta_j\ \bm{\mathbcal{V}^{(ij)}}\big[\partial_t u^{(j)}\big](\mathrm{x},\mathrm{t}) \\ &+ \sum\limits_{\substack{j=1 \\ j\neq i}}^M\alpha_j\ \bm{\mathcal{S}^{(ij)}}\big[\partial_\nu u^{(j)}\big](\mathrm{x},\mathrm{t}) = \frac{\gamma_m}{\gamma_{p_i}}\bm{\mathbcal{V}^t}_{D_i}\big[\mathrm{J}\big](\mathrm{x},\mathrm{t})  + \sum\limits_{\substack{j=1 \\ j\neq i}}^M \frac{\gamma_m}{\gamma_{p_j}}\bm{\mathbcal{V}^{(ij)}}\big[\mathrm{J}\big](\mathrm{x},\mathrm{t}),\; (\mathrm{x},\mathrm{t}) \in D_i \times(0,\mathrm{T}).
\end{align}    
Then, using the jump relation we arrive at the following boundary integral representation
\begin{align}
&\nonumber(1+\frac{\alpha_i}{2})\partial_\nu u^{(i)}(x,t) + \vartheta_i\ \partial_\nu\bm{\mathbcal{V}^{(ii)}}\big[\partial_t u^{(i)}\big](\mathrm{x},\mathrm{t}) + \alpha_i\ \bm{\mathcal{K}^{(ii)}}\big[\partial_\nu u^{(i)}\big](\mathrm{x},\mathrm{t}) + \sum\limits_{\substack{j=1 \\ j\neq i}}^M \vartheta_j\ \partial_\nu\bm{\mathbcal{V}^{(ij)}}\big[\partial_t u^{(j)}\big](\mathrm{x},\mathrm{t}) \\ &+ \sum\limits_{\substack{j=1 \\ j\neq i}}^M\alpha_j\ \partial_\nu\bm{\mathcal{S}^{(ij)}}\big[\partial_\nu u^{(j)}\big](\mathrm{x},\mathrm{t}) = \frac{\gamma_m}{\gamma_{p_i}}\ \partial_\nu \bm{\mathbcal{V}^{(ii)}}\big[\mathrm{J}\big](\mathrm{x},\mathrm{t}) + \sum\limits_{\substack{j=1 \\ j\neq i}}^M \frac{\gamma_m}{\gamma_{p_j}}\ \partial_\nu \bm{\mathbcal{V}^{(ij)}}\big[\mathrm{J}\big](\mathrm{x},\mathrm{t}),\; (\mathrm{x},\mathrm{t}) \in \partial D_i \times(0,\mathrm{T}).
\end{align}    
Let us now take integration with respect to $\partial D_i$ to obtain
    \begin{align}\label{imprexpression}
         \nonumber&(1+\frac{\alpha_i}{2})\int_{\partial D_i}\partial_\nu u^{(i)}(x,t) 
         + \underbrace{\vartheta_i\int_{\partial D_i}\partial_\nu\bm{\mathbcal{V}^{(ii)}}_D\big[\partial_t u^{(i)}\big](\mathrm{x},\mathrm{t})}_{:= \text{err}_n^{(1)}} 
         + \underbrace{\alpha_i\int_{\partial D_i}\bm{\mathcal{K}^{(ii)}}_{\partial D}\big[\partial_\nu u^{(i)}\big](\mathrm{x},\mathrm{t})}_{:= \text{err}_n^{(2)}} 
         \\ \nonumber&+ \underbrace{\sum\limits_{\substack{j=1 \\ j\neq i}}^M \vartheta_j\int_{\partial D_i}\partial_\nu\bm{\mathbcal{V}^{(ij)}}\big[\partial_t u^{(j)}\big](\mathrm{x},\mathrm{t})}_{:= \text{err}_n^{(3)}} 
         + \underbrace{\sum\limits_{\substack{j=1 \\ j\neq i}}^M \alpha_j\int_{\partial D_i}\partial_\nu\bm{\mathcal{S}^{(ij)}}\big[\partial_\nu u^{(j)}\big](\mathrm{x},\mathrm{t})}_{:= \textbf{Term}_n^{(1)}} 
         = \underbrace{\int_{\partial D_i}\frac{\gamma_m}{\gamma_{p_i}}\partial_\nu \bm{\mathbcal{V}^{(ii)}}\big[\mathrm{J}\big](\mathrm{x},\mathrm{t})}_{:= \textbf{Term}_n^{(2)}} 
         \\ &+ \underbrace{\sum\limits_{\substack{j=1 \\ j\neq i}}^M \int_{\partial D_i}\frac{\gamma_m}{\gamma_{p_j}}\partial_\nu \bm{\mathbcal{V}^{(ij)}}\big[\mathrm{J}\big](\mathrm{x},\mathrm{t})}_{:= \textbf{err}_n^{(4)}}.
    \end{align}  
Proceeding similarly as we derived the estimates (\ref{err2}), we can show that
\begin{align}\label{errn1}
    \text{err}_n^{(1)} \lesssim \delta^{6-h}.
\end{align}
Again, as discussed in the previous section and due to the estimate (\ref{2}), we have
\begin{align}\label{errn2}
    \text{err}_n^{(2)} = -\frac{\alpha_i}{2}\int_{\partial D_i}\partial_\nu u^{(i)} + \mathcal{O}(\delta^{6-h}).
\end{align}
We start with the following singularity properties related to the heat fundamental solution.
\begin{lemma}\cite[Lemma 4.2]{sini-haibing}
    For $x\ne y$ and $|x-y|\to 0,$ we have
    \begin{equation}
        \Big(\int_0^t|\partial_{\nu_x}\Phi(x,t;y,\tau)|^2d\tau\Big)^\frac{1}{2} = \mathcal{O}(|x-y|^{-3}),\;\text{and} \; \Big(\int_0^t|\nabla_x\Phi(x,t;y,\tau)|^2d\tau\Big)^\frac{1}{2} = \mathcal{O}(|x-y|^{-3}),\; t \in (0,T).
    \end{equation}
\end{lemma}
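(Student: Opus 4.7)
The plan is a direct computation that exploits the explicit form of the heat kernel in~(\ref{green}), together with a single change of variables that turns the $\tau$-integral into a Gamma-type integral. Since $|\partial_{\nu_x}\Phi^{(m)}(x,t;y,\tau)|\le|\nabla_x\Phi^{(m)}(x,t;y,\tau)|$ pointwise, it suffices to establish the bound for the full spatial gradient; the normal-derivative estimate then follows by Cauchy--Schwarz.

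First I would differentiate~(\ref{green}) to get
\begin{equation*}
\nabla_x\Phi^{(m)}(x,t;y,\tau) = -\frac{\kappa_m(x-y)}{2(t-\tau)}\,\Phi^{(m)}(x,t;y,\tau)\,\chi_{(0,\infty)}(t-\tau),
\end{equation*}
so that, up to a constant depending only on $\kappa_m$,
\begin{equation*}
|\nabla_x\Phi^{(m)}(x,t;y,\tau)|^2 \;\lesssim\; \frac{|x-y|^2}{(t-\tau)^5}\exp\!\Big(-\frac{\kappa_m|x-y|^2}{2(t-\tau)}\Big),
\end{equation*}
for $\tau<t$. The key step is the substitution $s=t-\tau$ followed by $u=\kappa_m|x-y|^2/(2s)$. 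The latter is tailored precisely to the Gaussian in the exponent: the Jacobian factor $ds$ combines with $s^{-5}$ to yield (up to constants) $|x-y|^{-8}\,u^{3}\,du$. Absorbing the prefactor $|x-y|^{2}$, the $\tau$-integral is reduced to
\begin{equation*}
\int_0^t|\nabla_x\Phi^{(m)}(x,t;y,\tau)|^2\,d\tau \;\lesssim\; |x-y|^{-6}\int_{\kappa_m|x-y|^2/(2t)}^{\infty}u^{3}e^{-u}\,du.
\end{equation*}

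As $|x-y|\to 0$ with $t\in(0,T)$ fixed, the lower limit tends to zero, so the remaining integral is bounded by $\Gamma(4)=6$. Taking square roots yields the required $\mathcal{O}(|x-y|^{-3})$ bound, uniformly in $t$ on compact subsets of $(0,T)$. There is no genuine obstacle: the whole point of the substitution is that it absorbs the Gaussian decay at $s=0$ (where the classical singularity is concentrated) and turns what looks like a non-integrable power into a convergent Gamma integral. The only care needed is to note that the indicator $\chi_{(0,\infty)}(t-\tau)$ confines the integrand to $\tau\in(0,t)$, so the substitution produces a proper integral whose value in the limit $|x-y|\to 0$ is a universal constant independent of $t$.
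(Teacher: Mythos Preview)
Your argument is correct: the gradient formula, the substitution $u=\kappa_m|x-y|^2/(2s)$, and the reduction to $\Gamma(4)$ are all accurate, and the normal-derivative bound follows as you say from $|\partial_{\nu_x}\Phi|\le|\nabla_x\Phi|$. The paper does not supply its own proof of this lemma---it is quoted verbatim from \cite[Lemma~4.2]{sini-haibing}---so there is nothing to compare against; your direct change-of-variables computation is the standard route and matches what one finds in that reference.
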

\noindent
Let us now proceed to estimate the term $\displaystyle\sum\limits_{\substack{j=1 \\ j\neq i}}^M \vartheta_j\int_{\partial D_i}\partial_\nu\bm{\mathbcal{V}^{(ij)}}\big[\partial_t u^{(j)}\big](\mathrm{x},\mathrm{t})$. In order to do that, we use divergence theorem and fact that Newtonian potential satisfies the corresponding heat operator to obtain
    \begin{align} \label{errn3}
          \text{err}_n^{(3)}
          \nonumber&:= \Bigg|\sum\limits_{\substack{j=1 \\ j\neq i}}^M \vartheta_j\int_{\partial D_i}\partial_\nu\bm{\mathbcal{V}^{(ij)}}\big[\partial_t u^{(j)}\big](\mathrm{x},\mathrm{t})d\sigma_x\Bigg|
          \\ \nonumber&\lesssim \sum\limits_{\substack{j=1 \\ j\neq i}}^M \vartheta_j\Vert 1\Vert_{L^2(\partial D_i)}\Vert\partial_\nu\bm{\mathbcal{V}^{(ij)}}\big[\partial_t u^{(j)}\big]\Vert_{L^2(\partial D_i)}
          \\ \nonumber& = \sum\limits_{\substack{j=1 \\ j\neq i}}^M \delta\vartheta_j\Bigg(\int_{\partial D_i}\Big(\int_0^t\int_{D_j}\partial_{\nu_x}\Phi(x,t;y,\tau)\ \partial_t u^{(j)}(\mathrm{y},\tau)dyd\tau\Big)^2d\sigma_x\Bigg)^\frac{1}{2}
          \\ \nonumber& \lesssim\Bigg|\sum\limits_{\substack{j=1 \\ j\neq i}}^M \frac{\vartheta_j}{\kappa_m}\delta \int_0^T \Vert \partial_t u^{(j)}(\cdot,t)\Vert_{L^2(D_j)}\Bigg(\int_{\partial D_i}\int_{ D_j} \Big(\int_0^t|\partial_{\nu_x}\Phi(x,t;y,\tau)|^2d\tau\Big)d\sigma_x\Bigg)^\frac{1}{2} 
          \\ \nonumber &\lesssim \delta\sum\limits_{\substack{j=1 \\ j\neq i}}^M \frac{\vartheta_j}{\kappa_m}\Vert \partial_t u^{(j)}(\cdot,t)\Vert_{L^2(D_j)} |\partial D_i\times D_j|^\frac{1}{2} \Big(\int_0^t|\partial_{\nu_x}\Phi(x,t;y,\tau)|^2d\tau\Big)^\frac{1}{2}
          \\  &\lesssim \delta^{4+\beta-h}\sum\limits_{\substack{j=1 \\ j\neq i}}^M d_{ij}^{-3}.
   \end{align}
Next, we use divergence theorem and the fact that the single layer potential $\mathcal{S}w$ satisfies the homogeneous heat equation. That is, $[\alpha\partial_t - \Delta_x](\mathcal{S}w)(x, t) = 0\ \forall (x, t) \in \mathbb R^3 \setminus \partial D\times(0,T)$. Then, we have 
\begin{align}\label{interactionterm}
    \textbf{Term}_n^{(1)} := \sum\limits_{\substack{j=1 \\ j\neq i}}^M \alpha_j\int_{\partial D_i}\partial_\nu\bm{\mathcal{S}^{(ij)}}\big[\partial_\nu u^{(j)}\big](\mathrm{x},\mathrm{t})\ dx
    &\nonumber= \sum\limits_{\substack{j=1 \\ j\neq i}}^M \alpha_j\int_{D_i}\Delta\bm{\mathcal{S}^{(ij)}}\big[\partial_\nu u^{(j)}\big](\mathrm{x},\mathrm{t})\ dx
    \\&= \sum\limits_{\substack{j=1 \\ j\neq i}}^M \alpha_j\int_{D_i} \bm{\mathcal{S}^{(ij)}}\big[\partial_t\partial_\nu u^{(j)}\big](\mathrm{x},\mathrm{t})\ dx.
\end{align}
Let us now define with $\displaystyle\sigma^{(j)} = \int_{\partial D_j}\partial_\nu u^{(j)}$ the following
\begin{align}\label{interactionterm1}
    \bm{\mathcal{S}^{(ij)}}\big[\partial_t\sigma^{(j)}\big](\mathrm{x},\mathrm{t}) = \frac{1}{\kappa_m}\int_{0}^t \Phi(z_i,t;z_j,\tau)\ \partial_t\sigma^{(j)}(\tau) d\tau + \mathcal{A}^{(ij)} + \mathcal{B}^{(ij)}\; \text{with}\ (x,t) \in \partial D_i \times(0,T),
\end{align}
where, $$\bm{\mathcal{A}^{(ij)}}\big[\sigma^{(j)}\big](\mathrm{x},\mathrm{t}) := \frac{1}{\kappa_m}\int_{0}^t \big(\Phi(x,t;z_j,\tau) - \Phi(z_i,t;z_j,\tau)\big)\ \partial_t\sigma^{(j)}(\tau) d\tau $$
and
$$\bm{\mathcal{B}^{(ij)}}\big[\sigma^{(j)}\big](\mathrm{x},\mathrm{t}) := \frac{1}{\kappa_m}\int_{0}^t \big(\Phi(x,t;y,\tau) - \Phi(x,t;z_j,\tau)\big)\ \partial_t\sigma^{(j)}(\tau) d\tau.$$
We now proceed to estimate term $\bm{\mathcal{A}^{(ij)}}\big[\sigma^{(j)}\big](\mathrm{x},\mathrm{t}).$ We write
    \begin{align}\nonumber
          \bm{\mathcal{A}^{(ij)}}\big[\sigma^{(j)}\big](\mathrm{x},\mathrm{t}) 
          &\nonumber= \underbrace{\int_{0}^t \big(x-z_i\big)\cdot \nabla_x \Phi(z_i,t;z_j,\tau)\ \partial_t\sigma^{(j)}(\tau) d\tau}_{:= \bm{\mathcal{A}^{(ij)}}_1 } 
          \\ &\nonumber+ \underbrace{\frac{1}{2} \sum\limits_{k,l=1}^3\int_{0}^t \big(x_k-z_{ik}\big)\big(x_l-z_{il}\big)\cdot \partial_{x_k}\partial_{x_l} \Phi(z^*_i,t;z_j,\tau)\ \partial_t\sigma^{(j)}(\tau) d\tau}_{:= \bm{\mathcal{A}^{(ij)}}_2 },
    \end{align}
where $z^*_i = z_i + \theta(x-z_i), 0<\theta<1.$ \\
Therefore, plugging the previous expression (\ref{interactionterm1}) in (\ref{interactionterm}), we derive that
    \begin{align}\label{important}
         \textbf{Term}_n^{(1)} := \nonumber&\sum\limits_{\substack{j=1 \\ j\neq i}}^M \alpha_j\int_{\partial D_i}\partial_\nu\bm{\mathcal{S}^{(ij)}}\big[\partial_\nu u^{(j)}\big](\mathrm{x},\mathrm{t}) 
        \\ \nonumber&=   \sum\limits_{\substack{j=1 \\ j\neq i}}^M \frac{\alpha_j}{\kappa_m} \text{vol}(D_i)\int_{0}^t \Phi(z_i,t;z_j,\tau)\ \partial_t\sigma^{(j)}(\tau) d\tau + \underbrace{\sum\limits_{\substack{j=1 \\ j\neq i}}^M \frac{\alpha_j}{\kappa_m}\int_{D_i}\mathcal{A}_1^{(ij)}}_{:= \textbf{err}_n^{(5)}} + \underbrace{\sum\limits_{\substack{j=1 \\ j\neq i}}^M \frac{\alpha_j}{\kappa_m}\int_{D_i}\mathcal{A}_2^{(ij)}}_{:= \textbf{err}_n^{(6)}}
        \\ &+ \underbrace{\sum\limits_{\substack{j=1 \\ j\neq i}}^M \frac{\alpha_j}{\kappa_m}\int_{D_i}\mathcal{B}^{(ij)}}_{:= \textbf{err}_n^{(7)}}.
     \end{align}
Keeping in mind the definition of $\sigma^{(j)}$, we start with estimating the term $\textbf{err}_n^{(4)}$.
\begin{align}
    \textbf{err}_n^{(5)} 
    \nonumber&:= \Big|\sum\limits_{\substack{j=1 \\ j\neq i}}^M \frac{\alpha_j}{\kappa_m}\int_{D_i}\mathcal{A}_1^{(ij)}\Big| 
    \\ \nonumber&\lesssim |D_i|^\frac{1}{2}\Vert \partial_t\partial_\nu u^{(j)}(\cdot,t)\Vert_{L^2(\partial D_j)}\Bigg(\int_{D_i}\int_{\partial D_j}\Big(\int_0^t |x-z_i|^2|\nabla_x \Phi(z_i,t;z_j,\tau)|^2 d\tau\Big)  d\sigma_x d\sigma_y\Bigg)^\frac{1}{2}
    \\ \nonumber&\lesssim \sum\limits_{\substack{j=1 \\ j\neq i}}^M \frac{\alpha_j}{\kappa_m}|D_i|^\frac{1}{2}\Vert \partial_t\partial_\nu u^{(j)}(\cdot,t)\Vert_{L^2(\partial D_j)}\ \delta\ |D_i\times \partial D_j|^\frac{1}{2}\ d_{ij}^{-3}
\end{align}
Consequently, we obtain that 
\begin{align}\label{errn5}
    \textbf{err}_n^{(5)} \lesssim \delta^{6-h}\sum\limits_{\substack{j=1 \\ j\neq i}}^M d_{ij}^{-3}.
\end{align}
In a similar way, we can show that
\begin{align}\label{errn6}
    \textbf{err}_n^{(6)} \lesssim \delta^{8-h}\sum\limits_{\substack{j=1 \\ j\neq i}}^M d_{ij}^{-4}\; \text{and}\; \textbf{err}_n^{(7)} \lesssim \delta^{6-h}\sum\limits_{\substack{j=1 \\ j\neq i}}^M d_{ij}^{-3}.
\end{align}
Therefore, with these estimates (\ref{errn5}), (\ref{errn6}), and plugging these into (\ref{important}), we deduce that
    \begin{align}\label{termn1}
        \sum\limits_{\substack{j=1 \\ j\neq i}}^M \frac{\alpha_j}{\kappa_m}\int_{\partial D_i}\partial_\nu\bm{\mathcal{S}^{(ij)}}\big[\partial_\nu u^{(j)}\big](\mathrm{x},\mathrm{t}) 
        =   \sum\limits_{\substack{j=1 \\ j\neq i}}^M \frac{\alpha_j}{\kappa_m} \text{vol}(D_i)\int_{0}^t \Phi(z_i,t;z_j,\tau)\ \partial_t\sigma^{(j)}(\tau) d\tau + \mathcal{O}\Big(\delta^{6-h}\sum\limits_{\substack{j=1 \\ j\neq i}}^M d_{ij}^{-3}\Big).
    \end{align}
\noindent
Following a similar derivation for the first term on the right-hand side of the expression in (\ref{important}) and using the estimate in (\ref{err1}), we deduce that
\begin{align}\label{termn2}
    \textbf{Term}_n^{(2)}
    \nonumber&:= \int_{\partial D_i}\frac{\gamma_m}{\gamma_{p_i}}\partial_\nu \bm{\mathbcal{V}^{(ii)}}\big[\mathrm{J}\big](\mathrm{x},\mathrm{t})
    \\ &= \frac{\gamma_m}{\gamma_{p_i}}\frac{1}{\kappa_m}\frac{k\cdot\Im(\varepsilon_p)}{2\pi}\ f(t) \int_{D_i} |\mathrm{E}|^2 \ d\mathrm{y} + \mathcal{O}(\delta^{\frac{13}{2}-h}).
\end{align}
Next, we turn our attention to estimate the following term
\begin{align}\label{errn4}
   \textbf{err}_n^{(4)} \nonumber&:= \Bigg|\sum\limits_{\substack{j=1 \\ j\neq i}}^M \int_{\partial D_i}\frac{\gamma_m}{\gamma_{p_j}}\partial_\nu \bm{\mathbcal{V}^{(ij)}}\big[\mathrm{J}\big](\mathrm{x},\mathrm{t})d\sigma_x\Bigg|
   \\ \nonumber& = \Bigg|\sum\limits_{\substack{j=1 \\ j\neq i}}^M \frac{\gamma_m}{\gamma_{p_j}}\int_{\partial D_i}\Big(\int_{D_j}\partial_{\nu_x}\Phi(x,t;y,\tau)\ J(\mathrm{y},\tau)dyd\tau\Big)d\sigma_x\Bigg|
     \\ \nonumber& \lesssim\Bigg|\sum\limits_{\substack{j=1 \\ j\neq i}}^M \frac{\gamma_m}{\gamma_{p_j}}|\partial D_i|^\frac{1}{2}\Vert f\Vert_{L^\infty}\Vert J\Vert_{L^2(D_j)}\Bigg(\int_{\partial D_i}\int_{ D_j} \Big(\int_0^t|\partial_{\nu_x}\Phi(x,t;y,\tau)|^2d\tau\Big)\ dyd\sigma_x\Bigg)^\frac{1}{2} 
     \\ \nonumber &\lesssim \sum\limits_{\substack{j=1 \\ j\neq i}}^M \frac{\gamma_m}{\gamma_{p_j}}|\partial D_i|^\frac{1}{2}\Vert f\Vert_{L^\infty}\Vert J\Vert_{L^2(D_j)} |\partial D_i\times D_j|^\frac{1}{2} \Big(\int_0^t|\partial_{\nu_x}\Phi(x,t;y,\tau)|^2d\tau\Big)^\frac{1}{2}
     \\ &\lesssim \delta^{5+\beta-h}\sum\limits_{\substack{j=1 \\ j\neq i}}^M d_{ij}^{-3}.
\end{align}
Finally, combining the estimates (\ref{errn1}), (\ref{errn1}), (\ref{errn1}), (\ref{termn1}), (\ref{termn2}), (\ref{errn4}), and plugging those into the expression we obtain that $\displaystyle\sigma^{(i)} = \int_{\partial D_i}\partial_\nu u^{(i)}$ for $i=1,2,\ldots,M$ satisfies the system of equations
    \begin{align}\label{time-algebraic}
          \sigma^{(i)} + \sum\limits_{\substack{j=1 \\ j\neq i}}^M \frac{\alpha_j}{\kappa_m} \text{vol}(D_i)\int_{0}^t \Phi(z_i,t;z_j,\tau)\ \frac{\partial}{\partial\tau}\sigma^{(j)}(\tau) d\tau 
          =  \frac{\gamma_m}{\gamma_{p_i}}\frac{1}{\kappa_m}\frac{k\cdot\Im(\varepsilon_p)}{2\pi}\ f(t) \int_{D_i} |\mathrm{E}|^2 \ d\mathrm{y} + \mathcal{O}\Big(\delta^{4+\beta-h}\sum\limits_{\substack{j=1 \\ j\neq i}}^M d_{ij}^{-3}\Big). 
    \end{align}
In the following section, we prove the unique solvability of the above algebraic system and establish an estimate for \(\sum\limits_{i=1}^M \Vert \sigma^{(i)}\Vert^2_{L^2(0,T)}\).


\subsubsection{\texorpdfstring{Unique Solvability of the Discrete Algebraic System in $H^1(0,T)$}{Unique Solvability of the Discrete Algebraic System}}  

Let us consider the algebraic system (\ref{time-algebraic})
   \begin{align}\label{algebraic}
         \sigma^{(i)} + \sum\limits_{\substack{j=1 \\ j\neq i}}^M b_j\int_{0}^t \Phi(z_i,t;z_j,\tau) \frac{\partial}{\partial \tau} \sigma^{(j)}(\tau)\ d\tau = \mathcal F_i(t).
   \end{align}
Since \(\sigma^{(i)}(\cdot, 0) = 0\) and using the smoothness of the fundamental solution \(\Phi(z_i, t; z_j, \tau)\), we apply integration by parts to rewrite the system as
   \begin{align}\label{algebraic1}
         \sigma^{(i)} + \sum\limits_{\substack{j=1 \\ j\neq i}}^M b_j \int_{0}^t \partial_t \Phi(z_i, t; z_j, \tau) \sigma^{(j)}(\tau)\ d\tau = \mathcal F_i(t).
   \end{align}
which we rewrite in operator form as
\begin{align}
    \mathbcal{A} \bm{\sigma}(x,t) = \mathbcal{F}(x,t),
\end{align}
where we aim to invert the operator \(\mathbcal{A}:= \bm{\mathrm{I}} + b_j \mathbcal{K}\) in \(L^2(0,T)\). Here, 
\(\bm{\sigma} := \big( \sigma^{(1)}, \sigma^{(2)}, \ldots, \sigma^{(M)}\big)^t\), \(\mathbcal{F} := \big( f_1, f_2, \ldots, f_M\big)^t\) and the operator \(\mathbcal{K}\) is defined as
\begin{equation}
    \mathbcal{K} \bm{\sigma}(t) := \int_0^t
    \begin{pmatrix}
        0 & \partial_t\Phi(z_1, t; z_2, \tau) &  \dots & \partial_t\Phi(z_1, t; z_M, \tau) \\
        \partial_t\Phi(z_2, t; z_1, \tau) & 0 &  \dots & \partial_t\Phi(z_2, t; z_M, \tau) \\
        \vdots & \ddots & \dots & \vdots\\
        \partial_t\Phi(z_M, t; z_1, \tau) &  \partial_t\Phi(z_M, t; z_2, \tau) & \dots & 0
    \end{pmatrix} \bm{\sigma}(\tau)\, d\tau.
\end{equation}
It is observed that the fundamental solution \(\Phi(z_i, t; z_j, \tau)\) is smooth for \(z_i \neq z_j\), where \(i \neq j\) and \(i, j = 1, 2, \ldots, M\). Thus, the matrix-valued kernel
\[
\mathbb{K}(t-\tau; z_i-z_j) := \begin{pmatrix}
        0 & \partial_t\Phi(z_1, t; z_2, \tau) &  \dots & \partial_t\Phi(z_1, t; z_M, \tau) \\
        \partial_t\Phi(z_2, t; z_1, \tau) & 0 &  \dots & \partial_t\Phi(z_2, t; z_M, \tau) \\
        \vdots & \ddots & \dots & \vdots\\
        \partial_t\Phi(z_M, t; z_1, \tau) &  \partial_t\Phi(z_M, t; z_2, \tau) & \dots & 0
    \end{pmatrix}
\]
belongs to the \(L^2(0,T)\) function space, implying that \(\mathbcal{K}\) is a Hilbert-Schmidt operator, and therefore compact in \(L^2(0,T)\). Compact operators have a discrete spectrum, which implies that the spectrum of \( \bm{I} + b_j \mathbcal{K} \) does not include zero for sufficiently small \( b_j \), ensuring the invertibility of the operator. 
Thus, for small \( b_j \), the operator \( \bm{I} + b_j\ \mathbcal{K} \) is indeed invertible. However, such a property is not enough. We need to estimate $\Big(\sigma^{(i)}\Big)_{i=1}^M$ in terms of $\mathbcal{F}$ and $\delta.$ For this, we state the following lemma.

\begin{lemma}          
Under the condition
    \begin{align}\label{condition}
        b\ \max\limits_{1\le i\le M}\sum\limits_{j\ne i} d_{ij}^{-2} < 1,
    \end{align}
where \(b = \max\limits_{1\le j\le M} b_j\), with \(b_j := \frac{\overline{\alpha}_j}{\kappa_m} \text{vol}(B_j)\delta^{3-\beta}\), the algebraic system (\ref{time-algebraic}) is uniquely solvable in \(H^1(0,T)\). Moreover, the following estimate holds
    \begin{align}\label{goodesti}
        \Big(\sum\limits_{i=1}^M \Vert \sigma^{(i)}\Vert^2_{H^1(0,T)}\Big)^\frac{1}{2} \le \Big(1- b\ \max\limits_{1\le i\le M}\sum\limits_{j\ne i} d_{ij}^{-2}\Big)^{-1} \Big(\sum\limits_{i=1}^M \Vert f_i(t)\Vert^2_{H^1(0,T)}\Big)^\frac{1}{2},
    \end{align}
where \(\displaystyle \mathcal F_i(t) := \frac{\gamma_m}{\gamma_{p_i}}\frac{1}{\kappa_m}\frac{k\cdot\Im(\varepsilon_p)}{2\pi} f(t) \int_{D_i} |\mathrm{E}|^2 \ d\mathrm{y}\), a function that is constant with respect to the spatial variable \(x\).
\end{lemma}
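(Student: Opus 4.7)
The plan is to view \eqref{algebraic1} as a bounded perturbation of the identity on the Hilbert space $H^1(0,T)^M$, and extract both well-posedness and the quantitative bound \eqref{goodesti} from a pair of energy-type estimates in $L^2(0,T)$—one for $\sigma^{(i)}$, one for $\partial_t\sigma^{(i)}$. Two structural facts are used repeatedly: the initial condition $\sigma^{(i)}(0)=0$ (so integration by parts in $\tau$ produces no boundary contribution) and the identity $\Phi(z_i,t;z_j,t) = 0$ for $z_i\neq z_j$ (so the Leibniz rule applied to $\int_0^t \partial_t\Phi\,\sigma^{(j)}\,d\tau$ produces no trace term at $\tau = t$). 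These two facts together guarantee that all the integration-by-parts and time-differentiation manipulations needed below remain valid without introducing singular endpoint contributions.

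First, I would take the $L^2(0,T)$ inner product of \eqref{algebraic1} with $\sigma^{(i)}(t)$, sum over $i$, and apply Cauchy--Schwarz twice—once inside the inner temporal integral, and once across the particle index. This produces a relation of the form
\begin{align*}
\sum_{i=1}^M \|\sigma^{(i)}\|_{L^2(0,T)}^2 \le b\,\Big(\max_{1\le i\le M}\sum_{j\ne i} \mathrm{K}(i,j)\Big)\sum_{i=1}^M\|\sigma^{(i)}\|_{L^2(0,T)}^2 + \Big(\sum_i\|\sigma^{(i)}\|_{L^2}^2\Big)^{\!1/2}\Big(\sum_i\|\mathcal{F}_i\|_{L^2}^2\Big)^{\!1/2},
\end{align*}
where $\mathrm{K}(i,j)$ denotes the operator norm on $L^2(0,T)$ of the Volterra-type convolution with kernel $\partial_t\Phi(z_i,t;z_j,\tau)$. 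The crucial lemma is the kernel estimate $\mathrm{K}(i,j)\lesssim d_{ij}^{-2}$, obtained by the scaling substitution $u = |z_i-z_j|^2/(4(t-\tau))$, which reduces the temporal integral of $|\partial_t\Phi|$ against a test function to a dimensionless Gaussian moment multiplied by an explicit power of $d_{ij}$, with the finite time horizon $T$ ensuring convergence. The hypothesis \eqref{condition} is precisely what is needed to make the prefactor $1-b\max_i\sum_{j\ne i}d_{ij}^{-2}$ strictly positive, so dividing through by $(\sum_i\|\sigma^{(i)}\|_{L^2}^2)^{1/2}$ delivers the $L^2$ bound with the claimed constant.

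Next I would differentiate \eqref{algebraic1} in $t$. Because $\Phi(z_i,t;z_j,t)=0$ for $z_i\ne z_j$, Leibniz's rule applied to $\int_0^t \partial_t\Phi\,\sigma^{(j)}\,d\tau$ eliminates the boundary term at $\tau=t$, yielding an equation for $\partial_t\sigma^{(i)}$ of exactly the same structural form as \eqref{algebraic1} but with $\partial_t\mathcal{F}_i$ in place of $\mathcal{F}_i$. Running the same energy argument verbatim produces the matching bound
\begin{align*}
\Big(1 - b\max_{1\le i\le M}\sum_{j\ne i}d_{ij}^{-2}\Big)\Big(\sum_{i=1}^M \|\partial_t\sigma^{(i)}\|_{L^2(0,T)}^2\Big)^{\!1/2} \le \Big(\sum_{i=1}^M \|\partial_t\mathcal{F}_i\|_{L^2(0,T)}^2\Big)^{\!1/2},
\end{align*}
and summing the two yields \eqref{goodesti}. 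Existence and uniqueness then follow: \eqref{condition} forces the operator norm of $b_j\mathbcal{K}$ on $H^1(0,T)^M$ to be strictly less than one, so $I + b_j\mathbcal{K}$ is boundedly invertible by a Neumann series (equivalently, by Fredholm's alternative combined with the a priori estimate ruling out a nontrivial kernel).

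The principal technical obstacle is the sharp kernel estimate $\mathrm{K}(i,j)\lesssim d_{ij}^{-2}$: a naive $L^1_t$ bound on $\partial_t\Phi$ only yields $d_{ij}^{-3}$, and the crude Cauchy--Schwarz-in-$\tau$ route gives the even weaker $d_{ij}^{-4}$. The improvement to $d_{ij}^{-2}$ requires exploiting the identity $\partial_t\Phi = -\partial_\tau\Phi$ and integrating by parts in $\tau$ against $\sigma^{(j)}$—the boundary contributions vanishing thanks to $\Phi(z_i,t;z_j,t)=0$ and $\sigma^{(j)}(0)=0$—so as to trade the singular kernel $\partial_t\Phi$ for the milder $\Phi$ paired with $\partial_\tau\sigma^{(j)}$; Young's inequality applied to the convolution $\Phi(z_i,\cdot;z_j,0)\star\partial_\tau\sigma^{(j)}$, together with $\|\Phi(z_i,\cdot;z_j,0)\|_{L^1(0,T)}\lesssim d_{ij}^{-1}$ and the concurrent control of $\partial_t\sigma^{(j)}$ furnished by the second energy estimate, then closes the coupled bootstrap.
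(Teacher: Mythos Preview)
Your overall strategy---an $L^2$ energy estimate for $\sigma^{(i)}$, a second one for $\partial_t\sigma^{(i)}$, then combine---is sound and parallels the paper's. The gap is in the kernel exponent. You correctly observe that $\|\partial_t\Phi\|_{L^1}\lesssim d_{ij}^{-3}$ and $\|\partial_t\Phi\|_{L^2}\lesssim d_{ij}^{-4}$ are too weak, and then propose to undo the integration by parts and apply Young with $\|\Phi(z_i,\cdot\,;z_j,0)\|_{L^1(0,T)}\lesssim d_{ij}^{-1}$. But that yields a $d_{ij}^{-1}$ coupling between $\|\sigma^{(i)}\|_{L^2}$ and $\|\partial_\tau\sigma^{(j)}\|_{L^2}$, not $d_{ij}^{-2}$; your coupled bootstrap would close under the condition $b\,\max_i\sum_{j\ne i}d_{ij}^{-1}<1$ with the corresponding constant, which is a \emph{different} statement than \eqref{condition}--\eqref{goodesti}. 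The exact exponent $-2$ in the lemma does not emerge from the route you describe.

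The paper reaches $d_{ij}^{-2}$ by a different reduction. It rewrites \eqref{algebraic} as
\[
\int_0^t\Bigl(\partial_\tau\sigma^{(i)}(\tau)+\sum_{j\ne i}b_j\,\Phi(z_i,t;z_j,\tau)\,\partial_\tau\sigma^{(j)}(\tau)\Bigr)\,d\tau=\int_0^t\partial_\tau\mathcal{F}_i(\tau)\,d\tau,
\]
and---under a one-sided sign assumption on the integrand---passes to the pointwise relation
$q_i(t)+\sum_{j\ne i}b_j\,\Phi(z_i,t;z_j,\tau)\,q_j(t)=\partial_t\mathcal{F}_i(t)$ with $q_i:=\partial_t\sigma^{(i)}$. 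The energy estimate is then run on this \emph{algebraic} (not Volterra) identity: multiply by $q_i$, integrate in $t$, sum over $i$, and bound $\int_0^T|\Phi\,q_i q_j|\,dt$ by Cauchy--Schwarz together with the Proposition
\[
\Bigl(\int_0^T|\Phi(x,t;y,\tau)|^2\,dt\Bigr)^{1/2}=\mathcal{O}(|x-y|^{-2}).
\]
It is the $L^2_t$ norm of $\Phi$ itself---not its $L^1$ norm, and not any norm of $\partial_t\Phi$---that produces the $d_{ij}^{-2}$ in \eqref{goodesti}. Since $\sigma^{(i)}(0)=0$, the bound on $\|q_i\|_{L^2}$ controls $\|\sigma^{(i)}\|_{H^1}$, and unique solvability follows.
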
      

\begin{proof}     
Consider the algebraic system (\ref{time-algebraic})
   \begin{align}
         \sigma^{(i)} + \sum\limits_{\substack{j=1 \\ j\neq i}}^M b_j \int_{0}^t \Phi(z_i,t;z_j,\tau) \frac{\partial}{\partial \tau} \sigma^{(j)}(\tau)\ d\tau = \mathcal F_i(t).
   \end{align}
We observe that, since \(\sigma^{(i)}(0) = 0\) and $f \in C^{k-1}(\mathbb{R})$, we transform the integral equation as follows
    \begin{align}
        \int_0^t \Big(\partial_\tau\sigma^{(i)}(\tau) + \sum\limits_{j\ne i}b_j \Phi(z_i,t;z_j,\tau) \partial_\tau\sigma^{(j)}(\tau)\Big)\,d\tau = \int_0^t \partial_\tau\mathcal{F}_i(\tau)\,d\tau,
    \end{align}
which further implies with the assumptions that $\partial_t\sigma^{(i)}(t) + \sum\limits_{j\ne i}b_j \Phi(z_i,t;z_j,\tau) \partial_t\sigma^{(j)}(t) -\partial_t\mathcal{F}_i(t) \ge 0$ 
    \begin{align}\nonumber
          \partial_t\sigma^{(i)}(t) + \sum\limits_{j\ne i}b_j \Phi(z_i,t;z_j,\tau) \partial_t\sigma^{(j)}(t) = \partial_t\mathcal{F}_i(t),\; \text{a.e for}\ t \in (0,T).
    \end{align}
We now set $q_i(t) := \partial_t\sigma^{(i)}(t)$ and $\mathcal{F}(t) := \partial_t\mathcal{F}_i(t)$ Therefore, we rewrite the above equations as follows
    \begin{align}
          q_i(t) + \sum\limits_{j\ne i}b_j \Phi(z_i,t;z_j,\tau) q_j(t) = \mathcal{F}_i(t).
    \end{align}
Multiplying both sides by \(q_i\), integrating over \([0,T]\), and summing over \(i = 1, \dots, M\), we obtain
   \begin{align}\label{inverse}
         \sum\limits_{i=1}^M \int_0^T q^2_i(t)dt + \sum\limits_{i=1}^M \sum\limits_{\substack{j=1 \\ j\neq i}}^M b_j \int_0^T \Phi(z_i,t;z_j,\tau)\ q_j(t)\ q_i(t) dt =  \sum\limits_{i=1}^M \int_0^T \mathcal{F}_i(t)\ q_i(t)\ dt.
   \end{align}
We now, observe that
    \begin{align}
         \int_0^T |\Phi(z_i,t;z_j,\tau)\ q_j(t)\ q_i(t)| dt \le \Big(\int_0^T |\Phi(z_i,t;z_j,\tau)|^2\Big)^\frac{1}{2} \Big(\int_0^T |q_j(t)\ q_i(t)|^2\Big)^\frac{1}{2}
    \end{align}
Now, consider the following proposition.
\begin{proposition}    
For \(x \neq y\) and \( |x - y| \to 0 \), we have
        \begin{align}\nonumber
              \Big(\int_0^T |\Phi(x, t; y, \tau)|^2\ dt\Big)^\frac{1}{2} = \mathcal{O}(|x - y|^{-2}), \quad t \in (0,T).
       \end{align}
\end{proposition}
\noindent
Then, based on the above Proposition and from (\ref{inverse}), we deduce
    \begin{align}
          \sum\limits_{i=1}^M \Vert q_i\Vert^2_{L^2(0,T)} - b \sum\limits_{i=1}^M \sum\limits_{\substack{j=1 \\ j\neq i}}^M d_{ij}^{-2} \Vert q_i\Vert^2_{L^2(0,T)}  \le \Big(\sum\limits_{i=1}^M \Vert q_i\Vert^2_{L^2(0,T)}\Big)^\frac{1}{2} \Big(\sum\limits_{i=1}^M \Vert \mathcal{F}_i(t)\Vert^2_{L^2(0,T)}\Big)^\frac{1}{2}, \nonumber
    \end{align}
which further implies that
    \begin{align}
         \Big(1 - b\ \max\limits_{1 \le i \le M} \sum\limits_{j \neq i} d_{ij}^{-2}\Big) \sum\limits_{i=1}^M \Vert q_i\Vert^2_{L^2(0,T)} \le \Big(\sum\limits_{i=1}^M \Vert q_i\Vert^2_{L^2(0,T)}\Big)^\frac{1}{2} \Big(\sum\limits_{i=1}^M \Vert \mathcal{F}_i(t)\Vert^2_{L^2(0,T)}\Big)^\frac{1}{2}.
   \end{align}   
Consequently, we deduce that
    \begin{align}
          \Big(1 - b\ \max\limits_{1 \le i \le M} \sum\limits_{j \neq i} d_{ij}^{-2}\Big) \Big(\sum\limits_{i=1}^M \Vert q_i\Vert^2_{L^2(0,T)}\Big)^\frac{1}{2} \le  \Big(\sum\limits_{i=1}^M \Vert \mathcal{F}_i(t)\Vert^2_{L^2(0,T)}\Big)^\frac{1}{2}.
    \end{align}
Thus, based on the above estimate, the unique solvability of (\ref{algebraic}) and the estimate (\ref{goodesti}) follow from condition (\ref{condition}) and the observation that \( q_i(t) := \partial_t \sigma^{(i)}(t) \). \\
The proof is complete.
\end{proof}   

\subsubsection{End of the proof of Theorem \ref{th1}: The Asymptotic Expansions}
In the following, we derive the asymptotic formula for the solution to (\ref{heat}) in the case of multiple inclusions. The proof is similar to the case of single inclusion, so we skip the details but outline it. we derive for $x\in \mathbb R^3\setminus D,\ z_i \in D_i$ and $t\in(0,T)$ that
\begin{align}
    u(x,t) = -\sum\limits_{i=1}^M\alpha_i\frac{1}{\kappa_m}\int_0^t\Phi^{(m)}(x,t;z_i,\tau)\sigma^{(i)}(\tau)d\tau + \text{error},
\end{align}
where, we set $\displaystyle\sigma^{(i)}:= \Big(\int_{\partial D}\partial_\nu u^{(i)}(y,\tau)d\sigma_y\Big).$ The "error" consists of the following terms and can be estimated with a similar procedure as (\ref{e1}), (\ref{e2}), and (\ref{e3}) as follows:
\begin{align}
\text{err}^{(1)} &\nonumber:= \Big|\sum\limits_{i=1}^M\alpha_i\int_0^t\int_{\partial D}\Big(\Phi^{(m)}(x,t;y,\tau)- \Phi^{(m)}(x,t;z_i,\tau)\Big)\partial_\nu u(y,\tau)\ d\sigma_yd\tau \Big| \lesssim M\delta^{4-h},\\
\text{err}^{(2)} &\nonumber:=\Big|\sum\limits_{i=1}^M \vartheta_i \ \bm{\mathbcal{V}^t}_D\Big[\partial_t u\Big](x,t)\Big| \lesssim M\delta^{2+\beta-h}, \ \text{and}\\
\text{err}^{(3)} &\nonumber:=\Big| \sum\limits_{i=1}^M\frac{1}{\gamma_{p_i}}\bm{\mathbcal{V}^t}_D\Big[\mathrm{J}\Big](x,t)\Big|\lesssim M\delta^{3+\beta-h}.
\end{align}
Therefore, combining above three estimates, we derive that
\begin{align}\label{mainformula}
    u(x,t) = -\sum\limits_{i=1}^M\alpha_i\frac{1}{\kappa_m}\int_0^t\Phi^{(m)}(x,t;z_i,\tau)\sigma_i(\tau)d\tau + \mathcal{O}\big(M\delta^{4-h}\big),
\end{align}
where $\sigma^{(i)}$ satisfies the following linear system of equations
    \begin{align}\nonumber
         \sigma^{(i)} + \sum\limits_{\substack{j=1 \\ j\neq i}}^M \frac{\alpha_j}{\kappa_m} \text{vol}(D_i)\int_{0}^t \Phi(z_i,t;z_j,\tau)\ \frac{\partial}{\partial\tau}\sigma^{(j)}(\tau) d\tau 
         =  \frac{\gamma_m}{\gamma_{p_i}}\frac{1}{\kappa_m}\frac{k\cdot\Im(\varepsilon_p)}{2\pi}\ f(t) \int_{D_i} |\mathrm{E}|^2 \ d\mathrm{y} 
         + \mathcal{O}\Big(\delta^{4+\beta-h}\sum\limits_{\substack{j=1 \\ j\neq i}}^M d_{ij}^{-3}\Big). 
    \end{align}
This completes the proof. \qed


    \section{Proof of Theorem \ref{non-periodic} 
    }
We begin by considering the following non-homogeneous first-order matrix differential equation, along with its initial conditions, while neglecting error terms of higher order:
    \begin{align}\label{matrix-al}
         \sigma^{(i)} + \sum_{\substack{j=1 \\ j \neq i}}^M \frac{\alpha_j}{\kappa_m} \text{vol}(D_i) \int_{0}^t \Phi^{(m)}(z_i, t; z_j, \tau) \frac{\partial}{\partial \tau} \sigma^{(j)}(\tau) \, d\tau = g_i(t), \quad \text{for}\; t \in (0, T),
    \end{align}
where \( \alpha_i := \gamma_{p_i} - \gamma_m \) denotes the contrast between the inner and outer heat coefficients, and \( \kappa_m := \frac{c_m}{\gamma_m} \) is the diffusion constant for the background medium. We define $ \displaystyle g_i(t)= \frac{\gamma_m}{\gamma_{p_i}} \frac{1}{\kappa_m} \frac{k \cdot \Im(\varepsilon_p)}{2 \pi} f(t) \int_{D_i} |\mathrm{E}_i|^2 \, d\mathrm{y}.$ 
\\
Based on the a priori estimates for \( E_i \) from Proposition \ref{proposition-maxwell-1}, along with the properties described in equations (\ref{frequency}) and (\ref{condition3D}), the expression for \( g_i(t) \) can be rewritten in a more tractable form as follows
\[
\int_{D_i} |\mathrm{E}_i|^2 
= \int_{D_i} |\overset{3}{\mathbb{P}}(\mathrm{E}_i)|^2 + \mathcal{O}(\delta^3)
= E_i(z_i) \cdot \left( \sum_{m=1}^{m_{n_0}} \int_{D_i} e^{(3)}_{m,n_0} \otimes \int_{D_i} e^{(3)}_{m,n_0} \right) \cdot E_i^\textit{Tr}(z_i) + \mathcal{O}(\delta^3),
\]
where we define \( \mathbb{K}_{D_i} \) as
\[
\mathbb{K}_{D_i} := \sum_{m=1}^{m_{n_0}} \int_{D_i} e^{(3)}_{m,n_0} \otimes \int_{D_i} e^{(3)}_{m,n_0}.
\]
Next, recalling the definition \(\displaystyle \mathbcal{Q}_i = \int_{D_i} \overset{3}{\mathbb{P}}(\mathrm{E}_i) \), and applying the spectral decomposition, we obtain the relation
\[
\int_{D_i} \overset{3}{\mathbb{P}}(\mathrm{E}_i) = E_i(z_i) \cdot \left( \sum_{m=1}^{m_{n_0}} \int_{D_i} e^{(3)}_{m,n_0} \otimes \int_{D_i} e^{(3)}_{m,n_0} \right) + \mathcal{O}(\delta^3),
\]
which leads to the following expression for \( E_i(z_i) \)
\[
E_i(z_i) = \mathbb{K}_{D_i}^{-1} \cdot \int_{D_i} \overset{3}{\mathbb{P}}(\mathrm{E}_i).
\]
Substituting this into the previous equation and neglecting higher-order terms, we derive
\[
\int_{D_i} |\overset{3}{\mathbb{P}}(\mathrm{E}_i)|^2 = \mathbb{K}_{D_i}^{-1} \mathbcal{Q}_i \cdot \overline{\mathbcal{Q}_i^\textit{Tr}}.
\]
After normalization, let \( e^{(3)}_{m,n_0}(B) := \frac{\Tilde{e}^{(3)}_{m,n_0}}{\| \Tilde{e}^{(3)}_{m,n_0} \|_{L^2(B)}} \), where \( \| \Tilde{e}^{(3)}_{m,n_0} \|_{L^2(B)} = \mathcal{O}(\delta^{-\frac{3}{2}}) \) represents the scaled eigenfunctions associated with the space \( \nabla \mathbb{H}_{\textit{arm}} \) on the domain \( B \). Using the property (\ref{condition3D}) and the definition of the polarization matrix \( \mathbcal{P}_{D_i} \)
\[
\mathbcal{P}_{D_i} = \frac{\delta^3}{1 + \eta \lambda_{n_0}^{(3)}} \sum_{m=1}^{m_{n_0}} \int_{B} e^{(3)}_{m,n_0}(B) \otimes \int_{B} e^{(3)}_{m,n_0}(B) + \mathcal{O}(\delta^3),
\]
we observe that
\[
\mathbb{K}_{D_i} = \delta^h \, \mathbcal{P}_{D_i}.
\]
Thus, from the previous expression, we obtain
\[
\int_{D_i} |\overset{3}{\mathbb{P}}(\mathrm{E}_i)|^2 = \delta^{3-2h} \mathbcal{P}_B\ \mathbcal{Q}_i \cdot \overline{\mathbcal{Q}_i^\textit{Tr}},
\]
where \( \mathbcal{P}_B \) is defined as \(\displaystyle\mathbcal{P}_B := \sum_{m=1}^{m_{n_0}} \int_{B} e^{(3)}_{m,n_0}(B) \otimes \int_{B} e^{(3)}_{m,n_0}(B).\)
\noindent
Thus, we can rewrite the algebraic system in equation (\ref{matrix-al}) in terms of the scaled variable \(\displaystyle \Tilde{\sigma}^{(i)} := \frac{1}{\text{vol}(D_i)} \int_{\partial D_i} \partial_\nu u^{(i)} \) for \( i = 1, 2, \ldots, M \):
\begin{align}\label{matrix}
    \Tilde{\sigma}^{(i)} + \sum_{\substack{j = 1 \\ j \neq i}}^M \frac{\alpha_j}{\kappa_m} \text{vol}(D_j) \int_0^t \Phi^{(m)}(z_i, t; z_j, \tau) \frac{\partial}{\partial \tau} \Tilde{\sigma}^{(j)}(\tau) \, d\tau = \overline{a} \, \delta^{\beta - h} \, f(t) \, \mathbcal{P}_B\ \mathbcal{Q}_i \cdot \overline{\mathbcal{Q}_i^\textit{Tr}},
\end{align}
where we have defined \( \overline{a} := \frac{\gamma_m}{\overline{\gamma}_{p_i}} \frac{k \cdot \overline{\varepsilon}_p}{2 \pi \kappa_m} \), which is independent of \( \delta \). Since \( \mathbcal{P}_{D_i} = \delta^{3 - h} \mathbcal{P}_B \), we rewrite the algebraic system in the scaled domain \( B \), where \( \Tilde{\mathbcal{Q}}_i \) satisfies the following algebraic system as discussed in Lemma \ref{lemma-maxwell}
\begin{align}\label{in-11}
    \Tilde{\mathbcal{Q}}_i + \eta \sum_{j \neq i}^M \bm{\Upsilon}^{(k)}(z_i, z_j) \, \mathbcal{P}_{D_j} \cdot \Tilde{\mathbcal{Q}}_j = \mathrm{E}^\textit{in}(z_i).
\end{align}
We now assume that the shape of the each nanoparticles $D_j$ for $j=1,2,\ldots,M$ is same, which implies that $b_i = b_j$ for $i,j=1,2,\ldots, M.$ Let us define $\overline{b}:= \overline{b}_j$ be the scaled value of $b_j$ i.e. $\overline{b}=\frac{\overline{\alpha}_j}{\kappa_m} \text{vol}(B_j).$
\\
The next critical step involves establishing a comparison between equation (\ref{matrix}) and the following Lippmann-Schwinger equation
\begin{align}\label{time-effective-equation}
    \bm{\mathrm{Y}}(x, t) + \overline{b} \int_0^t \int_{\mathbf{\Omega}} \Phi^{(m)}(x, t; y, \tau) \frac{\partial}{\partial \tau} \bm{\mathrm{Y}}(y, \tau) \, dy \, d\tau = \mathbcal{F}(x, t), \quad \text{for} \; (x, t) \in \mathbf{\Omega} \times (0, T),
\end{align}
where we have defined $ \displaystyle\mathbcal{F}(x, t) := \overline{a} \, f(t) \, \mathbcal{A}_B\cdot\mathbcal{P}_{B}^{-1}\cdot \mathbcal{A}_B \cdot \bm{E}_f\cdot \overline{\bm{E}}^\textit{Tr}_f$. The initial condition for \( \bm{\mathrm{Y}} \) is implicitly zero in this equation. Here, \( \Tilde{\bm{E}}_f \) is the solution to the following effective Lippmann-Schwinger equation
\begin{align}
    \bm{E}_f(x) + \Big( \mathbb{M}_{\bm{\Omega}}^{(\kappa)} - k^2 \mathbb{N}_{\bm{\Omega}}^{(\kappa)} \Big) \big[ \mathbcal{A}_B \cdot \bm{E}_f \big](x) = \mathrm{E}^\textit{in}(x), \quad \text{for} \; x \in \bm{\Omega},
\end{align}
where \( \mathbcal{A}_B \) is the effective polarization matrix corresponding to the algebraic system defined in equation (\ref{in-11}). 
\\
Let us now define
\begin{align}
    \mathbcal{V}(x,t) := \begin{cases}
                       \bm{\mathrm{Y}}(x,t) & \text{if}\; (x,t) \in \mathbf{\Omega}\times(0,T) \\ \displaystyle
                       \mathbcal{F}(\mathrm{x},\mathrm{t}) - \overline{b} \int_0^t\int_{\mathbf{\Omega}}\Phi^{(m)}(x,t;y,\tau)\ \frac{\partial}{\partial\tau}\bm{\mathrm{Y}}(y,\tau)\ dyd\tau & \text{for}\; (x,t) \in \mathbb R^3\setminus\mathbf{\Omega}\times(0,T).
                 \end{cases}
\end{align}
We set $\mathbcal{W}(x,t) := \mathbcal{F}(x,t) - \mathbcal{V}(x,t).$ Therefore, we have the following result.
\begin{lemma}
    If $\bm{\mathrm{Y}}$ is the solution of the Lippmann-Schwinger equation (\ref{time-effective-equation}), then $\mathbcal{W}$ satisfies the following problem
    \begin{align}\label{effective-parabolic}
        \begin{cases}
            (\kappa_m\partial_t - \Delta)\mathbcal{W} + \overline{b}\rchi_{\mathbf{\Omega}}\partial_t\mathbcal{W} = \overline{b}\rchi_{\mathbf{\Omega}}\mathbcal{F}(x,t) & \text{if}\; (x,t) \in \mathbb R^3 \times (0,T) \\
            \mathbcal{W}(x,0) = 0 & \text{for}\; x \in \mathbb R^3, \\
            |\mathbcal{W}(x,t)|  \leq  C_0\ e^{A|x|^2} & \text{as}\ |x| \to +\infty,
        \end{cases}
    \end{align}
for some positive constant $C_0$ and $A \le \frac{1}{4T}.$ The above differential equation has a unique solution in $\mathrm{H}^{r}_{0,\sigma}\big(0,\mathrm{T}; \mathrm{H}^1(\mathbb R^3)\big)$ for $\mathbcal{F} \in \mathrm{H}^{r-\frac{1}{2}}_{0,\sigma}\big(0,\mathrm{T}; \mathrm{L}^2(\mathbb R^3)\big)$. 
\\
We can show this result in a manner similar to the approach discussed in Section \ref{apriori}. The only difference is the presence of a perturbation in the first-order term, which will not affect the overall proof of well-posedness. 
\end{lemma}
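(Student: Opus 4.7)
The plan is to first unify the two branches defining $\mathbcal{V}$, then apply the heat operator to the resulting single volume-potential representation of $\mathbcal{W}$, and finally establish unique solvability by a Fourier--Laplace argument patterned on Lemma \ref{lemma-apriori-heat}. Writing $\mathbcal{W}=\mathbcal{F}-\mathbcal{V}$, the Lippmann--Schwinger equation (\ref{time-effective-equation}) gives, on $\mathbf{\Omega}$, the identity $\mathbcal{F}-\bm{\mathrm{Y}}=\overline{b}\int_0^t\int_{\mathbf{\Omega}}\Phi^{(m)}(x,t;y,\tau)\partial_\tau\bm{\mathrm{Y}}(y,\tau)\,dy\,d\tau$, while on $\mathbb{R}^3\setminus\overline{\mathbf{\Omega}}$ the same expression is built directly into the definition of $\mathbcal{V}$. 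Consequently, on all of $\mathbb{R}^3\times(0,T)$,
\begin{equation}\label{W-rep-prop}
\mathbcal{W}(x,t)=\overline{b}\int_0^t\!\int_{\mathbf{\Omega}}\Phi^{(m)}(x,t;y,\tau)\,\partial_\tau\bm{\mathrm{Y}}(y,\tau)\,dy\,d\tau,
\end{equation}
which is a single volume heat potential with density supported in $\mathbf{\Omega}$.

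Next, I would exploit that $\Phi^{(m)}$ is the fundamental solution of $\kappa_m\partial_t-\Delta$ to apply this operator to (\ref{W-rep-prop}) in the distributional sense. Standard heat-potential theory (used throughout Section \ref{apriori}) yields $(\kappa_m\partial_t-\Delta)\mathbcal{W}(x,t)=\overline{b}\,\rchi_{\mathbf{\Omega}}(x)\,\partial_t\bm{\mathrm{Y}}(x,t)$ on $\mathbb{R}^3\times(0,T)$. Inside $\mathbf{\Omega}$ one has $\bm{\mathrm{Y}}=\mathbcal{F}-\mathbcal{W}$, so substituting for $\partial_t\bm{\mathrm{Y}}$ and rearranging produces the transmission-perturbed equation in (\ref{effective-parabolic}); the initial condition $\mathbcal{W}(\cdot,0)=0$ is immediate from (\ref{W-rep-prop}). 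The Gaussian bound $|\Phi^{(m)}(x,t;y,\tau)|\lesssim (t-\tau)^{-3/2}\exp\bigl(-\tfrac{\kappa_m|x-y|^2}{4(t-\tau)}\bigr)$, combined with the uniform boundedness of $\partial_\tau\bm{\mathrm{Y}}$ on the compact set $\mathbf{\Omega}\times(0,T)$, produces the growth estimate $|\mathbcal{W}(x,t)|\le C_0 e^{A|x|^2}$ for any $A\le\frac{1}{4T}$ by an elementary bound on the remaining integral in $y\in\mathbf{\Omega}$.

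For well-posedness, I would mimic the Fourier--Laplace argument of Section \ref{apriori}. Transforming (\ref{effective-parabolic}) in $t$ produces the elliptic transmission problem
\begin{equation}
-\Delta\hat{\mathbcal{W}}+\bm{s}\,\kappa(x)\,\hat{\mathbcal{W}}=\overline{b}\,\rchi_{\mathbf{\Omega}}\,\hat{\mathbcal{F}}\quad\text{in }\mathbb{R}^3,\qquad \kappa(x):=\kappa_m+\overline{b}\,\rchi_{\mathbf{\Omega}}(x),
\end{equation}
with $\kappa$ bounded below by $\kappa_m>0$. Testing the associated bilinear form against $\overline{\bm{s}}^{1/2}\hat{\mathbcal{W}}$ and taking real parts gives, exactly as in Lemma \ref{lemma-apriori-heat}, an estimate uniform in $\bm{s}\in\mathbb{C}_\sigma$; Lax--Milgram then yields existence and uniqueness in $\mathrm{H}^1(\mathbb{R}^3)$. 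Inverse Fourier--Laplace transform together with Lemma \ref{lubich} converts this into the desired mapping property from $\mathbcal{F}\in \mathrm{H}^{r-\frac{1}{2}}_{0,\sigma}(0,T;\mathrm{L}^2(\mathbb{R}^3))$ into $\mathrm{H}^{r}_{0,\sigma}(0,T;\mathrm{H}^1(\mathbb{R}^3))$.

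The main obstacle is not the existence and the energy estimate, since the perturbation $\overline{b}\rchi_{\mathbf{\Omega}}\partial_t$ merely shifts the effective heat capacity by a bounded, positive, compactly supported amount and leaves coercivity intact. The delicate point is reconciling the two solution concepts: the Fourier--Laplace-based solution lives in $\mathrm{H}^1(\mathbb{R}^3)$ (hence decays in an averaged sense), while the representation (\ref{W-rep-prop}) only guarantees Gaussian growth $e^{A|x|^2}$. One needs a Tychonoff-type uniqueness theorem for parabolic equations with bounded measurable coefficients in the class of solutions with such growth, applied to the difference of the two candidates, in order to identify them. Once that identification is in place, the regularity estimate inherited from the Fourier--Laplace construction transfers to $\mathbcal{W}$ given by (\ref{W-rep-prop}), and the proof is complete.
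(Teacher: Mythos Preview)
Your proposal is correct and follows the same route the paper has in mind: the paper itself gives no detailed proof of this lemma, only the remark that it ``can be shown in a manner similar to Section~\ref{apriori}'' with the extra first-order perturbation being harmless, which is precisely the Fourier--Laplace coercivity argument you outline. Your unification of the two branches of $\mathbcal{V}$ into the single volume-potential formula \eqref{W-rep-prop} and the subsequent application of the heat operator are the natural way to verify the PDE, and the paper implicitly relies on exactly this computation (it is used again in Section~\ref{end} when $\bm{\mathrm{Y}}=\partial_t\mathbf{U}$ is introduced).

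One small over-caution: the ``delicate point'' you flag about reconciling Gaussian growth with the $H^1$ solution via a Tychonoff-type theorem is not really needed here. Because the density $\partial_\tau\bm{\mathrm{Y}}$ in \eqref{W-rep-prop} is supported in the bounded set $\mathbf{\Omega}$ and $\Phi^{(m)}$ is a decaying Gaussian in $|x-y|$, the potential $\mathbcal{W}$ actually \emph{decays} at spatial infinity, so it lies in the $H^1(\mathbb{R}^3)$ class directly; the bound $|\mathbcal{W}|\le C_0 e^{A|x|^2}$ is then trivially satisfied and serves only to place the problem in the standard uniqueness class for the Cauchy problem. No separate identification argument is required.
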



\subsection{Solvability and Regularity of the Effective Integral Equation for the Parabolic Problem} 

\begin{lemma}\label{prop1}    
The Lippmann-Schwinger equation
    \begin{equation}
        \bm{\mathrm{Y}} (\mathrm{x},\mathrm{t}) 
        + \overline{b}\int_0^t\int_{\mathbf{\Omega}}\Phi^{(m)}(x,t;y,\tau)\ \frac{\partial}{\partial\tau}\bm{\mathrm{Y}}(y,\tau)\ dyd\tau 
        = \mathbcal{F}(x,t), \; (x,t) \in \mathbf{\Omega} \times (0, \mathrm{T}),
    \end{equation}
has a unique solution in 
    $\mathrm{H}^{r}_{0,\sigma}\big(0,\mathrm{T};\mathrm{L}^2(\mathbf{\Omega})\big)$ for $f \in \mathrm{H}^{r+\frac{1}{2}}_{0,\sigma}\big(0,\mathrm{T};\mathrm{L}^2(\mathbf{\Omega})\big)$ and it satisfies \\ $\Vert \bm{\mathrm{Y}}\Vert_{\mathrm{H}^{r}_{0,\sigma}\big(0,\mathrm{T};\mathrm{L}^2(\mathbf{\Omega})\big)} \lesssim \Vert f\Vert_{\mathrm{H}^{r+\frac{1}{2}}_{0,\sigma}\big(0,\mathrm{T};\mathrm{L}^2(\mathbf{\Omega})\big)}.$
\end{lemma}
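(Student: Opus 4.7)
The plan is to apply the Fourier--Laplace transform in time to reduce the Volterra convolution to a scalar elliptic problem in the spectral variable $\bm{s}$, solve that problem by Lax--Milgram uniformly in $\bm{s}$, and transfer the resulting estimate back to the time domain via Lemma \ref{lubich}. The structure mirrors the $H^r_{0,\sigma}$ analysis carried out for the original parabolic transmission problem in Section \ref{apriori}.

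First, applying $\mathcal{L}$ to both sides of the Lippmann--Schwinger equation and exploiting $\bm{\mathrm{Y}}(\cdot,0)=0$, the convolution $\int_0^t \Phi^{(m)} * \partial_\tau \bm{\mathrm{Y}}$ collapses into a spatial Newtonian-type operator acting on $\hat{\bm{\mathrm{Y}}}$:
\[
\hat{\bm{\mathrm{Y}}}(x,\bm{s}) + \overline{b}\,\bm{s}\,\mathcal{V}_{\bm{s},\bm{\Omega}}[\hat{\bm{\mathrm{Y}}}](x,\bm{s}) = \hat{\mathbcal{F}}(x,\bm{s}),\qquad x\in\bm{\Omega},\ \Re\bm{s}=\sigma>0,
\]
where $\mathcal{V}_{\bm{s},\bm{\Omega}}$ has the kernel $G^{(\bm{s})}(x,y)=(4\pi|x-y|)^{-1}e^{-\sqrt{\bm{s}}\,|x-y|}$ studied in Proposition \ref{prop2}. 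Introducing the auxiliary function $z:=\mathcal{V}_{\bm{s},\bm{\Omega}}[\hat{\bm{\mathrm{Y}}}]\in H^1(\mathbb{R}^3)$, which satisfies $-\Delta z + \bm{s}z = \hat{\bm{\mathrm{Y}}}$ (with $\hat{\bm{\mathrm{Y}}}$ extended by zero outside $\bm{\Omega}$), and eliminating $\hat{\bm{\mathrm{Y}}}$ via the algebraic identity $\hat{\bm{\mathrm{Y}}}=\hat{\mathbcal{F}}-\overline{b}\,\bm{s}\,z$ converts the integral equation into the scalar elliptic problem
\[
-\Delta z + \bigl(1+\overline{b}\,\chi_{\bm{\Omega}}\bigr)\bm{s}\,z = \hat{\mathbcal{F}} \quad \text{in } \mathbb{R}^3.
\]

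Next, I would solve this transformed problem by the Lax--Milgram lemma on $H^1(\mathbb{R}^3)$. Testing against $\overline{\bm{s}}^{1/2}\bar{z}$, integrating over $\mathbb{R}^3$, and taking the real part --- exactly as in the derivation leading to (\ref{bi})--(\ref{ineq}) --- gives, with $\omega=\Re\bm{s}^{1/2}$,
\[
\omega\,\|\nabla z\|_{L^2(\mathbb{R}^3)}^2 + \omega\,|\bm{s}|\,\bigl\|(1+\overline{b}\chi_{\bm{\Omega}})^{1/2} z\bigr\|_{L^2(\mathbb{R}^3)}^2 \leq |\bm{s}|^{1/2}\,\|\hat{\mathbcal{F}}\|_{L^2(\bm{\Omega})}\,\|z\|_{L^2(\bm{\Omega})},
\]
and hence $\|z\|_{L^2(\bm{\Omega})}\lesssim (\omega|\bm{s}|^{1/2})^{-1}\|\hat{\mathbcal{F}}\|_{L^2(\bm{\Omega})}$ uniformly in $\sigma$. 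Reinserting into $\hat{\bm{\mathrm{Y}}}=\hat{\mathbcal{F}}-\overline{b}\,\bm{s}\,z$ produces the uniform operator-norm bound
\[
\bigl\|\hat{\bm{\mathrm{Y}}}(\cdot,\bm{s})\bigr\|_{L^2(\bm{\Omega})} \lesssim \frac{|\bm{s}|^{1/2}}{\omega}\,\bigl\|\hat{\mathbcal{F}}(\cdot,\bm{s})\bigr\|_{L^2(\bm{\Omega})},\qquad \Re\bm{s}\geq\sigma>0.
\]

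Finally, this estimate places the solution operator $\hat{\mathbcal{F}}\mapsto\hat{\bm{\mathrm{Y}}}$ into the framework (\ref{1.1}) with growth exponent $j=\tfrac{1}{2}$, so Lemma \ref{lubich} extends it by density to a bounded operator from $H^{r+1/2}_{0,\sigma}(0,T;L^2(\bm{\Omega}))$ into $H^{r}_{0,\sigma}(0,T;L^2(\bm{\Omega}))$, delivering existence in the time domain together with the claimed norm inequality; uniqueness is immediate from the same uniform bound applied to the difference of two solutions. The principal obstacle is the sharp tracking of the $|\bm{s}|^{1/2}/\omega$ dependence: the factor of $\bm{s}$ produced by the time derivative inside the convolution is only partially offset by the $|\bm{s}|^{-1/2}$ gain of the Newtonian potential from Proposition \ref{prop2}, yielding exactly the half-order loss that forces the source regularity assumption $\mathbcal{F}\in H^{r+1/2}_{0,\sigma}$.
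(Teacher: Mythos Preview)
Your proof is correct and follows the same overall strategy as the paper: Laplace transform, a variational argument with the $\bm{s}^{1/2}$-weighted test function yielding the $|\bm{s}|^{1/2}/\omega$ operator bound, and then Lemma~\ref{lubich} to return to the time domain. The one genuine difference is in how the variational step is set up. The paper works directly with the integral equation on $L^2(\bm{\Omega})$: it tests $\hat{\bm{\mathrm{Y}}}+\overline{b}\,\bm{s}\,\mathcal{V}_{\bm{s},\bm{\Omega}}[\hat{\bm{\mathrm{Y}}}]=\hat{\mathbcal{F}}$ against $\bm{s}^{1/2}\hat{\bm{\mathrm{Y}}}$ and uses the positivity identity $\int \mathcal{V}_{\bm{s}}[\hat{\bm{\mathrm{Y}}}]\,\overline{\hat{\bm{\mathrm{Y}}}}=\|\nabla z\|^2+\overline{\bm{s}}\|z\|^2$ to get coercivity in one stroke, so the bound on $\hat{\bm{\mathrm{Y}}}$ falls out directly. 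You instead eliminate $\hat{\bm{\mathrm{Y}}}$ in favour of the potential $z$ and recast the problem as the elliptic PDE $-\Delta z+(1+\overline{b}\chi_{\bm{\Omega}})\bm{s}z=\hat{\mathbcal{F}}$ on $H^1(\mathbb{R}^3)$, which is precisely the Laplace-transformed effective parabolic model~(\ref{effective-parabolic}); this makes the Lax--Milgram step entirely standard at the cost of an extra substitution step to recover $\hat{\bm{\mathrm{Y}}}=\hat{\mathbcal{F}}-\overline{b}\,\bm{s}\,z$. Both routes give the identical $|\bm{s}|^{1/2}/\omega$ bound and the same half-order loss; yours has the advantage of making the link to the effective PDE explicit, while the paper's is slightly more economical.
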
   

\begin{proof}  
We begin by considering the Lippmann-Schwinger equation in the Fourier-Laplace domain, where the transform parameter is set as \(\bm{s} \in \mathbb{C}_+ := \Big\{\bm{s}\in \mathbb C : \Re(\bm{s}) > 0 \Big\}\). This choice is motivated by the fact that the heat equation, when transformed into the Laplace domain, assumes the form \(\Big(-\Delta + \left(\bm{s}^{\frac{1}{2}}\right)^2\Big) \hat{u} = \hat{\mathbcal{F}}(x, \bm{s})\). Consequently, it is necessary to consider \(\bm{s} \in \mathbb{C} \setminus (-\infty, 0]\), which ensures that \(\bm{s}^{\frac{1}{2}} \in \mathbb{C}_+\) and \(\omega = \Re (\bm{s}^{\frac{1}{2}}) > 0\).
\\
Next, we introduce the Newtonian heat potential operator, which is defined as
\[
    \bm{\mathrm{V}}^t_\mathbf{\Omega}[f](x,t) := \int_0^t \int_{\mathbf{\Omega}} \Phi^{(m)}(x,t;y,\tau) \, f(y,\tau) \, dy \, d\tau.
\]
Here, we note that the Laplace-Fourier transform \(\mathrm{G}^{(\bm{s})}\) of the fundamental solution \(\Phi^{(m)}(x,t;y,\tau)\) for the heat equation corresponds to the fundamental solution of the differential equation \( -\Delta u + \bm{s} u = 0\), expressed as
\[
    \mathrm{G}^{(\bm{s})}(x,y) = \frac{1}{4\pi|x-y|} e^{-\sqrt{\bm{s}} \, |x-y|}.
\]
Based on this, the corresponding Newtonian potential is defined as
\[
    \bm{\mathrm{V}}_{\bm{s},\mathbf{\Omega}}[\hat{f}](x) := \int_{\Omega} \mathrm{G}^{(\bm{s})}(x,y) \, \hat{f}(y) \, dy,
\]
where \(\displaystyle\widehat{u}:= \widehat{u}(x,\mathbf{s})=\int_0^\infty u(x,t)e^{-\mathbf{s}t}dt.\) Furthermore, it is known that for \(\hat{h}(y) \in L^2(\bm{\Omega})\), extended by 0, the Newtonian potential \(z := \bm{\mathrm{V}}_{\bm{s},\mathbf{\Omega}}[\hat{h}]\) satisfies the following equation
    \begin{equation}\label{4.5}
        -\Delta z + \bm{s} z = \hat{h} \quad \text{in} \ \mathbb{R}^3.
    \end{equation}
Thus, in the Laplace-Fourier domain, the time-domain Lippmann-Schwinger equation (\ref{time-effective-equation}) takes the form
    \begin{align}\label{lax}
        \hat{\bm{\mathrm{Y}}} + \overline{b}\ \mathbf{s}\; \bm{\mathrm{V}}_{\bm{s},\mathbf{\Omega}}\big(\hat{\bm{\mathrm{Y}}}\big) = \hat{\mathbcal{F}}(x,\bm{s}) \quad \text{in} \; \mathbf{\Omega}.
    \end{align}
Our goal is to establish the well-posedness of this problem using the approach outlined in \cite{le-monk}. Specifically, we will formulate a variational framework for this problem and demonstrate its well-posedness using the Lax-Milgram Lemma.
\\
To proceed, we multiply equation (\ref{lax}) by \(\widehat{g} \in L^2(\mathbf{\Omega})\) and integrate over \(\mathbf{\Omega}\), obtaining the following variational form for \(\widehat{\bm{\mathrm{Y}}} \in L^2(\mathbf{\Omega})\):
\[
    \mathbb{A}\big(\hat{\bm{\mathrm{Y}}}, \widehat{g}\big) = \mathbb{B}(\widehat{g}) \quad \text{in} \; L^2(\mathbf{\Omega}),
\]
where
\[
    \mathbb{A}\big(\hat{\bm{\mathrm{Y}}}, \widehat{g}\big) := \int_{\mathbf{\Omega}} \hat{\bm{\mathrm{Y}}} + \mathbf{s}\ \overline{b}\ \bm{\mathrm{V}}_{\bm{s},\mathbf{\Omega}}\big(\hat{\bm{\mathrm{Y}}}\big)\Big) \, \overline{\widehat{g}} \, dy,
\]
and
\[
    \mathbb{B}(\widehat{g}) := \int_{\mathbf{\Omega}} \widehat{\mathbcal{F}} \, \overline{\widehat{g}} \, dy.
\]
To prove the coercivity of this variational form, we choose \(\widehat{g} = \bm{s}^\frac{1}{2}\widehat{\bm{\mathrm{Y}}}\), leading to the following:
\[
    \mathbb{A}\big(\hat{\bm{\mathrm{Y}}}, \bm{s}^\frac{1}{2}\widehat{\bm{\mathrm{Y}}}\big) = \overline{\bm{s}}^\frac{1}{2}\int_{\mathbf{\Omega}} |\hat{\bm{\mathrm{Y}}}|^2 \, dy + \mathbf{s}\overline{\bm{s}}^\frac{1}{2}\ \overline{b}  \int_{\mathbf{\Omega}} \bm{\mathrm{V}}_{\bm{s},\mathbf{\Omega}}\big(\hat{\bm{\mathrm{Y}}}\big) \, \overline{\widehat{\bm{\mathrm{Y}}}} \, dy.
\]
Taking the real part of this equation, we have
\[
    \Re\Big(\mathbb{A}\big(\hat{\bm{\mathrm{Y}}}, \bm{s}^\frac{1}{2}\widehat{\bm{\mathrm{Y}}}\big)\Big) = \omega\int_{\mathbf{\Omega}} |\hat{\bm{\mathrm{Y}}}|^2 \, dy + \Re\Big(\mathbf{s}\overline{\bm{s}}^\frac{1}{2}\ \overline{b} \int_{\mathbf{\Omega}} \bm{\mathrm{V}}_{\bm{s},\mathbf{\Omega}}\big(\hat{\bm{\mathrm{Y}}}\big) \, \overline{\widehat{\bm{\mathrm{Y}}}} \, dy\Big).
\]
From this, using equation (\ref{4.5}), we deduce that
\[
    \int_{\mathbb{R}^3} \bm{\mathrm{V}}_{\bm{s},\mathbf{\Omega}}\big(\hat{\bm{\mathrm{Y}}}\big) \, \overline{\widehat{\bm{\mathrm{Y}}}} \, dy = \int_{\mathbb{R}^3} z \, \overline{\Big(-\Delta z + \bm{\mathrm{s}} z\Big)} \, dy = \int_{\mathbb{R}^3} |\nabla z|^2 + \overline{\bm{\mathrm{s}}} |z|^2 \, dy.
\]
Thus, we find that
\[
    \Re\Big(\mathbf{s}\overline{\bm{s}}^\frac{1}{2}\ \overline{b} \int_{\mathbb{R}^3} \bm{\mathrm{V}}_{\bm{s},\mathbf{\Omega}}\big(\hat{\bm{\mathrm{Y}}}\big) \, \overline{\widehat{\bm{\mathrm{Y}}}} \, dy\Big) = \Re\Big(\mathbf{s}\overline{\bm{s}}^\frac{1}{2}\ \overline{b} \int_{\mathbb{R}^3} |\nabla z|^2 \, dy + |\bm{\mathrm{s}}|^2\overline{\bm{s}}^\frac{1}{2}\ \overline{b} \int_{\mathbb{R}^3} |z|^2 \, dy\Big) \geq 0.
\]
Thus, we conclude that
    \begin{align}\label{t1}
          \mathbb{A}\big(\hat{\bm{\mathrm{Y}}}, \bm{s}^\frac{1}{2}\widehat{\bm{\mathrm{Y}}}\big) \geq \omega\Vert \hat{\bm{\mathrm{Y}}} \Vert_{L^2(\mathbf{\Omega})}^2.
    \end{align}
Moreover, we have
    \begin{align} \label{t2}
          |\mathbb{B}(\widehat{\bm{\mathrm{Y}}})| = \Big| \int_{\mathbf{\Omega}} \widehat{\mathbcal{F}} \, \overline{\mathbf{s}^\frac{1}{2} \widehat{\bm{\mathrm{Y}}}} \, dy \Big| = \Big|\overline{\mathbf{s}}^\frac{1}{2}\int_{\mathbf{\Omega}}  \widehat{\mathbcal{F}} \, \hat{\bm{\mathrm{Y}}} \, dx\Big| \leq  |\mathbf{s}|^\frac{1}{2}\Vert \widehat{f}\Vert_{L^2(\mathbf{\Omega})} \Vert \hat{\bm{\mathrm{Y}}} \Vert_{L^2(\mathbf{\Omega})}.
    \end{align}
Thus, combining (\ref{t1}) and (\ref{t2}), it follows that
    \begin{align}
          \Big\Vert \Big(\mathbf{I} + \mathbf{s} \bm{\mathrm{V}}_{\bm{s},\mathbf{\Omega}}\Big)^{-1} \Big\Vert_{L^2(\mathbf{\Omega}) \to L^2(\mathbf{\Omega})} \leq \frac{\bm{s}^\frac{1}{2}}{\omega},
    \end{align}
where, we observe it satisfies the form (\ref{1.1}) and hence it allows us to apply Lemma \ref{lubich}. Therefore, we define the corresponding time-domain operator
\[
    \bm{\mathcal{A}}_\mathbf{\Omega} := \mathbf{I} + \bm{\mathrm{V}}^t_\mathbf{\Omega} \partial_t.
\]
Following the notation of Lubich et al. \cite{lubich, lubich1} and Lemma \ref{lubich}, and utilizing the Laplace-Fourier techniques presented in \cite{Arpan-Sini-SIMA, sini-haibing, le-monk}, we can show that
\[
    \bm{\mathcal{A}}_{\mathbf{\Omega}}^{-1} : H^{r+\frac{1}{2}}_{0,\sigma}(0,T; L^2(\mathbf{\Omega})) \to H^r_{0,\sigma}(0,T; L^2(\mathbf{\Omega}))
\]
is a bounded operator.
\\
Thus, we conclude that equation (\ref{time-effective-equation}) has a unique solution in \(H^r_{0,\sigma}(0,T; L^2(\mathbf{\Omega}))\) for \(f \in H^{r+\frac{1}{2}}_{0,\sigma}(0,T; L^2(\mathbf{\Omega}))\), completing the proof.
\end{proof}           
We now present the following corollary, derived as a direct consequence of the previous lemma. This corollary will play a crucial role in the subsequent sections. Specifically, the corollary asserts the following
\begin{corollary}\label{cor}             
Consider the Lippmann-Schwinger equation (\ref{time-effective-equation}). Then, we have \(\partial_t^2 \bm{\mathrm{Y}} \in \mathrm{L}^\infty\big(0, \mathrm{T}; L^\infty(\mathbf{\Omega})\big)\) and \(\partial_{\mathrm{x}_i} \partial_t \bm{\mathrm{Y}} \in \mathrm{L}^\infty(0, \mathrm{T}; L^p(\mathbf{\Omega})\big)\) for any $p>3.$
\end{corollary}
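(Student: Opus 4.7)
The plan is to iterate Lemma \ref{prop1} after differentiating the Volterra equation in time, and then to convert the resulting $L^2$-in-space regularity into pointwise/$L^p$ bounds via the smoothing properties of the heat volume potential $\bm{\mathrm{V}}^t_\mathbf{\Omega}$. The key structural input is that the source is separable:
\begin{equation*}
\mathbcal{F}(x,t) = f(t)\, g(x), \qquad g(x) := \overline{a}\,\mathbcal{A}_B\mathbcal{P}_B^{-1}\mathbcal{A}_B\,\bm{E}_f(x)\,\overline{\bm{E}}_f^{\textit{Tr}}(x),
\end{equation*}
with $f \in \mathcal{C}^6(\mathbb{R})$ causal and with $f^{(k)}(0) = 0$ for $k \le 5$, while $g$ inherits its spatial regularity from the effective Maxwell model (\ref{effective-maxwell-model}).

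First, since $\bm{\mathrm{Y}}(\cdot,0) = 0$, the standard causal-convolution identity $\partial_t\bigl(\bm{\mathrm{V}}^t_\mathbf{\Omega}[\partial_t\bm{\mathrm{Y}}]\bigr) = \bm{\mathrm{V}}^t_\mathbf{\Omega}[\partial_t^2\bm{\mathrm{Y}}]$ holds. Differentiating (\ref{time-effective-equation}) in $t$ therefore yields the same Lippmann--Schwinger equation for $\bm{\mathrm{Y}}_k := \partial_t^k \bm{\mathrm{Y}}$ with source $f^{(k)}(t)\,g(x)$. Iterating Lemma \ref{prop1} for $k = 0,1,2,3$ gives $\partial_t^k\bm{\mathrm{Y}} \in H^r_{0,\sigma}\bigl(0,T; L^2(\mathbf{\Omega})\bigr)$ for $r$ sufficiently large (this is where the $\mathcal{C}^6$ regularity of $f$ is used), and the continuous embedding $H^r_{0,\sigma}(0,T;X) \hookrightarrow \mathcal{C}([0,T];X)$ for $r > \tfrac{1}{2}$ promotes this to $\partial_t^k\bm{\mathrm{Y}} \in L^\infty\bigl(0,T; L^2(\mathbf{\Omega})\bigr)$.

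Next, I convert $L^2$-in-space into $L^\infty$-in-space via a H\"older estimate on the heat kernel. By a direct computation,
\begin{equation*}
\|\Phi^{(m)}(x,t;\cdot,\tau)\|_{L^{q'}(\mathbb{R}^3)} \lesssim (t-\tau)^{-3/(2q)},
\end{equation*}
so $\bm{\mathrm{V}}^t_\mathbf{\Omega}$ maps $L^\infty\bigl(0,T; L^q(\mathbf{\Omega})\bigr) \to L^\infty\bigl(0,T; L^\infty(\mathbf{\Omega})\bigr)$ whenever $q > 3/2$. Writing the equation as $\partial_t^2\bm{\mathrm{Y}} = f''(t)\,g(x) - \overline{b}\,\bm{\mathrm{V}}^t_\mathbf{\Omega}[\partial_t^3\bm{\mathrm{Y}}]$, applying this with $q=2$ to $\partial_t^3\bm{\mathrm{Y}}$, and using that $g \in L^\infty(\mathbf{\Omega})$ (interior regularity of $\bm{E}_f$ in $\mathbf{\Omega}$, where $\varepsilon_{\textit{ef}}$ is constant), yields the first assertion $\partial_t^2\bm{\mathrm{Y}} \in L^\infty\bigl(0,T; L^\infty(\mathbf{\Omega})\bigr)$.

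For the spatial derivative, differentiate once in $x_i$ and once in $t$:
\begin{equation*}
\partial_{x_i}\partial_t\bm{\mathrm{Y}}(x,t) = f'(t)\,\partial_{x_i}g(x) - \overline{b}\,\partial_{x_i}\bm{\mathrm{V}}^t_\mathbf{\Omega}[\partial_t^2\bm{\mathrm{Y}}](x,t).
\end{equation*}
The gradient bound $\|\partial_{x_i}\Phi^{(m)}(x,t;\cdot,\tau)\|_{L^{q'}(\mathbb{R}^3)} \lesssim (t-\tau)^{-1/2 - 3/(2q)}$ is integrable in $\tau$ iff $q > 3$; combined with the previous step ($\partial_t^2 \bm{\mathrm{Y}} \in L^\infty_{t,x} \subset L^\infty_t L^q_x$ for every $q$), this shows $\partial_{x_i}\bm{\mathrm{V}}^t_\mathbf{\Omega}[\partial_t^2\bm{\mathrm{Y}}] \in L^\infty\bigl(0,T; L^\infty(\mathbf{\Omega})\bigr)$. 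The remaining term $f'(t)\,\partial_{x_i}g(x)$ is controlled in $L^\infty\bigl(0,T; L^p(\mathbf{\Omega})\bigr)$ provided $\partial_{x_i}g \in L^p(\mathbf{\Omega})$. The expected main obstacle is exactly here: since $\varepsilon_{\textit{ef}}$ jumps across $\partial\mathbf{\Omega}$, the normal component of $\bm{E}_f$ is discontinuous there, so $\nabla \bm{E}_f$ lies in $L^p(\mathbf{\Omega})$ only for finite $p$. A Meyers-type higher-integrability estimate for the Maxwell Lippmann--Schwinger equation associated with (\ref{effective-maxwell-model}), together with the algebra property $W^{1,p}(\mathbf{\Omega}) \cdot L^\infty(\mathbf{\Omega}) \subset W^{1,p}(\mathbf{\Omega})$, delivers $\partial_{x_i}g \in L^p(\mathbf{\Omega})$ for every $p > 3$, finishing the proof. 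Verifying this Meyers estimate uniformly in the relevant parameters is the step I would expect to require the most care.
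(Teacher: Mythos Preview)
Your proposal is correct and follows essentially the same route as the paper: bootstrap time regularity from Lemma~\ref{prop1}, use the smoothing of $\bm{\mathrm{V}}^t_{\mathbf{\Omega}}$ to upgrade $L^2$-in-space to $L^\infty$-in-space for $\partial_t^2\bm{\mathrm{Y}}$, then control $\partial_{x_i}\bm{\mathrm{V}}^t_{\mathbf{\Omega}}[\partial_t^2\bm{\mathrm{Y}}]$ via heat-kernel gradient estimates and invoke $\bm{E}_f \in W^{1,p}(\mathbf{\Omega})$ for $p>3$ to handle $\partial_{x_i}\mathbcal{F}$. The only tactical differences are that the paper uses Proposition~\ref{prop2} (the $L^2\to H^2$ mapping) together with the Sobolev embedding $H^2\hookrightarrow L^\infty$ where you use a direct H\"older estimate on the kernel, and the paper obtains the $W^{1,p}$ regularity of $\bm{E}_f$ by citing \cite{cao-sini} rather than a Meyers-type argument.
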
      

\begin{proof}          
We begin by recalling the Lippmann-Schwinger equation (\ref{time-effective-equation})
    \begin{equation}\label{effective-equation}
        \bm{\mathrm{Y}} (\mathrm{x}, \mathrm{t}) + \overline{b} \int_0^t \int_{\mathbf{\Omega}} \Phi^{(m)}(\mathrm{x}, \mathrm{t}; \mathrm{y}, \tau) \ \frac{\partial}{\partial \tau} \bm{\mathrm{Y}}(\mathrm{y}, \tau) \, d\mathrm{y} d\tau = \mathbcal{F}(\mathrm{x}, \mathrm{t}), \quad (\mathrm{x}, \mathrm{t}) \in \mathbf{\Omega} \times (0, \mathrm{T}), \nonumber
    \end{equation}
which can be rewritten as
    \begin{align}
        \bm{\mathrm{Y}} (\mathrm{x}, \mathrm{t}) + \overline{b}\mathbf{\bm{V}}^t_{\bm{\Omega}} \left[ \partial_t \bm{\mathrm{Y}} \right] (\mathrm{x}, \mathrm{t}) = \mathbcal{F}(\mathrm{x}, \mathrm{t}), \quad (\mathrm{x}, \mathrm{t}) \in \mathbf{\Omega} \times (0, \mathrm{T}).
    \end{align}
By applying Lemma \ref{prop1}, we observe that for \(\mathbcal{F} \in \mathrm{H}^{\frac{11}{2}}_{0,\sigma}(0, \mathrm{T}; \mathrm{L}^2(\mathbf{\Omega}))\), it follows that \(\bm{\mathrm{Y}} \in \mathrm{H}^5_{0,\sigma}(0, \mathrm{T}; \mathrm{L}^2(\mathbf{\Omega}))\) (i.e., \(p = 5\)), which further implies \(\partial_t \bm{\mathrm{Y}} \in \mathrm{H}^4_{0,\sigma}(0, \mathrm{T}; \mathrm{L}^2(\mathbf{\Omega}))\).
\\
Next, using Proposition \ref{prop2}, we conclude that \(\mathbf{\bm{V}}^t_{\bm{\Omega}} \left[ \partial_t \bm{\mathrm{Y}} \right] \in \mathrm{H}^{\frac{7}{2}}_{0,\sigma}(0, \mathrm{T}; \mathrm{H}^2(\mathbf{\Omega}))\). By the continuous Sobolev embedding \(H^2(\mathbf{\Omega}) \hookrightarrow L^\infty(\mathbf{\Omega})\), this implies \(\mathbf{\bm{V}}^t_{\bm{\Omega}} \left[ \partial_t \bm{\mathrm{Y}} \right] \in \mathrm{H}^3_{0,\sigma}(0, \mathrm{T}; L^\infty(\mathbf{\Omega}))\).
\\
Since \(\mathbcal{F} \in \mathrm{H}^3_{0,\sigma}(0, \mathrm{T}; L^\infty(\mathbf{\Omega}))\), it follows that \(\bm{\mathrm{Y}} \in \mathrm{H}^{\frac{7}{2}}_{0,\sigma}(0, \mathrm{T}; L^\infty(\mathbf{\Omega}))\). Consequently, we deduce that \(\partial_t^2 \bm{\mathrm{Y}} \in \mathrm{H}^{\frac{3}{2}}_{0,\sigma}(0, \mathrm{T}; L^\infty(\mathbf{\Omega}))\), and by the Sobolev embedding \(H^r(0, T) \xhookrightarrow{} \mathcal{C}(0, T)\) for \(r > \frac{1}{2}\), we obtain \(\partial_t^2 \bm{\mathrm{Y}} \in \mathrm{L}^\infty(0, \mathrm{T}; L^\infty(\mathbf{\Omega}))\).
\\
We now refer to classical singularity estimates of the fundamental solution \(\Phi^{(m)}(\mathrm{x}, \mathrm{t}; \mathrm{y}, \tau)\), as outlined in \cite[Chapter 1]{friedman} and \cite[Chapter 9]{kress}. A critical inequality in this context is \(\mathrm{s}^\mathrm{r} e^{-\mathrm{s}} \leq \mathrm{r}^\mathrm{r} e^{-\mathrm{r}}\), where \(0 < \mathrm{s}, \mathrm{r} < \infty\) and \(\mathrm{s} = \kappa_m |\mathrm{x} - \mathrm{y}|^2 / 4(t - \tau)\). Utilizing this, we derive the following singularity estimates
    \begin{equation}\label{singularities}
        \begin{cases}
            |\Phi(\mathrm{x}, \mathrm{t}; \mathrm{y}, \tau)| \lesssim \frac{\kappa_m^\mathrm{r}}{(t - \tau)^\mathrm{r}} \frac{1}{|\mathrm{x} - \mathrm{y}|^{3 - 2\mathrm{r}}}, & \quad \mathrm{r} < \frac{3}{2}, \\[10pt]
            |\partial_{\mathrm{x}_i} \Phi(\mathrm{x}, \mathrm{t}; \mathrm{y}, \tau)| \lesssim \frac{\kappa_m^\mathrm{r}}{(t - \tau)^\mathrm{r}} \frac{1}{|\mathrm{x} - \mathrm{y}|^{4 - 2\mathrm{r}}}, & \quad \mathrm{r} < \frac{5}{2}, \ i = 1, 2,
        \end{cases}
    \end{equation}
for \(0 \leq \tau \leq t \leq T\) and \(\mathrm{x}, \mathrm{y} \in \mathbb{R}^3\) with \(\mathrm{x} \neq \mathrm{y}\).
\\
We first observe that
    \begin{align}\label{observe}
         \left| \partial_{\mathrm{x}_i}\partial_t \bm{\mathrm{Y}}(\mathrm{x}, \mathrm{t}) \right| 
         \lesssim \int_0^t \int_{\mathbf{\Omega}} |\partial_{\mathrm{x}_i} \Phi^{(m)}(\mathrm{x}, \mathrm{t}; \mathrm{y}, \tau)| \, d\mathrm{y} d\tau \cdot \big\Vert \frac{\partial^2}{\partial \tau^2} \bm{\mathrm{Y}} \big\Vert_{\mathrm{L}^\infty(0, \mathrm{T}; L^\infty(\mathbf{\Omega}))} + |\partial_{\mathrm{x}_i}\partial_t\mathbcal{F}(x,t)| .
    \end{align}
Following the analysis in \cite[Section 5.1]{cao-sini}, we observe that \(\Tilde{\bm{E}}_f\) belongs to \(W^{1,p}(\bm{\Omega})\) for any \(p > 3\). Additionally, we consider \(f(t)\) within the function space \(\mathrm{H}^{\frac{5}{2}}_{0,\sigma}(0, \mathrm{T})\), which ensures that \(\mathbcal{F} \in \mathrm{H}^{\frac{5}{2}}_{0,\sigma}\big(0, \mathrm{T}; W^{1,p}(\bm{\Omega})\big)\) for any \(p > 3\). 
\\
Utilizing the singularity estimate for \(\partial_{\mathrm{x}_i} \Phi^{(m)}(\mathrm{x}, \mathrm{t}; \mathrm{y}, \tau)\) with \(r > \frac{1}{2}\), we derive
\[
\int_0^t \int_{\mathbf{\Omega}} |\partial_{\mathrm{x}_i} \Phi^{(m)}(\mathrm{x}, \mathrm{t}; \mathrm{y}, \tau)| \, d\mathrm{y} \, d\tau = \mathcal{O}(1),
\]
which implies that \(\partial_{\mathrm{x}_i} \partial_t \bm{\mathrm{Y}} \in \mathrm{H}^{\frac{3}{2}}_{0,\sigma}\big(0, \mathrm{T}; L^p(\mathbf{\Omega})\big)\) for any \(p > 3\). Specifically, we conclude that \(\partial_{\mathrm{x}_i} \partial_t \bm{\mathrm{Y}} \in \mathrm{L}^\infty\big(0, \mathrm{T}; L^p(\mathbf{\Omega})\big)\) for any \(p > 3\).
\\
This completes the proof.
\end{proof}
To complete the proof of Theorem \ref{non-periodic}, we address the effective electric field that arises in the effective parabolic problem (\ref{effective-parabolic}). In the subsequent sections, we analyze the associated electromagnetic problem and present the relevant results.


    \subsection{Proof of Proposition \ref{proposition-maxwell-1}: The Effective Electromagnetic Problem}    

In this section, we present the asymptotic analysis of the solution to the electromagnetic scattering problem (\ref{Maxwell-model}) for a cluster of nanoparticles \( D_j \) for \( j=1,2,\ldots,M \), in the limit as \( \delta \to 0 \), where the plasmonic nanoparticles are distributed within the region of interest \( \bm{\Omega} \).
\\
We start by introducing the Lippmann-Schwinger (LS) equation, which provides the solution to the electromagnetic scattering problem (\ref{Maxwell-model}) for \( i=1,2,\ldots,M \). The equation is formulated as
    \begin{align} \label{Maxwell-ls}
          \mathrm{E}_i(\mathrm{x}) + \eta \sum_{i=1}^M \mathbb{M}_{D_i}^{(\kappa)}\big[\mathrm{E}_i\big](\mathrm{x}) - k^2  \eta \sum_{i=1}^M \mathbb{N}_{D_i}^{(\kappa)}\big[\mathrm{E}_i\big](\mathrm{x}) = \mathrm{E}^\textit{in}(\mathrm{x}), \quad \text{for} \ x \in D := \bigcup_{i=1}^M D_i,
    \end{align}
where the magnetization operator and the Newtonian operator are given by:
    \begin{equation}\nonumber 
          \mathbb{M}_{D_i}^{(\kappa)}\big[\mathrm{E}\big](\mathrm{x}) = \nabla \int_{D_i} \nabla \mathcal{G}^{(\kappa)}(\mathrm{x},\mathrm{y}) \cdot \mathrm{E}(\mathrm{y}) \, d\mathrm{y}, \quad 
          \mathbb{N}_{D_i}^{(\kappa)}\big[\mathrm{E}\big](\mathrm{x}) = \int_{D_i} \mathcal{G}^{(\kappa)}(\mathrm{x},\mathrm{y}) \, \mathrm{E}(\mathrm{y}) \, d\mathrm{y}.
    \end{equation}
Here, \( \mathcal{G}^{(\kappa)}(\mathrm{x}, \mathrm{y}) \) represents the Green's function corresponding to the Helmholtz operator, and the contrast coefficient \( \eta \) is defined as:
    \begin{equation} \label{contrast}
          \eta := \varepsilon_p - \varepsilon_m,
    \end{equation}
where \( \varepsilon_p \) and \( \varepsilon_m \) are the permittivities of the plasmonic material and the surrounding medium, respectively.
\\
Next, we study the LS equation in \( \mathbb{L}^2(D_i) := (L^2(D_i))^3 \). To do this, we project the equation (\ref{Maxwell-ls}), onto the decomposition of \( \mathbb{L}^2(D_i) \), into three sub-spaces
    \begin{align}\nonumber
           \mathbb{L}^{2}(D_i) = \mathbb{H}_{0}(\textit{div}\ 0, D_i) \oplus \mathbb{H}_{0}(\textit{curl}\ 0, D_i) \oplus \nabla \mathbb{H}_{\textit{arm}},
    \end{align}
where \( \mathbb{H}_{0}(\textit{div}\ 0, D_i) \) and \( \mathbb{H}_{0}(\textit{curl}\ 0, D_i) \) are the sub-spaces corresponding to divergence-free and curl-free fields, respectively, and \( \nabla \mathbb{H}_{\textit{arm}} \) represents the gradient of harmonic functions.
\\
Finally, we are interested in analyzing the behavior of the system in the regime where:
    \begin{align}
           M \sim d^{-3}, \quad \text{and} \quad d \sim \delta^\lambda, \quad \text{for some non-negative parameter} \ \lambda.
    \end{align}
This scaling regime characterizes the relationship between the number of nanoparticles \( M \), the inter-particle distance \( d \), and the small parameter \( \delta \).


    \subsubsection{Construction of the Algebraic System for the Electromagnetic Problem}  

\begin{lemma} \label{lemma-maxwell}  
The vectors \(\displaystyle\mathbcal{Q}_i := \int_{D_i} \overset{3}{\mathbb{P}}(E_i)(y)\, dy\) for \(i=1,2,\ldots,M\) satisfy the following linear algebraic system
    \begin{align}\nonumber
          \mathbcal{Q}_i - \eta \sum_{j \neq i}^M \mathbcal{P}_{D_i} \cdot \bm{\Upsilon}^{(k)}(z_i, z_j)\ \mathbcal{Q}_j 
          = \mathbcal{P}_{D_i} \cdot \mathrm{E}^\textit{in}(z_i) + \mathcal{O}\left(\delta^{\min\{4-h, 7-2h-4\lambda\}}\right), \; \text{for} \ j = 1, 2, \ldots, M,
    \end{align}
where \(\mathbcal{P}_{D_i}\) is the polarization matrix, defined as:
    \begin{align}\nonumber
          \mathbcal{P}_{D_i} = \delta^3 \sum_{n} \frac{1}{1 + \eta \lambda_n^{(3)}} \left( \int_{B_i} e^{(3)}_n \otimes \int_{B_i} e^{(3)}_n \right),
    \end{align}
with \(e^{(3)}_n(\cdot)\) denoting the eigenfunctions associated $\nabla \mathbb{H}_{\textit{arm}}$-space in the domain \(B_i\).
\end{lemma}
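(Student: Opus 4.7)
The plan is to obtain the algebraic system by projecting the Lippmann-Schwinger equation (\ref{Maxwell-ls}) onto the subspace $\nabla \mathbb{H}_{\textit{arm}}$ of $\mathbb{L}^2(D_i)$ via the projector $\overset{3}{\mathbb{P}}$, separating the self-interaction term ($j=i$) from the cross-interaction sum ($j \ne i$), and inverting the self-interaction operator using its spectral decomposition on $\nabla \mathbb{H}_{\textit{arm}}(D_i)$. The resonance condition (\ref{condition3D}), which forces $|1+\eta\lambda_{n_0}^{(3)}| \sim \delta^h$ only at the mode $n_0$, is what isolates the dominant spectral subspace and dictates the scale $\delta^{3-h}$ of the polarization matrix.

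For the self-interaction, I would first replace the Helmholtz magnetization operator $\mathbb{M}_{D_i}^{(\kappa)}$ by its static counterpart $\mathbb{M}_{D_i}^{(0)}$, at the cost of a Taylor correction obtained by expanding $\mathcal{G}^{(k)}(x,y)$ around $k|x-y|=0$ on the small scale $|x-y|\le\delta$. Since $\mathbb{M}_{D_i}^{(0)}$ restricted to $\nabla \mathbb{H}_{\textit{arm}}$ admits an orthonormal eigenbasis $(\lambda_n^{(3)}, e_n^{(3)})_n$, inverting $I + \eta \mathbb{M}_{D_i}^{(0)}$ produces
\[
\overset{3}{\mathbb{P}}(E_i)(x) = \sum_n \frac{1}{1+\eta \lambda_n^{(3)}}\, \bigl\langle \mathrm{RHS}_i,\, e_n^{(3)}\bigr\rangle\, e_n^{(3)}(x),
\]
where $\mathrm{RHS}_i$ collects $E^\textit{in}$ and the cross terms from $j \ne i$. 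Integrating over $D_i$, scaling $y = z_i + \delta \xi$ (which produces a factor $\delta^{3/2}$ per integration of the $L^2(B_i)$-normalized $e_n^{(3)}(B)$), and regrouping the tensor-product structure yields precisely the polarization matrix $\mathbcal{P}_{D_i}$ as stated.

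For the cross-interaction contribution, I would exploit the smoothness of the dyadic Green's function $\bm{\Upsilon}^{(k)}(x,y)$ on $D_i \times D_j$ for $i \ne j$ and Taylor expand it around $(z_i, z_j)$. The zeroth-order term produces the leading kernel $\bm{\Upsilon}^{(k)}(z_i, z_j)\,\mathbcal{Q}_j$ appearing in the statement, while the first-order Taylor remainder is controlled by $\delta \cdot \|\nabla \bm{\Upsilon}^{(k)}\|_\infty \sim \delta \cdot d_{ij}^{-4}$. Combining with the a-priori estimates of Proposition \ref{proposition-maxwell-1}, the counting $M \sim d^{-3}$ with $d \sim \delta^\lambda$, and the multiplication by $\|\mathbcal{P}_{D_i}\| \sim \delta^{3-h}$ yields the cross-term error $\delta^{7-2h-4\lambda}$; the self-term Helmholtz-vs-static approximation together with the $k^2 \mathbb{N}_{D_i}^{(\kappa)}$-contribution give the competing $\delta^{4-h}$ term, producing the claimed $\min$.

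The main obstacle will be the rigorous control of the contributions coming from the orthogonal complement $\mathbb{H}_0(\textit{div}\,0, D_i) \oplus \mathbb{H}_0(\textit{curl}\,0, D_i)$. Although the resonance condition selects $\nabla \mathbb{H}_{\textit{arm}}$, the components of $E_j$ in the other two spaces enter the right-hand side of the projected equation for particle $i$ via the cross-coupling through $\eta \mathbb{M}_{D_j}^{(k)}$ and $k^2 \eta \mathbb{N}_{D_j}^{(k)}$ for $j \ne i$, and one must show that these pieces remain of order strictly smaller than $\delta^{3-h}$. This will follow from the $\mathbb{L}^4$-a-priori bound $\|E_i\|_{\mathbb{L}^4(D_i)} \sim \delta^{3/4-h}$ combined with a H\"older-type estimate on the Newtonian convolution, but the bookkeeping across the three subspaces requires care to match the scaling exponents on both sides of the final algebraic identity.
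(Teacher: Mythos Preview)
Your plan is essentially the paper's strategy: separate self- and cross-interaction in the Lippmann--Schwinger equation, replace $\mathbb{M}_{D_i}^{(\kappa)}$ by $\mathbb{M}_{D_i}^{(0)}$ via the $k|x-y|$-Taylor expansion, invert using the spectral basis $(\lambda_n^{(3)},e_n^{(3)})$ on $\nabla\mathbb{H}_{\textit{arm}}$, Taylor-expand $\bm{\Upsilon}^{(k)}$ on $D_i\times D_j$ around $(z_i,z_j)$, and then combine the a-priori bounds of Proposition~\ref{proposition-maxwell-1} with the counting $M\sim d^{-3}$, $d\sim\delta^{\lambda}$. Your error exponents $\delta^{4-h}$ and $\delta^{7-2h-4\lambda}$ match the paper's after inserting $\|\overset{3}{\mathbb{P}}(E_j)\|_{L^2(D_j)}\sim\delta^{3/2-h}$. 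The only packaging difference is that the paper introduces the matrix field $\mathbb{L}:=\bigl(I+\eta\mathbb{M}_{D_i}^{(-k)}-k^2\eta\mathbb{N}_{D_i}^{(-k)}\bigr)^{-1}(\mathbb{I})$ and sets $\mathbcal{P}_{D_i}:=\int_{D_i}\mathbb{L}$, then computes this integral spectrally; you instead project first and invert mode-by-mode. The two routes are equivalent.

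Where your proposal is weaker is the ``main obstacle'' paragraph. The paper does \emph{not} control the complement $\mathbb{H}_0(\textit{div}\,0)\oplus\mathbb{H}_0(\textit{curl}\,0)$ through the $\mathbb{L}^4$ bound. Instead it uses three structural facts that you should incorporate: (i) $\overset{2}{\mathbb{P}}(E_j)=0$ identically, via integration by parts against $e_n^{(2)}$ using $\textit{curl}\,e_n^{(2)}=0$ and $e_n^{(2)}\times\nu=0$ on $\partial D_j$; (ii) $\mathbb{M}_{D_j}^{(\kappa)}$ vanishes on $\mathbb{H}_0(\textit{div}\,0,D_j)$, so the $\overset{1}{\mathbb{P}}(E_j)$-contribution in the cross term reduces to the purely Newtonian piece $k^2\mathcal{G}^{(k)}$; (iii) $\int_{D_j}\overset{1}{\mathbb{P}}(E_j)=0$, which kills the leading Taylor term of that Newtonian piece and pushes it to the next order. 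With these observations the complement produces the subdominant errors $\mathcal{O}(\delta^{\frac{11}{2}-h}d^{-3}\max_j\|\overset{1}{\mathbb{P}}(E_j)\|_{L^2(D_j)})$ appearing in the paper's intermediate step~(\ref{first-al-1}), and no $\mathbb{L}^4$ argument is needed here.
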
 
\noindent
In addition, the algebraic system described above is invertible under the following condition
    \begin{align}\nonumber
          |\eta| \max_{j=1,2,\ldots,M} \Vert \mathbcal{P}_{D_i} \Vert \ d^{-3} < 1.
    \end{align}
\begin{proof} 
We begin by recalling the Lippmann-Schwinger equation for the electric field \( \mathrm{E}_i(\mathrm{x}) \) within the nanoparticle's cluster
    \begin{align}\label{algebra-lippmann}
         \mathrm{E}_i(\mathrm{x}) + \eta \, \mathbb{M}_{D_i}^{(k)}\big[\mathrm{E}_i\big](\mathrm{x}) - k^2\eta \, \mathbb{N}_{D_i}^{(\kappa)}\big[\mathrm{E}_i\big](\mathrm{x}) = \mathrm{E}^\textit{in}(\mathrm{x}) - \eta \sum_{j\ne i}^M\left(\mathbb{M}_{D_j}^{(\kappa)} - k^2\mathbb{N}_{D_j}^{(\kappa)}\right)\big[\mathrm{E}_j\big](\mathrm{x}),\; x\in D_i
    \end{align}
where \(\eta = \varepsilon_p - \varepsilon_m\). We express the operators \(\mathbb{M}_{D_j}^{(\kappa)}\) and \(\mathbb{N}_{D_j}^{(\kappa)}\) using the dyadic Green's function \(\bm{\Upsilon}^{(k)}(x,y)\), given by
    \begin{align}\nonumber
        \bm{\Upsilon}^{(k)}(x,y) = \underset{x}{\text{Hess}}\,\mathcal{G}^{(k)}(x,y) + k^2 \mathcal{G}^{(k)}(x,y) \mathbb{I},
    \end{align}
where \(\mathcal{G}^{(k)}(x,y)\) is the Green's function of the Helmholtz equation. This allows us to write
    \begin{align}\nonumber
         \left(-\mathbb{M}_{D_j}^{(\kappa)} +k^2 \mathbb{N}_{D_j}^{(\kappa)}\right)\big[\mathrm{E}_j\big](\mathrm{x}) = \int_{D_j} \bm{\Upsilon}^{(k)}(x,y)\, \mathrm{E}_j(y)\, dy,\; \text{for}\ x\notin D_j.
    \end{align}
Let us define \(\mathbb{L}\) as
    \begin{align}\nonumber
        \mathbb{L} := \left( \mathbb{I} + \eta\, \mathbb{M}_{D_i}^{(-k)} - k^2\eta \mathbb{N}_{D_i}^{(-k)} \right)^{-1}(\mathbb{I}) := \mathbb{T}_{-k}^{-1}(\mathbb{I}),
    \end{align}
to simplify the Lippmann-Schwinger equation. Next, we decompose the electric field \(\mathrm{E}_j\) into its projections \(\mathrm{E}_j = \overset{1}{\mathbb{P}}(E_j) + \overset{3}{\mathbb{P}}(E_j),\)
where \(\overset{1}{\mathbb{P}}(E_j)\) is the component in \(\mathbb{H}_0(\textit{div}\ 0, D_j)\), and \(\overset{3}{\mathbb{P}}(E_j)\) is the component in \(\nabla \mathbb{H}_{\textit{arm}}\). Observe that \(\overset{2}{\mathbb{P}}(E_j) = 0\), as
\[
\int_{D_j} E_j \cdot e_n^{(2)} = \frac{1}{k^2} \int_{D_j} \delta_j^{-1} \, \text{curl} \, \text{curl} \, E_j \cdot e_n^{(2)} = 0,
\]
by integration by parts, using the fact that \(\text{curl} \, e_n^{(2)} = 0\) and \(e_n^{(2)} \times \nu = 0\) on \(\partial D_j\).
\\
Integrating both sides of the Lippmann-Schwinger equation (\ref{algebra-lippmann}) over the domain \(D_i\), we obtain
    \begin{align}
          \int_{D_i}\overset{3}{\mathbb{P}}(E_i)(y)\ dy 
          - \eta\sum_{j\ne i}^M\int_{D_i}\mathbb{L}(x) \cdot&\nonumber\int_{D_j} \bm{\Upsilon}^{(k)}(x,y)\ \overset{3}{\mathbb{P}}(E_j)(y) dydx 
          = \int_{D_i}\mathbb{L}(x)\cdot \mathrm{E}^\textit{in}(x) dx 
          \\ \nonumber&+ \eta\sum_{j\ne i}^M\int_{D_i}\mathbb{L}(x) \cdot\int_{D_j} \bm{\Upsilon}^{(k)}(x,y)\ \overset{1}{\mathbb{P}}(E_j)(y) dydx
    \end{align}
Since \(\mathbb{M}_{D_j}^{(\kappa)}\) vanishes on \(\mathbb{H}_0(\textit{div}\ 0, D_j)\), the term involving \(\overset{1}{\mathbb{P}}(E_j)\) simplifies to
\[
\int_{D_i} \mathbb{L}(x) \cdot \int_{D_j} \bm{\Upsilon}^{(k)}(x,y) \, \overset{1}{\mathbb{P}}(E_j)(y)\, dy\, dx = -k^2 \int_{D_i} \mathbb{L}(x) \cdot\int_{D_j} \mathcal{G}^{(k)}(x,y) (y)\, dy.
\]
Thus, the equation becomes
    \begin{align}\label{becomes}
          \int_{D_i}\overset{3}{\mathbb{P}}(E_i)(y)\ dy 
          - \eta\sum_{j\ne i}^M\int_{D_i}\mathbb{L}(x) \cdot&\nonumber\int_{D_j} \bm{\Upsilon}^{(k)}(x,y)\ \overset{3}{\mathbb{P}}(E_j)(y) dydx 
          = \int_{D_i}\mathbb{L}(x)\cdot \mathrm{E}^\textit{in}(x) dx 
          \\ &+ k^2\eta\sum_{j\ne i}^M\int_{D_i}\mathbb{L}(x) \cdot\int_{D_j} \mathcal{G}^{(k)}(x,y)\ \overset{1}{\mathbb{P}}(E_j)(y) dydx
    \end{align}
To analyze this system, we perform a Taylor expansion of the Green’s function \(\bm{\Upsilon}^{(k)}(x,y)\) and \(\mathcal{G}^{(k)}(x,y)\) around the points \(z_i \in D_i\) and \(z_j \in D_j\), with \(i \neq j\). For \(\bm{\Upsilon}^{(k)}(x,y)\), we have
\[
\bm{\Upsilon}^{(k)}(x,y) = \bm{\Upsilon}^{(k)}(z_i,z_j) + \int_0^1 \underset{x}{\nabla} \bm{\Upsilon}^{(k)}(z_i + \theta(x - z_i), z_j) \cdot (x - z_i) \, d\theta + \int_0^1 \underset{y}{\nabla} \bm{\Upsilon}^{(k)}(z_i, z_j + \theta(y - z_j)) \cdot (y - z_j) \, d\theta.
\]
Similarly, the Taylor expansion for \(\mathcal{G}^{(k)}(x,y)\) is
\[
\mathcal{G}^{(k)}(x,y) = \mathcal{G}^{(k)}(x, z_j) + \underset{y}{\nabla} \mathcal{G}^{(k)}(x, z_j) \cdot (y - z_j) + \frac{1}{2} \int_0^1 (y - z_j)^{\perp} \cdot \underset{y}{\textit{Hess}} \mathcal{G}^{(k)}(z_i, z_j + \theta(x - z_j)) \cdot (y - z_j)\, d\theta.
\]
Using the fact that $\displaystyle\int_{D_j}\overset{1}{\mathbb{P}}(E_j)(y) dy = 0$, we can deduce the following expression from (\ref{becomes})
    \begin{align}
          \nonumber&\int_{D_i}\overset{3}{\mathbb{P}}(E_i)(y)\, dy 
          - \eta\sum_{j\neq i}^{M}\int_{D_i}\mathbb{L}(x) \cdot \bm{\Upsilon}^{(k)}(z_i, z_j) \int_{D_j}\overset{3}{\mathbb{P}}(E_j)(y)\, dy 
          \\\nonumber&= \int_{D_i}\mathbb{L}(x) \cdot \mathrm{E}^{\textit{in}}(x)\, dx 
          + \eta \sum_{j\neq i}^{M} \int_{D_i} \mathbb{L}(x) \cdot \int_0^1 \underset{x}{\nabla} \bm{\Upsilon}^{(k)}(z_i + \theta(x - z_i), z_j) \cdot (x - z_i)\, d\theta \int_{D_j} \overset{3}{\mathbb{P}}(E_j)(y)\, dy\, dx 
          \\ \nonumber& + \eta \sum_{j\neq i}^{M} \int_{D_i} \mathbb{L}(x) \cdot \int_{D_j} \int_0^1 \underset{y}{\nabla} \bm{\Upsilon}^{(k)}(z_i, z_j + \theta(x - z_j)) \cdot (y - z_j) \overset{3}{\mathbb{P}}(E_j)(y)\, d\theta\, dy\, dx 
          \\ \nonumber& + k^2  \eta \sum_{j\neq i}^{M} \int_{D_i} \mathbb{L}(x) \cdot \int_{D_j} \underset{x}{\nabla} \mathcal{G}^{(k)}(x, z_j) \cdot (y - z_j) \overset{1}{\mathbb{P}}(E_j)(y)\, dy\, dx 
          \\ \nonumber& + k^2 \eta \sum_{j\neq i}^{M} \int_{D_i} \mathbb{L}(x) \cdot \int_{D_j} \frac{1}{2} \int_0^1 (y - z_j)^\perp \cdot \underset{y}{\nabla} \mathcal{G}^{(k)}(z_i, z_j + \theta(x - z_j)) \cdot (y - z_j) \overset{1}{\mathbb{P}}(E_j)(y)\, d\theta\, dy\, dx.
    \end{align}
This can be rewritten by focusing on the leading order term
    \begin{align}\label{al}
          \int_{D_i}\overset{3}{\mathbb{P}}(E_i)(y)\, dy 
          - \eta \sum_{j\ne i}^{M} \int_{D_i} \mathbb{L}(x) \cdot \bm{\Upsilon}^{(k)}(z_i, z_j) \int_{D_j} \overset{3}{\mathbb{P}}(E_j)(y)\, dy 
          = \int_{D_i}\mathbb{L}(x)\cdot \mathrm{E}^{\textit{in}}(x)\, dx + \textit{Err}_{\mathbb{L},D_i},
    \end{align}
where
    \begin{align}
          \textit{Err}_{\mathbb{L},D_i} := \textit{Err}_{\mathbb{L},1,D_i} + \textit{Err}_{\mathbb{L},2,D_i} + \textit{Err}_{\mathbb{L},3,D_i} + \textit{Err}_{\mathbb{L},4,D_i}.
    \end{align}
The following estimates for the error terms are then obtained. First, we have
    \begin{align}\label{al1}
           \big| \textit{Err}_{\mathbb{L},1,D_i} \big| 
           \nonumber&:= \Big|\eta \sum_{j \neq i}^{M} \int_{D_i} \mathbb{L}(x) \cdot \int_0^1 \nabla_x \bm{\Upsilon}^{(k)}(z_i + \theta(x - z_i), z_j) \cdot (x - z_i)\, d\theta \int_{D_j} \overset{3}{\mathbb{P}}(E_j)(y)\, dy\, dx \Big| \\
           \nonumber&\lesssim \Vert \mathbb{L} \Vert_{L^2(D_i)} \sum_{j \ne i}^{M} \Big\Vert \int_0^1 \nabla_x \bm{\Upsilon}^{(k)}(z_i + \theta(x - z_i), z_j) \cdot (x - z_i)\, d\theta \Big\Vert_{L^2(D_i \times D_j)} \Vert \overset{3}{\mathbb{P}}(E_j)(y) \Vert_{L^2(D_j)} \\
           &\lesssim \delta^4 \Vert \mathbb{L} \Vert_{L^2(D_i)}\sum_{j \ne i}^{M} d_{ij}^{-4} \Vert \overset{3}{\mathbb{P}}(E_j) \Vert_{L^2(D_j)} = \mathcal{O}\Big(\Vert \mathbb{L} \Vert_{L^2(D_i)}\delta^4 d^{-4} \underset{j}{\max} \Vert \overset{3}{\mathbb{P}}(E_j) \Vert_{L^2(D_j)} \Big).
    \end{align}
Similarly, we obtain the following for the error term
    \begin{align}\label{al2}
           \big| \textit{Err}_{\mathbb{L},2,D_i} \big| 
           \nonumber&:= \Big|\eta \sum_{j \ne i}^{M} \int_{D_i} \mathbb{L}(x) \cdot \int_{D_j} \int_0^1 \nabla_y \bm{\Upsilon}^{(k)}(z_i, z_j + \theta(x - z_j)) \cdot (y - z_j) \overset{3}{\mathbb{P}}(E_j)(y)\, d\theta\, dy\, dx \Big| \\
           &= \mathcal{O}\Big(\Vert \mathbb{L} \Vert_{L^2(D_i)}\delta^4 d^{-4} \underset{j}{\max} \Vert \overset{3}{\mathbb{P}}(E_j) \Vert_{L^2(D_j)} \Big).
    \end{align}
Next, we deduce
    \begin{align}\label{al3}
           \big| \textit{Err}_{\mathbb{L},3,D_i} \big| 
           \nonumber&:= \Big| k^2 \eta \sum_{j \ne i}^{M} \int_{D_i} \mathbb{L}(x) \cdot \int_{D_j} \nabla_x \mathcal{G}^{(k)}(x, z_j) \cdot (y - z_j) \overset{1}{\mathbb{P}}(E_j)(y)\, dy\, dx \Big| \\
           \nonumber&\lesssim \Vert \mathbb{L} \Vert_{L^2(D_i)} \sum_{j \ne i}^{M} \Big\Vert \int_{D_j} \nabla_x \mathcal{G}^{(k)}(x, z_j) \cdot (y - z_j) \overset{1}{\mathbb{P}}(E_j)(y)\, dy \Big\Vert_{L^2(D_i)} \\
           &\lesssim \delta^4\Vert \mathbb{L} \Vert_{L^2(D_i)} \sum_{j \ne i}^{M} d_{ij}^{-2} \Vert \overset{1}{\mathbb{P}}(E_j) \Vert_{L^2(D_j)} = \mathcal{O}\Big(\Vert \mathbb{L} \Vert_{L^2(D_i)}\delta^4 d^{-3} \underset{j}{\max} \Vert \overset{1}{\mathbb{P}}(E_j) \Vert_{L^2(D_j)} \Big),
    \end{align}
and finally, we have
    \begin{align}\label{al4}
           \nonumber&\big| \textit{Err}_{\mathbb{L},4,D_i} \big| 
           \\ \nonumber&:= \Big| k^2  \eta \sum_{j \ne i}^{M} \int_{D_i} \mathbb{L}(x) \cdot \int_{D_j} \frac{1}{2} \int_0^1 (y - z_j)^\perp \cdot \underset{x}{\textit{Hess}} \mathcal{G}^{(k)}(z_i, z_j + \theta(x - z_j)) \cdot (y - z_j) \overset{1}{\mathbb{P}}(E_j)(y)\, d\theta\, dy\, dx \Big| \\
           \nonumber&\lesssim \Vert \mathbb{L} \Vert_{L^2(D_i)} \sum_{j \ne i}^{M} \Big\Vert \int_{D_j} \frac{1}{2} \int_0^1 (y - z_j)^\perp \cdot \underset{x}{\textit{Hess}} \mathcal{G}^{(k)}(z_i, z_j + \theta(x - z_j)) \cdot (y - z_j) \overset{1}{\mathbb{P}}(E_j)(y)\, d\theta\, dy \Big\Vert_{L^2(D_i)} \\
           &\lesssim \delta^5\Vert \mathbb{L} \Vert_{L^2(D_i)} \sum_{j \ne i}^{M} d_{ij}^{-3} \Vert \overset{1}{\mathbb{P}}(E_j) \Vert_{L^2(D_j)} 
           = \mathcal{O}\Big(\Vert \mathbb{L} \Vert_{L^2(D_i)}\delta^5 |\log(d)| d^{-3} \underset{j}{\max} \Vert \overset{1}{\mathbb{P}}(E_j) \Vert_{L^2(D_j)} \Big).
    \end{align}
Therefore, plugging the estimates (\ref{al1}), (\ref{al2}), (\ref{al3}), and (\ref{al4}) in (\ref{al}), we arrive at
    \begin{align}
          \int_{D_i}\overset{3}{\mathbb{P}}(E_i)(y)\ dy &\nonumber+ \eta\sum_{j\ne i}^M\int_{D_i}\mathbb{L}(x)dx \cdot \bm{\Upsilon}^{(k)}(z_i,z_j)\int_{D_j} \overset{3}{\mathbb{P}}(E_j)(y) dy 
          = \int_{D_i}\mathbb{L}(x)\cdot \mathrm{E}^\textit{in}(x) dx
          \\ \nonumber&+  \mathcal{O}\Big(\Vert \mathbb{L} \Vert_{L^2(D_i)}\delta^4 d^{-4}\underset{j}{\max} \Vert \overset{3}{\mathbb{P}}(E_j)\Vert_{L^2(D_j)}\Big)
          +  \mathcal{O}\Big(\Vert \mathbb{L} \Vert_{L^2(D_i)}\delta^4 d^{-3}\underset{j}{\max} \Vert \overset{1}{\mathbb{P}}(E_j)\Vert_{L^2(D_j)}\Big) 
          \\ &+ \mathcal{O}\Big(\Vert \mathbb{L} \Vert_{L^2(D_i)}\delta^5|\log(d)| d^{-3}\underset{j}{\max} \Vert \overset{1}{\mathbb{P}}(E_j)\Vert_{L^2(D_j)}\Big).
    \end{align}
We now introduce the notation $\displaystyle\mathbcal{P}_{D_i} := \int_{D_i}\mathbb{L}(x)dx.$ By applying Taylor's series expansion for $z_i \in D_i,$ we obtain the following expression
    \begin{align}
          \int_{D_i}\mathbb{L}(x)\cdot \mathrm{E}^\textit{in}(x) dx 
          \nonumber&= \mathrm{E}^\textit{in}(z_i)\cdot \mathbcal{P}_{D_i} 
          + \int_{D_i}\mathbb{L}(x) \cdot \int_0^1 \underset{x}{\nabla} \mathrm{E}^\textit{in}(z_i +\theta (x-z_i))\cdot (x-z_i)d\theta
         \\ &= \mathrm{E}^\textit{in}(z_i)\cdot \mathbcal{P}_{D_i} + \mathcal{O}\Big(\delta^\frac{5}{2}\Vert \mathbb{L} \Vert_{L^2(D_i)}\Big).
    \end{align}
Using the expression derived above, we can further deduce
    \begin{align}\label{first-al-1}
          \int_{D_i}\overset{3}{\mathbb{P}}(E_i)(y)\ dy 
          &\nonumber- \eta\sum_{j\ne i}^M \mathbcal{P}_{D_i} \cdot \bm{\Upsilon}^{(k)}(z_i,z_j)\int_{D_j} \overset{3}{\mathbb{P}}(E_j)(y) dy
          = \mathbcal{P}_{D_i}\cdot \mathrm{E}^\textit{in}(z_i)
          \\ \nonumber&+  \mathcal{O}\Big(\Vert \mathbb{L} \Vert_{L^2(D_i)}\delta^4 d^{-4}\underset{j}{\max} \Vert \overset{3}{\mathbb{P}}(E_j)\Vert_{L^2(D_j)}\Big)
          +  \mathcal{O}\Big(\Vert \mathbb{L} \Vert_{L^2(D_i)}\delta^4 d^{-2}\underset{j}{\max} \Vert \overset{1}{\mathbb{P}}(E_j)\Vert_{L^2(D_j)}\Big) 
          \\ &+ \mathcal{O}\Big(\Vert \mathbb{L} \Vert_{L^2(D_i)}\delta^5|\log(d)| d^{-3}\underset{j}{\max} \Vert \overset{1}{\mathbb{P}}(E_j)\Vert_{L^2(D_j)}\Big) 
          + \mathcal{O}\Big(\delta^\frac{5}{2}\Vert \mathbb{L} \Vert_{L^2(D_i)}\Big).
    \end{align}
From this point onward, we will focus on estimating the term $\mathbcal{P}_{D_i}$ as well as determining $\Vert \mathbb{L} \Vert_{L^2(D_i)}.$ Initially, we note that
    \begin{align}
           \mathbcal{P}_{D_i} 
           := \delta^3\int_{B_i}\Tilde{\mathbb{L}}(\xi)d\xi 
           = \delta^3\sum_{n}\Big\langle \Tilde{\mathbb{L}}; e^{(3)}_n\Big\rangle \otimes \int_{B_i} e^{(3)}_n(\xi)d\xi.
    \end{align}
First, we see that
\begin{align}
    \int_{B_i} \Tilde{e}^{(3)}_n(\xi)d\xi = \int_{B_i} \mathrm{I}\cdot \Tilde{e}^{(3)}_n(\xi)d\xi = \int_{B_i} \mathbb{T}_{-k\delta}(\tilde{\mathbb{L}})(\xi)\cdot \Tilde{e}^{(3)}_n(\xi)d\xi = \int_{B_i} \tilde{\mathbb{L}}(\xi)\cdot \mathbb{T}_{k\delta}(\Tilde{e}^{(3)}_n)(\xi)d\xi
\end{align}
Moreover, due to the definition of $\mathbb{T}_{k\delta} := \mathbb{I} + \eta\, \mathbb{M}_{B_i}^{(k\delta)} + k^2\eta \mathbb{N}_{B_i}^{(k\delta)}$ and the fact that the  Magnetization operator $\mathbb{M}^{(0)}_{\mathrm{B}_i}: \nabla \mathbb{H}_{\textit{arm}}\rightarrow \nabla \mathbb{H}_{\textit{arm}}$ induces a complete orthonormal basis namely $\big(\lambda^{(3)}_{\mathrm{n}},\Tilde{e}^{(3)}_{\mathrm{n}}\big)_{\mathrm{n} \in \mathbb{N}}$, we observe that
   \begin{align}
          \Big\langle \Tilde{\mathbb{L}}; e^{(3)}_n\Big\rangle = \frac{1}{1+\eta \lambda_n^{(3)}}\int_{B_i} e^{(3)}_n(\xi)d\xi + \frac{\eta}{1+\eta \lambda_n^{(3)}}\Big\langle \tilde{\mathbb{L}}, \widetilde{\textit{err}}_{1,\textit{al}}(e^{(3)}_n)\Big\rangle + \frac{\eta}{1+\eta \lambda_n^{(3)}}\Big\langle \tilde{\mathbb{L}}, \widetilde{\textit{err}}_{2,\textit{al}}(e^{(3)}_n)\Big\rangle, \nonumber
   \end{align}
from this, we can deduce that
   \begin{align}
          \mathbcal{P}_{D_i} 
          &\nonumber= \delta^3\sum_{n}\frac{1}{1+\eta \lambda_n^{(3)}}\int_{B_i} e^{(3)}_n(\xi)d\xi\otimes \int_{B_i} e^{(3)}_n(\xi)d\xi 
          \\ &+ \underbrace{\delta^3\sum_{n}\frac{\eta}{1+\eta \lambda_n^{(3)}}\Big\langle \tilde{\mathbb{L}}, \widetilde{\textit{err}}_{1,\textit{al}}(e^{(3)}_n)\Big\rangle \otimes \int_{B_i} e^{(3)}_n(\xi)d\xi}_{:=\textit{err}_{1,i}}
         + \underbrace{\delta^3\sum_{n}\frac{\eta}{1+\eta \lambda_n^{(3)}}\Big\langle \tilde{\mathbb{L}}, \widetilde{\textit{err}}_{2,\textit{al}}(e^{(3)}_n)\Big\rangle \otimes \int_{B_i} e^{(3)}_n(\xi)d\xi}_{:= \textit{err}_{2,i}},
   \end{align}
where we recall that
    \begin{align}
          \widetilde{\textit{err}}_{1,\textit{al}}\big[e_n^{(3)}\big] \nonumber&= \frac{\omega^2}{2} \delta^2 \mathbb{N}_{B_i}^{(\mathrm{0})}\big[e_n^{(3)}\big](\mathrm{x}) + \frac{i\omega^3\beta^2_\mathrm{m}}{12\pi}\delta^3\int_{B_i} e_n^{(3)}(\xi)d\xi - \frac{\omega^2}{2} \delta^2\int_{B_i} \mathbb{G}^{(\mathrm{0})}(\varsigma,\xi)\frac{\mathrm{A}(\varsigma,\xi)\cdot e_n^{(3)}}{\Vert \varsigma-\xi\Vert^2}d\xi
          \\ &- \frac{1}{4\pi}\sum_{\mathrm{j}\ge 3} \frac{(i\omega\beta^\frac{1}{2}_\mathrm{m})^{\mathrm{j}+1}}{(\mathrm{j}+1)!} \delta^{j+1}\int_{B_i}\underset{\varsigma}{\textit{Hess}}(\Vert \varsigma-\xi\Vert^\mathrm{j})\cdot e_n^{(3)}(\xi)d\xi,
    \end{align}
and
    \begin{align}
          \widetilde{\textit{err}}_{2,\textit{al}}\big[e_n^{(3)}\big] = \delta^2 \mathbb{N}_{B_i}^{(\mathrm{0})}\big[e_n^{(3)}\big](\mathrm{x}) + \frac{i\omega\beta^\frac{1}{2}_\mathrm{m}}{4\pi}\delta^3\int_{B_i} e_n^{(3)}(\xi)d\xi 
          - \frac{1}{4\pi}\sum_{\mathrm{j}\ge 1} \frac{(i\omega\beta^\frac{1}{2}_\mathrm{m})^{\mathrm{j}+1}}{(\mathrm{j}+1)!} \delta^{j+1}\int_{B_i}\Vert \varsigma-\xi\Vert^\mathrm{j}\ e_n^{(3)}(\xi)d\xi.
    \end{align}
It can be observed in both expressions for $\widetilde{\textit{err}}_{1,\textit{al}}\big[e_n^{(3)}\big]$ or $\widetilde{\textit{err}}_{2,\textit{al}}\big[e_n^{(3)}\big]$, the presence of factors such as $\delta^2$ and $\delta^3$, hence, we see that the dominating term is $\delta^2 \mathbb{N}_{B_i}^{(\mathrm{0})}\big[e_n^{(3)}\big].$ We begin by analyzing the term \(\textit{err}_{1,i}\):
     \begin{align}
         |\textit{err}_{1,i}| 
         &\nonumber:= \delta^3\Big|\sum_{n}\frac{\eta}{1+\eta \lambda_n^{(3)}}\Big\langle \tilde{\mathbb{L}}, \widetilde{\textit{err}}_{1,\textit{al}}(e^{(3)}_n)\Big\rangle \otimes \int_{B_i} e^{(3)}_n(\xi)d\xi\Big|  
         \\ &\nonumber\simeq  \delta^5\Big|\sum_{n}\frac{\eta}{1+\eta \lambda_n^{(3)}}\Big\langle \tilde{\mathbb{L}}, \mathbb{N}_{B_i}^{(\mathrm{0})}\big[e_n^{(3)}\big]\Big\rangle \otimes \int_{B_i} e^{(3)}_n(\xi)d\xi\Big| + \delta^6\Big|\sum_{n}\frac{\eta}{1+\eta \lambda_n^{(3)}}\Big\langle \tilde{\mathbb{L}}, \int_{B_i} e_n^{(3)}(\xi)d\xi\Big\rangle \otimes \int_{B_i} e^{(3)}_n(\xi)d\xi\Big| 
         \\ \nonumber &\lesssim \delta^{5-h} \Vert \Tilde{\mathbb{L}} \Vert_{L^2(B_i)},
   \end{align}
A similar estimate holds for the term \(\textit{err}_{2,i}\). Based on these previous estimates for \(\textit{err}_{1,i}\) and \(\textit{err}_{2,i}\), we can conclude
    \begin{align}\label{pol-esti}
          \mathbcal{P}_{D_i} 
          &= \delta^3\sum_{n}\frac{1}{1+\eta \lambda_n^{(3)}}\int_{B_i} e^{(3)}_n(\xi)d\xi\otimes \int_{B_i} e^{(3)}_n(\xi)d\xi + \mathcal{O}\Big(\delta^{5-h} \Vert \Tilde{\mathbb{L}} \Vert_{L^2(B_i)}\Big).
   \end{align}
Next, we estimate the term \(\|\tilde{\mathbb{L}}\|_{L^2(B_i)}\):
    \begin{align}
          \Vert \Tilde{\mathbb{L}} \Vert_{L^2(B_i)}^2 
           \nonumber&= \sum_n \big|\langle \Tilde{\mathbb{L}}, e_n^{(1)}\rangle\big|^2 + \sum_n \big|\langle \Tilde{\mathbb{L}}, e_n^{(3)}\rangle\big|^2
           \\ &\nonumber\lesssim \sum_n \frac{1}{|1+\eta \lambda_n^{(3)}|^2}|\langle \mathbb{I},e_n^{(3)}\rangle|^2 + \sum_n\frac{|\eta|^2}{|1+\eta \lambda_n^{(3)}|^2}\Big|\langle \tilde{\mathbb{L}}, \widetilde{\textit{err}}_{1,\textit{al}}(e^{(3)}_n)\rangle\Big|^2 + \sum_n\frac{|\eta|^2}{|1+\eta \lambda_n^{(3)}|^2}\Big|\langle \tilde{\mathbb{L}}, \widetilde{\textit{err}}_{2,\textit{al}}(e^{(3)}_n)\rangle\Big|^2.
    \end{align}
Thus, we obtain
    \begin{align}
          \Vert \Tilde{\mathbb{L}} \Vert_{L^2(B_i)}^2 \lesssim \mathcal{O}(\delta^{-2h}) + \mathcal{O}(\delta^{4-2h}\Vert \Tilde{\mathbb{L}} \Vert_{L^2(B_i)}^2).
    \end{align}
By choosing,
$$h < 2$$
we can further deduce that
    \begin{align}
          \Vert \Tilde{\mathbb{L}} \Vert_{L^2(B_i)} \lesssim \delta^{-h}.
    \end{align}
Consequently, using the above estimate and plugging it in (\ref{pol-esti}), we have    
    \begin{align}
          \mathbcal{P}_{D_i} 
          = \delta^3\sum_{n}\frac{1}{1+\eta \lambda_n^{(3)}}\int_{B_i} e^{(3)}_n(\xi)d\xi\otimes \int_{B_i} e^{(3)}_n(\xi)d\xi 
          + \mathcal{O}\big(\delta^{5-2h}\big).
    \end{align}
Again, substituting the estimate for \(\|\tilde{\mathbb{L}}\|_{L^2(B_i)}\) into equation \((\ref{first-al-1})\), we derive
    \begin{align}\label{first-al}
          \int_{D_i}\overset{3}{\mathbb{P}}(E_i)(y)\ dy 
          &\nonumber- \eta\sum_{j\ne i}^M \mathbcal{P}_{D_i} \cdot \bm{\Upsilon}^{(k)}(z_i,z_j)\int_{D_j} \overset{3}{\mathbb{P}}(E_j)(y) dy
          = \mathbcal{P}_{D_i}\cdot \mathrm{E}^\textit{in}(z_i) + \mathcal{O}\Big(\delta^{4-h}\Big)
          \\ \nonumber &+  \mathcal{O}\Big(\delta^{\frac{11}{2}-h} d^{-4}\underset{j}{\max} \Vert \overset{3}{\mathbb{P}}(E_j)\Vert_{L^2(D_j)}\Big)
          +  \mathcal{O}\Big(\delta^{\frac{11}{2}-h} d^{-3}\underset{j}{\max} \Vert \overset{1}{\mathbb{P}}(E_j)\Vert_{L^2(D_j)}\Big) 
          \\ &+ \mathcal{O}\Big(\delta^{\frac{13}{2}-h}|\log(d)| d^{-3}\underset{j}{\max} \Vert \overset{1}{\mathbb{P}}(E_j)\Vert_{L^2(D_j)}\Big).
    \end{align}
The above algebraic system is invertible under the following condition, as discussed in \cite[Section 6.1]{cao-ahcene-sini-jlms}:
    \begin{align}\nonumber
         \eta \underset{j=1,2,\ldots,M}{\max}\Vert \mathbcal{P}_{D_i} \Vert_{L^\infty(\bm{\Omega})}d^{-3}<1.
    \end{align}    
To finalize the proof of Lemma \(\ref{lemma-maxwell}\), we require the following proposition, which states that:


   \subsubsection{\texorpdfstring{A-Priori Estimates in $\mathbb{L}^4(D)$}{A-Priori Estimates}}\label{a-priori}     

\paragraph{Proof of Proposition \ref{proposition-maxwell-1}.}\label{proposition-maxwell}      
Consider the electromagnetic scattering problem (\ref{Maxwell-model}) for cluster of plasmonic nanoparticles $D_i$ for $i=1,2,\ldots,M.$ Then, for $h,\lambda$ satisfying
    \begin{align}\nonumber
          3-3\lambda-h \ge 0 \quad \text{and} \quad \frac{9}{5}< h <2,
    \end{align}
we have the following a-priori estimates
    \begin{align}\nonumber
           \underset{i}{\max}\big\Vert \mathrm{E}_i\big\Vert_{\mathbb{L}^2(D_i)} \lesssim \delta^{\frac{3}{2}-h}.
    \end{align}
In addition to that, we have
    \begin{align}\nonumber
         \big\Vert E_i\big\Vert_{\mathbb{L}^\mathrm{4}(D_i)} \sim \delta^{\frac{3}{4}-\mathrm{h}}.
    \end{align}

\begin{proof} 
The proof of the Proposition is structured into two parts. We begin by deriving an a priori estimate in the space \(\mathbb{L}^2(D)\).
\subsubsection*{Part 1: Derivation of the \(\mathbb{L}^2(D)\) Estimate} We derive the proof by projecting the Lippmann-Schwinger equation (\ref{Maxwell-ls}) onto each of the three sub-spaces outlined in (\ref{decomposition-introduction}).

         
     \subparagraph{1. Projection onto $\mathbb{H}_{0}(\textit{div}\ 0,D_i)$.}
We first consider the Lippmann-Schwinger equation (\ref{Maxwell-ls})
\begin{align}\label{maxwell-hdiv}
           \mathrm{E}_i(\mathrm{x}) + \eta \;\mathbb{M}_{D_i}^{(k)}\big[\mathrm{E}_i\big](\mathrm{x}) 
           = \mathrm{E}^\textit{in}(\mathrm{x}) 
           - \eta \sum_{j\ne m}^M\Big(\mathbb{M}_{D_j}^{(\kappa)}\big[\mathrm{E}_j\big](\mathrm{x})  
           - k^2\mathbb{N}_{D_j}^{(\kappa)}\Big)\big[\mathrm{E}_i\big](\mathrm{x}) 
           + k^2\eta \; \mathbb{N}_{D_i}^{(\kappa)}\big[\mathrm{E}_i\big](\mathrm{x})
    \end{align}
Then, we note that $\mathbb{M}^{(\kappa)}[\mathrm{E}_i]$ vanishes in the subspace $\mathbb{H}_{0}(\textit{div}\ 0,D_i)$. Consequently, projecting the above equation onto $\mathbb{H}_{0}(\textit{div}\ 0,D_i)$ yields the following equation for $x\in D_i$
    \begin{align}
          \nonumber \big\langle \mathrm{E}_i, e_n^{(1)}\big\rangle 
          = \big\langle \mathrm{E}^\textit{in}, e_n^{(1)}\big\rangle 
          + k^2\eta\big\langle \mathbb{N}^{(\kappa)}_{D_i}\big[\mathrm{E}_i\big](\mathrm{x}), e_n^{(1)}\big\rangle 
          + k^2\eta\sum_{j\ne i}^M\big\langle \mathbb{N}^{(\kappa)}_{D_j}\big[\mathrm{E}_j\big](\mathrm{x}), e_n^{(1)}\big\rangle,
    \end{align}
which simplifies to
\begin{align}\label{5.8}
          \big\langle \mathrm{E}_i, e_n^{(1)}\big\rangle 
          = \big\langle \mathrm{E}^\textit{in}, e_n^{(1)}\big\rangle 
          + k^2\eta\big\langle \mathbb{N}^{(\kappa)}_{D_i}\big[\mathrm{E}_i\big](\mathrm{x}), e_n^{(1)}\big\rangle 
          + k^2\eta\ \textit{Err}^{(1)}_{n,D_j},
    \end{align}
where we define the error term as after expanding it using Taylor's series expansion
    \begin{align}\label{errndi}
          \textit{Err}^{(1)}_{n,D_j} 
          \nonumber&:= \sum_{j\ne i}^M\Big\langle \mathbb{G}^{(k)}(z_i,z_j)\int_{D_j}\mathrm{E}_j(y)dy; e_{n}^{(1)}\Big\rangle 
          \\ \nonumber&+ \sum_{j\ne i}^M\Big\langle \int_0^1 \underset{x}{\nabla}\mathbb{G}^{(k)}(z_i+\theta(x-z_i),z_j)\cdot (x-z_i)d\theta\int_{D_j}\mathrm{E}_j(y)dy; e_{n}^{(1)}\Big\rangle 
          \\ &+ \sum_{j\ne i}^M\Big\langle \int_{D_j}\int_0^1 \underset{y}{\nabla}\mathbb{G}^{(k)}(z_i,z_j+\theta(y-z_j))\cdot (y-z_j)\mathrm{E}_j(y)d\theta dy; e_{n}^{(1)}\Big\rangle.
    \end{align}
Upon re-scaling to the domains $B_i$ and $B_j$ in equation (\ref{5.8}), we obtain
    \begin{align}
         \big\langle \Tilde{\mathrm{E}}_i, \Tilde{e}_n^{(1)}\big\rangle 
         &\nonumber= \big\langle \Tilde{\mathrm{E}}^\textit{in}, \Tilde{e}_n^{(1)}\big\rangle 
         + k^2\eta\ \delta^2 \big\langle \mathbb{N}_{B_i}^{(\kappa)}\big[\Tilde{\mathrm{E}}_i\big](\mathrm{x}), \tilde{e}_n^{(1)}\big\rangle
         + k^2\eta\ \widetilde{\textit{Err}}^{(1)}_{n,B_j}.
    \end{align}
Taking the squared modulus and summing over $n$, we derive
    \begin{align}\nonumber
         \Vert \overset{1}{\mathbb{P}}(\Tilde{\mathrm{E}}_i)\Vert^2_{L^2(B_i)} 
         \lesssim  \Vert \overset{1}{\mathbb{P}}(\Tilde{\mathrm{E}}^\textit{in})\Vert^2_{L^2(B_i)} 
         + \delta^4  \Vert \mathbb{N}_{B_i}^{(\kappa)}\big[\Tilde{\mathrm{E}}_i\big]\Vert^2_{L^2(B_i)} 
         + \sum_n |\widetilde{\textit{Err}}^{(1)}_{n,B_j}|^2.
    \end{align}
Using the continuity of the Newtonian operator, we then infer
    \begin{align}\label{errhdiv}
          \Vert \overset{1}{\mathbb{P}}(\Tilde{\mathrm{E}}_i)\Vert^2_{L^2(B_i)} \lesssim  \Vert \overset{1}{\mathbb{P}}(\Tilde{\mathrm{E}}^\textit{in})\Vert^2_{L^2(B_i)} 
          + \delta^4\ \Vert \Tilde{\mathrm{E}}_i\Vert^2_{L^2(B_i)}
          + \sum_n |\widetilde{\textit{Err}}^{(1)}_{n,B_j}|^2.
    \end{align}
Next, we focus on estimating the term $\sum_n |\widetilde{\textit{Err}}^{(1)}_{n,B_j}|^2$. Scaling (\ref{errndi}) to $B_j$, we get
    \begin{align}
          \widetilde{\textit{Err}}_{n,B_j} 
         &\nonumber= \delta^3\sum_{j\ne i}^M\Big\langle \mathbb{G}^{(k)}(z_i,z_j)\int_{B_j}\Tilde{\mathrm{E}}_j(\xi)d\xi; \Tilde{e}_{n}^{(1)}\Big\rangle 
          + \delta^4\sum_{j\ne i}^M\Big\langle \int_0^1 \underset{\varsigma}{\nabla}\mathbb{G}^{(k)}(z_i+\theta\varsigma,z_j)\cdot \varsigma d\theta\int_{B_j}\Tilde{\mathrm{E}}_j(\xi)d\xi; \Tilde{e}_{n}^{(1)}\Big\rangle 
          \\ \nonumber&+ \delta^4\sum_{j\ne i}^M\Big\langle \int_{B_j}\int_0^1 \underset{\xi}{\nabla}\mathbb{G}^{(k)}(z_i,z_j+\theta\xi)\cdot \xi\Tilde{\mathrm{E}}_j(\xi)d\theta d\xi; \Tilde{e}_{n}^{(1)}\Big\rangle.
    \end{align}
Taking the squared modulus and summing over $n$, we obtain the bound
    \begin{align}
          \sum_n |\widetilde{\textit{Err}}^{(1)}_{n,B_j}|^2
          &\nonumber\lesssim M\delta^6\Bigg( \sum_{j\ne i}^M \Big\Vert \mathbb{G}^{(k)}(z_i,z_j)\int_{B_j}\Tilde{\mathrm{E}}_j(\xi)d\xi \Big\Vert^2_{L^2(B_i)}  \\ \nonumber&+ \delta^2 \sum_{j\ne i}^M \Big\Vert\underset{\varsigma}{\nabla}\mathbb{G}^{(k)}(z_i+\theta\varsigma,z_j)\cdot \varsigma\int_{B_j}\Tilde{\mathrm{E}}_j(\xi)d\xi \Big\Vert^2_{L^2(B_i)}
          + \delta^2 \sum_{j\ne i}^M \Big\Vert\underset{\xi}{\nabla}\mathbb{G}^{(k)}(z_i,z_j+\theta\xi)\cdot \xi\Tilde{\mathrm{E}}_j(\xi) \Big\Vert^2_{L^2(B_i)}\Bigg).
\end{align}
Given that $\mathbb{G}^{(k)}(z_i,z_j) = \mathcal{O}\left(\frac{1}{|z_i-z_j|}\right)$, the resulting estimate is as follows
    \begin{align}\label{hdiverr1}
          \sum_n |\widetilde{\textit{Err}}^{(1)}_{n,B_j}|^2
          &\nonumber \lesssim M\delta^6\ d^{-3} \underset{j}{\max}\Vert \Tilde{\mathrm{E}}_j\Vert_{L^2(B_j)} 
          + M\delta^8\ d^{-4} \underset{j}{\max}\Vert\Tilde{\mathrm{E}}_j\Vert_{L^2(B_j)} 
          \\ &\lesssim \delta^{6-6\lambda}\ \underset{j}{\max}\Vert \Tilde{\mathrm{E}}_j\Vert_{L^2(B_j)}  
          + \delta^{8-7\lambda}\ \underset{j}{\max}\Vert \Tilde{\mathrm{E}}_j\Vert_{L^2(B_j)}.
    \end{align}
Finally, using the above estimate (\ref{hdiverr1}) in (\ref{errhdiv}), we obtain
    \begin{align}\label{main-hdiv}
          \Vert \overset{1}{\mathbb{P}}(\Tilde{\mathrm{E}}_i)\Vert^2_{L^2(B_i)} \lesssim  \Vert \overset{1}{\mathbb{P}}(\Tilde{\mathrm{E}}^\textit{in})\Vert^2_{L^2(B_i)} + \delta^{\max\{ 4, 6-6\lambda, 8-7\lambda\}}\ \underset{j}{\max}\Vert \Tilde{\mathrm{E}}_j\Vert_{L^2(B_j)}.
    \end{align}
    
             
    \paragraph{\textit{2. Projection on $\mathbb{H}_{0}(\textit{curl}\ 0,D_i).$}}
Using integration by parts and the Maxwell model (\ref{Maxwell-model}), we first find that
    \begin{align}\nonumber
          \int_{D_i} E_i \cdot e_n^{(2)} 
          &= \frac{1}{k^2} \int_{D_j} \delta_j^{-1} \, \text{curl} \, \text{curl} \, E_j \cdot e_n^{(2)} \nonumber \\
         &= \frac{1}{k^2} \int_{D_j} \delta_i^{-1} \, \text{curl} \, E_i \cdot \text{curl} \big(e_n^{(2)}\big) \, dx + \frac{1}{k^2} \int_{\partial D_i} \delta_i^{-1} \, \big(\text{curl} \, E_i \times e_n^{(2)}\big) \cdot \nu \, dS, \nonumber
    \end{align}
which, due to the fact that \(\text{curl} \, e_n^{(2)} = 0\) and \(e_n^{(2)} \times \nu= 0\) on \(\partial D_i\), implies that
    \begin{align}\label{main-hcurl}
         \overset{2}{\mathbb{P}}(\Tilde{\mathrm{E}}_i) = 0, \quad \text{for} \; i = 1,2,\ldots, M.
    \end{align}
    
              
    \paragraph{\textit{3. Projection on $\nabla \mathbb{H}_{\textit{arm}}.$}} 
We note that the Magnetization operator can be decomposed as follows
    \begin{align}
          \mathbb{M}^{(\kappa)}\big[\mathrm{E}\big](\mathrm{x}) 
          \nonumber&=  \mathbb{M}^{(\mathrm{0})}\big[\mathrm{E}\big](\mathrm{x}) 
          + \frac{k^2}{2} \mathbb{N}^{(\mathrm{0})}\big[\mathrm{E}\big](\mathrm{x}) 
          +  \frac{ik^3}{12\pi}\int_{D} \mathrm{E}(\mathrm{y})d\mathrm{y} 
          - \frac{k^2}{2} \int_D \mathbb{G}^{(\mathrm{0})}(\mathrm{x},\mathrm{y})\frac{\mathrm{A}(\mathrm{x},\mathrm{y})\cdot\mathrm{E}(\mathrm{y})}{\Vert \mathrm{x}-\mathrm{y}\Vert^2}d\mathrm{y}
         \\ \nonumber&- \frac{1}{4\pi}\sum_{\mathrm{j}\ge 3} \frac{(ik)^{\mathrm{j}+1}}{(\mathrm{j}+1)!} \int_D\underset{\mathrm{x}}{\textit{Hess}}(\Vert \mathrm{x}-\mathrm{y}\Vert^\mathrm{j})\cdot \mathrm{E}(\mathrm{y})d\mathrm{y}, 
    \end{align}
where $\mathrm{A}(\mathrm{x},\mathrm{y}):= (\mathrm{x}-\mathrm{y})\otimes (\mathrm{x}-\mathrm{y})$. Using this decomposition, we can rewrite the Lippmann-Schwinger equation (\ref{Maxwell-ls}) as follows
    \begin{align}\label{maxwell-ls2}
           \mathrm{E}_i(\mathrm{x}) + \eta \;\mathbb{M}_{D_i}^{(0)}\big[\mathrm{E}_i\big](\mathrm{x}) 
           = \mathrm{E}^\textit{in}(\mathrm{x}) 
           + \eta \sum_{j\ne m}^M\Big(-\mathbb{M}_{D_j}^{(\kappa)}\big[\mathrm{E}_j\big](\mathrm{x})  
           + k^2\mathbb{N}_{D_j}^{(\kappa)}\Big)\big[\mathrm{E}_i\big](\mathrm{x}) 
           + k^2\eta \; \mathbb{N}_{D_j}^{(\kappa)}\big[\mathrm{E}_i\big](\mathrm{x})
           - \textit{Err}_{1,D_i},
    \end{align}
where,
    \begin{align}
          \textit{Err}_{1,D_i} &\nonumber:= \frac{k^2}{2} \mathbb{N}_{D_i}^{(\mathrm{0})}\big[\mathrm{E}\big](\mathrm{x}) +  \frac{i k^3}{12\pi}\int_{D_i} \mathrm{E}(\mathrm{y})d\mathrm{y} \textcolor{blue}{-} \frac{k^2}{2} \int_{D_i} \mathbb{G}^{(\mathrm{0})}(\mathrm{x},\mathrm{y})\frac{\mathrm{A}(\mathrm{x},\mathrm{y})\cdot\mathrm{E}(\mathrm{y})}{\Vert \mathrm{x}-\mathrm{y}\Vert^2}d\mathrm{y}
          \\&- \frac{1}{4\pi}\sum_{\mathrm{j}\ge 3} \frac{(i k)^{\mathrm{j}+1}}{(\mathrm{j}+1)!} \int_{D_i}\underset{\mathrm{x}}{\textit{Hess}}(\Vert \mathrm{x}-\mathrm{y}\Vert^\mathrm{j})\cdot \mathrm{E}(\mathrm{y})d\mathrm{y}.
    \end{align}
Next, we scale the domain $D_i$ to $B_i$, leading to the following expression for the error term
    \begin{align}\label{expression1}
          \widetilde{\textit{Err}}_{1,B_i} 
          \nonumber&= \frac{k^2}{2} \delta^2 \mathbb{N}_{B_i}^{(\mathrm{0})}\big[\Tilde{\mathrm{E}}_i\big](\mathrm{x}) 
          + \frac{ik^3}{12\pi}\delta^3\int_{B_i} \Tilde{\mathrm{E}}_i(\xi)d\xi 
          - \frac{k^2}{2} \delta^2\int_{B_i} \mathbb{G}^{(\mathrm{0})}(\varsigma,\xi)\frac{\mathrm{A}(\varsigma,\xi)\cdot\Tilde{\mathrm{E}}_i(\xi)}{\Vert \varsigma-\xi\Vert^2}d\xi
          \\ &- \frac{1}{4\pi}\sum_{\mathrm{j}\ge 3} \frac{(ik)^{\mathrm{j}+1}}{(\mathrm{j}+1)!} \delta^{j+1}\int_{B_i}\underset{\varsigma}{\textit{Hess}}(\Vert \varsigma-\xi\Vert^\mathrm{j})\cdot \Tilde{\mathrm{E}}_i(\xi)d\xi.
    \end{align}
We also recall that for $x\in D_i$
\begin{align}
          \Big(-\mathbb{M}_{D_j}^{(\kappa)}+ k^2\mathbb{N}_{D_j}^{(\kappa)}\Big)\big[\mathrm{E}_j\big](\mathrm{x}) = \int_{D_j} \bm{\Upsilon}^{(k)}(x,y)\ E_j(y) dy,
    \end{align}
where, $\bm{\Upsilon}^{(k)}(x,y) := \underset{x}{\textit{Hess}}\mathcal{G}^{(k)}(x,y) + k^2\mathcal{G}^{(k)}(x,y)\mathbb{I}$ represents the corresponding dyadic Green’s function, and $\mathcal{G}^{(k)}(x,y)$ is the Green’s function for the Helmholtz operator. Consequently, we can deduce the following relation
    \begin{align}
          E_i + \eta\mathbb{M}_{D_i}^{(0)}\big[E_i\big](x) 
          &\nonumber= E^\textit{in} 
          + \eta\sum_{j\ne i}^M\bm{\Upsilon}^{(k)}(z_i,z_j) \int_{D_j} E_j(y) dy 
          \\ \nonumber&+\eta\sum_{j\ne i}^M\int_0^1\underset{x}{\nabla}\bm{\Upsilon}^{(k)}(z_i+\theta(x-z_i),z_j)\cdot (x-z_i)d\theta\int_{D_j} E_j(y) dy
          \\ \nonumber&+ \eta\sum_{j\ne i}^M \int_{D_j} \int_0^1\underset{y}{\nabla}\bm{\Upsilon}^{(k)}(z_i,z_j+\theta(x-z_j))\cdot (y-z_j)E_j(y) d\theta dy
          + k^2\eta \; \mathbb{N}_{D_i}^{(\kappa)}\big[E_i\big](\mathrm{x})
           \\ \nonumber &- \textit{Err}_{1,D_i}.
    \end{align}    
Next, we take the projection of equation (\ref{maxwell-ls2}) onto $\nabla \mathbb{H}_{\textit{arm}}$, and after appropriate scaling, we obtain
    \begin{align}
          \big\langle\tilde{\mathrm{E}}_i,\Tilde{e}_n^{(3)}\big\rangle &\nonumber+ \eta \;\big\langle\mathbb{M}_{B_i}^{(0)}\big[\Tilde{\mathrm{E}}_i\big],\tilde{e}_n^{(3)}\big\rangle 
          =  \big\langle\tilde{\mathrm{E}}^\textit{in},\Tilde{e}_n^{(3)}\big\rangle
          + \eta \;\delta^3 \sum_{j\ne m}^M\big\langle\bm{\Upsilon}^{(k)}(z_i,z_j)\cdot\int_{B_j}\Tilde{\mathrm{E}}_j(\xi)d\xi,\tilde{e}_n^{(3)}\big\rangle 
          \\ \nonumber&+ \eta \;\delta^4 \sum_{j\ne m}^M\big\langle\int_0^1 \underset{x}{\nabla}\bm{\Upsilon}^{(k)}(z_i+\theta\delta\varsigma,z_j)\cdot \varsigma d\theta\int_{B_j}\Tilde{\mathrm{E}}_j(\xi)d\xi,\tilde{e}_n^{(3)}\big\rangle
          \\ &\nonumber+ \eta \;\delta^4 \sum_{j\ne m}^M\big\langle\int_{B_j}\int_0^1 \underset{y}{\nabla}\bm{\Upsilon}^{(k)}(z_m,z_j+\theta\delta\xi)\cdot \xi\Tilde{\mathrm{E}}_j)(\xi)\ d\theta d\xi,\tilde{e}_n^{(3)}\big\rangle
          + k^2\eta \; \delta^2\big\langle \mathbb{N}_{B_i}^{(\kappa)}\big[\overset{3}{\mathbb{P}}(\Tilde{\mathrm{E}}_i)\big](\mathrm{x}), \tilde{e}_n^{(3)}\big\rangle 
          \\ &- \big\langle\widetilde{\textit{Err}}_{1,B_i}, \tilde{e}_n^{(3)}\big\rangle.
    \end{align}
Furthermore, by solving the dispersion equation $f_{n_0}(\omega,\gamma):= 1+\eta\lambda_n^{(3)}=0$, we establish a property based on the choice of the incident frequency and the Lorentz model, which yields the following condition for $\mathrm{h}>0$
    \begin{align}\nonumber
        \big|1 + \eta\lambda^{(3)}_{n}\big| \sim 
           \begin{cases}
                \delta^\mathrm{h} & n = n_0 \\
                1 & n \ne n_0.
           \end{cases}   
    \end{align}
Based on the discussion in \cite[Subsection 2.1]{Arpan-Sini}, we utilize the property that
    \begin{align}\nonumber
          \Big\Vert \big(\mathbb{I}+\eta\mathbb{M}_{B_i}^{(0)}\Big)^{-1}\Big\Vert_{\mathbcal{L}\big(L^2(B_i);L^2(B_i)\big)} \lesssim \delta^{-h}.
    \end{align}
The sub-space $\nabla \mathbb{H}_{\textit{arm}}$ is an invariant subspace of the operator $\mathbb{M}_{B_i}^{(0)}$, where it induces a complete orthonormal basis $\big(\lambda^{(3)}_{\mathrm{n}},\Tilde{\mathrm{e}}^{(3)}_{\mathrm{n}}\big)_{\mathrm{n} \in \mathbb{N}}.$ After taking the squared modulus and summing over $n$, we deduce that
    \begin{align}\label{main-gradh}
          \big\Vert \overset{3}{\mathbb{P}}(\Tilde{\mathrm{E}}_i)\big\Vert^2_{\mathbb{L}^2(\mathrm{B}_i)} 
          &\nonumber\lesssim \delta^{-2h}\Bigg(\big\Vert \overset{3}{\mathbb{P}}(\Tilde{\mathrm{E}}^\textit{in})\big\Vert^2_{\mathbb{L}^2(\mathrm{B}_i)} 
          + \delta^6 \sum_{j\ne i}^M\Big\Vert\bm{\Upsilon}^{(k)}(z_i,z_j)\cdot\int_{B_j}\Tilde{\mathrm{E}}_j(\xi)d\xi\Big\Vert^2_{L^2(B_i)} 
          \\ \nonumber&+ \delta^6 \sum_{j\ne m}^M\big|\bm{\Upsilon}^{(k)}(z_m,z_j)\cdot\int_{B_j}\Tilde{\mathrm{E}}_j)(\xi)d\xi\big| \sum_{r>j,r\ne j}^M\big|\bm{\Upsilon}^{(k)}(z_i,z_r)\cdot\int_{B_j}\Tilde{\mathrm{E}}_j(\xi)d\xi\big|
          \\ \nonumber&+ M\delta^8 \sum_{j\ne i}^M\Big\Vert\int_0^1 \underset{\varsigma}{\nabla}\bm{\Upsilon}^{(k)}(z_i+\theta\delta\varsigma,z_j)d\theta\cdot \varsigma\int_{B_j}\Tilde{\mathrm{E}}_j(\xi)\  d\xi\Big\Vert_{L^2(B_i)}^2 
          \\ \nonumber&+ M\delta^8 \sum_{j\ne i}^M\Big\Vert\int_{B_j}\int_0^1 \underset{\xi}{\nabla}\bm{\Upsilon}^{(k)}(z_i,z_j+\theta\delta\xi)\cdot \xi\Tilde{\mathrm{E}}_j(\xi)\ d\theta d\xi\Big\Vert_{L^2(B_i)}^2 
          + \delta^4 \Vert \mathbb{N}_{B_i}^{(\kappa)}\big[\Tilde{\mathrm{E}}_i)\big]\Vert^2_{L^2(B_i)}
          \\ &+ \Vert \widetilde{\textit{Err}}_{1,B_i}\Vert^2_{L^2(B_i)}\Bigg).
    \end{align}
We can express the contributions from various terms. For instance, using the fact that $\bm{\Upsilon}(z_i,z_j) \simeq d_{ij}^{-3},$ we deduce
    \begin{align}\label{combine-1}
          \delta^6 \sum_{j\ne i}^M\Big\Vert\bm{\Upsilon}^{(k)}(z_i,z_j)\cdot\int_{B_j}\Tilde{\mathrm{E}}_j(\xi)d\xi\Big\Vert^2_{L^2(B_i)} 
          \lesssim \sum_{j\ne i}^M d_{ij}^{-6} \Vert \Tilde{E}_j\Vert_{L^2(B_j)} \lesssim d^{-6}\underset{j}{\max}\Vert\Tilde{E}_j\Vert_{L^2(B_j)}.
    \end{align}
Moreover, for the mixed term, we have
    \begin{align}\label{combine-2}
           &\nonumber\delta^6 \sum_{j\ne m}^M\big|\bm{\Upsilon}^{(k)}(z_m,z_j)\cdot\int_{B_j}\Tilde{\mathrm{E}}_j(\xi)d\xi\big| \sum_{r>j,r\ne j}^M\big|\bm{\Upsilon}^{(k)}(z_i,z_r)\cdot\int_{B_j}\Tilde{\mathrm{E}}_j(\xi)d\xi\big|
           \\ &\le \delta^6\sum_{j\ne i}^M d_{ij}^{-6} \Vert \Tilde{E}_j\Vert_{L^2(B_j)} \sum_{j\ne i}^M d_{ij}^{-6} \Vert \Tilde{E}_j\Vert_{L^2(B_j)} \lesssim \delta^6 d^{-6}|\log(\delta)|^2 \underset{j}{\max}\Vert\Tilde{E}_j)\Vert_{L^2(B_j)}.
     \end{align}
We now, estimate the following term.
\begin{align}\label{term-1-mul}
    & \nonumber M\delta^{8-2h} \sum_{j\ne i}^M\Big\Vert\int_0^1 \underset{\varsigma}{\nabla}\bm{\Upsilon}^{(k)}(z_i+\theta\delta\varsigma,z_j)d\theta\cdot \varsigma\int_{B_j}\Tilde{\mathrm{E}}_j(\xi)\  d\xi\Big\Vert_{L^2(B_i)}^2 
    \\ &\lesssim M\delta^{8-2h} \sum_{j\ne i}^Md_{ij}^{-8}\Big\Vert\Tilde{\mathrm{E}}_j\Big\Vert_{L^2(B_i)}^2 \lesssim \delta^{8-11\lambda-2h}\underset{j}{\max}\big\Vert \Tilde{\mathrm{E}}_j\big\Vert^2_{\mathbb{L}^2(\mathrm{B}_j)}.
\end{align}
In a similar way, we have
\begin{align}\label{term-2-mul}
    M\delta^8 \sum_{j\ne i}^M\Big\Vert\int_{B_j}\int_0^1 \underset{\xi}{\nabla}\bm{\Upsilon}^{(k)}(z_i,z_j+\theta\delta\xi)\cdot \xi\Tilde{\mathrm{E}}_j(\xi)\ d\theta d\xi\Big\Vert_{L^2(B_i)}^2 \lesssim \delta^{8-11\lambda-2h}\underset{j}{\max}\big\Vert \Tilde{\mathrm{E}}_j\big\Vert^2_{\mathbb{L}^2(\mathrm{B}_j)}.
\end{align}
Considering the expression (\ref{expression1}), we observe that the dominating term is $\frac{k^2}{2} \delta^2 \mathbb{N}_{B_i}^{(\mathrm{0})}\big[\Tilde{\mathrm{E}}_i\big](\mathrm{x}).$ Therefore, neglecting the other higher order terms, we take the $\mathrm{L}^2(B_i)$-norm, and utilizing the continuity of the Newtonian operator, we obtain
    \begin{align}\label{gradherr2}
           \Vert \widetilde{\textit{Err}}_{1,B_i}\Vert^2_{L^2(B_i)} \lesssim \delta^4\big\Vert \Tilde{\mathrm{E}}_j\big\Vert^2_{\mathbb{L}^2(\mathrm{B}_j)}.
    \end{align}
It is noteworthy that $\nabla\bm{\Upsilon}^{(\mathrm{0})}(z_m,z_j)\big| \simeq \frac{1}{d^4_{mj}}.$ 
Subsequently, by leveraging the continuity of the Newtonian operator and combining the estimates from (\ref{combine-1}), (\ref{combine-2}), (\ref{term-1-mul}), (\ref{term-2-mul}), (\ref{gradherr2}), and we substitute these findings into (\ref{main-gradh}), leading us to the conclusion that
    \begin{align}\label{main-algebra}
          \big\Vert \overset{3}{\mathbb{P}}(\Tilde{\mathrm{E}}_i)\big\Vert^2_{\mathbb{L}^2(\mathrm{B}_i)} 
          &\nonumber\lesssim \delta^{-2h}\big\Vert \overset{3}{\mathbb{P}}(\Tilde{\mathrm{E}}^\textit{in})\big\Vert^2_{\mathbb{L}^2(\mathrm{B}_i)} 
          + \delta^{8-11\lambda-2h}\underset{j}{\max}\big\Vert \Tilde{\mathrm{E}}_j\big\Vert^2_{\mathbb{L}^2(\mathrm{B}_j)}
          \\  &+ \delta^6 \underset{j}{\max}\Vert\Tilde{E}_j\Vert_{L^2(B_j)} + \delta^6 d^{-6}|\log(\delta)|^2 \underset{j}{\max}\Vert\Tilde{E}_j\Vert^2_{L^2(B_j)} + \delta^{4-2h} \Vert \Tilde{\mathrm{E}}_j\big\Vert^2_{\mathbb{L}^2(\mathrm{B}_j)}.
    \end{align}
Now, we use Parseval's identity to estimate $\Tilde{E}_i$, i.e. we write
$\big\Vert \Tilde{E}_i\big\Vert^2_{\mathbb{L}^2(\mathrm{B}_i)} = \sum\limits_{\mathrm{j}=1}^3 \big\Vert \overset{\mathrm{j}}{\mathbb{P}}(\Tilde{E}_i)\big\Vert^2_{\mathbb{L}^2(\mathrm{B}_i)}.$ Subsequently, due to the estimate (\ref{main-hdiv}), (\ref{main-hcurl}) and (\ref{main-algebra}), we derive that
    \begin{align}
          \underset{i}{\max}\Vert\Tilde{E}_i\Vert^2_{L^2(B_i)} 
           &\nonumber\lesssim \delta^{-2h}\big\Vert \Tilde{\mathrm{E}}^\textit{in}\big\Vert^2_{\mathbb{L}^2(\mathrm{B}_i)} 
           + \delta^{\{\max(4, 4-2h, 8-11\lambda-2h, 6-6\lambda, 8-7\lambda \}} \underset{j}{\max}\Vert\Tilde{E}_j\Vert^2_{L^2(B_j)}
          \\  \nonumber&+ \delta^6 \underset{j}{\max}\Vert\Tilde{E}_j\Vert_{L^2(B_j)} 
          + \delta^6 d^{-6}|\log(\delta)|^2 \underset{j}{\max}\Vert\Tilde{E}_j\Vert^2_{L^2(B_j)}
    \end{align}

\noindent
Thus, according to the invertibility condition of the linear algebraic system, it follows that $\lambda \le 1-\frac{h}{3}.$ Looking at the above expression, we examine the case where
    \begin{align}\nonumber
          \frac{9}{5}<h<2,
    \end{align}
and therefore, we derive
    \begin{align}\nonumber
         \underset{i}{\max}\big\Vert \Tilde{\mathrm{E}}_i\big\Vert^2_{\mathbb{L}^2(\mathrm{B}_i)} \lesssim \delta^{-2h}\big\Vert \Tilde{\mathrm{E}}^\textit{in}\big\Vert^2_{\mathbb{L}^2(\mathrm{B}_i)},\; \text{which implies,}\; \underset{i}{\max}\big\Vert \mathrm{E}_i\big\Vert^2_{\mathbb{L}^2(D_i)} \lesssim \delta^{3-2h}.
    \end{align}
This concludes the proof of the initial estimate stated in Proposition \ref{proposition-maxwell-1}.\\
\\
\subsubsection*{Part 2: Proof of the Estimate for $\big\Vert \mathrm{E}\big\Vert_{\mathbb{L}^\mathrm{4}(D)}$ in Proposition \ref{proposition-maxwell-1}}
We start by recalling the Lippmann-Schwinger (LS) equation, which provides the solution to the electromagnetic scattering problem (\ref{Maxwell-model}) for \( i=1,2,\ldots,M \)
    \begin{align} \label{Maxwell-ls-new}
          \mathrm{E}_i(\mathrm{x}) + \eta \sum_{i=1}^M \mathbb{M}_{D_i}^{(k)}\big[\mathrm{E}_i\big](\mathrm{x}) - k^2  \eta \sum_{i=1}^M \mathbb{N}_{D_i}^{(k)}\big[\mathrm{E}_i\big](\mathrm{x}) = \mathrm{E}^\textit{in}(\mathrm{x}), \quad \text{for} \ x \in D := \bigcup_{i=1}^M D_i,
    \end{align} 
Now, with integration by parts and as $\nabla\cdot \mathrm{E}_i=0$, we show that $\mathbb{M}_{D_i}^{(k)}\big[\mathrm{E}_i\big] = \nabla \mathcal{S}_{\partial D_i}^{(k)}\big[\nu\cdot\mathrm{E}_i\big]$, where $\mathcal{S}_{\partial D_i}^{(k)}$ is the Single-Layer operator defined from $\mathbb L^2(\partial D_i)$ to $\mathbb H^{\frac{3}{2}}(D_i)$, by
    \begin{align}\nonumber
           \mathcal{S}_{\partial D_i}^{(k)}\big[f\big](\cdot) := \int_{\partial D_i} \mathbb{G}^{(k)}(\cdot,\mathrm{y})\;f(\mathrm{y})d\sigma_\mathrm{y}.
    \end{align}
Therefore, with the help of the above identity, we rewrite the equation (\ref{Maxwell-ls-new}) for $x\in D_i$ as follows
    \begin{align}\label{equn-single}
        \mathrm{E}_i(\mathrm{x}) + \eta \nabla \mathcal{S}_{\partial D_i}^{(k)}\big[\nu\cdot\mathrm{E}_i\big](\mathrm{x}) + \eta \sum_{j\ne i}^M \nabla \mathcal{S}_{\partial D_j}^{(k)}\big[\nu\cdot\mathrm{E}_j\big](\mathrm{x}) - k^2  \eta \mathbb{N}_{D_i}^{(k)}\big[\mathrm{E}_i\big](\mathrm{x}) 
         - k^2  \eta \sum_{j\ne i}^M \mathbb{N}_{D_j}^{(k)}\big[\mathrm{E}_j\big](\mathrm{x}) 
        = \mathrm{E}^\textit{in}(\mathrm{x}).
    \end{align}
Scaling to $B$ and taking $\textit{curl}$ in above equation, we arrive at
    \begin{align}
         \textit{curl}(\Tilde{E}_i) = \textit{curl}(\Tilde{E}^\textit{in}) + k^2\eta\delta^2\textit{curl}\ \mathbb{N}_{B_i}^{(k\delta)}\big[\Tilde{\mathrm{E}}_i\big] 
         + \widetilde{\textit{Err}}_{\textit{f},1},
    \end{align}
where we define
    \begin{align}
          \widetilde{\textit{Err}}_{\textit{f},1} \nonumber&:= \delta^3\sum_{j\ne i}^M\mathbb{G}^{(k)}(z_i,z_j)\int_{B_j}\textit{curl}(\Tilde{\mathrm{E}}_j)(\xi)d\xi 
          + \delta^4\sum_{j\ne i}^M \int_0^1 \underset{\varsigma}{\nabla}\mathbb{G}^{(k)}(z_i+\theta\varsigma,z_j)\cdot \varsigma d\theta\int_{B_j}\textit{curl}(\Tilde{\mathrm{E}}_j)(\xi)d\xi
          \\ &+ \delta^4\sum_{j\ne i}^M \int_{B_j}\int_0^1 \underset{\xi}{\nabla}\mathbb{G}^{(k)}(z_i,z_j+\theta\xi)\cdot \xi\textit{curl}(\Tilde{\mathrm{E}}_j)(\xi)d\theta d\xi.
    \end{align}
Taking the $L^2(B_i)$-norm to both hand side of the above equation yields to
   \begin{align}
         \Vert \textit{curl}(\Tilde{E}_i) \Vert_{L^2(B_i)} 
         \nonumber&\lesssim \Vert \textit{curl}(\Tilde{E}^\textit{in}) \Vert_{L^2(B_i)} + \delta^2\Vert \textit{curl}\ \mathbb{N}_{B_i}^{(k\delta)}\big[\Tilde{\mathrm{E}}_i\big]\Vert_{L^2(B_i)} + \Vert \widetilde{\textit{Err}}_{\textit{f},1} \Vert_{L^2(B_i)}.
   \end{align}
Next, we use the continuity of Newtonian operator, the fact that $|\mathbb{G}^{(k)}(z_i,z_j)| \sim d_{ij}^{-1}$ and $\nabla\mathbb{G}^{(k)}(z_i,z_j) \sim d_{ij}^{-2}$ to deduce the following  
\begin{align}
    \Big(1-\delta^{\max\{3-3\lambda, 4-3\lambda\}}\Big) \Vert \textit{curl}(\Tilde{E}_i) \Vert_{L^2(B_i)} \lesssim \Vert \textit{curl}(\Tilde{E}^\textit{in}) \Vert_{L^2(B_i)} + \delta^2\Vert \Tilde{E}_i \Vert_{L^2(B_i)}.
\end{align}
Due to the a-priori estimate $\Vert \Tilde{E}_i \Vert_{L^2(B_i)} \sim \delta^{-h},$ and based on the chosen regime
\begin{align}\nonumber
    \lambda \le 1-\frac{h}{3}, \; \text{with}\; \frac{9}{5}<h<2,
\end{align}
it implies that
\begin{align}\label{curl}
    \Vert \textit{curl}(\Tilde{E}_i) \Vert_{L^2(B_i)} \lesssim 1.
\end{align}
We also have the following estimate
\begin{align}\label{nu}
    \big\Vert \nu\cdot\Tilde{\mathrm{E}}_i\big\Vert_{H^{-\frac{1}{2}}(\partial B_i)} \nonumber &\lesssim \big\Vert \Tilde{\mathrm{E}}_i\big\Vert_{H(\textit{curl},B_i)}
    \nonumber\\ &\lesssim \Big(\big\Vert \Tilde{\mathrm{E}}\big\Vert_{\mathbb{L}^2(\mathrm{B})}^2 + \big\Vert \textit{curl}\;\Tilde{\mathrm{E}}_i\big\Vert_{\mathbb{L}^2(B_i)}^2\Big)^{\frac{1}{2}} \sim \delta^{-\mathrm{h}}.
\end{align}
To write the closed system of equations, we need to take the normal trace of the equation (\ref{equn-single}) in $\mathrm{B}$ and  use the jump relation of the single-layer operator to get
\begin{align}\label{afore}
     \big(1+\frac{\eta}{2}\big)\nu\cdot\Tilde{\mathrm{E}}_i = -\eta\mathcal{K}_{\partial B_i}^*(\nu\cdot\Tilde{\mathrm{E}}_i) + k^2\eta\delta^2 \ \nu\cdot \mathbb{N}_{B_i}^{(k\delta)}\big[\Tilde{\mathrm{E}}_i\big] 
      + \eta\widetilde{\textit{Err}}_{\textit{f},3}   + k^2\eta\ \nu\cdot\widetilde{\textit{Err}}_{\textit{f},2} +  \nu \cdot \Tilde{\mathrm{E}}^\textit{in},
\end{align}
where, $\mathcal{K}_{\partial B_i}^*$ is the Neumann-Poincar\'e operator defined by
\begin{align}\nonumber
    \mathcal{K}^*_{\partial B_i}\big[f\big](x) := \text{p.v.}\int_{\partial B_i} \partial_{\nu_\mathrm{x}}\mathbb{G}^{(\kappa)}(\cdot,\mathrm{y})\;f(\mathrm{y})d\sigma_\mathrm{y}.
\end{align}
We also define
    \begin{align}
          \widetilde{\textit{Err}}_{\textit{f},3} 
          \nonumber&:= - \sum_{j\ne i}^M \partial_\nu \mathcal{S}_{\partial D_j}^{(k)}\big[\nu\cdot\mathrm{E}_j\big]
          \\ \nonumber&= -\eta \delta^2\Bigg(\sum_{j\ne i}^M\partial_\nu\mathbb{G}^{(k)}(z_i,z_j)\int_{\partial B_j}\nu\cdot\Tilde{\mathrm{E}}_j(\xi)d\sigma_\xi
          + \delta^4\sum_{j\ne i}^M \int_0^1 \underset{\varsigma}{\nabla}\partial_\nu\mathbb{G}^{(k)}(z_i+\theta\varsigma,z_j)\cdot \varsigma d\theta\int_{\partial B_j}\nu\cdot\Tilde{\mathrm{E}}_j(\xi)d\sigma_\xi
          \\ &+ \delta^4\sum_{j\ne i}^M \int_{\partial B_j}\int_0^1 \underset{\xi}{\nabla}\partial_\nu\mathbb{G}^{(k)}(z_i,z_j+\theta\xi)\cdot \xi\nu\cdot\Tilde{\mathrm{E}}_j(\xi)d\theta d\sigma_\xi\Bigg)
    \end{align}
and
    \begin{align}
          \widetilde{\textit{Err}}_{\textit{f},2} 
          \nonumber&:= \delta^3\sum_{j\ne i}^M\mathbb{G}^{(k)}(z_i,z_j)\int_{B_j}\Tilde{\mathrm{E}}_j(\xi)d\xi 
          + \delta^4\sum_{j\ne i}^M \int_0^1 \underset{\varsigma}{\nabla}\mathbb{G}^{(k)}(z_i+\theta\varsigma,z_j)\cdot \varsigma d\theta\int_{B_j}\Tilde{\mathrm{E}}_j(\xi)d\xi
          \\ &+ \delta^4\sum_{j\ne i}^M \int_{B_j}\int_0^1 \underset{\xi}{\nabla}\mathbb{G}^{(k)}(z_i,z_j+\theta\xi)\cdot \xi\Tilde{\mathrm{E}}_j(\xi)d\theta d\xi
    \end{align}
Consequently, taking the $H^{\frac{1}{2}}(\partial B_i)$-norm on both sides of the aforementioned equation (\ref{afore}), we see that
    \begin{align}
          \Vert \nu\cdot\Tilde{E}_i \Vert_{H^{\frac{1}{2}}(\partial B_i)} 
          &\nonumber\lesssim \Big|\frac{1}{1+\eta}\Big| \Vert \nu \cdot \Tilde{\mathrm{E}}^\textit{in} \Vert_{H^{\frac{1}{2}}(\partial B_i)} + \Big|\frac{\eta}{1+\eta}\Big| \Vert \mathcal{K}_{\partial B_i}^*(\nu\cdot\Tilde{\mathrm{E}}_i) \Vert_{H^{\frac{1}{2}}(\partial B_i)} 
          + \delta^2\Big|\frac{\eta}{1+\eta}\Big| \Vert \nu\cdot \mathbb{N}_{B_i}^{(k\delta)}\big[\Tilde{\mathrm{E}}_i\big] \Vert_{H^{\frac{1}{2}}(\partial B_i)} 
          \\ &+ \Big|\frac{\eta}{1+\eta}\Big| \Vert \widetilde{\textit{Err}}_{\textit{f},3} \Vert_{H^{\frac{1}{2}}(\partial B_i)} 
          + \Big|\frac{\eta}{1+\eta}\Big| \Vert \widetilde{\textit{Err}}_{\textit{f},2} \Vert_{H^{\frac{1}{2}}(\partial B_i)}.
    \end{align}
We know that for the case when $\partial B_i$ is $\mathcal{C}^2$-regular, the Neumann-Poincar\'e operator $\mathcal{K}_{\partial B_i}^*$ is continuous from $H^{-\frac{1}{2}}(\partial B_i) \to H^{\frac{1}{2}}(\partial B_i).$ Then, using the continuity of the Newtonian operator and due to the estimate (\ref{nu}), we arrive at the following reduced expression
    \begin{align}
           \Vert \nu\cdot\Tilde{E}_i \Vert_{H^{\frac{1}{2}}(\partial B_i)} \lesssim 1 + \delta^{-h} + \delta^{2-h} + \Vert \widetilde{\textit{Err}}_{\textit{f},3} \Vert_{H^{\frac{1}{2}}(\partial B_i)} + \Vert \widetilde{\textit{Err}}_{\textit{f},3} \Vert_{H^{\frac{1}{2}}(\partial B_i)}.
    \end{align}
Then, we use the fact that $|\partial_\nu\mathbb{G}^{(k)}(z_i,z_j)| \sim d_{ij}^{-2}$ and $\nabla\partial_\nu\mathbb{G}^{(k)}(z_i,z_j) \sim d_{ij}^{-3}$ to deduce the following
    \begin{align}\nonumber
          \Big(1- \delta^{2-3\lambda}\Big) \Vert \nu\cdot\Tilde{E}_i \Vert_{H^{\frac{1}{2}}(\partial B_i)} \lesssim \delta^{-h} + \delta^{3-3\lambda-h}.
    \end{align}
Therefore, based on the chosen regime
    \begin{align}\nonumber
          \lambda \le 1-\frac{h}{3}, \; \text{with}\; \frac{9}{5}<h<2,
    \end{align} 
it follows from the previous estimate that
    \begin{align}
          \Vert \nu\cdot\Tilde{E}_i \Vert_{H^{\frac{1}{2}}(\partial B_i)} \lesssim \delta^{-h}.
    \end{align}
Then, based on the inequality as shown in \cite[ineq. 1.4]{ineq}, we deduce
\begin{align}\label{2.18}
    \big\Vert \Tilde{\mathrm{E}}_i\big\Vert_{H^{1}(B)} \lesssim \underbrace{\big\Vert \Tilde{\mathrm{E}}_i\big\Vert_{L^{2}(B)}}_{\sim \; \delta^{-\mathrm{h}}} + \underbrace{\big\Vert \textit{curl}(\tilde{\mathrm{E}}_i)\big\Vert_{L^2(B)}}_{\sim \; 1} + \underbrace{\big\Vert \text{div}\;\tilde{\mathrm{E}}\big\Vert_{L^2(B)}}_{= \; 0} + \big\Vert \nu\cdot\Tilde{\mathrm{E}}\big\Vert_{H^{\frac{1}{2}}(\partial B)} \sim \delta^{-\mathrm{h}}.
\end{align}
We have the following estimate based on Gagliardo-Nirenberg inequality, estimate $\Vert \Tilde{E}_i \Vert_{L^2(B_i)} \sim \delta^{-h},$ and using (\ref{2.18})
\begin{align}\nonumber
\big\Vert \Tilde{\mathrm{E}}_i\big\Vert_{L^\mathrm{4}(\mathrm{B})} \nonumber &\lesssim \big\Vert \Tilde{\mathrm{E}}_i\big\Vert_{L^\mathrm{2}(B_i)}^{\frac{1}{2}}\big\Vert \Tilde{\mathrm{E}}_i\big\Vert_{H^1(B)}^\frac{1}{2}
\\ &\lesssim \delta^{-\frac{\mathrm{h}}{2}} \cdot \delta^{-\frac{\mathrm{h}}{2}} \sim \delta^{-\mathrm{h}}.
\end{align}
So, using the aforementioned estimate and scaling it back to $D_i$, we arrive at
\begin{align}\nonumber
    \big\Vert E_i\big\Vert_{L^\mathrm{4}(D_i)} \sim \delta^{\frac{3}{4}-\mathrm{h}}.
\end{align}
This completes the proof of Proposition \ref{proposition-maxwell-1}.
\end{proof} 
\noindent
To complete the proof of the Lemma \ref{lemma-maxwell}, we first recall the derived linear algebraic system given by (\ref{first-al})
    \begin{align}
          \int_{D_i}\overset{3}{\mathbb{P}}(E_i)(y)\ dy 
          &\nonumber- \eta\sum_{j\ne i}^M \mathbcal{P}_{D_i} \cdot \bm{\Upsilon}^{(k)}(z_i,z_j)\int_{D_j} \overset{3}{\mathbb{P}}(E_j)(y) dy
          = \mathbcal{P}_{D_i}\cdot \mathrm{E}^\textit{in}(z_i) + \mathcal{O}\Big(\delta^{4-h}\Big)
          \\ &+  \mathcal{O}\Big(\delta^{\frac{11}{2}-h} d^{-4}\underset{j}{\max} \Vert \overset{3}{\mathbb{P}}(E_j)\Vert_{L^2(D_j)}\Big)
          +  \mathcal{O}\Big(\delta^{\frac{11}{2}-h} d^{-2}\underset{j}{\max} \Vert \overset{1}{\mathbb{P}}(E_j)\Vert_{L^2(D_j)}\Big).
    \end{align}
From the a priori estimates, we have
    \begin{align}\nonumber
          \underset{i}{\max}\big\Vert \mathrm{E}_i\big\Vert_{\mathbb{L}^2(D_i)} \lesssim \delta^{\frac{3}{2}-h}.
    \end{align}
Consequently, we obtain the following linear algebraic system 
\begin{align}\nonumber
          \mathbcal{Q}_i 
          -\eta\sum_{j\ne i}^M \mathbcal{P}_{D_i} \cdot \bm{\Upsilon}^{(k)}(z_i,z_j)\cdot\mathbcal{Q}_j 
          = \mathbcal{P}_{D_i}\cdot \mathrm{E}^\textit{in}(z_i) + \mathcal{O}\Big(\delta^{\min\{4-h;7-2h-4\lambda\}}\Big),\; \texttt{for}\ i=1,2,\ldots,M.
    \end{align}
Here, we define the polarization matrix $\mathbcal{P}_{D_i}$ as
    \begin{align}\nonumber
          \mathbcal{P}_{D_i} 
          = \delta^3\sum_{n}\frac{1}{1+\eta \lambda_n^{(3)}}\int_{B_i} e^{(3)}_n(\xi)d\xi\otimes \int_{B_i} e^{(3)}_n(\xi)d\xi.
    \end{align}
\noindent
The above algebraic system is invertible under the following condition, as discussed in \cite[Section 6.1]{cao-ahcene-sini-jlms}:
    \begin{align}\label{inv-cond}
         |\eta| \underset{j=1,2,\ldots,M}{\max}\Vert \mathbcal{P}_{D_i} \Vert\ d^{-3}<1.
    \end{align} 
This condition ensures the uniqueness and existence of solutions to the system, thereby completing the proof of the lemma.
\end{proof} 
Neglecting the error order term, let us now rewrite the above algebraic system, which will be useful in defining its continuous effective equation in the next section, as follows:
    \begin{align}\label{general-al-equn}
          \Tilde{\mathbcal{Q}}_i 
          - \eta\sum_{j\ne i}^M\bm{\Upsilon}^{(k)}(z_i,z_j)\cdot \mathbcal{P}_{D_j} \cdot \Tilde{\mathbcal{Q}}_j 
          = \mathrm{E}^\textit{in}(z_i),
    \end{align}
where, we set $ \Tilde{\mathbcal{Q}}_i:= \mathbcal{P}^{-1}_{D_i}\cdot\mathbcal{Q}_i.$ Then, due to the properties (\ref{frequency})–(\ref{condition3D}), \( \mathbcal{P}_{D_i} \) is the polarization matrix given by
\begin{align}\label{def-pol}
    \mathbcal{P}_{D_i} =  \frac{\delta^3}{1 + \eta \lambda_{n_0}^{(3)}}\sum_{m=1}^{m_{n_0}} \int_{B_i} e^{(3)}_{m,n_0}(B) \otimes \int_{B_i} e^{(3)}_{m,n_0}(B) + \mathcal{O}(\delta^3),
\end{align}
where \( \Tilde{e}^{(3)}_{m,n_0} \) representing the scalled eigenfunctions associated with the space \( \nabla \mathbb{H}_{\textit{arm}} \) on the domain \( B_i \) and \(\displaystyle e^{(3)}_{m,n_0}(B) = \frac{1}{\Vert \Tilde{e}_{m,n_0}^{(3)} \Vert_{L^2(B_i)}} \Tilde{e}_{m,n_0}^{(3)} \) representing the corresponding normalized functions. Subsequently, using the same properties (\ref{frequency})–(\ref{condition3D}), we can deduce that $\mathbcal{P}_{D_i} = \delta^{3-h}\mathbcal{P}_B$, where, we have $\mathbcal{P}_B$ as
\begin{align}
    \mathbcal{P}_B := C_B \sum_{m=1}^{m_{n_0}} \int_{B_i} e^{(3)}_{m,n_0}(B) \otimes \int_{B_i} e^{(3)}_{m,n_0}(B),\; \text{with}\ C_B = \Bigg(\lambda_{n_0}^{(3)}\cdot \dfrac{k_p^2k_{n_0}\sqrt{5}}{k_{n_0}^4 + (\zeta_{n_0}k_{n_0})^2}\Bigg)^{-1}.
\end{align} 
Let us now discuss about its corresponding effective equation in the next section.

    
    \subsubsection{The Effective Medium Theory for the Electromagnetic Problem}\label{effective-max-section} 
   
Now, we distribute the plasmonic nanoparticle periodically inside a given smooth domain $\mathbf{\Omega}$ with a period given by $\delta^{1-\frac{h}{3}}$, which means that the distance between close nanoparticle is of the order $\delta^{1-\frac{h}{3}}$. For simplicity, we take the nanoparticles having all the same contrasts. In this case, we show that the algebric system derived in (\ref{general-al-equn})-(\ref{def-pol}) along with the invertibility condition (\ref{inv-cond}) "converges" to the solution of the following kind of Lippmann-Schwinger equation
    \begin{equation}\label{eff-ls}
         \textit{\bm{\mathrm{E}}}_f (\mathrm{x}) 
         + \nabla\int_{\mathbf{\Omega}}\nabla\mathbcal{G}^{(k)}(x,y)\cdot \mathbcal{A}_B\cdot \textit{\bm{\mathrm{E}}}_f (y)dy 
         - k^2 \int_{\mathbf{\Omega}}\mathbcal{G}^{(k)}(x,y)\cdot \mathbcal{A}_B\cdot \textit{\bm{\mathrm{E}}}_f (y)dy
         = \mathrm{E}^\textit{in}(x).
    \end{equation}
Next, using the fact that
$$\bm{\Omega}= \Omega_i\cup \Big(\bigcup_{j\ne i}^{[d^{-3}]} \Omega_j\Big)\cup \Big(\bm{\Omega}\setminus\bigcup_{j=1}^{[d^{-3}]}\Omega_j\Big)\; \text{and}\; \textit{Vol}(\Omega_j) = d^3,$$
we express equation (\ref{eff-ls}) in its discretized form at $x=z_i$ as:
    \begin{align}
        \textit{\bm{\mathrm{E}}}_f (z_i) 
        \nonumber&+ \nabla\int_{\Omega_i}\nabla\mathbcal{G}^{(0)}(z_i,y)\cdot \mathcal{A}_B \cdot\textit{\bm{\mathrm{E}}}_f (y)\ dy 
        - \delta^{3-h}\sum_{j\ne i}^{[d^{-3}]}\mathbcal{A}_B\cdot\bm{\Upsilon}^{(k)}(z_i,z_j)\cdot \textit{\bm{\mathrm{E}}}_f (z_j) 
        = \mathrm{E}^\textit{in}(z_i)
        \\ \nonumber&+ \int_{\mathbf{\Omega}\setminus\bigcup\limits_{j=1}^{[d^{-3}]}\Omega_j}\bm{\Upsilon}^{(k)}(z_i,y)\cdot \mathbcal{A}_B\cdot \textit{\bm{\mathrm{E}}}_f (y)
         - \nabla\int_{\Omega_i}\nabla\mathbcal{G}^{(0)}(z_i,y)\cdot \mathcal{A}_B \cdot\big(\textit{\bm{\mathrm{E}}}_f (y)-\textit{\bm{\mathrm{E}}}_f (z_i)\big)dy
        \\ \nonumber&- \sum_{j\ne i}^{[d^{-3}]}\int_{\Omega_j}\Big(\bm{\Upsilon}^{(k)}(z_i,z_j)\cdot\mathcal{A}_B \cdot\textit{\bm{\mathrm{E}}}_f (z_j)-\bm{\Upsilon}^{(k)}(z_i,y)\cdot \mathcal{A}_B \cdot \textit{\bm{\mathrm{E}}}_f (y)\Big)dy 
        \\ &+ \int_{\Omega_i}\Big(\bm{\Upsilon}^{(k)}(z_i,y)-\bm{\Upsilon}^{(0)}(z_i,y)\Big)\cdot \mathcal{A}_B\cdot\textit{\bm{\mathrm{E}}}_f (y)dy,        
    \end{align}
which we rewrite as follows
    \begin{align}
        \nonumber&\widehat{\textit{\bm{\mathrm{E}}}}_f (z_i) 
        - \delta^{3-h}\sum_{j\ne i}^{[d^{-3}]}\bm{\Upsilon}^{(k)}(z_i,z_j)\cdot \mathbcal{P}_B\cdot \widehat{\textit{\bm{\mathrm{E}}}}_f (z_j) 
        = \mathrm{E}^\textit{in}(z_i)
        + \underbrace{\int_{\mathbf{\Omega}\setminus\bigcup\limits_{j=1}^{[d^{-3}]}\Omega_j}\bm{\Upsilon}^{(k)}(z_i,y)\cdot \mathbcal{P}_B\cdot\widehat{\textit{\bm{\mathrm{E}}}}_f (y)}_{:=\textit{Err}^{(1)}_i}
        \\ \nonumber&- \underbrace{\nabla\int_{\Omega_i}\nabla\mathbcal{G}^{(0)}(z_i,y)\cdot \mathbcal{P}_B\cdot\big(\widehat{\textit{\bm{\mathrm{E}}}}_f (y)-\widehat{\textit{\bm{\mathrm{E}}}}_f (z_i)\big)dy}_{:= \textit{Err}^{(2)}_i }
        \\ \nonumber&+ \underbrace{\sum_{j\ne i}^{[d^{-3}]}\int_{\Omega_j}\Big(\bm{\Upsilon}^{(k)}(z_i,z_j)\cdot\mathbcal{P}_B \cdot\widehat{\textit{\bm{\mathrm{E}}}}_f (z_j)-\bm{\Upsilon}^{(k)}(z_i,y)\cdot \mathbcal{P}_B \cdot \widehat{\textit{\bm{\mathrm{E}}}}_f (y)\Big)dy}_{:= \textit{Err}^{(3)}_i} 
        \\ &+ \underbrace{\int_{\Omega_i}\Big(\bm{\Upsilon}^{(k)}(z_i,y)-\bm{\Upsilon}^{(0)}(z_i,y)\Big)\cdot \mathbcal{P}_B \cdot \widehat{\textit{\bm{\mathrm{E}}}}_f (y)dy}_{\textit{Err}^{(4)}_i},
    \end{align}
where we impose $\displaystyle\mathbcal{P}_B := \Big(\mathbb{I} + \mathbcal{A}_B\cdot\nabla\int_{\Omega_i}\nabla\mathbcal{G}^{(0)}(z_i,y)dy\Big)^{-1} \cdot \mathbcal{A}_B$, and we set $\widehat{\textit{\bm{\mathrm{E}}}}_f := \mathbcal{P}_B^{-1}\cdot \mathbcal{A}_B\cdot\textit{\bm{\mathrm{E}}}_f.$ Therefore, we have
\begin{equation}\label{Pol-Matrix}
\mathbcal{A}_B=\Big(\mathbb{I}-\displaystyle\mathbcal{P}_B \cdot\nabla\int_{B}\nabla\mathbcal{G}^{(0)}(z_i,y)dy\Big)^{-1} \cdot \displaystyle\mathbcal{P}_B.
\end{equation}
We now proceed to estimate the fourth term of the above expression:
\begin{align}
    \textit{Err}^{(3)}_i
    &\nonumber:= \sum_{j\ne i}^{[d^{-3}]}\int_{\Omega_j}\Big(\bm{\Upsilon}^{(k)}(z_i,z_j)\cdot\mathbcal{P}_B \cdot\widehat{\textit{\bm{\mathrm{E}}}}_f (z_j)-\bm{\Upsilon}^{(k)}(z_i,y)\cdot \mathbcal{P}_B \cdot \widehat{\textit{\bm{\mathrm{E}}}}_f (y)\Big)dy
    \\ &\nonumber\le \sum_{j\ne i}^{[d^{-3}]}\int_{\Omega_i}\bm{\Upsilon}^{(k)}(z_i,z_j)\cdot \mathbcal{A}_B\cdot\big(\textit{\bm{\mathrm{E}}}_f (z_j) - \textit{\bm{\mathrm{E}}}_f (y)\big)
    + \sum_{j\ne i}^{[d^{-3}]}\int_{\Omega_i}\Big(\bm{\Upsilon}^{(k)}(z_i,z_j)-\bm{\Upsilon}^{(k)}(z_i,y)\Big)\cdot\mathbcal{A}_B\cdot\textit{\bm{\mathrm{E}}}_f (y)
    \\ &\nonumber\le \underbrace{\sum_{j\ne i}^{[d^{-3}]}\bm{\Upsilon}^{(k)}(z_i,z_j)\cdot\mathbcal{A}_B\cdot\int_{\Omega_i}|z_j-y|^\alpha\big[\textit{\bm{\mathrm{E}}}_f\big]_{C^{0,\alpha}(\overline{\bm{\Omega}})}}_{:= \textit{Err}_\textit{ef,i,1} }
    + \underbrace{\sum_{j\ne i}^{[d^{-3}]}\int_{\Omega_j}\Big(\bm{\Upsilon}^{(k)}(z_i,z_j)-\bm{\Upsilon}^{(k)}(z_i,y)\Big)\cdot \mathbcal{A}_B\cdot\textit{\bm{\mathrm{E}}}_f (y)}_{:= \textit{Err}_\textit{ef,i,2}}.
\end{align}
By applying Lemma \ref{counting} and leveraging the result from \cite[Section 5.1]{cao-sini}, along with the \(C^{0,\alpha}\)-regularity of \(\bm{E}_f\) for \(0 < \alpha < 1\), we can estimate the first error term as follows
\begin{align}
    |\textit{Err}_\textit{ef,i,1}| 
    &\nonumber= |\mathbcal{A}_B| \big[\textit{\bm{\mathrm{E}}}_f\big]_{C^{0,\alpha}(\overline{\bm{\Omega}})}\textit{Vol}(\Omega_j) d^\alpha \Big|\sum_{j\ne i}^{[d^{-3}]}\bm{\Upsilon}^{(k)}(z_i,z_j)\Big|
    \\ &\nonumber\lesssim |\mathbcal{A}_B| \big[\textit{\bm{\mathrm{E}}}_f\big]_{C^{0,\alpha}(\overline{\bm{\Omega}})}\textit{Vol}(\Omega_j) d^\alpha d^{-3}|\log(d)|.
\end{align}
Taking the square of this expression and summing over $i=1\ \text{to}\ [d^{-3}]$ yields the final estimation
\begin{align}\label{r1}
  \sum_{i=1}^{[d^{-3}]} |\textit{Err}_\textit{ef,i,1}|^2 
  \lesssim |\mathbcal{A}_B|^2 \big[\textit{\bm{\mathrm{E}}}_f\big]^2_{C^{0,\alpha}(\overline{\bm{\Omega}})} d^{2\alpha-3}|\log(d)|^2. 
\end{align}
Next, we proceed as follows
\begin{align}
    |\textit{Err}_\textit{ef,i,2}| 
    \nonumber&:= \Big|\sum_{j\ne i}^{[d^{-3}]}\int_{\Omega_i}\Big(\bm{\Upsilon}^{(k)}(z_i,z_j)-\bm{\Upsilon}^{(k)}(z_i,y)\Big)\cdot \mathbcal{A}_B\cdot\textit{\bm{\mathrm{E}}}_f (y)\Big|
    \\ &\nonumber\lesssim \Vert \textit{\bm{\mathrm{E}}}_f\Vert_{L^\infty(\bm{\Omega})} |\mathbcal{A}_B| \sum_{j\ne i}^{[d^{-3}]}\int_{\Omega_i}\Big|\int_0^1 \underset{y}{\nabla}\bm{\Upsilon}^{(k)}(z_i,z_j+\theta(y-z_j))\cdot(y-z_j)d\theta\Big|dy
    \\ \nonumber &\lesssim \Vert \textit{\bm{\mathrm{E}}}_f\Vert_{L^\infty(\bm{\Omega})} |\mathbcal{A}_B| \textit{Vol}(\Omega_j)d \sum_{j\ne i}^{[d^{-3}]} d_{ij}^{-4}
    \\ \nonumber &= \Vert \textit{\bm{\mathrm{E}}}_f\Vert_{L^\infty(\bm{\Omega})} |\mathbcal{A}_B| \textit{Vol}(\Omega_j)d \Bigg(\sum_{j\ne i}^{\textit{A}_1} d_{ij}^{-4} + \sum_{j\ne i}^{\textit{A}_2} d_{ij}^{-4}\Bigg),
\end{align}
where, $\textit{A}_1$ represents the numbers of subdomains $\Omega_j$ such that $\Omega_j \cap \mathcal{B}^0_j \ne {\varphi}$, with $\mathcal{B}^0_j$ being a small neighborhood of $\mathcal{B}_j$, for which there exist $r \in ]0,1[$ such that
\begin{align}
    \textit{Vol}(\mathcal{B}^0_j) = \mathcal{O}(d^{3r}),\; \text{and}\; \textit{diam}(\mathcal{B}^0_j) = \mathcal{O}(d^r),
\end{align}
and we consider $\textit{A}_1$ is the number of sub-domains $\Omega_j$ such that $\Omega_j \cap \mathcal{B}^0_j = {\varphi}.$ Furthermore, the number of subdomains $\textit{A}_1$ satisfies $\textit{A}_1 = \mathcal{O}(d^{3r-3}),$ as shown in \cite[Section 4]{cao-sini}. Squaring the above expression and summing over $i=1\ \text{to}\ [d^{-3}]$ yields
\begin{align}
    \sum_{i=1}^{[d^{-3}]} |\textit{Err}_\textit{ef,i,2}|^2
    \nonumber&\lesssim \Vert \textit{\bm{\mathrm{E}}}_f\Vert^2_{L^\infty(\bm{\Omega})} |\mathbcal{A}_B|^2 \textit{Vol}(\Omega_j)^2d^2 \Big(\textit{A}_1\sum_{j\ne i}^{\textit{A}_1}\sum_{j\ne i}^{[d^{-3}]}d_{ij}^{-8} + d^{-8r-3}\Big)
    \\ &\lesssim \Vert\textit{\bm{\mathrm{E}}}_f\Vert^2_{L^\infty(\bm{\Omega})} |\mathbcal{A}_B|^2 \textit{Vol}(\Omega_j)^2d^2\Big(\textit{A}_1^2d^{-8} + d^{-8r-3}\Big)
\end{align}
Now, choosing $r = \frac{11}{14}$ results in 
\begin{align}\label{r2}
    \sum_{i=1}^{[d^{-3}]} |\textit{Err}_\textit{ef,i,2}|^2
    \lesssim\Vert\textit{\bm{\mathrm{E}}}_f\Vert^2_{L^\infty(\bm{\Omega})} |\mathbcal{A}_B|^2 d^{-\frac{9}{7}}.
\end{align}
Subsequently, combining the estimates (\ref{r1}) and (\ref{r2}), we arrive at
\begin{align}
    \sum_{i=1}^{[d^{-3}]} |\textit{Err}_i^{(3)}|^2
    \lesssim \Vert\textit{\bm{\mathrm{E}}}_f\Vert^2_{L^\infty(\bm{\Omega})} |\mathbcal{A}_B|^2 d^{-\frac{9}{7}} + |\mathbcal{A}_B|^2 \big[\textit{\bm{\mathrm{E}}}_f\big]^2_{C^{0,\alpha}(\overline{\bm{\Omega}})} d^{2\alpha-3}|\log(d)|^2.
\end{align}
To estimate the term $\textit{Err}^{(1)}_i$, we utilize the following counting lemma, as described in \cite[Lemma 6.2]{cao-sini}.
\begin{lemma}
For any fixed $z_i, i=1,2,\ldots,[d^{-3}]$, the following counting estimate holds
    \begin{align}
        \sum_{i=1}^{[d^{-3}]}\Big|\int_{\mathbf{\Omega}\setminus\bigcup\limits_{j=1}^{[d^{-3}]}\Omega_j}\bm{\Upsilon}^{(k)}(z_i,y)dy\Big|^2 = \mathcal{O}(d^{-1}).
    \end{align}
\end{lemma}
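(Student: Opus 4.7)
The plan is to split the integrand via
\[
\bm{\Upsilon}^{(k)}(z_i,y) \;=\; \underset{x}{\mathrm{Hess}}\,\mathcal G^{(k)}(z_i,y) \;+\; k^2\,\mathcal G^{(k)}(z_i,y)\,\mathbb I,
\]
and to introduce the auxiliary Newtonian potential
\[
W(x) \;:=\; \int_{\mathrm{strip}} \mathcal G^{(k)}(x,y)\,dy,
\qquad
\mathrm{strip}:=\mathbf{\Omega}\setminus\bigcup_{j=1}^{[d^{-3}]}\Omega_j,
\]
so that $\int_{\mathrm{strip}}\bm{\Upsilon}^{(k)}(z_i,y)\,dy = \underset{x}{\mathrm{Hess}}\,W(z_i) + k^2 W(z_i)\,\mathbb I$ and $W$ solves $(-\Delta - k^2)W=\rchi_{\mathrm{strip}}$ in $\mathbb R^3$. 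The crucial geometric input is that the strip is a tubular neighbourhood of $\partial\mathbf{\Omega}$ of thickness $\sim d$ and total volume $\sim d$; moreover each evaluation point $z_i$ sits at distance $\geq d/2$ from the strip because it is the centre of a tile $\Omega_i$.

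For the scalar (Newtonian) contribution I would parametrise the strip in tube coordinates $(y_0,t)\in\partial\mathbf{\Omega}\times(0,d)$ and use the elementary identity $\int_0^R s(\eta^2+s^2)^{-1/2}\,ds=\sqrt{\eta^2+R^2}-\eta\lesssim 1$ to deduce the uniform pointwise bound $|W(z_i)|\lesssim d$ for all $z_i\in\mathbf{\Omega}$, regardless of $\eta_i:=\mathrm{dist}(z_i,\partial\mathbf{\Omega})$. Summing over all particles then immediately yields the target order:
\[
\sum_{i=1}^{[d^{-3}]} k^4\,|W(z_i)|^2 \;\lesssim\; [d^{-3}]\cdot d^{\,2} \;=\; \mathcal O(d^{-1}).
\]

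For the Hessian contribution I would exploit that $W$ is $\mathcal C^\infty$ on each tile $\Omega_i$ (since it solves $(-\Delta-k^2)W=0$ off the strip), so that the tensor $\underset{x}{\mathrm{Hess}}\,W$ is smooth there as well. Using $B(z_i,d/2)\subset\Omega_i$ together with the mean-value inequality applied to $\underset{x}{\mathrm{Hess}}\,W$ on this ball and the disjointness of the tiles gives the discrete-to-continuous reduction
\[
\sum_i |\underset{x}{\mathrm{Hess}}\,W(z_i)|^2 \;\lesssim\; d^{-3}\,\|\underset{x}{\mathrm{Hess}}\,W\|_{L^2(\bigcup_i\Omega_i)}^2.
\]
To reach the claimed $\mathcal O(d^{-1})$ I must therefore establish $\|\underset{x}{\mathrm{Hess}}\,W\|_{L^2(\bigcup_i\Omega_i)}\lesssim d$, which is strictly stronger than what the Calder\'on--Zygmund bound $\|\underset{x}{\mathrm{Hess}}\,W\|_{L^2(\mathbb R^3)}\lesssim\|\rchi_{\mathrm{strip}}\|_{L^2}\sim d^{1/2}$ delivers.

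The principal obstacle is exactly this sharpening, and is where I would concentrate the work. The natural route is to exploit the thin-shell structure by writing $\rchi_{\mathrm{strip}}(y)\,dy \approx d\cdot\mathrm{d}\sigma_{\partial\mathbf{\Omega}}(y)$ to leading order and decomposing $W = d\,\widetilde W_{\mathrm{SL}} + R$, where $\widetilde W_{\mathrm{SL}}(x):=\int_{\partial\mathbf{\Omega}}\mathcal G^{(k)}(x,y)\,dS(y)$ is the single-layer potential and $R$ a Taylor-type remainder in the inward normal direction. Classical layer-potential regularity on the $\mathcal C^2$-boundary $\partial\mathbf{\Omega}$ then yields $|\underset{x}{\mathrm{Hess}}\,\widetilde W_{\mathrm{SL}}|\lesssim 1$ uniformly on $\bigcup_i\Omega_i$ (which sits at distance $\geq d/2$ from $\partial\mathbf{\Omega}$), so that multiplication by $d$ contributes an $\mathcal O(d)$ bound to the leading term, while a second-order normal Taylor expansion of the kernel $\mathcal G^{(k)}(x,y_0+t\hat n)$ across $t\in(0,d)$ produces a remainder of size $d^2$ in $L^2$. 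Propagating both contributions through the covering argument closes the proof. Rigorously controlling $R$ in $H^2(\bigcup_i\Omega_i)$, and simultaneously absorbing the $k^2W$ coupling that feeds back into the equation for $W$, is the delicate technical step I expect to be the main difficulty.
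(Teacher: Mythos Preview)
The paper does not supply a proof: the lemma is quoted from \cite[Lemma~6.2]{cao-sini} and invoked as a black box, so there is no in-paper argument to compare against.

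Your potential-theoretic route through $W$ is a reasonable strategy, and you have correctly isolated the crux: the Calder\'on--Zygmund bound $\|\mathrm{Hess}\,W\|_{L^2}\lesssim d^{1/2}$ is one factor of $d^{1/2}$ short, and recovering it requires the thin-shell structure. Two genuine gaps remain in the sharpening you outline. First, the strip $\mathbf{\Omega}\setminus\bigcup_j\Omega_j$ is \emph{not} a tubular neighbourhood of $\partial\mathbf{\Omega}$: its inner boundary is the stair-step surface $\partial(\bigcup_j\Omega_j)$, so the identification $\rchi_{\mathrm{strip}}\,dy\approx d\cdot d\sigma_{\partial\mathbf{\Omega}}$ leaves a geometric residual (the symmetric difference with an exact tube of thickness $\sim d$) that has volume $\sim d$ but non-smooth boundary, and does not submit to the same single-layer comparison. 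Second, even on an exact tube your Taylor-remainder control is too optimistic for the outermost tiles. The integral form of the second-order remainder involves $\partial_t^2\mathcal G^{(k)}(x,y_0+t\hat n)\sim|x-y_0|^{-3}$; taking $\mathrm{Hess}_x$ worsens this to $|x-y_0|^{-5}$, and after surface integration and multiplication by $d^3$ one obtains a contribution of order $(d/\mathrm{dist}(x,\partial\mathbf{\Omega}))^3$, which is only $O(1)$---not $O(d)$---on the layer where $\mathrm{dist}\sim d$. Squaring and integrating over $\{\mathrm{dist}\geq d/4\}$ then feeds $d^{-3}\cdot d=d^{-2}$ into the discrete sum rather than $d^{-1}$. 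A sturdier way to handle an exact tube is to foliate by parallel surfaces $\Sigma_t$, $t\in(0,Cd)$, write $W_{\mathrm{tube}}=\int_0^{Cd}S_{\Sigma_t}[J_t]\,dt$, and use that $\mathrm{Hess}\,S_{\Sigma_t}[J_t]$ remains uniformly bounded up to $\Sigma_t$ from each side for $\mathcal C^2$ surfaces with smooth density; this yields $|\mathrm{Hess}\,W_{\mathrm{tube}}|\lesssim d$ pointwise on $\mathbf{\Omega}$ without any Taylor truncation. The stair-step correction from the first point, however, still has to be handled separately.
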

\noindent
Therefore, we have
    \begin{align}
          |\textit{Err}^{(1)}_i| 
          \nonumber&\lesssim \Big|\int_{\mathbf{\Omega}\setminus\bigcup\limits_{j=1}^{[d^{-3}]}\Omega_j}\bm{\Upsilon}^{(k)}(z_i,y)\cdot \mathbcal{P}_B \cdot\widehat{\textit{\bm{\mathrm{E}}}}_f (y)\Big| 
          \\ \nonumber&\lesssim \Vert\textit{\bm{\mathrm{E}}}_f\Vert_{L^\infty(\bm{\Omega})} |\mathbcal{A}_B| \int_{\mathbf{\Omega}\setminus\bigcup\limits_{j=1}^{[d^{-3}]}\Omega_j}\bm{\Upsilon}^{(k)}(z_i,y)dy.
    \end{align}
Consequently, taking square, summing up to $[d^{-3}]$, and using the previous lemma, we deduce
    \begin{align}\label{r3}
          \sum_{i=1}^{[d^{-3}]}|\textit{Err}^{(1)}_i|^2 
          \nonumber&\lesssim \Vert\textit{\bm{\mathrm{E}}}_f\Vert^2_{L^\infty(\bm{\Omega})} |\mathbcal{A}_B|^2 \sum_{i=1}^{[d^{-3}]}\Big|\int_{\mathbf{\Omega}\setminus\bigcup\limits_{j=1}^{[d^{-3}]}\Omega_j}\bm{\Upsilon}^{(k)}(z_i,y)dy\Big|^2
           \\ &\lesssim \Vert\textit{\bm{\mathrm{E}}}_f\Vert^2_{L^\infty(\bm{\Omega})} |\mathbcal{A}_B|^2 d^{-1}.
    \end{align}
We now recall that
\begin{align}
        |\textit{Err}^{(4)}_i| 
        \nonumber&:= \Bigg|\int_{\Omega_i}\Big(\bm{\Upsilon}^{(k)}(z_i,y)-\bm{\Upsilon}^{(0)}(z_i,y)\Big)\cdot \mathcal{P}_B\cdot\hat{\textit{\bm{\mathrm{E}}}}_f (y)dy\Bigg|
        \\ \nonumber&\lesssim \Vert \textit{\bm{\mathrm{E}}}_f\Vert_{L^\infty(\overline{\bm{\Omega}})} |\mathbcal{A}_B| \int_{D_j}\frac{1}{|z_i-y|}\ dy
        \\ &\lesssim \Vert \textit{\bm{\mathrm{E}}}_f\Vert_{L^\infty(\overline{\bm{\Omega}})} |\mathbcal{A}_B|\ d^2,
\end{align}
which further implies after taking the square modulus and summing from $1$ to $[d^{-3}]$ as
    \begin{align}\label{r5}
           \sum_{i=1}^{[d^{-3}]} |\textit{Err}^{(5)}_i|^2 \lesssim |\mathbcal{A}_B|^2\ \Vert \textit{\bm{\mathrm{E}}}_f\Vert_{L^\infty(\overline{\bm{\Omega}})}\ d.
    \end{align}
Next, in order to estimate the following term, we use $C^{0,\alpha}$-regularity of $\bm{E}_f$ for $0<\alpha<1$. Consequently, we derive
    \begin{align}
         |\textit{Err}^{(2)}_i| 
         \nonumber&:= \Big|\nabla\int_{\Omega_i}\nabla\mathbcal{G}^{(0)}(z_i,y)\cdot \mathcal{P}_B \cdot\big(\widehat{\textit{\bm{\mathrm{E}}}}_f (y)-\widehat{\textit{\bm{\mathrm{E}}}}_f (z_i)\big)dy\Big|
         \\ &\lesssim |\mathcal{A}_B| \big[\textit{\bm{\mathrm{E}}}_f\big]_{C^{0,\alpha}} \int_{\Omega_i} \frac{1}{|z_i-y|^{3-\alpha}} dy,
    \end{align}
and by a scaling we derive the following estimate
    \begin{align}\label{r4}
         |\textit{Err}^{(2)}_i| \lesssim |\mathcal{A}_B| \big[\textit{\bm{\mathrm{E}}}_f\big]_{C^{0,\alpha}} \textit{Vol}(\Omega_i)d^{\alpha-3}.
    \end{align}
Similar to the previous estimates, we square the above expression and summing from $1$ to $[d^{-3}]$, we obtain
   \begin{align}
         \sum_{i=1}^{[d^{-3}]} |\textit{Err}^{(2)}_i|^2 \lesssim |\mathbcal{A}_B|^2 \big[\textit{\bm{\mathrm{E}}}_f\big]^2_{C^{0,\alpha}(\overline{\bm{\Omega}})} d^{2\alpha-3}.
   \end{align}
Therefore, we obtain the following linear algebraic system 
\begin{align}
           \Big(\widehat{\textit{\bm{\mathrm{E}}}}_f (z_i) - \Tilde{\mathbcal{Q}}_i\Big)
           &\nonumber- \eta\sum_{j\ne i}^M \bm{\Upsilon}^{(k)}(z_i,z_j)\cdot \mathbcal{P}_{D_j} \cdot \Big(\widehat{\textit{\bm{\mathrm{E}}}}_f (z_i) - \Tilde{\mathbcal{Q}}_j\Big)
          = \mathcal{R}_i,\; \texttt{for}\ j=1,2,\ldots,M,
    \end{align}
where, based on the estimates (\ref{r1}), (\ref{r2}), (\ref{r3}), (\ref{r5}) and (\ref{r4}), we estimated the term $\mathcal{R}_i$ as
    \begin{align}
          \sum_{i=1}^{[d^{-3}]} |\mathcal{R}_i|^2
          &\nonumber= \mathcal{O}\Big(|\mathbcal{A}_B|^2 \big[\textit{\bm{\mathrm{E}}}_f\big]^2_{C^{0,\alpha}(\overline{\bm{\Omega}})} d^{2\alpha-3}|\log(d)|^2 
          + |\mathbcal{A}_B|^2 \big[\textit{\bm{\mathrm{E}}}_f\big]^2_{C^{0,\alpha}(\overline{\bm{\Omega}})} d^{2\alpha-3} 
          + \Vert\textit{\bm{\mathrm{E}}}_f\Vert^2_{L^\infty(\bm{\Omega})} |\mathbcal{A}_B|^2 d^{-\frac{9}{7}} 
          \\ &+ \Vert\textit{\bm{\mathrm{E}}}_f\Vert^2_{L^\infty(\bm{\Omega})} |\mathbcal{A}_B|^2 d^{-1} 
          + |\mathbcal{A}_B|^2\ \Vert \textit{\bm{\mathrm{E}}}_f\Vert_{L^\infty(\overline{\bm{\Omega}})} \ d\Big)
    \end{align}
In conclusion, due to the invertibility condition of the linear algebric system (\ref{general-al-equn}) and by the previous estimate, we deduce that
    \begin{align}
          \sum_{i=1}^{[d^{-3}]}\Vert \widehat{\textit{\bm{\mathrm{E}}}}_f (z_i) - \Tilde{\mathbcal{Q}}_i\Vert_{\ell}^2 &\nonumber= \mathcal{O}\Big(|\mathbcal{A}_B|^2 \big[\textit{\bm{\mathrm{E}}}_f\big]^2_{C^{0,\alpha}(\overline{\bm{\Omega}})} d^{2\alpha-3}|\log(d)|^2 
          + |\mathbcal{A}_B|^2 \big[\textit{\bm{\mathrm{E}}}_f\big]^2_{C^{0,\alpha}(\overline{\bm{\Omega}})} d^{2\alpha-3} 
         + \Vert\textit{\bm{\mathrm{E}}}_f\Vert^2_{L^\infty(\bm{\Omega})} |\mathbcal{A}_B|^2 d^{-\frac{9}{7}} \\\ &+ \Vert\textit{\bm{\mathrm{E}}}_f\Vert^2_{L^\infty(\bm{\Omega})} |\mathbcal{P}_B|^2 d^{-1}+ |\mathbcal{A}_B|^2\ \Vert \textit{\bm{\mathrm{E}}}_f\Vert_{L^\infty(\overline{\bm{\Omega}})} \delta^4\ d^{-3}\Big).
    \end{align}
As, each term of the above expression is square summable, based on the above estimate, we therefore obtain that
    \begin{align}\label{estimate-proof}
          \nonumber& \sum_{i=1}^{[d^{-3}]} \Big\Vert\big|\widehat{\textit{\bm{\mathrm{E}}}}_f (z_i)\big|^2 -\big|\Tilde{\mathbcal{Q}}_i\big|^2\Big\Vert_{\ell^2}^2
          \\ &\lesssim \sup\limits_{B}\Big|\widehat{\textit{\bm{\mathrm{E}}}}_f (z_i) + \Tilde{\mathbcal{Q}}_i\Big|^2\sum_{i=1}^{[d^{-3}]}\Vert \widehat{\textit{\bm{\mathrm{E}}}}_f (z_i) - \Tilde{\mathbcal{Q}}_i\Vert_{\ell^2}^2 \lesssim d^{-\frac{9}{7}}.
    \end{align}


    \subsection{End of the Proof of Theorem \ref{non-periodic}: The Asymptotic Expansions} \label{end}   
    
This section begins by estimating the term $\displaystyle\sum_{\substack{i=1}}^M|\sigma^{(i)}(t)-\bm{\mathrm{Y}}(z_i,t)|^2$, where $(\sigma^{(i)})_{i=1}^M$ represents the vector-valued solution to the Volterra-type system of integral equations (\ref{matrix}) and $\bm{\mathrm{Y}}(z_i,t)$, $i=1,\ldots,M$, denotes the corresponding solution of the Lippmann-Schwinger equation (\ref{effective-equation}) with zero initial conditions. Next, using the fact that
$$\bm{\Omega}= \Omega_i\cup \Big(\bigcup_{j\ne i}^{[d^{-3}]} \Omega_j\Big)\cup \Big(\bm{\Omega}\setminus\bigcup_{j=1}^{[d^{-3}]}\Omega_j\Big)\; \text{and}\; \textit{Vol}(\Omega_j) = d^3,$$
we rewrite the expression (\ref{effective-equation}) in a discretized form at $\mathrm{x}=\mathrm{z}_i$
    \begin{align}
         \bm{\bm{\mathrm{Y}}} (\mathrm{z}_i,\mathrm{t}) + \sum\limits_{\substack{j=1 \\ j\neq i}}^M \overline{b}\ \delta^{3-\beta} \int_0^t\Phi^{(m)}(z_i,t;z_j,\tau)\ \frac{\partial}{\partial\tau}\bm{\mathrm{Y}}(z_j,\tau)\ d\tau = \mathbcal{F}(z_i,t) + E_{(\mathbf{1})} + E_{(\mathbf{2})} + E_{(\mathbf{3})},
    \end{align}
or, equivalently,
    \begin{align}\label{effective-heat-equation}
         \bm{\bm{\mathrm{Y}}} (\mathrm{z}_i,\mathrm{t}) + \sum\limits_{\substack{j=1 \\ j\neq i}}^M \overline{b}\ \delta^{3-\beta} \int_0^t\Phi^{(m)}(z_i,t;z_j,\tau)\ \frac{\partial}{\partial\tau}\bm{\mathrm{Y}}(z_j,\tau)\ d\tau
        \nonumber&= \overline{a} \, f(t) \, \delta^{\beta - h} \, \mathbcal{P}_{B} \cdot \widehat{\bm{E}}_f(z_i)\cdot \overline{\widehat{\bm{E}}}^\textit{Tr}_f(z_i) 
        \\&+ E_{(\mathbf{1})} + E_{(\mathbf{2})} + E_{(\mathbf{3})},
    \end{align}
where, we set $\widehat{\bm{E}}_f:= \mathbcal{P}_B^{-1}\cdot \mathbcal{A}_B\cdot \bm{E}_f$, 
$$E_{(\mathbf{1})}:= -\int_0^t\int_{\mathbf{\Omega}\setminus\bigcup\limits_{j=1}^{[d^{-3}]}\Omega_j}\overline{b}\ \Phi^{(m)}(z_i,t;y,\tau)\ \frac{\partial}{\partial\tau}\bm{\mathrm{Y}}(y,\tau)\ dyd\tau,$$
$$E_{(\mathbf{2})}:= -\int_0^t\int_{\Omega_j}\overline{b}\ \Phi^{(m)}(z_i,t;y,\tau)\ \frac{\partial}{\partial\tau}\bm{\mathrm{Y}}(y,\tau)\ dyd\tau,$$
and
$$E_{(\mathbf{3})}:=-\int_0^t\sum\limits_{\substack{j=1 \\ j\neq i}}^M \int_{\Omega_i}\overline{b}\ \Phi^{(m)}(z_i,t;y,\tau)\ \frac{\partial}{\partial\tau}\bm{\mathrm{Y}}(y,\tau)\ dyd\tau + \sum\limits_{\substack{j=1 \\ j\neq i}}^M \overline{b}\ \delta^{3-\beta} \int_0^t\Phi^{(m)}(z_i,t;z_j,\tau)\ \frac{\partial}{\partial\tau}\bm{\mathrm{Y}}(z_j,\tau)\ d\tau.$$
\noindent
Before estimating the terms introduced earlier, we first recall a useful result, stated as follows:  
\begin{lemma}\cite{habib-sini} \label{counting}  
\textbf{Counting Lemma.}  
For any arbitrary distribution of points \(z_j, j = 1, \ldots, M\), within a bounded domain of \(\mathbb{R}^3\) (with a prescribed minimum distance \(d\) between any two points), the following estimates hold uniformly with respect to \(i\):  
\begin{align}  
    \sum\limits_{\substack{j=1 \\ j \neq i}}^{M} \frac{1}{|z_i - z_j|^k} =  
    \begin{cases}  
        \mathcal{O}(d^{-3})\; &\text{if}\; k < 3, \\  
        \mathcal{O}\Big(d^{-3}(1 + |\log(d)|)\Big)\; &\text{if}\; k = 3, \\  
        \mathcal{O}(d^{-k})\; &\text{if}\; k > 3.  
    \end{cases} \nonumber  
\end{align}  
\end{lemma}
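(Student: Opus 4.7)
The plan is a shell-decomposition (packing) argument centered at the fixed point $z_i$, exploiting the fact that the minimum-distance hypothesis makes the open balls $B(z_j, d/2)$ pairwise disjoint. This is the only structural input needed; in particular the estimate is independent of how the points are arranged inside $\bm{\Omega}$.

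First I would partition the space around $z_i$ into concentric annuli
\[
\mathcal{A}_\ell := \{ x \in \mathbb{R}^3 : \ell d \le |x - z_i| < (\ell+1) d \}, \quad \ell \ge 1,
\]
and note that since all $z_j$ lie in the bounded domain, only indices up to $L := \lceil \mathrm{diam}(\bm{\Omega})/d \rceil$ contribute. Letting $N_\ell$ denote the number of $z_j$ (with $j\ne i$) in $\mathcal{A}_\ell$, the disjoint balls $B(z_j, d/2)$ corresponding to these points all lie inside the slightly enlarged annulus $\{(\ell - \tfrac{1}{2})d \le |x - z_i| \le (\ell + \tfrac{3}{2})d\}$, whose Lebesgue measure is $\mathcal{O}(\ell^2 d^3)$. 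Comparing the total volume $N_\ell \cdot \tfrac{4\pi}{3}(d/2)^3$ with this bound yields the packing estimate $N_\ell \le C \ell^2$ with $C$ depending only on the ambient dimension.

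Combining this count with the pointwise bound $|z_i - z_j|^{-k} \le (\ell d)^{-k}$ on $\mathcal{A}_\ell$ produces
\[
\sum_{j \ne i} \frac{1}{|z_i - z_j|^k} \le C \sum_{\ell=1}^L \ell^2 (\ell d)^{-k} = C d^{-k} \sum_{\ell=1}^L \ell^{2-k}.
\]
The three regimes in the statement then fall out of elementary partial-sum estimates on $S_L(k) := \sum_{\ell=1}^L \ell^{2-k}$: for $k < 3$ the integral test gives $S_L(k) \sim L^{3-k} \sim d^{k-3}$, yielding $\mathcal{O}(d^{-3})$; for $k = 3$ the sum is harmonic with $S_L(3) \sim \log L \sim |\log d|$, giving $\mathcal{O}(d^{-3}(1+|\log d|))$; for $k > 3$ the series $\sum_{\ell \ge 1} \ell^{2-k}$ converges, leaving only the prefactor $d^{-k}$.

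I do not anticipate any serious obstacle, as the argument is essentially the discrete analogue of the elementary integral bound $\int_d^{\mathrm{diam}(\bm{\Omega})} r^{2-k}\, dr$ and the three regimes reflect exactly the threshold behaviour of this integral at $k=3$. The one technical nuance worth recording is the uniformity of the constant $C$ with respect to the arrangement of the points; this is automatic because the disjointness of the $d/2$-balls depends only on the minimum-distance hypothesis, and enlarging each annulus by $d/2$ on either side absorbs the contribution of balls centered near $\partial \mathcal{A}_\ell$ without degrading the volume estimate.
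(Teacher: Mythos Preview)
Your proposal is correct and follows the standard shell-decomposition argument. The paper does not prove this lemma itself but cites it from \cite{habib-sini}; the proof there is precisely the layer-counting argument you outline (partition into concentric annuli of width $d$, bound the number of points per annulus by $\mathcal{O}(\ell^2)$ via the disjoint-balls packing, and sum), so your approach coincides with the original.
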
 
\noindent
To estimate the term \(E_{(\mathbf{1})}\), we adopt the framework and notations introduced in \cite{sini-haibing, sini-wang-yao}. The analysis proceeds by addressing the following two scenarios:


\begin{figure}
\begin{center}
\begin{tikzpicture}[scale=1.2]

\draw (0,0) ellipse (3cm and 1.5cm);
\node[scale=1] at (0,-1.8) {$\mathbf{\Omega}$};

\draw[<-,red, thick] (1.6, 1.15) -- (2.5, 1.5);
\node[above, scale=0.5] at (2.5,1.5) {$\aleph_{(1)}\ (\text{The region above the blue line})$};

\draw[<-,red, thick] (-1.6, -1.15) -- (-2.5, -1.5);
\node[above, scale=0.5] at (-2.5,-1.75) {$\aleph_{(2)} (\text{The region below the blue line})$};

\draw[<-,red, thick] (0.1,1.2) -- (1, 1.8);
\node[above, scale=0.5] at (1,1.8) {$\mathrm{D}_i$};

\node[above, scale=.5] at (0,0.8) {$z_i$};
\node[above, scale=0.5] at (0,0.9) {$\bullet$};

\coordinate (B) at (0,1); 
\draw (B) circle (0.2);

\draw[dotted, blue, line width=1.2pt] (-3.5,0.5) -- (3.5,0.5);

\clip (0,0) ellipse (3cm and 1.5cm);

\foreach \x in {-3.5,-2.5,...,3.5} {
    \draw (\x,-2) -- (\x,2);
}
\def\xellip(#1){4*sqrt(1 - (#1/2)^2)}

\foreach \y in {-1.75,-1.25,...,1.75} {
    \draw ({\xellip(\y)},\y) -- ({-\xellip(\y)},\y);
}

\begin{scope}
    \clip (2.5,-1.5) rectangle (3.5,1.5);
    \foreach \i in {-1.5,-1.3,...,3.5} {
        \draw[black, thick] (\i,-0.8) -- ++(3.5,1.5);
    }
\end{scope}
\begin{scope}
    \clip (-2.9,-1.5) rectangle (3.5,-1.25);
    \foreach \i in {-2.9,-2.65,...,0} {
        \draw[black, thick] (\i,-1.7) -- ++(3.5,1);
    }
\end{scope}

\begin{scope}
    \clip (-3.9,-1.5) rectangle (-2.5,2.7);
    \foreach \i in {-3.9,-3.8,...,3.9} {
        \draw[black, thick] (\i,-.9) -- ++(3.5,4.9);
    }
\end{scope}

\begin{scope}
    \clip (-3,1) rectangle (5.5,2);
    \foreach \i in {-3,-2.8,...,2} {
        \draw[black, thick] (\i,1.25) -- ++(5.5,1.5);
    }
\end{scope}

\begin{scope}
    \clip (1.5,0.75) rectangle (5.5,1.5);
    \foreach \i in {-3,-2.8,...,2} {
        \draw[black, thick] (\i,0) -- ++(5.5,1.5);
    }
\end{scope}

\begin{scope}
    \clip (-6.5,0.75) rectangle (-1.5,1.5);
    \foreach \i in {-6.5,-6.3,...,2} {
        \draw[black, thick] (\i,0) -- ++(5.5,1.5);
    }
\end{scope}

\begin{scope}
    \clip (-6.5,-2) rectangle (-1.5,-.75);
    \foreach \i in {-6.5,-6.3,...,2} {
        \draw[black, thick] (\i,-.5) -- ++(5.5,-2);
    }
\end{scope}

\begin{scope}
    \clip (1.5,-.75) rectangle (5.5,-5);
    \foreach \i in {-3,-2.8,...,5} {
        \draw[black, thick] (\i,0) -- ++(5.5,-3);
    }
\end{scope}

\draw (0,0) ellipse (3cm and 1.5cm);

\end{tikzpicture}
\end{center}
\caption{A schematic representation of the partitioning of the region \(\mathbf{\Omega} \setminus \bigcup\limits_{j=1}^{[d^{-3}]} \Omega_j\). } \label{pic1}
\end{figure}    


\begin{enumerate}
    \item \textit{Case 1: $z_i$ Far from the Boundary $\partial\mathbf{\Omega}$ }.
    \\
         When the point \( z_i \) is sufficiently distant from the boundary \( \partial\mathbf{\Omega} \), the function \( |z_i - z|^{-1} \) remains bounded in the vicinity of the boundary. Consequently, in this scenario, the error term \( E_{(\mathbf{1})} \) scales as  
         \[
         E_{(\mathbf{1})} = \mathcal{O}\left(\text{vol}\left(\mathbf{\Omega} \setminus \bigcup_{j=1}^{\lfloor d^{-3} \rfloor} \Omega_j\right)\right) = \mathcal{O}(d),
         \]  
         where the volume of the excluded region is directly proportional to \( d \), assuming \( d \ll 1 \).
    \item \textit{Case 2: $z_i$ Near the Boundary $\partial\mathbf{\Omega}$.}     
    \\
         When \( z_i \) approaches the boundary \( \partial\mathbf{\Omega} \) or is near one of the sub-regions \( \Omega_j \), the estimation is divided into two components. Let the contribution from the \( \Omega_j \)-regions near \( z_i \) be denoted as \( \aleph_{(1)} \), and the contribution from the remainder of the domain as \( \aleph_{(2)} \).
         \begin{enumerate}
             \item \textit{Contribution from \( \aleph_{(2)} \):}
              The integral over \( \aleph_{(2)} \) is evaluated similarly to the previous case. Since \( \aleph_{(2)} \subset \mathbf{\Omega} \setminus \bigcup\limits_{j=1}^{\lfloor d^{-3} \rfloor} \Omega_j \), the volume of \( \aleph_{(2)} \), denoted as \( \text{vol}(\aleph_{(2)}) \), scales as \( d \) for \( d \ll 1 \).
              \item \textit{Contribution from \( \aleph_{(1)} \):}
              To estimate the integral over \( \aleph_{(1)} \), we first determine the number of \( \Omega_j \)-regions near \( z_i \). These regions are localized near a small segment of the boundary \( \partial\mathbf{\Omega} \). Assuming \( \partial\mathbf{\Omega} \) is sufficiently smooth, this segment can be approximated as flat and centered around \( z_i \). This flat region is partitioned into concentric square layers centered at \( z_i \), as illustrated in Figure \(\ref{pic1}\). 
              
              Given that the characteristic size of the flat region is of order unity relative to the parameter \( d \ll 1 \), and that the maximum edge length of the squares (or subregions \( \Omega_j \)) is \( d \), the number of layers is at most \( \lfloor d^{-1} \rfloor \). In the \( n^\text{th} \) layer (\( n = 0, \ldots, \lfloor d^{-1} \rfloor \)), the number of squares is at most \( (2n + 1)^2 \).  
              Excluding the innermost layer (\( n=0 \)), the number of squares in the \( n^\text{th} \) layer is bounded by  \[(2n + 1)^2 - (2n - 1)^2,\]  and their minimum distance from \( z_i \) is approximately \( n\left(d - \frac{d^3}{2}\right) \).  
         \end{enumerate}
    This framework provides a systematic approach for estimating the integral contributions from regions near and far from the boundary.     
\end{enumerate}
\noindent
Therefore, we express the following term and, using the singularity estimate with \(\beta > \frac{1}{2}\), we obtain
\[
    \int_0^t |\Phi^{(m)}(x,t;z,\tau)|\ d\tau = \mathcal{O}(|x-z|^{-1}),
\]
where \(\Phi^{(m)}(x,t;z,\tau)\) denotes the fundamental solution with the corresponding singular behavior. We have
\begin{align}
    \nonumber&|E_{(\mathbf{1})}| 
    \\ \nonumber&= \Bigg|\int_0^t\int_{\mathbf{\Omega}\setminus\bigcup\limits_{j=1}^{[d^{-3}]}\Omega_j}\overline{b}\ \Phi^{(m)}(z_i,t;y,\tau)\ \frac{\partial}{\partial\tau}\bm{\mathrm{Y}}(y,\tau)\ dyd\tau\Bigg|
    \\ \nonumber &\le \Bigg|\int_0^t\int_{\aleph_{(1)}}\overline{b}\ \Phi^{(m)}(z_i,t;y,\tau)\ \frac{\partial}{\partial\tau}\bm{\mathrm{Y}}(y,\tau)\ dyd\tau\Bigg| + \Bigg|\int_0^t\int_{\aleph_{(2)}}\overline{b}\ \Phi^{(m)}(z_i,t;y,\tau)\ \frac{\partial}{\partial\tau}\bm{\mathrm{Y}}(y,\tau)\ dyd\tau\Bigg|
    \\ \nonumber &\lesssim \sum\limits_{\substack{j=1}}^{[d^{-1}]}\frac{1}{d_{ij}}\Vert \frac{\partial }{\partial t}{\bm{\mathrm{Y}}\Vert_{\mathrm{L}^\infty\big(0,\mathrm{T};\mathrm{L}^\infty(\mathbf{\Omega})\big)}}\textit{vol}\big(\Omega_j\big)
    + \Vert \frac{\partial}{\partial t}{\bm{\mathrm{Y}}\Vert_{\mathrm{C}^1\big(0,\mathrm{T};\mathrm{L}^\infty(\mathbf{\Omega})\big)}} \Vert\Phi^{(m)}(z_i,t;y,\tau)\Vert_{\mathrm{L}^\infty\big(0,\mathrm{T};\mathrm{L}^\infty(\mathbf{A}_{(2)})\big)}\textit{vol}\big(\aleph_{(2)}\big)
    \\ \nonumber &\lesssim d^3\sum\limits_{\substack{j=1}}^{[d^{-1}]}\frac{1}{d_{ij}} + d
    \lesssim d^3 \sum\limits_{\substack{n=1}}^{[d^{-1}]}\big[(2n+1)^2-(2n-1)^2\big]\frac{1}{n(d-\frac{d^3}{2})} + d
    \lesssim d^3\cdot d^{-2} + d.  
\end{align}
Hence, we obtain 
\begin{align}\label{esti1}
    |E_{(\mathbf{1})}|  =  \mathcal{O}\big(d\big).
\end{align}
Next, Using Lemma \ref{prop1} and Corollary \ref{cor}, we have $\frac{\partial}{\partial t}{\bm{\mathrm{Y}}}\in \mathrm{L}^\infty\big(0,\mathrm{T};\mathrm{L}^\infty(\mathbf{\Omega})\big)$ and due to the singularity estimates introduced in (\ref{singularities}), we deduce that
\begin{align}
    |E_{(\mathbf{2})}| \nonumber&= \mathcal{O}\Big(\overline{b}\;\Vert \frac{\partial}{\partial t}{\bm{\mathrm{Y}}}\Vert_{\mathrm{L}^\infty\big(0,\mathrm{T};\mathrm{L}^\infty(\mathbf{\Omega})\big)}\int_{\Omega_i}|y-z_i|^{2r-3}\;dy\Big).
\end{align}
To analyze the term $\displaystyle\int_{\Omega_i}|y-z_i|^{2r-3}\;dy$, for $0<r<1$, we conclude that $\displaystyle\int_{\Omega_i}|y-z_i|^{2r-3}\;dy = \mathcal{O}(\delta^\frac{2r}{3}).$ Therefore, we deduce that 
\begin{align}\label{esti2}
    |E_{(\mathbf{2})}| = \mathcal{O}\Big(d^{2r}\Big).
\end{align}
We now proceed to estimate the third term, \(E_{(\mathbf{3})}\). Specifically, we have
    \begin{align}
          E_{(\mathbf{3})} 
          \nonumber&:= -\sum\limits_{\substack{j=1 \\ j\neq i}}^{[d^{-3}]}\overline{b}\int_{\Omega_j} \Bigg[\int_0^t \Phi^{(m)}(z_i,t;y,\tau)\ \frac{\partial}{\partial\tau}\bm{\mathrm{Y}}(y,\tau)\ dyd\tau 
          - \int_0^t\Phi^{(m)}(z_i,t;z_j,\tau)\ \frac{\partial}{\partial\tau}\bm{\mathrm{Y}}(z_j,\tau)\ d\tau\Bigg]\;dyd\tau.
         \\ \nonumber &= -\sum\limits_{\substack{j=1 \\ j\neq i}}^{[d^{-3}]}\overline{b}\int_{\Omega_j}\Bigg[ \underbrace{\frac{\partial}{\partial\tau}\bm{\mathrm{Y}}(y,\tau)\int_0^t\Big(\Phi^{(m)}(z_i,t;y,\tau)-\Phi^{(m)}(z_i,t;z_j,\tau)\Big)\;dyd\tau.}_{\textbf{err}_{(1)}}
         \\  &+ \underbrace{\int_0^t\Phi^{(m)}(z_i,t;z_j,\tau) \Big[\frac{\partial}{\partial t}{\bm{\mathrm{Y}} (y, \tau)}-\frac{\partial }{\partial t}{\bm{\mathrm{Y}} (z_j, \tau)}\Big]}_{\textbf{err}_{(2)}}\Bigg]\;dyd\tau.
    \end{align}
Using the singularity estimate of the heat fundamental solution, as outlined in \cite[Chapter 1]{friedman} and \cite[Chapter 9]{kress}, for \(0<\beta <1\), we obtain

\[
    \int_0^t |\nabla_y\Phi^{(m)}(z_i,t;z_j,\tau)|\ d\tau = \mathcal{O}(|x-z|^{2r-4}),
\]we estimate $\text{err}_{(1)}$ as
\begin{align}\label{part11}
    \textbf{err}_{(1)} := \Bigg|&\nonumber\sum\limits_{\substack{j=1 \\ j\neq i}}^{[d^{-3}]}\overline{b}\int_{\Omega_j}\frac{\partial}{\partial\tau}\bm{\mathrm{Y}}(y,\tau)\int_0^t\Big(\Phi^{(m)}(z_i,t;z_j,\tau)-\Phi^{(m)}(z_i,t;z_j,\tau)\Big)\;dyd\tau\Bigg|
    \\ \nonumber &= \Bigg|\frac{1}{4\pi}\sum\limits_{\substack{j=1 \\ j\neq i}}^{[d^{-3}]}\overline{b}\int_{\Omega_j}\frac{\partial}{\partial\tau}\bm{\mathrm{Y}}(y,\tau)\ (y-z_j)\int_0^t\nabla_y\Phi^{(m)}(z_i,t;z^*_j,\tau)\ dyd\tau\Bigg|,\quad \text{with}\; z_j^*\in \Omega_j
    \\ &= \mathcal{O}\Bigg(\overline{b}\sum\limits_{\substack{j=1 \\ j\neq i}}^{[d^{-3}]}\frac{1}{d^{-4+2r}_{ij}}\; \Vert \frac{\partial }{\partial t}{\bm{\mathrm{Y}}\Vert_{\mathrm{L}^\infty\big(0,\mathrm{T};\mathrm{L}^\infty(\mathbf{\Omega})\big)}}\; \int_{\Omega_j} |y-z_j|dy\Bigg) = \mathcal{O}\Bigg(d^4\sum\limits_{\substack{j=1 \\ j\neq i}}^{[d^{-3}]}\frac{1}{d^{-4+2r}_{ij}}\Bigg).
\end{align}
Next, we deduce using Taylor's series expansion that
\begin{align}
   \frac{\partial}{\partial t}{\bm{\mathrm{Y}} (y, \tau)}-\frac{\partial }{\partial t}{\bm{\mathrm{Y}} (z_j, \tau)} = (y-z_j)\;\nabla_{y}\frac{\partial }{\partial t}{\bm{\mathrm{Y}} (z^*_j, \tau)},
\end{align}
where, $z^*\in \Omega_j.$ 
\newline
Therefore, due to Corollary \ref{cor}, we have $\partial_{x_i}\frac{\partial}{\partial t}\bm{\mathrm{Y}} \in L^\infty\big(0,\mathrm{T};\mathrm{L}^p(\mathbf{\Omega})\big)$ for $p>3$ and using the singularity estimate with \(0<r <1\), we obtain

\[
    \int_0^t |\Phi^{(m)}(z_i,t;z_j,\tau)|\ d\tau = \mathcal{O}(|x-z|^{2r-3}),
\]
    \begin{align}
          \textbf{err}_{(2)} 
          &\nonumber:= \Bigg|\sum\limits_{\substack{j=1 \\ j\neq i}}^{[d^{-3}]}\overline{b}\int_0^t\int_{\Omega_j}(y-z_j)\ \Phi^{(m)}(z_i,t;z_j,\tau)\ \frac{\partial}{\partial_x}\frac{\partial}{\partial t}{\bm{\mathrm{Y}} (z_j^*, \tau)}\ dyd\tau \Bigg|
          \\ \nonumber &\lesssim \sum\limits_{\substack{j=1 \\ j\neq i}}^{[d^{-3}]} \Bigg(\int_0^t\int_{\Omega_j}(y-z_j)^q |\Phi^{(m)}(z_i,t;z_j,\tau)|^q\ dyd\tau\Bigg)^\frac{1}{q} \Vert \partial_{x_i}\frac{\partial}{\partial t}\bm{\mathrm{Y}}\Vert_{\mathrm{L}^\infty\big(0,\mathrm{T};\mathrm{L}^p(\mathbf{\Omega})\big)},\; \text{with}\ \frac{1}{p} + \frac{1}{q} =1.
    \end{align}
Now, due to the singularity estimate of $\Phi^{(m)}(z_i, \mathrm{t}; z_j, \tau)$, we have
\[
|\Phi^{(m)}(z_i, \mathrm{t}; z_j, \tau)| \lesssim \frac{1}{(t - \tau)^{qr}} \frac{1}{|z_i - z_j|^{q(3 - 2r)}},\; \text{with}\; z_i \ne z_j.
\]
The above function is integrable in \( \Omega_j \times (0, T) \) only when \( qr < 1 \). Then, by Corollary \ref{cor}, and choosing \( p \) sufficiently large, we take \( q \) close to 1. Considering \( r < 1 \), we then see that
\begin{align}\label{part2}
    \textbf{err}_{(2)}  &\lesssim \sum\limits_{\substack{j=1 \\ j\neq i}}^{[d^{-3}]}\frac{1}{d^{q(-3+2r)}_{ij}}\; \Vert \partial_{x_i}\frac{\partial}{\partial t}\bm{\mathrm{Y}}\Vert_{\mathrm{L}^\infty\big(0,\mathrm{T};\mathrm{L}^p(\mathbf{\Omega})\big)}\; \int_{\Omega_j} |y-z_j|^qdy\Bigg) = \mathcal{O}\Bigg(d^{3+q}\sum\limits_{\substack{j=1 \\ j\neq i}}^{[d^{-3}]}\frac{1}{d^{q(-3+2r)}_{ij}}\Bigg).
\end{align}
Further, based on the estimates (\ref{part11}) and (\ref{part2}), we derive the following estimate
\begin{align}\label{esti3}
    E_{(\mathbf{3})} = \mathcal{O}\Big(\sum\limits_{\substack{j=1 \\ j\neq i}}^{[d^{-3}]}\frac{1}{d^{-4+2r}_{ij}}\Big)d^4 = \mathcal{O}\Big(d\Big).
\end{align}
Comparing the equations (\ref{matrix}) and (\ref{effective-heat-equation}), we arrive at the following expression with $\mathbcal{B}_i(z_i,t) := \bm{\bm{\mathrm{Y}}}(\mathrm{z}_i,\mathrm{t}) -\sigma^{(i)}(t) $
\begin{align}\label{effective-heat-equation-2}
   \mathbcal{B}_i(z_i,t) + \sum\limits_{\substack{j=1 \\ j\neq i}}^M \overline{b}\ \delta^{3-\beta} \int_0^t\Phi^{(m)}(z_i,t;z_j,\tau)\ \frac{\partial}{\partial\tau}\mathbcal{B}_j(z_j,t)\ d\tau 
   = \overline{a}f(t)\delta^{\beta-h}\ \mathbcal{P}_{B}\Big(\big| \widehat{\bm{\textit{E}}}_f(z_i)\big|^2-\big|\Tilde{\mathbcal{Q}}_i\big|^2\Big) 
   + \mathcal{O}(d).
\end{align}
Gathering (\ref{esti1}), (\ref{esti2}) and (\ref{esti3}), we get
\begin{align}
    \sum_{i=1}^{[d^{-3}]} \Big(|E_{(\mathbf{1})}|^2 + |E_{(\mathbf{2})}|^2 + |E_{(\mathbf{3})}|^2\Big) = \mathcal{O}\Big(M\;d^2 + M\;d^{4r}\Big) = \mathcal{O}(d^{-1}).
\end{align}
Thus, applying the above estimate together with the invertibility property and the estimate (\ref{goodesti}) for the algebraic system (\ref{algebraic}), we use the same method to that applied to (\ref{effective-heat-equation-2}) to derive the following estimate as $d\ll 1$
\begin{align}
    \sum_{i=1}^M|\sigma^{(i)}(t)-\bm{\mathrm{Y}}(z_i,t)|^2 
    &= \mathcal{O}(d^{-1}) 
    + \mathcal{O}\Bigg(\delta^{2\beta-2h}\sum_{i=1}^{[d^{-3}]} \Big\Vert\big| \widehat{\bm{\textit{E}}}_f(z_i)\big|^2-\big|\Tilde{\mathbcal{Q}}_i\big|^2\Big\Vert_{\ell^2}^2\Bigg).
\end{align}
Then, based on the estimate (\ref{estimate-proof}) proved in Section (\ref{effective-max-section}), we have the following estimate
    \begin{align}
          \sum_{i=1}^{[d^{-3}]} \Big\Vert\big| \widehat{\bm{\textit{E}}}_f(z_i)\big|^2-\big|\Tilde{\mathbcal{Q}}_i\big|^2\Big\Vert_{\ell^2}^2 \lesssim  d^{-\frac{9}{7}}.
    \end{align}
Consequently, we derive that
    \begin{align}\label{mainesti}
          \sum_{i=1}^M|\sigma^{(i)}(t)-\bm{\mathrm{Y}}(z_i,t)|^2 
          = \mathcal{O}(d^{-1}) 
          + \mathcal{O}\Big(\delta^{2\beta-2h}d^{-\frac{9}{7}}\Big).
    \end{align}
We introduce the unknown variable $\bm{\mathrm{Y}} = \frac{\partial}{\partial t}\mathbf{U}$, where $\mathbf{U}$ satisfies the following Lippmann-Schwinger equation
\begin{equation}\nonumber
 \bm{\mathrm{U}} (\mathrm{x},\mathrm{t}) + \overline{b}\int_0^t\int_{\mathbf{\Omega}}\Phi^{(m)}(x,t;y,\tau)\ \frac{\partial}{\partial\tau}\bm{\mathrm{U}}(y,\tau)\ dyd\tau = \mathbcal{F}(\mathrm{x},\mathrm{t}),\mbox{ for } \mathrm{x} \in \mathbb{R}^3, \mathrm{t} \in (0, \mathrm{T}),
\end{equation}
with zero initial conditions for $\mathbf{U}$ up to the first order and define
Let us now define
\begin{align}
    \mathbcal{W}(x,t) := \begin{cases}
                       \mathbf{U}(x,t)  & \text{if}\; (x,t) \in \mathbf{\Omega}\times(0,T) \\ \displaystyle
                       \mathbcal{F}(\mathrm{x},\mathrm{t}) - \overline{b}\int_0^t\int_{\mathbf{\Omega}}\Phi^{(m)}(x,t;y,\tau)\ \frac{\partial}{\partial\tau}\bm{\mathrm{U}}(y,\tau)\ dyd\tau & \text{for}\; (x,t) \in \mathbb R^3\setminus\mathbf{\Omega}\times(0,T).
                 \end{cases}
\end{align}
From this point onward, our objective is to estimate \(|\mathbcal{W}^\textit{sc}(x,t) - u^\textit{sc}(x,t)|\). To proceed, we assume that the point \(x\) lies outside the region \(\mathbf{\Omega} \cup \{x_0\}\). Under this assumption, we have
\begin{align}
    \mathbcal{W}^\textit{sc}(x,t) 
    \nonumber &= -\overline{b}\int_0^t\int_{\mathbf{\Omega}}\Phi^{(m)}(x,t;y,\tau)\ \frac{\partial}{\partial\tau}\bm{\mathrm{U}}(y,\tau)\ dyd\tau
    \\ \nonumber &= -\overline{b}\int_0^t\int_{\mathbf{\Omega}}\Phi^{(m)}(x,t;y,\tau)\ \bm{\mathrm{Y}}(y,\tau)\ dyd\tau
    \\ \nonumber &=-\sum\limits_{\substack{i=1 }}^{[d^{-3}]}\overline{b}\ \delta^{3-\beta}\;\int_0^t\Phi^{(m)}(x,t;z_i,\tau)\bm{\mathrm{Y}}(z_i,\tau)d\tau
    - \int_0^t\int_{\mathbf{\Omega}\setminus\bigcup\limits_{i=1}^{[d^{-3}]}\Omega_i}\overline{b}\ \Phi^{(m)}(x,t;z_i,\tau)\ \frac{\partial}{\partial\tau}\bm{\mathrm{Y}}(y,\tau)\ dyd\tau
    \\ \nonumber&- \sum\limits_{\substack{i=1 }}^{[d^{-3}]}\overline{b}\int_0^t\int_{\Omega_i} \Big(\Phi^{(m)}(x,t;y,\tau)\ \bm{\mathrm{Y}}(y,\tau) -  \Phi^{(m)}(x,t;z_i,\tau)\bm{\mathrm{Y}}(z_i,\tau)\Big)\;dyd\tau.
\end{align}
Using similar techniques as those employed to estimate \(E_{(\mathbf{1})}\) and \(E_{(\mathbf{3})}\), it can be shown that the second and third terms in the above expression are of the order \(\mathcal{O}(d)\) as \(d \ll 1\). Consequently, we deduce that
    \begin{align}\label{conclude}
         \nonumber&\mathbcal{W}^\textit{sc}(x,t) 
         \\ \nonumber&= -\sum\limits_{\substack{i=1 }}^{[d^{-3}]}\overline{b}\ \delta^{3-\beta}\;\int_0^t\Phi^{(m)}(x,t;z_i,\tau)\bm{\mathrm{Y}}(z_i,\tau)d\tau  + \mathcal{O}(d)
         \\ \nonumber&= -\sum\limits_{\substack{i=1 }}^{[d^{-3}]}\overline{b}\ \delta^{3-\beta}\;\int_0^t\Phi^{(m)}(x,t;z_i,\tau)\sigma^{(i)}(\tau)d\tau 
         + \underbrace{\sum\limits_{\substack{i=1 }}^{[d^{-3}]}\overline{b}\ \delta^{3-\beta}\;\int_0^t\Phi^{(m)}(x,t;z_i,\tau)\ \big(\sigma^{(i)}-\bm{\mathrm{Y}}(z_i,\tau)\big)(\tau)d\tau}_{:= \textbf{err}_{(3)}}
         \\ &+ \mathcal{O}(d).
   \end{align}
We then apply the Cauchy-Schwarz inequality along with the estimate (\ref{mainesti}) to derive the following bound
    \begin{align}
          \textbf{err}_{(3)} 
          \nonumber&:= \sum\limits_{\substack{i=1 }}^{[d^{-3}]}\overline{b}\ \delta^{3-\beta}\;\int_0^t\Phi^{(m)}(x,t;z_i,\tau)\ \big(\sigma^{(i)}-\bm{\mathrm{Y}}(z_i,\tau)\big)(\tau)d\tau 
          \\ &\nonumber= \mathcal{O}\Bigg(\delta^{3-\beta}\ \Big(\sum\limits_{\substack{i=1}}^{[d^{-3}]}\int_0^t|\Phi^{(m)}(x,t;z_i,\tau)|^2d\tau\Big)^\frac{1}{2}\;\Big(\sum\limits_{\substack{i=1}}^{[d^{-3}]}|\sigma^{(i)}-\bm{\mathrm{Y}}(z_i,\tau)|^2\Big)^\frac{1}{2}\Bigg)
          \\ &\nonumber = \mathcal{O}(\delta^{3-\beta}\ d^{-\frac{3}{2}}\ \delta^{2\beta-2h} d^{-\frac{9}{7}}).
    \end{align}
Consequently, due to the chosen regime as mentioned in (\ref{scales-h-beta}) i.e.
\begin{align}\nonumber
    h = \beta\; \text{and}\; d\sim \delta^{1-\frac{\beta}{3}},
\end{align}
we conclude from (\ref{conclude}) that
\begin{align}\nonumber
        u^\textit{sc}(x,t) - \mathbcal{W}^\textit{sc}(x,t) = \mathcal{O}(\delta^{\frac{2}{7}(3-\beta)}\; \text{as}\; \delta\to 0,
    \end{align}
which completes the proof.\qed

\bigskip

\noindent
\textit{\textbf{Acknowledgments.}}
\\
\textit{\textbf{Data Availability Statement.}} Data sharing is not applicable to this article as no datasets were generated or analyzed during the current study.
\bigskip

\noindent
\textit{\textbf{Declarations.}}
\newline
\textit{\textbf{Conflict of interest.}} The authors declare that they have no conflict of interest.

\end{document}